\documentclass[11pt, reqno]{amsart}

\usepackage{amsmath,amsthm,amsfonts,amssymb,mathrsfs}
\usepackage{inputenc,mathrsfs}

\numberwithin{equation}{section}
\usepackage[dvips]{graphicx}
\usepackage[colorlinks]{hyperref}
\hypersetup{allcolors=blue}

\setcounter{tocdepth}{1}

\setlength{\parindent}{0pt}
\setlength{\parskip}{6pt}

\newtheorem{theorem}{Theorem}[section]
\newtheorem{definition}{Definition}[section]

\newtheorem{question}{Question}[section]
\newtheorem{proposition}{Proposition}[section]
\newtheorem{lemma}{Lemma}[section]
\newtheorem{corollary}{Corollary}[section]

\theoremstyle{remark}
\newtheorem{remark}{Remark}[section]

\DeclareMathOperator{\diam}{\mathrm{diam}}
\DeclareMathOperator{\riem}{\mathrm{Rm}}
\DeclareMathOperator{\ric}{\mathrm{Ric}}
\DeclareMathOperator{\hess}{\mathrm{Hess}}
\DeclareMathOperator{\dive}{\mathrm{div}}
\DeclareMathOperator{\vol}{\mathrm{vol}}



\setlength{\textwidth}{6.7in} \setlength{\textheight}{8.5in}
\hoffset=-0.8truein \voffset=-0.3truein

\baselineskip=7.0mm
\setlength{\baselineskip}{1.09\baselineskip}

\author[Panagiotis Gianniotis]{Panagiotis Gianniotis}
\thanks{\\Panagiotis Gianniotis\\ Department of Mathematics,
National and Kapodistrian University of Athens, University Campus 15784, Zografou Athens\\  email: pgianniotis@math.uoa.gr}
\address{Department of Mathematics\\
National and Kapodistrian University of Athens, University Campus 15784, Zografou Athens}
\email{pgianniotis@math.uoa.gr}


\begin{document}
\title[Splitting maps in Type I Ricci flows]{Splitting maps in Type I Ricci flows}
\begin{abstract}
We study the existence and small scale behaviour of almost splitting maps along a Ricci flow satisfying  Type I curvature bounds. These are special solutions of the heat equation that serve as parabolic analogues of harmonic almost splitting maps, which have proven to be an indespensable tool in the study of the structure of the singular set of non-collapsed Ricci limit spaces.

In this paper, motivated by the recent work of Cheeger--Jiang--Naber in \cite{CJN} in the Ricci limit setting, we construct sharp splitting maps on Ricci flows that are almost selfsimilar, and then investigate their small scale behaviour. We show that, modulo linear transformations, an almost splitting map at a large scale remains a splitting map even at smaller scales, provided that the Ricci flow remains sufficiently self-similar. Allowing these linear transformations means that a priori an almost splitting map might degenerate at small scales. However, we show that under an additional summability hypothesis such degeneration doesn't occur.
\end{abstract}
\maketitle
\tableofcontents

\section{Introduction}\label{sec:introduction}

It is well known that a smooth Ricci flow $(M,g(t))_{t\in [0,T)}$ on a compact manifold can exhibit singularities in finite time $T<+\infty$, in which case the curvature blows up as $t\rightarrow T$ and the flow can not be continued smoothly past time $T$. We need to understand the structure of these singularities well enough so that we are able to construct continuations of the flow in some weak sense.

In dimension three the structure of the possible singularities is well understood by the work of Perelman \cite{Perelman1,Perelman2,Perelman3}.  
High curvature regions of the Ricci flow in dimension three are locally, at the scale of the curvature, modelled on $\kappa$-solutions, which are ancient solutions of the Ricci flow with well understood structure. In higher dimensions  such fine understanding is not available, at least without further assumptions on the curvature, as in Brendle \cite{BrendlePIC} for positive isotropic curvature. It is however known that centered blow up sequences converge to, possibly singular, gradient shrinking Ricci solitons by the work of Sesum \cite{Sesum}, Naber \cite{Naber4d} and Enders-Buzano-Topping \cite{EMT} and Buzano-Mantegazza in \cite{ManteMull}, for Type I singularities, and Bamler in \cite{BamlerScalar} and \cite{Bamler3}, for singularities with scalar curvature blowing up at Type I rate and $\mathbb F$-limits of smooth Ricci flows respectively. A classification though of the possible gradient shrinking solitons in high dimensions seems out of reach.

Even in dimension three however, there are aspects of the singularity formation of Ricci flow that are still not well-understood. For instance, Perelman in \cite{Perelman1} states

``Suppose that $g_{ij}(t)$ is defined on $M \times [1,T)$, $T < +\infty$, and goes singular as $t\rightarrow T$. Then using 12.1 we see that, as $t \rightarrow T$, either the curvature goes to infinity everywhere, and then $M$ is a quotient of either $\mathbb S^3$ or $\mathbb S^2 \times \mathbb R$, or the region of high curvature in $g_{ij}(t)$ is the union of several necks and capped necks, which in the limit turn into horns (\textit{the horns most likely have finite diameter}, but at the moment I don’t have a proof of that)"

We can view this statement as a special case of the following, more general, question:

\begin{question} \label{question} Let $(M,g(t))_{t\in [0,T)}$ be a maximal smooth Ricci flow on a compact manifold, that becomes singular as $t\rightarrow T$. Is it possible that 
\begin{equation}\label{eqn:diameter_infinity}
\limsup_{t\rightarrow T} \diam_{g(t)}(M) = +\infty.
\end{equation}
If not, is there a limit, in some sense, of $(M,g(t))$ as $t\rightarrow T$? What is the regularity structure of that limit?\end{question}

Under an additional scalar curvature bound the behaviour of the Ricci flow as $t\rightarrow T$ has been extensively studied by Wang \cite{Wang}, Chen-Wang \cite{CW1,CW2a,CW2b,CW3}, Simon \cite{SimonScalar2,SimonScalar1}, Bamler-Zhang \cite{BZ1,BZ2} and Bamler \cite{BamlerScalar}. In particular, it has been shown by Bamler \cite{BamlerScalar} that, if the scalar curvature of $(M,g(t))_{t\in [0,T)}$ is bounded, the limit of $(M,g(t_j))$ as $t_j\rightarrow T$ exists in the Gromov-Hausdorff sense and is smooth away from singularities of codimension at least $4$. Bamler-Zhang also obtain in \cite{BZ1,BZ2} distance distortion estimates, relying only on a scalar curvature bound along the flow. Moreover, in dimension $4$, the limit can only have isolated orbifold singularities, by Bamler-Zhang \cite{BZ1} and Simon \cite{SimonScalar2,SimonScalar1}.  In general however, without a uniform scalar curvature bound, it is not known whether such limit exists at all, even in the Gromov-Hausdorff sense.

The study of similar issues in the closely related context of non-collapsed Ricci limit spaces, starting from the influential work of Cheeger-Colding \cite{CC1,CC2,CC3} and continuing until recently with the work of Cheeger-Jiang-Naber \cite{CJN}, has lead to a deep understanding of the structure of the singular stratification in this setting. In particular, in \cite{CJN} it is shown, among other things, that the singular set is $k$-rectifiable.

Fundamental to the study of the singular set of non-collapsed Ricci limit spaces in \cite{CC1,CC2,CC3}, as well as of Ricci flows under scalar curvature bounds in \cite{Bamler_singular, BamlerScalar}, are the so called $(k,\varepsilon)$-splitting maps. These are harmonic functions $u:B_1(p)\rightarrow\mathbb R^k$ that quantify the extent that the manifold locally splits $k$ Euclidean factors, by means of $L^2$ estimates on the Hessian and the gradient. In particular, in \cite{CJN}, splitting maps are used to construct bi-Lipschitz charts of the singular set, up to a set of $n-2$-Hausdorff measure zero, which implies its rectifiability.  For this, it is crucial to understand the small scale behaviour of splitting maps, the main issue being that splitting maps might degenerate at small scales,  their derivative tending to become more and more singular at small scales. At \cite{CJN} it is shown that splitting maps in a non-collapsed manifold with lower Ricci bounds can only degenerate in a controlled manner, essentially via linear transformations that blow up with at most H\"older rate. Moreover, around most points of an approximation of the singular set, splitting maps don't degenerate at all.

In this paper, we investigate similar issues for a parabolic analogue of the notion of a $(k,\varepsilon)$-splitting map in the setting of a Ricci flow $(M,g(t))_{t\in [0,T]}$. Such maps, at scale $r>0$ are solutions $v: M\times [T-r^2,T]\rightarrow \mathbb R^k$ to the heat equation along Ricci flow, whose Hessian is small and the gradient vector fields of the components of $v$ are almost orthonormal, in a space-time $L^2$ sense with respect to a conjugate heat kernel measure centered around $(p,T)$. In contrast to the Ricci limit case, these assumptions are global on $M$, not just in parabolic balls around $p$, see Definition \ref{def:splitting_map_intro} below for more details.

In forthcoming work, motivated by \cite{CJN}, we aim to address Question \ref{question} under the assumption that a Ricci flow $(M,g(t))_{t\in [0,T)}$ satisfies a Type I bound on the curvature, namely for every $t\in [0,T)$,
\begin{equation}\label{eqn:intro_type_i}
\max_M |\riem(g(t))|\leq \frac{C_I}{T-t},
\end{equation}
which may, or may not, be smooth up to $t=T$.  This paper sets the foundations for this work, addressing the degeneration behaviour of almost splitting maps. In particular, we investigate the following issues.
\begin{enumerate}
\item \underline{Existence of optimal splitting maps at each scale:} we will show that if a smooth compact Ricci flow $(M,g(t))_{t\in [0,T]}$ satisfying \eqref{eqn:intro_type_i}, is almost selfsimilar at scale $1$ around $p\in M$ and it almost splits $k$ Euclidean factors, then there exists a $(k,\varepsilon)$-splitting map whose Hessian is \textit{linearly} controlled in terms of a quantity involving the drop of the pointed $\mathcal W$-entropy of an appropriate collection of points around $p$. We call this quantity ``entropy $k$-pinching around $p$ at scale $1$", and it is a natural analogue of a similar quantity that plays a central role in \cite{CJN}. This is the content of Theorem \ref{intro_thm:sharp_splitting_maps} below.
\item \underline{Propagating splitting maps in small scales:} we will show that given a good enough splitting map $v:M\times [-1,0]\rightarrow \mathbb R^k$ at scale $1$ around $p\in M$, as long as the Ricci flow $(M,g(t))_{t\in [-1,0]}$ is almost selfsimilar, it almost splits $k$ Euclidean factors, and is far from splitting an additional Euclidean factor down to an arbitrarily small scale $r>0$, then at any scale $s\in [r,1]$, $v|_{M\times [-s^2,0]}$ can be linearly transformed to a slightly worse almost splitting map. Moreover, the Hessian of these small scale splitting maps is controlled by the sum of entropy $k$-pinching around $p$ at all  scales larger than $s$. This is the content of Theorem \ref{intro_thm:GTT} below.
\item \underline{Non-degeneration of almost splitting maps:} we show that if the sum of entropy $k$-pinching around $p$ is small down to arbitrarily small scales, then an almost splitting map at a large scale remains an almost splitting map at all smaller scales - possibly with some loss in the estimates. This is the content of Theorem \ref{intro_thm:non_degen} below.
\end{enumerate}

Theorems \ref{intro_thm:sharp_splitting_maps}, \ref{intro_thm:GTT} and \ref{intro_thm:non_degen} are consequences of Theorems \ref{thm:sharp_splitting}, \ref{thm:transformations} and \ref{thm:non_degen} respectively, which are stated in terms of a set of a priori assumptions introduced in Section \ref{sec:apriori}. These assumptions are valid for compact Ricci flows satisfying a Type I bound \eqref{eqn:intro_type_i} and a non-collapsing assumption, by Proposition \ref{prop:RF35}. Most of these a priori assumptions, such as upper and lower Gaussian bounds for conjugate heat kernels and lower distance distortion bound are still valid even under a weaker Type I curvature bound on the \textit{scalar} curvature, by \cite{BZ1,Hallgren_scalar}. It thus seems very likely that the results of the paper hold even under this weaker assumption, however this will be treated elsewhere in the future.

It is important to note that Bamler in \cite{Bamler3} introduces a similar notion of splitting maps for the Ricci flow, which plays a central role in the structure theory for smooth Ricci flows and $\mathbb F$-limits of smooth Ricci flows developed in \cite{Bamler3}, which is in the spirit of the theory developed in  \cite{CC1,CC2,CC3} and \cite{CheegerNaber} for non-collapsed Ricci limit spaces. In this context, Gaussian upper bounds for the conjugate heat kernel are available, however lower Gaussian bounds are not, at least in this generality.

\subsection{Results and main ideas}
 
Before we can state the main results in more detail, we need to fix some notation and terminology. Given a compact Riemannian manifold $(M^n,g)$, $f\in C^\infty(M)$ and $\tau>0$ let
$$\mathcal W(g,f,\tau)=\int_M (\tau(R+|\nabla f|^2) + f-n)(4\pi \tau)^{-n/2} e^{-f} d\vol_g,$$
and 
\begin{align*}
\mu(g,\tau)&=\inf\left\{ \mathcal W(g,f,\tau), f\in C^\infty(M) \quad \textrm{with} \quad\int_M (4\pi \tau)^{-n/2} e^{-f} d\vol_g=1\right\},\\
\nu(g,\tau) &= \inf \{ \mu(g,\tau'), 0<\tau'\leq \tau\}.
\end{align*}

Let $I\subset \mathbb R$ be an interval with $\max I=0$, $(M,g(t))_{t\in I}$ be smooth compact Ricci flow and $p\in M$. By Perelman \cite{Perelman1}, $\mu$ and $\nu$ are monotone under Ricci flow in the sense that for any $\tau>0$ and $t_1<t_2$
\begin{align*}
\mu(g(t_1), \tau +t_2-t_1) &\leq \mu(g(t_2),\tau),\\
\nu(g(t_1), \tau + t_2-t_1)&\leq \nu(g(t_2),\tau).
\end{align*}
Moreover, denoting by $u_{(p,0)} =(4\pi |t|)^{-n/2} e^{-f}$ the conjugate heat kernel starting at $(p,0)$, see Section \ref{sec:preliminaries}, and $d\nu_{(p,0),t} = u_{(p,0)}(\cdot,t) d\vol_{g(t)}$, we define the pointed entropy at $p$ as
$$\mathcal W_p(\tau)=\mathcal W(g(-\tau),f(\cdot,-\tau),\tau),$$
 which is also non-increasing with respect to $\tau$.

\begin{definition}[$(k,\delta)$-splitting map]
\label{def:splitting_map_intro}
Let $(M^n,g(t),p)_{t\in [-r^2,0]}$ be a smooth complete pointed Ricci flow and let $1\leq k\leq n$. Then $v=(v^1,\ldots,v^k): M\times (-r^2,0)\rightarrow \mathbb R^k$ is a $(k,\delta)$-splitting map around $p$ at scale $r$ if
\begin{enumerate}
\item Each $v^a$ solves the heat equation $\frac{\partial v^a}{\partial t} = \Delta_{g(t)} v^a$.
\item For any $a=1,\ldots, k$
\begin{equation}
\int_{-r^2}^{-\delta r^2}\int_{M} | \hess_{g(t)} v^a|^2 d\nu_{(p,0),t} dt \leq\delta.
\end{equation}
\item For any $a,b=1,\ldots,k$
\begin{equation}
r^{-2}\int_{-r^2}^{-\delta r^2}\int_{M} \left| \langle\nabla v^a,\nabla v^b\rangle -\delta^{ab} \right|^2  d\nu_{(p,0),t} dt\leq\delta.
\end{equation}
\end{enumerate}
\end{definition}
 A quantity which will prove crucial in controlling splitting maps in a Ricci flow is the \textit{entropy $k$-pinching} $\mathcal E^{(k,\mu,\delta,R)}_r(p)$ at a point $p\in M$ and some scale $r>0$. This quantity will eventually quantify, in terms of the pointed entropy, the extent that the flow around $p$ at scale $r>0$ is close to a selfsimilar Ricci flow splitting $k$ Euclidean factors.
 
  In more detail, denoting by $B(x,t,r)$ the open ball centred at $x$ and radius $r>0$ with respect to the metric $d_{g(t)}$, the entropy $k$-pinching is defined roughly as
$$\mathcal E^{(k,\mu,\delta,R)}_r(p) = \inf_{\begin{array}{c}\{x_i\}_{i=0}^k\subset B(p,-r^2,Rr)\\ \textrm{satisfying (1) and (2) below} \end{array}} \left\{ \sum_{i=0}^k \left(\mathcal W_{x_i}(r^2) - \mathcal W_{x_i} (T r^2)\right) \right\},
$$
where the infimum is taken over all subsets $\{x_i\}_{i=0}^k\subset B(p,-r^2,Rr)$ with the following properties
\begin{enumerate}
\item For each $i=0,\ldots,k$, either $\mathcal W_{x_i}(\delta r^2)-\mathcal W_{x_i}(\delta^{-1}r^2)<\delta$ or the flow around $x_i$, at scale $r$, is $\delta$-close to a selfsimilar Ricci flow induced by a shrinking Ricci soliton. 
\item The points $\{x_i\}_{i=0}^k$ can't be embedded into a product metric space of the form $K\times \mathbb R^l$ with $\diam(K)\leq D'r$ and distortion less than $(D'+\mu R)r$, when $l<k$.
\end{enumerate}
Here $D'<+\infty$ is meant to be a constant much smaller than $R$, while $T<+\infty$ is a large constant. Both will eventually depend on the dimension and the available upper and lower Gaussian bounds on conjugate heat kernels along the Ricci flow.

In general, we will say that a pointed Ricci flow  $(M,g(t),p)_{t\in(-\infty,0)}$ is $k$-selfsimilar, if it is induced by a gradient shrinking Ricci soliton splitting \textit{at least} $k$ Euclidean factors and $p$ is a minimum of a soliton function. We will also say that a pointed Ricci flow $(M,g(t),p)_{t\in (-2\delta^{-1}r^2,0)}$ is $(k,\delta)$-selfsimilar at scale $r$, if up to scaling by the factor $r^{-2}$ it is close to a $k$-selfsimilar Ricci flow, in a sense that is specified in Section \ref{sec:selfsimilar}.

Note that if a Ricci flow $(M,g(t),p)_{t\in (2\delta^{-1},0]}$ is $(k,\delta)$-selfsimilar at scale $1$, we could easily construct a $(k,\varepsilon)$-splitting map around $p$ at scale $1$. However, the control on the Hessian of such splitting map would be of order $\varepsilon$, as in Definition \ref{def:splitting_map_intro}. It turns out that we can do better, and our first result asserts the existence of splitting maps with the square of the $L^2$ norm of their Hessian linearly controlled in terms of the entropy $k$-pinching.

\begin{theorem}\label{intro_thm:sharp_splitting_maps}
Fix $\varepsilon>0$, $\mu\in (0,1/6)$. Let $(M^n,g(t),p)_{t\in [-10\delta^{-2},0]}$ be a smooth compact Ricci flow satisfying
\begin{align*}
\max_M |\riem|(\cdot,t) &\leq \frac{C_I}{|t|},\\
\nu(g(-10\delta^{-2}),20\delta^{-2})&\geq -\Lambda,
\end{align*}
for every $t\in [-10\delta^{-2},0)$, and let $R\geq \frac{D'}{\mu}$, where $D'=D'(n,C_I,\Lambda)<+\infty$. 

Suppose that $(M,g(t),q)_{t\in(-2\delta^{-2},0)}$ is $(k,\delta^2)$-selfsimilar at scale $1$ at $q\in B(p,-1,R)$, for some $1\leq k\leq n$, and if $q\not = p$ suppose that
$$\mathcal W_p(\delta)-\mathcal W_p(\delta^{-1})<\delta.$$

If $0<\delta\leq \delta(n,C_I,\Lambda|R,\mu,\varepsilon)$, then there is a $(k,\varepsilon)$-splitting map $v=(v^1,\ldots,v^k):M\times [-1,0]\rightarrow \mathbb R^k$ at scale $1$ around $p$ such that for any $a=1,\ldots,k$
\begin{equation*}
\int_M |\hess  v^a |^2 d\nu_{(p,0),t} \leq C(n,C_I,\Lambda|R,\mu,\varepsilon) \mathcal E^{(k,\mu,\delta,R)}_1(p),
\end{equation*}
for every $t\in [-1,-\varepsilon]$.
\end{theorem}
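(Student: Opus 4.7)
The plan is to build the splitting map from candidates of the form
\[
b_i(\cdot,t)=-2|t|\bigl(f_i-f_0\bigr)(\cdot,t),\qquad i=1,\ldots,k,
\]
where $u_{x_j}=(4\pi|t|)^{-n/2}e^{-f_j}$ is the conjugate heat kernel starting at $(x_j,0)$ and $x_0,\ldots,x_k$ are base points chosen below. On a Gaussian gradient shrinker $\mathbb R^k\times N$ with base points placed on the Euclidean factor, a direct computation shows $b_i$ reduces to the linear function $y\cdot y_i-|y_i|^2/2$. Hence the $b_i$ are the natural parabolic analogues of the difference of squared distance functions exploited in \cite{CJN}.

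First I would fix $\{x_i\}_{i=0}^k\subset B(p,-1,R)$ nearly attaining the infimum defining $\mathcal E^{(k,\mu,\delta,R)}_1(p)$; the non-embedding clause of the definition, combined with the quantitative structure of the nearby $k$-selfsimilar model, ensures that the $x_i$ span the Euclidean factor in a quantitatively nondegenerate way uniform in $n$, $\mu$ and $R$. The key identity is
\[
\hess b_i=-2|t|\Bigl[\bigl(\hess f_i+\ric-\tfrac{g(t)}{2|t|}\bigr)-\bigl(\hess f_0+\ric-\tfrac{g(t)}{2|t|}\bigr)\Bigr],
\]
in which the soliton-equation defects of $f_i$ and $f_0$ appear. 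Perelman's monotonicity formula
\[
\mathcal W_{x_j}(1)-\mathcal W_{x_j}(T)=\int_1^T 2\tau\int_M \bigl|\ric+\hess f_j-\tfrac{g}{2\tau}\bigr|^2 u_{x_j}\,d\vol\,dt
\]
then gives the desired \emph{linear} control of the right-hand side by $\mathcal E^{(k,\mu,\delta,R)}_1(p)$ (up to an $o(1)$ error from the near-infimum choice). The a priori upper and lower Gaussian bounds on conjugate heat kernels (Proposition \ref{prop:RF35}) let me compare $u_{x_j}$ with $u_{(p,0)}$ at the cost of a constant $C(n,C_I,\Lambda|R)$, since $x_j\in B(p,-1,R)$.

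Next I would define $\tilde v^i$ as the solution of the heat equation on $M\times[t_0,0]$ with initial datum $b_i(\cdot,t_0)$ for some $t_0\in[-2,-1]$ chosen (by averaging in time) so the Hessian bound above holds against the single slice $\nu_{(p,0),t_0}$. The weighted evolution identity $\frac{d}{dt}\int F\,u_{(p,0)}\,d\vol=\int(\partial_t-\Delta)F\cdot u_{(p,0)}\,d\vol$, applied to $F=|\hess \tilde v^i|^2$ together with the Bochner-type inequality $(\partial_t-\Delta)|\hess v|^2\leq \tfrac{C(n,C_I)}{|t|}|\hess v|^2+\text{l.o.t.}$ valid under Type I (with lower-order terms in $|\nabla v|$ absorbed using $(\partial_t-\Delta)|\nabla v|^2=-2|\hess v|^2$), propagates the estimate to any $t\in[-1,-\varepsilon]$ via Gr\"onwall, losing only a multiplicative $\varepsilon^{-C(n,C_I)}$ factor absorbed into the final constant. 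Finally, I produce $v=(v^1,\ldots,v^k)$ as a linear transformation $A$ of $(\tilde v^1,\ldots,\tilde v^k)$ obtained by Gram--Schmidt on the symmetric matrix $M_{ij}=\int_{-1}^{-\varepsilon}\int_M\langle\nabla\tilde v^i,\nabla\tilde v^j\rangle\,d\nu_{(p,0),t}\,dt$; the non-embedding clause forces $M$ to be uniformly positive definite, so $A$ has bounded norm, $v^a$ remains a heat equation solution, the Hessian bound transfers without loss, and the selfsimilar proximity of $\tilde v^i$ to $b_i$ yields both almost-orthonormality of $\nabla v^a$ and the remaining integral smallness in Definition \ref{def:splitting_map_intro}.

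The principal technical obstacle is the transition from the formal candidate $b_i$, which only approximately solves the heat equation with error vanishing in the selfsimilar limit, to the genuine heat equation solution $\tilde v^i$, while keeping the \emph{linear} dependence on $\mathcal E$ rather than a coarser dependence on $\delta$. This requires the Bochner propagation to lose nothing better than a constant; in particular, the lower Gaussian bound on $u_{(p,0)}$ is used essentially to make spatial $L^2$ norms weighted by $d\nu_{(p,0),t}$ at different times genuinely comparable, and the two-sided comparison $u_{x_j}\sim u_{(p,0)}$ in $B(p,-1,R)$ is indispensable for transferring Perelman's monotonicity bound back to the measure $d\nu_{(p,0),t}$ appearing in the statement.
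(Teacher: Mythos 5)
Your overall architecture matches the paper's: both proofs start from the difference of (regularized) conjugate--heat--kernel potentials based at a well--spread collection $\{x_i\}_{i=0}^k$ near the infimum of $\mathcal E^{(k,\mu,\delta,R)}_1(p)$, both exploit the identity relating the Hessian of that difference to the soliton defects $\ric+\hess f_i-\frac{g}{2|t|}$, both use Perelman's monotonicity for the linear $\mathcal E$ control, and both finish with a Gram--Schmidt step justified by the non--embedding clause. However, there is a genuine gap in the basepoint--change step.

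You claim that the upper and lower Gaussian bounds of Proposition~\ref{prop:RF35} allow you to ``compare $u_{x_j}$ with $u_{(p,0)}$ at the cost of a constant $C(n,C_I,\Lambda|R)$''. That is false, and the paper explicitly warns against it in Section~\ref{sec:apriori}: even in the static Euclidean Ricci flow the ratio $u_{(p,0)}/u_{(x_j,0)}$ is of size $e^{c\,\langle x,\,x_j-p\rangle/|t|}$, which is unbounded, so there is no inequality of the form $d\nu_{(p,0)}\leq C\, d\nu_{(x_j,0)}$. The correct comparison (Lemma~\ref{lemma:compare_kernels}) is $u_{(p,0)}\leq C\, e^{\alpha f_j}\,u_{(x_j,0)}$ with $\alpha<1$ close to $1$, and to use it one must control $\int_M |S_i|^2\, e^{\alpha f_j}\,d\nu_{(x_j,0),t}$, not $\int_M |S_i|^2\,d\nu_{(x_j,0),t}$. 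Closing that loop is exactly where the rest of your plan breaks: you propose to choose $t_0\in[-2,-1]$ by averaging and solve the heat equation forward from $t_0$. But the paper's hypercontractivity mechanism (Theorem~\ref{thm:hypercontractivity} / Corollary~\ref{cor:L2toL2weighted}), which is what upgrades the plain $L^2(d\nu_{(x_j,0)})$ bound from Perelman monotonicity to the weighted bound with the extra factor $e^{\alpha f_j}$, requires the ratio of times $|\bar T|/|t|$ to be large (of order $(1-\sqrt\alpha)^{-1}$). This is precisely why Lemma~\ref{lemma:regularized_soliton_function} launches $w^i$ from a time $-\bar T\in[-2T,-\tfrac32T]$ with $T=T(n,C_1,C_2)\geq 10$ and large, not from $t_0\in[-2,-1]$. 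Starting from $t_0\sim -1$, the hypercontractivity exponent is too close to $2$ to produce the needed weighted estimate on $[-1,-\varepsilon]$, and your Gr\"onwall/Bochner propagation only preserves the unweighted $L^2(d\nu_{(p,0)})$ norm, which you never obtained in the first place because you cannot compare $\nu_{(p,0)}$ with $\nu_{(x_j,0)}$ by a constant. To repair this you would need to (i) start the regularization at a time of order $-2T$ with $T$ large enough depending on the Gaussian constants, (ii) obtain the weighted $L^2(e^{\alpha f_j}d\nu_{(x_j,0)})$ bound on the soliton defect at times in $[-10,0]$ via hypercontractivity, and (iii) only then invoke the change--of--basepoint lemma to land on $d\nu_{(p,0)}$ --- which is exactly the sequence of steps in Lemma~\ref{lemma:regularized_soliton_function} and Proposition~\ref{prop:sharp_splitting}.
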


The main idea behind the proof of Theorem \ref{intro_thm:sharp_splitting_maps} is that given a Ricci flow $(M,g(t))_{t\in (-2\delta^{-2},0]}$ and $x\in M$, we can a construct solution $w$ to the equation $\left(\frac{\partial}{\partial t} - \Delta_{g(t)} \right) w=-\frac{n}{2}$ which can be considered as a parabolic regularization of the function $|t|f_{(x,0)}$, defined from the conjugate heat kernel $(4\pi|t|)^{-n/2} e^{-f_{(x,0)}}$ based at $(x,0)$. 

It turns out that such function provides a good approximation of a soliton function, once $(M,g(t),x)_{t\in (-2\delta^{-2},0]}$ is $(0,\delta)$-selfsimilar. In fact we will obtain an estimate of the form
\begin{equation}\label{eqn:intro_w}
\int_M \left| |t| \ric_{g(t)} + \hess_{g(t)} w - \frac{g(t)}{2}\right|^2 d\nu_{(x,0),t} \leq C \left(\frac{|T|}{|t|}\right)^E(\mathcal W_x(1)-\mathcal W_x(2T)),
\end{equation}
for any $t\in [-T,0)$, as opposed to the estimate
$$\int_{-2T}^{-1} \int_M |t| \left| \ric_{g(t)} + \hess_{g(t)} f_{(x,0)} - \frac{g(t)}{2|t|}\right|^2 d\nu_{(x,0),t} =\mathcal W_x(1)-\mathcal W_x(2T),$$
we would obtain by simply applying the monotonicity formula of $\mathcal W_x$.

When $(M,g(t),p)_{t\in (-2\delta^{-2},0]}$ is $(k,\delta^2)$-selfsimilar, it will turn out that there are points $\{x_i\}_{i=0}^k$, satisfying the properties in the definition of the $k$-pinching entropy, for which we construct $\{w^i\}_{i=0}^k$ solving  $\left(\frac{\partial}{\partial t} - \Delta_{g(t)} \right) w^i=-\frac{n}{2}$. The functions $v^a=w^a-w^0$, $a=1,\ldots,k$ then solve the heat equation, and  they are a good starting point to construct a $(k,\varepsilon)$-splitting map $v=(v^1,\ldots,v^k)$, controlled by entropy $k$-pinching.

A technical issue is how to relate the integral estimates \eqref{eqn:intro_w} of each of the functions $w_i$, which are in terms of the conjugate heat kernels $u_{(x_i,0)}$, with integral estimates with respect to the conjugate heat kernel based at $(p,0)$. This is necessary in order to obtain an integral hessian estimate with respect to the conjugate heat kernel based at $(p,0)$, for each of the functions $v^a$. Unfortunately, conjugate heat kernels based at different points are not comparable, even in the static Euclidean Ricci flow in $\mathbb R^n$. Namely, it is not true that $u_{(p,0)}\leq C u_{(x_i,0)}$. We will see however that an estimate of the form $u_{(p,0)}\leq C e^{\alpha f_{(x_i,0)}} u_{(x_i,0)}$ does hold, provided that $\alpha\in(0,1)$ is close enough to $1$. This method of comparing conjugate heat kernel measures at different points originates in \cite{Bamler3}. Our situation however is simpler, since the upper and lower conjugate heat kernel bounds available in our setting can be utilized to prove such an estimate directly, in Lemma \ref{lemma:compare_kernels}.

Promoting then estimate \eqref{eqn:intro_w} for each of the functions $w^i$ to a stronger estimate of the form
$$\int_M \left| |t| \ric_{g(t)} + \hess_{g(t)} w^i - \frac{g(t)}{2}\right|^2 e^{\alpha f_{(x_i,0)}}d\nu_{(x_i,0),t} \leq C |t|^{-E}(\mathcal W_x(1)-\mathcal W_x(2T))$$
will be due to the hypercontractivity property of solutions to the heat equation under Ricci flow.

Hallgren in \cite{Hallgren_Kahler} constructs similar regularizations of conjugate heat kernel potentials, and manages to show proximity to the potential of a conjugate heat kernel without assuming any Type I curvature bounds. Both in Theorem \ref{intro_thm:sharp_splitting_maps} as well as in \cite{Hallgren_Kahler}, the regularizations are constructed by using the conjugate heat kernel potentials as an initial condition to construct solutions to a forward parabolic equation. However, in our case, we need to establish the stronger estimate \eqref{eqn:intro_w}. Such estimate, for now, requires controlling the global contribution of the curvature terms in the evolution of a Lichnerowicz heat equation, which is one of the reason a Type I curvature bound is needed for our argument.

Now that we have established the existence of an optimal splitting map at a certain scale, the following theorem is concerned with its behaviour in smaller scales.

\begin{theorem}\label{intro_thm:GTT}
Fix $\varepsilon>0$, $\eta>0$, $\mu\in(0,1/6)$. Let $(M^n,g(t),p)_{t\in [-10\delta^{-2},0]}$ be a smooth compact Ricci flow satisfying \begin{align*}
\max_M |\riem|(\cdot,t) &\leq \frac{C_I}{|t|},\\
\nu(g(-10\delta^{-2}),20\delta^{-2})&\geq -\Lambda,
\end{align*}
for every $t\in [-10\delta^{-2},0)$, and let $R\geq \frac{D'}{\mu}$, where $D'=D'(n,C_I,\Lambda)<+\infty$. 

Let $v$ be a $(k,\delta)$-splitting map $v$ around $p$ at scale $1$, and suppose that there are $r\in (0,1)$ and $1\leq k\leq n$ such that at every scale $s\in [r,1]$, there is $q_s\in B(p,-s^2,Rs)$ such that $(M,g(t),q_s)_{t\in (-2\delta^{-2},0)}$ is $(k,\delta^2)$-selfsimilar but not $(k+1,\eta)$-selfsimilar at scale $s$, and if $q_s\not = p$
$$\mathcal W_p(\delta s^2)-\mathcal W_p(\delta^{-1} s^2)<\delta.$$

If $0<\delta\leq\delta(n, C_I,\Lambda,\eta|R,\mu,\varepsilon)$, then for every $s\in [r,1]$ there is a lower triangular $k\times k$ matrix $T_s$ such that
\begin{enumerate}
\item $v_s:=T_s v$ is a $(k,\varepsilon)$-splitting map around $p$ at scale $s$ normalized so that for any $a,b=1,\ldots,k$
\begin{equation*}
\frac{4}{3s^2} \int_{-s^2}^{-s^2/4} \int_M \langle \nabla v_s^a,\nabla v_s^b\rangle d\nu_{(p,0)} dt = \delta^{ab}.
\end{equation*}
\item Let $r_j= \frac{1}{2^j}$. There is $\theta=\theta(n,C_I,\Lambda)\in(0,1)$ such that for every $s\in [r,1]$ and $a=1,\ldots,k$
\begin{align*}
 \int_{-s^2}^{-\frac{s^2}{4}} \int_M |\hess v_s^a|^2 d\nu_{(p,0)} dt \leq C(n,C_I,\Lambda,\eta|R,\mu)  \left(\sum_{s\leq r_j\leq 1} \left( \frac{s}{r_j} \right)^\theta \mathcal E^{(k,\mu,\delta,R)}_{r_j} (p) +\delta s^\theta\right).
\end{align*}
\end{enumerate}
\end{theorem}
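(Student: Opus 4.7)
The plan is to iteratively construct the lower triangular transformations $T_{r_j}$ at dyadic scales $r_j = 2^{-j}$ between $1$ and $r$, propagating control from each scale to the next via Gram--Schmidt normalization combined with Theorem \ref{intro_thm:sharp_splitting_maps}; for non-dyadic $s$ we set $T_s = T_{r_j}$ where $r_j \leq s < 2 r_j$. At the initial scale $r_0 = 1$, a Gram--Schmidt procedure applied to the gradient matrix $G^{ab} = \frac{4}{3}\int_{-1}^{-1/4}\int_M \langle \nabla v^a,\nabla v^b\rangle \, d\nu_{(p,0)}\, dt$ of the given $(k,\delta)$-splitting map $v$ produces the lower triangular $T_1$. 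Assuming inductively that $v_{r_j} = T_{r_j} v$ is a $(k,\varepsilon_j)$-splitting map at scale $r_j$ with the normalization in (1), I would apply Theorem \ref{intro_thm:sharp_splitting_maps} at scale $r_{j+1}$ around $p$ to obtain a sharp splitting map $\tilde v_{r_{j+1}}$ whose Hessian is linearly bounded by $\mathcal E^{(k,\mu,\delta,R)}_{r_{j+1}}(p)$. The $(k,\delta^2)$-selfsimilar hypothesis at scale $r_{j+1}$, combined with the non-$(k+1,\eta)$-selfsimilarity, forces the components of $v_{r_j}$ restricted to $[-r_{j+1}^2,0]$ and of $\tilde v_{r_{j+1}}$ to span the same $k$-dimensional almost-splitting subspace up to a lower triangular change of basis close to identity. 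Renormalizing $v_{r_j}$ at scale $r_{j+1}$ by Gram--Schmidt then yields $T_{r_{j+1}} = A_j T_{r_j}$ with $A_j$ lower triangular.

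The critical quantitative step is a one-step contraction for the Hessian at consecutive dyadic scales:
\begin{equation*}
H_{r_{j+1}} \leq (1-\theta') H_{r_j} + C \, \mathcal E^{(k,\mu,\delta,R)}_{r_{j+1}}(p),
\end{equation*}
where $H_s = \int_{-s^2}^{-s^2/4}\int_M |\hess v_s|^2 \, d\nu_{(p,0)}\, dt$ and $\theta' = \theta'(n,C_I,\Lambda) \in (0,1)$. The second term comes from the renormalization $A_j$ and is controlled by the sharp Hessian bound of $\tilde v_{r_{j+1}}$, hence by $\mathcal E^{(k,\mu,\delta,R)}_{r_{j+1}}(p)$. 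The contraction factor $1-\theta'$ is the heart of the argument: on one hand, a Lichnerowicz-type computation for $|\hess v|^2$ under the coupled heat equation and Ricci flow shows that $H_s$ is essentially monotone non-increasing under the Type I curvature bound, and on the other hand, the strict decay $\theta' > 0$ is extracted from the non-$(k+1,\eta)$-selfsimilarity, which prevents a large Hessian mode from persisting across scales. Iterating this recursion backwards from $r_0 = 1$ down to $s = r_{j^\ast}$ and using $H_1 \leq C \delta$ yields the claimed geometric sum with $\theta = -\log_2(1-\theta')$.

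The main obstacle will be establishing the one-step contraction with $\theta'$ uniform across scales, and in particular extracting the spectral gap from the non-$(k+1,\eta)$-selfsimilarity. A blow-up/compactness argument should suffice: if at some scale the Hessian failed to contract proportionally, a limiting sequence of rescaled flows would converge to a self-similar limit on which $v_{r_j}$ descends to a non-trivial harmonic splitting function whose gradient is orthogonal to the existing $k$ splitting directions, producing a $(k+1)$-selfsimilar structure and contradicting the hypothesis. A secondary technical difficulty is ensuring that the lower triangular matrices $A_j$ remain uniformly close to identity so that the composition $T_{r_j}$ does not become singular; this too reduces to the non-$(k+1,\eta)$-selfsimilar assumption, since degeneracy of $A_j$ in the limit would collapse $v_{r_j}$ to a lower-rank splitting, inconsistent with the almost-$k$-selfsimilarity of the flow at every scale. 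Finally, throughout the iteration one needs the comparison of conjugate heat kernel measures at the different points $x_i$ appearing in the definition of $\mathcal E^{(k,\mu,\delta,R)}_{r_j}(p)$ (in the spirit of Lemma \ref{lemma:compare_kernels} used for Theorem \ref{intro_thm:sharp_splitting_maps}) in order to transfer integral bounds to the base conjugate heat kernel centred at $(p,0)$.
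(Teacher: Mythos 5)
Your high-level strategy matches the paper's: you combine the sharp splitting map construction controlled by entropy pinching with a spectral-gap contraction and iterate across dyadic scales, tracking the transformations $T_s$ and controlling their accumulation. However, the central mechanism by which the one-step contraction is obtained is misattributed, and this is not a cosmetic issue — it is the hard part of the argument.

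First, the contraction is \emph{not} a property of $\|\hess v_s\|^2$ itself. The Lichnerowicz computation (Lemma \ref{lemma:Lich_type_I}) only shows that $|t|^E|\hess v|^2$ is a supersolution of the heat equation, where $E=E(n,C_I)$. Pairing with the conjugate heat kernel, this gives monotonicity of the \emph{weighted} Hessian, which translates into bounded growth (a factor $C(n,C_I)$) for the unweighted $\|\hess v_s\|^2$ from one dyadic scale to the next — not almost-monotonicity. The strict contraction comes entirely from the spectral gap of the drift Laplacian, via Lemma \ref{lemma:hessian_decay_solitons}, but it applies only to the \emph{non-linear part} $\tilde h$ of a heat solution $h$ — the part orthogonal in the $H^1_{\nu_{(p,0)}}$ sense to the span of the splitting directions. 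The key idea (Lemmas \ref{lemma:hessian_decay} and \ref{lemma:decay_pinching}) is to decompose each component $v_s^a = \tilde h + h_{\textrm{linear}}$ with $h_{\textrm{linear}}=\xi_a w^a$, where $w$ is a sharp splitting map at the next scale constructed via Theorem \ref{thm:sharp_splitting}. The non-linear part contracts by the spectral gap, while the Hessian of the linear part is bounded by $C\,\mathcal E^{(k,\mu,\delta,R)}_s(p)$ because the sharp splitting map has Hessian linearly controlled by entropy pinching. Your proposal attributes the $\mathcal E$ term to the Gram--Schmidt renormalization $A_j$; in fact $A_j$ is close to identity and only contributes a harmless bounded factor, while the $\mathcal E$ term is the Hessian of $h_{\textrm{linear}}$. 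Without the linear/non-linear decomposition one cannot see where the entropy pinching enters.

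Second, there is a latent circularity in your single inductive loop. To run the Hessian contraction at scale $r_{j+1}$ you need $v_{r_j}$ to already be a $(k,\varepsilon')$-splitting map at that smaller scale for a quantitatively small $\varepsilon'$; but that is precisely Part (1) of the theorem, which is logically prior. The paper separates these: Proposition \ref{prop:transformations} establishes, by a contradiction argument using the almost-linear-growth rigidity (Lemma \ref{lemma:growth_linear}), that for all $s\in[r,1]$ the normalized $T_sv$ are $(k,\varepsilon)$-splitting maps with controlled transformation ratios $\|T_{s_1}T_{s_2}^{-1}\|\le(s_2/s_1)^{C\sqrt\varepsilon}$; only afterwards are Lemmas \ref{lemma:hessian_decay} and \ref{lemma:decay_pinching} invoked to get the Hessian decay of Part (2). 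Finally, the spectral gap only gives an $m$-step (not one-step) contraction at dyadic scales, $\tau=4^{-m}$ depending on $n,C_I,\Lambda,\eta$, and the proof threads these together by writing $i = lm+k$; and for non-dyadic $s$ you cannot simply set $T_s = T_{r_j}$ because the normalization in Part (1) is scale-dependent and would then fail.
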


The proof of each part of Theorem \ref{intro_thm:GTT} is via contradiction, exploiting a spectral gap of the drift Laplacian on gradient shrinking Ricci solitons that split $k$ Euclidean factors, but are ``$\eta$-far" from splitting more than $k$ factors. In particular, the first non-trivial eigenvalue of the drift Laplacian appears with multiplicity exactly $k$, namely $\lambda_1=\cdots=\lambda_k=-\frac{1}{2}$ and there is a $\delta>0$ such that $\lambda_{i}\leq -\delta$ for every $i\geq k+1$, where $\delta$ depends on the curvature, the entropy of the soliton and $\eta$.   This spectral gap property leads to two consequences, Lemma \ref{lemma:growth_linear} and Lemma \ref{lemma:hessian_decay_solitons}. The first is a rigidity statement for solutions of the heat equation on an evolving gradient shrinking Ricci soliton, which forces solutions with slow growth as $t<<-1$ to be linear. This allows us to show that, should Assertion (1) of the theorem fail at some scale $s>r$, the maps $v_s$ become increasingly better splitting maps, which leads to a contradiction. The second asserts that the Hessian of a solution to the heat equation on an evolving shrinking Ricci soliton decays at a H\"older rate. This allows us to show that the Hessian of the ``non-linear" part of a solution to the heat equation decays at a H\"older rate, whereas the Hessian of the ``linear" part is controlled by entropy $k$-pinching, via Theorem \ref{thm:sharp_splitting}.

There are similarities between the proof of Theorem \ref{intro_thm:GTT} and the ``extend and improve" procedure exploited by Colding-Minicozzi in \cite{CM_singularities}, where they prove that generalized cylinders are isolated as gradient shrinking Ricci solitons, in the pointed smooth Cheeger-Gromov topology. In the setting of Theorem \ref{intro_thm:GTT}, ``extending" amounts to showing that an almost splitting map at some scale is still  an almost splitting map at some smaller scale (i.e. forward in time), with a possible loss in the estimates. On the other hand,  ``improving" amounts to that at the lowest scale of our contradiction argument we show that the splitting map is actually better than it is a priori assumed. Both here and in \cite{CM_singularities}, the mechanism at play is that a rough estimate outwards or backwards in time, in \cite{CM_singularities} and Theorem \ref{intro_thm:GTT} respectively, implies a better estimate at a slightly smaller scale, or forward in time. In our case, the rough estimate comes from assuming that $v$ is an almost splitting map at all scales larger than $s$, whereas in \cite{CM_singularities} by pseudolocality. Moreover, as in our case, the improvement mechanism in \cite{CM_singularities} relies on a spectral gap, but, unlike \cite{CM_singularities}, we don't need to impose any additional assumptions on the shrinking solitons involved - the spectral gap merely depends on the soliton being far from splitting an additional Euclidean factor.

A priori, the matrices $T_s$ in Theorem \ref{intro_thm:GTT} might satisfy $||T_s|| \rightarrow +\infty$, leading to the degeneration in small scales of the initial splitting map $v$. A situation such degeneration might develop is the following. Consider a rotationally symmetric Ricci flow on $\mathbb S^n$ forming a non-degenerate neckpinch singularity along a $n-1$-sphere, at $t=0$, with bounded diameter. An almost splitting map at a small scale around a singular point will be a solution to the heat equation with almost vanishing Hessian and gradient with norm $\approx 1$ along a neck of size $\approx \sqrt{|t|}$, that achieves its maximum/minimum values at the poles. One possibility, as the splitting map evolves under the heat equation and diffuses from one side of the neck-pinch to the other, is that the norm of the gradient in balls of radius $\approx \sqrt{|t|}$ around a singular point will decay to zero, forcing the corresponding linear transformations to blow up.

However, in Theorem \ref{intro_thm:non_degen} below, if the sum of the entropy $k$-pinching along all scales $s\in [r,1]$ is small, we can exploit the Hessian decay estimate of Theorem \ref{intro_thm:GTT} to show that in this case the transformations $T_s$ remain close to the identity at all scales $s\in [r,1]$, and thus a splitting map $v$ at a large scale remains a splitting map even at smaller scales.

\begin{theorem}\label{intro_thm:non_degen}
Fix $\varepsilon>0$, $\eta>0$, $\mu\in (0,1/6)$. Let $(M^n,g(t),p)_{t\in [-10\delta^{-2},0]}$ be a smooth compact Ricci flow satisfying
\begin{align*}
\max_M |\riem|(\cdot,t) &\leq \frac{C_I}{|t|},\\
\nu(g(-10\delta^{-2}),20\delta^{-2})&\geq -\Lambda
\end{align*}
for every $t\in [-10\delta^{-2},0]$, and let $R\geq \frac{D'}{\mu}$, where $D'=D'(n,C_I,\Lambda)<+\infty$. 

Let $v$ be a $(k,\delta)$-splitting map $v$ around $p$ at scale $1$, and suppose that there is $r\in (0,1)$ and $1\leq k\leq n$ such that at every scale $s\in [r,1]$, there is $q_s\in B(p,-s^2,Rs)$, such that $(M,g(t),q_s)_{t\in (-2\delta^{-2},0)}$ is $(k,\delta^2)$-selfsimilar but not $(k+1,\eta)$-selfsimilar at scale $s$, and if $q_s\not = p$
$$\mathcal W_p(\delta s^2)-\mathcal W_p(\delta^{-1} s^2)<\delta.$$

If for $r_j=2^{-j}$, $j\geq 0$ integer, we have
\begin{equation}\label{eqn:kpinch_sum}
\sum_{r \leq r_j\leq 1} \mathcal E^{(k,\mu,\delta,R)}_{r_j}(p) <\delta, 
\end{equation}
and $0<\delta\leq\delta(n,C_I,\Lambda,\eta|R,\mu,\varepsilon)$, then for every $s\in [r,1]$, $v: M\times [-s^2,0]\rightarrow \mathbb R^k$ is a $(k,\varepsilon)$-splitting map around $p$ at scale $s$.
\end{theorem}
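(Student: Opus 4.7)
The strategy is to apply Theorem \ref{intro_thm:GTT} at each dyadic scale $r_j = 2^{-j}$ with $r \leq r_j \leq 1$, producing lower triangular matrices $T_j := T_{r_j}$ with positive diagonal such that $v_j := T_j v$ is a $(k,\varepsilon_0)$-splitting map at scale $r_j$, normalized so that its time-averaged gradient Gramian over $[-r_j^2,-r_j^2/4]$ is the identity, and satisfying
\[ \int_{-r_j^2}^{-r_j^2/4}\!\!\int_M |\hess v_j^a|^2 \, d\nu_{(p,0),t}\,dt \leq H_j := C\left( \sum_{r_j\leq r_i\leq 1}\left(\frac{r_j}{r_i}\right)^\theta \mathcal E^{(k,\mu,\delta,R)}_{r_i}(p) + \delta r_j^\theta\right). \]
The goal is to show that under hypothesis \eqref{eqn:kpinch_sum} the telescoping product $T_\ell = A_{\ell-1}\cdots A_0 T_0$, with $A_j := T_{j+1}T_j^{-1}$, remains uniformly close to the identity; the splitting property then transfers from $v_s$ back to $v$ at every scale $s\in [r,1]$, provided $\varepsilon_0$ is chosen small enough in terms of $\varepsilon$.

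From $v_{j+1} = A_j v_j$ and the normalization at scale $r_{j+1}$, one deduces $A_jA_j^T = \mathcal G_{j+1}(v_j)^{-1}$, where $\mathcal G_s(w)$ denotes the time-averaged gradient Gramian of $w$ over $[-s^2,-s^2/4]$. Since $\mathcal G_{r_j}(v_j) = I$, the standard heat-equation identity
\[ \frac{d}{dt}\int_M \langle \nabla v_j^a,\nabla v_j^b\rangle \, d\nu_{(p,0),t} = -2\int_M \langle \hess v_j^a,\hess v_j^b\rangle \, d\nu_{(p,0),t} \]
bounds $|\mathcal G_{j+1}(v_j) - I|$ by a constant multiple of the time integral of $|\hess v_j|^2$ over $[-r_j^2, -r_j^2/16]$, which in turn is controlled by $H_j + \|A_j^{-1}\|^2 H_{j+1}$, using $\hess v_j = A_j^{-1}\hess v_{j+1}$ on $[-r_{j+1}^2,-r_{j+1}^2/4]$. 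Uniqueness and Lipschitz dependence of Cholesky factorization near the identity then give $\|A_j - I\| \leq C \|A_jA_j^T - I\| \leq C(H_j + H_{j+1})$, whenever the right hand side is small.

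Summing $H_j$ over dyadic scales and swapping the order of summation,
\[ \sum_{r\leq r_j\leq 1} H_j \leq C\sum_i \mathcal E^{(k,\mu,\delta,R)}_{r_i}(p)\sum_{j\leq i} 2^{-(i-j)\theta} + C\delta \leq C\sum_i \mathcal E^{(k,\mu,\delta,R)}_{r_i}(p) + C\delta \leq C\delta, \]
using hypothesis \eqref{eqn:kpinch_sum} in the last step. Therefore $\sum_j \|A_j - I\| \leq C\delta$, and the telescoping product stays within $e^{C\delta}$ of the identity for all $\ell$. The initial bound $\|T_0 - I\| \leq C\delta$ holds because $v$ itself is a $(k,\delta)$-splitting map at scale $1$, so $\mathcal G_{r_0}(v)$ differs from $I$ by $O(\delta)$. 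An induction on $j$ assuming $\|T_j\|, \|T_j^{-1}\| \leq 2$, which justifies $\|A_j^{-1}\| \leq 4$ in the previous step, propagates the same bound to $T_{j+1}$ via smallness of the partial sums, and yields $\|T_s - I\| \leq C\delta$ for all $s\in [r,1]$.

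Writing $v = T_s^{-1} v_s$ with $v_s$ a $(k,\varepsilon_0)$-splitting map and $\|T_s^{-1} - I\| \leq C\delta$, the $L^2$ Hessian and gradient Gramian bounds of Definition \ref{def:splitting_map_intro} for $v$ at scale $s$ follow by direct substitution, completing the proof once $\delta$ and $\varepsilon_0$ are small enough in terms of $\varepsilon$. The main technical obstacle is the circularity in the key comparison step: the bound $\|A_j - I\| \leq C(H_j + H_{j+1})$ depends on $\|A_j^{-1}\|$ being uniformly controlled, but $\|A_j^{-1}\|$ is itself an output of the argument. This is resolved by the induction above, which crucially exploits that, under hypothesis \eqref{eqn:kpinch_sum}, the total telescoping perturbation $\sum_j \|A_j - I\|$ remains bounded in terms of $\delta$ alone, independently of the number of dyadic scales between $r$ and $1$.
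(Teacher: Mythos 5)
Your overall strategy is the same as the paper's: apply the transformation theorem at every dyadic scale, identify the transition matrix $A_j=T_{r_{j+1}}T_{r_j}^{-1}$ through the Cholesky/Gram--Schmidt uniqueness of the normalization, bound $||A_j-I_k||$ by the Hessian integral of $v_j$ over $[-r_j^2,-r_{j+2}^2]$, telescope the product, and use the summability hypothesis \eqref{eqn:kpinch_sum} together with the swap of summation to keep the product near the identity; this is exactly the scheme of the paper's proof of Theorem \ref{thm:non_degen}.

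There is, however, one genuine gap in the way you resolve the circularity you correctly identify. Your induction hypothesis controls $||T_j||$ and $||T_j^{-1}||$, but the quantity you need at that stage is $||A_j^{-1}||=||T_jT_{j+1}^{-1}||$, which involves $T_{j+1}$ --- precisely the matrix you are trying to estimate --- so the induction does not close as written (and since the scales are discrete there is no continuity argument to fall back on). The fix does not require any induction: because $v_j$ is a $(k,\varepsilon_0)$-splitting map at scale $r_j$, its Hessian is already controlled (by $\varepsilon_0$, not by $H_j$) on the whole time interval $[-r_j^2,-\varepsilon_0 r_j^2]\supset[-r_j^2,-r_{j+2}^2]$, and its gradient Gramian deviates from $\delta^{ab}$ by $C\sqrt{\varepsilon_0}$ after restricting to $[-r_{j+1}^2,-r_{j+1}^2/4]$; Cholesky uniqueness then gives the a priori bound $||A_j-I_k||\leq C(n)\sqrt{\varepsilon_0}$ for \emph{every} $j$, independently of the pinching hypothesis. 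This crude bound (which is Assertion (3) of Proposition \ref{prop:transformations}, obtained via Lemmas \ref{lemma:near_splitting} and \ref{lemma:normalize_splitting}) legitimizes the conversion $\hess v_j = A_j^{-1}\hess v_{j+1}$ and only afterwards do you refine to $||A_j-I_k||\leq C(H_j+H_{j+1})$ and sum. Two smaller points: the relation should read $\mathcal G_{j+1}(v_j)=A_j^{-1}(A_j^{-1})^{T}$ (equivalently $\mathcal G_{j+1}(v_j)^{-1}=A_j^{T}A_j$), not $A_jA_j^{T}=\mathcal G_{j+1}(v_j)^{-1}$, though the Lipschitz dependence of the triangular factor near the identity is unaffected; and at non-dyadic scales $s\in(r_{j+1},r_j)$ the comparison of $T_s$ with $T_{r_j}$ via the normalization lemma only yields closeness of order $\sqrt{\varepsilon_0}$, not $C\delta$, which is still sufficient for the conclusion once $\varepsilon_0$ is chosen small in terms of $\varepsilon$.
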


An extrinsic analogy for Theorems \ref{intro_thm:sharp_splitting_maps},  \ref{intro_thm:GTT} and \ref{intro_thm:non_degen}, for the mean curvature flow, would be the following. Suppose that a mean curvature flow is close to a shrinking cylinder mean curvature flow at some scale. In this situation, Theorem \ref{intro_thm:sharp_splitting_maps} would correspond to finding the axis of a cylinder that best approximates the flow at this scale. Theorem \ref{intro_thm:GTT} can then be compared with being able to adjust the axes of the optimally approximating cylinders at each scale, while being able to control the accumulation of the total error. Finally,  Theorem \ref{intro_thm:non_degen} would mean that we can choose a fixed cylindrical mean curvature flow that approximates the mean curvature flow around a singular point down to arbitrarily small scales. 

Indeed, for the mean curvature flow this is exactly the behaviour one expects from the Lojasiewicz inequality of Colding--Minicozzi in  \cite{CM_uniqueness_MCF}, which in fact establishes the uniqueness of tangent flows at a point where the mean curvature flow exhibits a cylindrical singularity. There, the distance from the optimal cylinder and the evolution of the hypersurface are controlled in terms of Huisken's local monotone quantity for the mean curvature flow, as opposed to the entropy $k$-pinching here.  The monotonicity of Huisken's quantity immediately gives the required summability that leads to a control of the flow down to arbitrarily small scales. Unfortunately, in our case, the entropy $k$-pinching is not monotone with respect to scale, and thus it is not clear whether condition \eqref{eqn:kpinch_sum} always holds below some scale. However, we will see in forthcoming work that condition \eqref{eqn:kpinch_sum} \textit{does} hold around enough points to enable us  to transfer large scale information down to arbitrarily small scales, leading to a result similar to the neck structure theorem for non-collapsed Ricci limit spaces in \cite{CJN}.

\subsection{Structure of the paper}
In Section \ref{sec:preliminaries} we review some preliminary facts about the heat and conjugate heat equation on a Ricci flow, Perelman's entropy and gradient shrinking Ricci solitons.

In Section \ref{sec:apriori} we describe the set of a priori assumptions under which the results of this paper hold, and present some of their basic consequences that will be used throughout the paper. Then we discuss why these a priori assumptions hold in the setting of Ricci flows satisfying a Type I bound on the curvature.

In Section \ref{sec:selfsimilar} we set up a theory for selfsimilar Ricci flows. We introduce a notion of a \textit{spine} for such Ricci flows and investigate properties of the spine when a selfsimilar Ricci flow arises as a limit of Ricci flows.

In Section \ref{sec:heat_rf} we prove some technical estimates for the heat equation under a Ricci flow.  

In Section \ref{sec:heat_gsrs} we prove a spectral gap for the drift Laplacian on a gradient shrinking Ricci soliton, and investigate consequences in the behaviour of solutions of the heat equation on a Ricci flow induced by a gradient shrinking Ricci soliton. 

In Section \ref{sec:splitting_maps} we establish some preliminary properties of splitting maps.  

In Section \ref{sec:construction} we construct sharp splitting maps controlled by the entropy $k$-pinching and prove Theorem \ref{intro_thm:sharp_splitting_maps}.

In Section \ref{sec:GTT} we state and prove Theorem \ref{intro_thm:GTT}.

In Section \ref{sec:non_degeneration} we state and prove the non-degeneration Theorem \ref{intro_thm:non_degen}.

\section{Preliminaries} \label{sec:preliminaries}

\subsection{The heat and conjugate heat equation}  Let $(M,g(t))_{t\in I}$ be smooth compact Ricci flow and let $v,u\in C^\infty(M\times I)$. Integration by parts then gives that
\begin{equation}\label{eqn:integral_product}
\frac{d}{dt} \int_M v u d\vol_{g(t)} =\int_M  \left(\frac{\partial}{\partial t} -\Delta_{g(t)}\right) v  u d\vol_{g(t)} + \int_M v \left(\frac{\partial}{\partial t} +\Delta_{g(t)} - R\right) u d\vol_{g(t)}.
\end{equation}
In particular, if $v,u$ evolve by the heat equation 
\begin{equation}\label{eqn:heat_equation_prelim}
\left(\frac{\partial}{\partial t} -\Delta_{g(t)}\right) v =0,
\end{equation}
and the conjugate heat equation
\begin{equation}\label{eqn:conj_heat_equation_prelim}
\left(\frac{\partial}{\partial t} +\Delta_{g(t)}-R_{g(t)}\right) u =0
\end{equation}
respectively, then
\begin{equation*}
\frac{d}{dt} \int_M v u d\vol_{g(t)}=0.
\end{equation*}
Similarly, integrating \eqref{eqn:conj_heat_equation_prelim} we obtain that
\begin{equation}\label{eqn:chf_constant}
\frac{d}{dt} \int_M u d\vol_{g(t)} = 0.
\end{equation}

Moreover, a direct compuation shows that $u(\cdot,t)=(4\pi |t_0-t|^2) e^{-f(\cdot,t)}$, $t<t_0$, is a positive solution to the conjugate heat equation if and only if
\begin{equation}\label{eqn:f_che}
\frac{\partial f}{\partial t} = -\Delta_{g(t)} f + |\nabla^{g(t)} f|^2 - R_{g(t)} + \frac{n}{2|t_0-t|}.
\end{equation}

For every $(x,t)\in M\times I$ there is a unique positive solution to the conjugate heat equation $u_{(x,t)}(y,s)$, for $s<t$, such that for any smooth function $\varphi$
\begin{equation}\label{eqn:chk_dirac}
\lim_{s\rightarrow t} \int_M \varphi (y) u_{(x,t)}(y,s) d\vol_{g(s)}(y) = \varphi(x),
\end{equation}
see for instance \cite{RFpartIII}. Moreover, by \eqref{eqn:chf_constant}, it follows that
$$\int_M u_{(x,t)} (y,s)d\vol_{g(s)}(y)=1,$$ for every $s<t$. Thus, $d\nu_{(x,t),s} = u_{(x,t)}(\cdot,s) d\vol_{g(s)}$ defines a probability measure $\nu_{(x,t),s}$ on $M$, for every $s<t$.
We will call $u_{(x,t)}$ the conjugate heat kernel based at $(x,t)$ and the family  $\nu_{(x,t)}=(\nu_{(x,t),s})_{s<t}$ the conjugate heat kernel flow based at $(x,t)$.  

A direct consequence of \eqref{eqn:integral_product} and \eqref{eqn:chk_dirac} is that if $v$ satisfies $\left(\frac{\partial}{\partial t} -\Delta_{g(t)}\right) v=F$, for some  function $F$ in $M\times I$ then for every $(x,t)\in M\times I$ and $s<t$
\begin{equation}\label{eqn:duhamel}
v(x,t) = \int_M v(y,s) d\nu_{(x,t),s} + \int_s^t \int_M F(y,u) d\nu_{(x,t),u} du
\end{equation}

The conjugate heat kernel under Ricci flow satisfies the following Poincar\'e inequality, due to Hein--Naber \cite{HeinNaber}.

\begin{theorem}[Theorem 1.10 in \cite{HeinNaber}]\label{thm:poincare}
Let $(M,g(t))_{t\in I}$, $\max I=0$, be a smooth Ricci flow on a compact manifold and $p\in M$. Then, for every $s\in I\cap (-\infty,0)$ and $\varphi\in C^\infty(M)$ with $\int_M \varphi d\nu_{(p,0),s} =0$,
\begin{equation*}
\int_M \varphi^2 d\nu_{(p,0),s} \leq 2|s| \int_M |\nabla \varphi|^2 d\nu_{(p,0),s}.
\end{equation*}
\end{theorem}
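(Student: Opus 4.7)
Given $\varphi\in C^\infty(M)$ with $\int_M\varphi\, d\nu_{(p,0),s}=0$, my plan is to run a Bakry--\'Emery style evolution argument along the Ricci flow, propagating $\varphi$ forward in time by the heat equation and exploiting the exact cancellation between the Ricci term arising from $\frac{\partial}{\partial t}|\nabla v|^2$ and the Ricci term in the Bochner formula. Explicitly, I would first solve $\bigl(\frac{\partial}{\partial t}-\Delta_{g(t)}\bigr)v=0$ on $M\times[s,0]$ with initial condition $v(\cdot,s)=\varphi$, and then monitor the two functionals
$$H(t):=\int_M v^2\, d\nu_{(p,0),t},\qquad G(t):=\int_M |\nabla v|^2\, d\nu_{(p,0),t},$$
for $t\in[s,0)$, aiming to derive $H(s)\le 2|s|\,G(s)$.

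Applying \eqref{eqn:integral_product} with $u=u_{(p,0)}$ -- so that its second term vanishes -- together with the pointwise identity $\bigl(\frac{\partial}{\partial t}-\Delta\bigr)v^2=-2|\nabla v|^2$ gives $H'(t)=-2G(t)$. For $G$, the crucial computation on Ricci flow is $\bigl(\frac{\partial}{\partial t}-\Delta\bigr)|\nabla v|^2=-2|\hess v|^2$: from $\partial_t g^{ij}=2R^{ij}$ and the heat equation one has $\frac{\partial}{\partial t}|\nabla v|^2=2\ric(\nabla v,\nabla v)+2\langle\nabla v,\nabla\Delta v\rangle$, while Bochner gives $\Delta|\nabla v|^2=2|\hess v|^2+2\langle\nabla v,\nabla\Delta v\rangle+2\ric(\nabla v,\nabla v)$, so the Ricci terms cancel exactly. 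A second use of \eqref{eqn:integral_product} then yields $G'(t)=-2\int_M|\hess v|^2\, d\nu_{(p,0),t}\le 0$, i.e.\ $G$ is non-increasing.

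It remains to establish $\lim_{t\to 0^-}H(t)=0$. By \eqref{eqn:duhamel}, the map $t\mapsto\int_M v\, d\nu_{(p,0),t}$ is constant, equal to $0$ at $t=s$ by hypothesis and to $v(p,0)$ in the limit via \eqref{eqn:chk_dirac}; hence $v(p,0)=0$, and applying \eqref{eqn:chk_dirac} once more with $v^2$ in place of $\varphi$ yields $H(t)\to v(p,0)^2=0$. Integrating $H'=-2G$ from $s$ to $0$ and using the monotonicity of $G$,
$$\int_M\varphi^2\, d\nu_{(p,0),s}=H(s)=2\int_s^0 G(t)\, dt\le 2|s|\,G(s)=2|s|\int_M|\nabla\varphi|^2\, d\nu_{(p,0),s},$$
which is the claimed inequality. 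The only nontrivial technicality is the passage to the limit $H(t)\to 0$, which follows from smoothness and uniform boundedness of $v$ on the compact spacetime slab together with the Dirac convergence $\nu_{(p,0),t}\rightharpoonup\delta_p$; the algebraic heart of the proof is the exact Ricci cancellation in the evolution of $|\nabla v|^2$, which is precisely what makes the Ricci flow the natural evolution for this sharp Poincar\'e inequality with constant $2|s|$.
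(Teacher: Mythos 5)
Your argument is correct. Solving the heat equation forward from $t=s$, pairing against the conjugate heat kernel so that \eqref{eqn:integral_product} kills the second term, and using the exact Ricci cancellation $\bigl(\tfrac{\partial}{\partial t}-\Delta\bigr)|\nabla v|^2=-2|\hess v|^2$ to show that $G(t)=\int_M|\nabla v|^2\,d\nu_{(p,0),t}$ is non-increasing, then integrating $H'=-2G$ with $H(0^-)=v(p,0)^2=0$ (which itself follows from the Duhamel representation \eqref{eqn:duhamel} and the hypothesis $\int_M\varphi\,d\nu_{(p,0),s}=0$) is a complete and clean derivation of the stated bound. Note that the paper does not give its own proof of this statement --- it simply invokes Theorem~1.10 of Hein--Naber \cite{HeinNaber} --- and your heat-flow interpolation is precisely the Bakry--\'Emery-type argument used there; one minor cosmetic remark is that in the final passage to the limit it is cleaner to quote \eqref{eqn:duhamel} for both $v$ and $v^2$ directly (as you partially do), since this sidesteps any quibble about applying \eqref{eqn:chk_dirac} to a time-dependent test function.
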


\subsection{Entropy and no local collapsing}

\begin{definition}\label{def:W_entropy}
Given a complete Riemannian manifold $(M^n,g)$ with bounded curvature, $f\in C^\infty(M)$ and $\tau>0$ define
\begin{equation}
\mathcal W(g,f,\tau)=\int_M \left(\tau (R+|\nabla f|^2) + f-n\right) u d\vol_g,
\end{equation}
where $u=(4\pi \tau)^{-n/2}e^{-f}$. 

We also define
\begin{align*}
\mu(g,\tau)&=\inf\left\{ \mathcal W(g,f,\tau), f\in C^\infty(M) \quad \textrm{with} \quad\int_M (4\pi \tau)^{-n/2} e^{-f} d\vol_g=1\right\},\\
\nu(g,\tau) &= \inf \{ \mu(g,\tau'), 0<\tau'\leq \tau\}.
\end{align*}
\end{definition}
Let $(M,g(t))_{t\in I}$ be a Ricci flow on a closed manifold, $\max I =0$, and suppose that $u=(4\pi |t|)^{-n/2} e^{-f}$ evolves by the conjugate heat equation along $g(t)$. Then, by \cite{Perelman1}, for every $t\in I \cap \{t<0\}$
\begin{equation}\label{eqn:monotonicity_formula}
\frac{d}{dt} \mathcal W(g(t),f(t),|t|) = 2|t| \int_M \left| \ric_{g(t)} + \hess_{g(t)} f(\cdot,t) - \frac{g(t)}{2|t|} \right|^2 u(\cdot,t) d\vol_{g(t)} \geq 0.
\end{equation}
From this it follows that $\mu$ and $\nu$ are non-decreasing along Ricci flow, in the sense that for any $\tau>0$ and $t_1<t_2$
\begin{align*}
\mu(g(t_1),\tau+t_2-t_1)\leq \mu(g(t_2),\tau),\\
\nu(g(t_1),\tau+t_2-t_1)\leq \nu(g(t_2),\tau).
\end{align*}

\begin{definition}\label{def:p_entropy}
Let $(M,g(t))_{t\in I}$, $\max I=0$, be a smooth Ricci flow and $p\in M$. For any $\tau>0$ such that $-\tau\in I$, we define
 \begin{equation*}
\mathcal W_p (\tau) = \mathcal W(g(-\tau),f(\cdot,-\tau),\tau),
\end{equation*}
where  the function $u_{(p,0)}(x,t)=(4\pi |t|)^{-n/2} e^{-f(x,t)}$ is the backwards conjugate heat kernel starting at $(p,0)$.
\end{definition}

In the setting of Definition \ref{def:p_entropy}, \eqref{eqn:monotonicity_formula} implies that the function $\tau\mapsto \mathcal W_p(\tau)$ is monotone decreasing and
\begin{equation}\label{eqn:pointed_monotonicity}
\frac{d}{d t} \mathcal W_p(|t|) = -2|t|\int_M \left| \ric_{g(t)} + \hess_{g(t)} f(\cdot,t) - \frac{g(t)}{2|t|} \right|^2 d\nu_{(p,0),t}.
\end{equation}
Moreover, $\mathcal W_p(\tau)\leq 0$, by Perelman's differential Harnack inequality \cite{Perelman1}.

\begin{theorem}\label{thm:entropy_noncollapsing}
Let $(M^n,g)$ be a compact Riemannian manifold and let $0<s\leq r$. Suppose that $R\leq r^{-2}$ inside the ball $B(p,r)$. Then 
\begin{equation}
\frac{\vol_g(B(p,s))}{s^n} \geq C(n) e^{\nu(g,r^2)}.
\end{equation}
\end{theorem}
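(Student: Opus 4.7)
My plan is to apply Perelman's variational characterization of $\mu(g,s^2)$ combined with the monotonicity of $\nu$ in its second argument, by constructing a test function concentrated in $B(p,s)$. Since $\nu(g,\tau)$ is non-increasing in $\tau$ (inspecting its defining infimum over $(0,\tau]$), the hypothesis $s\leq r$ gives $\nu(g,s^2)\geq\nu(g,r^2)$, and by definition $\mu(g,s^2)\geq\nu(g,s^2)$. Thus it suffices to produce a smooth $f_0:M\to\mathbb{R}$ satisfying the normalization $\int_M(4\pi s^2)^{-n/2}e^{-f_0}\de\vol_g=1$ and
\[
\mathcal{W}(g,f_0,s^2)\leq \log\bigl(\vol_g(B(p,s))/s^n\bigr)+C(n),
\]
so that exponentiation of the resulting inequality $\nu(g,r^2)\leq \mathcal{W}(g,f_0,s^2)$ yields the claim.

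For the construction, I take a standard cutoff $\chi:[0,\infty)\to[0,1]$ with $\chi\equiv 1$ on $[0,1/2]$, $\chi\equiv 0$ on $[1,\infty)$, and $|\chi'|\leq 4$, then set $\phi(x)=\chi(d_g(p,x)/s)$ (smoothed near the cut locus of $p$ if needed). Put $L:=\int_M\phi^2\,\de\vol_g$ and define $f_0$ by $(4\pi s^2)^{-n/2}e^{-f_0}=\phi^2/L=:u$, so that $u$ is a probability density supported in $B(p,s)$ and $f_0=\log L-(n/2)\log(4\pi s^2)-2\log\phi$ where $\phi>0$. The three contributions to $\mathcal{W}(g,f_0,s^2)$ are then estimated routinely: the scalar term is bounded by $1$ since $s^2R\leq s^2/r^2\leq 1$ on $\mathrm{supp}(u)\subset B(p,s)\subset B(p,r)$; the gradient term satisfies $|\nabla f_0|^2 u = 4|\nabla\phi|^2/L$ and, using $|\nabla\phi|\leq 4/s$ supported in $B(p,s)$, gives $\int s^2|\nabla f_0|^2 u\,\de\vol_g\leq C(n)\vol(B(p,s))/L$; and a direct expansion yields $\int f_0 u\,\de\vol_g=\log(L/s^n)+C(n)+(2/L)\int(-\phi^2\log\phi)\,\de\vol_g$, with the last piece bounded by $C(n)\vol(B(p,s))/L$ since $-t^2\log t$ is uniformly bounded on $(0,1]$. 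Combining these with the trivial bound $L\leq\vol(B(p,s))$ in the log term yields
\[
\mathcal{W}(g,f_0,s^2)\leq \log\bigl(\vol(B(p,s))/s^n\bigr)+C(n)\bigl(1+\vol(B(p,s))/L\bigr).
\]

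I expect the main obstacle to be the residual factor $\vol(B(p,s))/L$. Since $\phi\equiv 1$ on $B(p,s/2)$ one has $L\geq \vol(B(p,s/2))$, so the factor is at most the local doubling ratio $\vol(B(p,s))/\vol(B(p,s/2))$, which is \emph{not} controlled by a scalar curvature bound alone. I would resolve this by a scale-selection step: rather than working at scale $s$ directly, I would exploit the continuous monotone function $t\mapsto \vol(B(p,t))$ on $[s/2,s]$ to select a subscale $s'\in[s/2,s]$ at which a dimensional local doubling bound holds, apply the entropy argument at $s'$, and then transfer the resulting lower volume bound back to scale $s$ via $\vol(B(p,s))\geq \vol(B(p,s'))$, absorbing the loss $(s/s')^n\leq 2^n$ into the universal constant $C(n)$. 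This technical step is the delicate point of the argument; the rest of the proof is a direct variational computation.
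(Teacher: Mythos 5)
Your overall strategy is the right one and matches the standard argument: take a test function supported in $B(p,s)$, estimate $\mathcal W(g,f_0,s^2)$ term by term, and compare with $\nu(g,r^2)$ via the monotonicity $\nu(g,r^2)\leq\nu(g,s^2)\leq\mu(g,s^2)\leq\mathcal W(g,f_0,s^2)$. The estimates for the three terms are correct, and you correctly identify that the only obstruction is the factor $\vol(B(p,s))/L$, which is bounded by a local doubling ratio that the hypotheses do not control.

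The gap is in the scale-selection step you propose to fix this. There is no reason a dimensional doubling bound $\vol(B(p,s'))/\vol(B(p,s'/2))\leq C(n)$ should hold for \emph{some} $s'\in[s/2,s]$: the hypothesis $R\leq r^{-2}$ is only an \emph{upper} bound on scalar curvature, so there is no Bishop--Gromov control, and the volume of geodesic balls about $p$ can grow super-polynomially across the entire window $[s/4,s]$ (e.g., strongly negatively curved regions, or a thin tube near $p$ opening up at scale $\approx s/2$ into a fat region), making $\vol(B(p,t))/\vol(B(p,t/2))$ arbitrarily large for \emph{every} $t\in[s/2,s]$. Restricting the search to $[s/2,s]$ therefore cannot work in general. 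The correct fix, which is Perelman's, lets the selected scale descend through \emph{all} dyadic subscales $\rho_j=2^{-j}s$: if $\vol(B(p,\rho_{j+1}))<2^{-(n+2)}\vol(B(p,\rho_j))$ at every step, then the density $\vol(B(p,\rho_j))/\rho_j^n$ decays by a factor of $4$ each step and hence tends to $0$, contradicting the fact that on a smooth Riemannian manifold $\vol(B(p,\rho))/\rho^n\to\omega_n$ as $\rho\to 0$. Thus there is a first $J$ at which the doubling bound $\vol(B(p,\rho_{J}))\leq 2^{n+2}\vol(B(p,\rho_{J+1}))$ holds, and by the geometric decay up to that stage one also has $\vol(B(p,\rho_J))/\rho_J^n\leq\vol(B(p,s))/s^n$. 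Running your variational computation at scale $\rho_J\leq s\leq r$ (note $R\leq r^{-2}\leq\rho_J^{-2}$ still holds on $B(p,\rho_J)$) gives $\nu(g,r^2)\leq\mu(g,\rho_J^2)\leq\log\bigl(\vol(B(p,\rho_J))/\rho_J^n\bigr)+C(n)\leq\log\bigl(\vol(B(p,s))/s^n\bigr)+C(n)$, and exponentiating finishes the proof. The essential point you are missing is that the good scale may need to be much smaller than $s/2$, and it is the smoothness of $(M,g)$ at $p$ (not any doubling within a fixed window) that guarantees the iteration terminates.
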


\begin{definition}
We say that $(M^n,g)$ is $\kappa$-noncollapsed below scale $r>0$ if for every $x\in M$ such that $R\leq r^{-2}$ in $B(x,r)$ we have
$$\frac{\vol(B(x,s))}{s^n} \geq \kappa,$$
for every $0<s\leq r$.
\end{definition}

\begin{corollary}
Let $(M,g)$ is a compact Riemannian manifold such that $\nu(g,r^2)\geq -\Lambda$. Then there is $\kappa=\kappa(n,\Lambda)$ such that $(M,g)$ is $\kappa$-noncollapsed below scale $r$.
\end{corollary}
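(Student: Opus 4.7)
The statement is essentially a direct repackaging of Theorem \ref{thm:entropy_noncollapsing} with the entropy lower bound substituted in. The plan is as follows.

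First, I would fix an arbitrary point $x \in M$ satisfying the hypothesis $R \leq r^{-2}$ on $B(x,r)$, together with an arbitrary radius $s \in (0,r]$. These are exactly the data required to apply Theorem \ref{thm:entropy_noncollapsing} at the center $x$ and radius $s$, so the theorem yields
\begin{equation*}
\frac{\vol_g(B(x,s))}{s^n} \geq C(n)\, e^{\nu(g,r^2)}.
\end{equation*}

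Next, I would use the assumed entropy bound $\nu(g,r^2) \geq -\Lambda$. Since $t \mapsto e^t$ is monotone increasing, this gives
\begin{equation*}
C(n)\, e^{\nu(g,r^2)} \geq C(n)\, e^{-\Lambda}.
\end{equation*}
Setting $\kappa(n,\Lambda) := C(n)\, e^{-\Lambda}$, which depends only on $n$ and $\Lambda$, we conclude that $\vol_g(B(x,s))/s^n \geq \kappa$. Since $x$ and $s$ were arbitrary subject only to the scalar curvature condition on $B(x,r)$ and $s \in (0,r]$, this matches precisely the definition of $\kappa$-noncollapsing below scale $r$.

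There is no real obstacle here: the only inputs are the monotonicity of the exponential and the direct quotation of Theorem \ref{thm:entropy_noncollapsing}. The one small point worth flagging in the write-up is that $\kappa$ must be independent of the particular point $x$ and scale $s$, which is automatic because $\nu(g,r^2)$ is a global invariant of $(M,g)$ and does not depend on the choice of $x$ or $s$.
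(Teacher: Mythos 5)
Your proof is correct, and it is the same straightforward application of Theorem \ref{thm:entropy_noncollapsing} that the paper intends (the paper omits the proof precisely because it is this immediate). Nothing to add.
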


\subsection{Gradient shrinking Ricci solitons}\label{subsection:gsrs}
\begin{definition}
Let $(M,g)$ be a complete Riemannian manifold, $\tau>0$ and $f\in C^\infty(M)$ such that 
\begin{equation}\label{def:gsrs}
\ric_g +\hess_g f = \frac{g}{2\tau}.
\end{equation}
Then $(M,g,f)$ is called a gradient shrinking Ricci soliton at scale $\tau>0$. We will refer to such $f$ as a soliton function at scale $\tau$.
\end{definition}

\begin{proposition}\label{prop:soliton_identities}
Let $(M^n,g,f)$ be a gradient shrinking soliton at scale $\tau>0$. Then,
\begin{align}
R+\Delta f &=\frac{n}{2\tau}\label{eqn:traced_soliton_eqn},\\
\tau ( R +|\nabla f|^2)-f &= c,\quad \textrm{for some $c\in \mathbb R$}, \label{eqn:soliton_constant}\\
\ric(\nabla f) &=\frac{1}{2}\nabla R,\\
R&\geq 0.
\end{align}
\end{proposition}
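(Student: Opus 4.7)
The plan is to derive each of the four identities directly from the defining equation $\ric + \hess f = g/(2\tau)$ via standard Ricci calculus. I would prove them in the order (1), (3), (2), (4), since the computation for (2) will make use of (3), and (4) is qualitatively different and the deepest of the four.

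Identity (1) is immediate: taking the trace of the soliton equation with respect to $g$ yields $R + \Delta f = n/(2\tau)$.

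For (3), I would take the divergence of the soliton equation. Using the contracted second Bianchi identity $2\nabla^j R_{ij} = \nabla_i R$ together with the Ricci commutation formula $\nabla^j\nabla_i\nabla_j f = \nabla_i\Delta f + R_{ij}\nabla^j f$, and then substituting $\nabla \Delta f = -\nabla R$ from (1), one obtains $\nabla_i R = 2 R_{ij}\nabla^j f$, which is identity (3).

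For (2), I would verify that the gradient of $\tau(R+|\nabla f|^2) - f$ vanishes identically. A direct computation using $\nabla_i |\nabla f|^2 = 2(\hess f)_{ij}\nabla^j f$ together with the soliton equation gives $\tau \nabla_i|\nabla f|^2 = \nabla_i f - 2\tau R_{ij}\nabla^j f$. Substituting (3) into the last term yields $\tau \nabla_i R + \tau \nabla_i|\nabla f|^2 - \nabla_i f = 0$, so $\tau(R+|\nabla f|^2)-f$ is locally constant, and hence constant by connectedness of $M$.

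The main obstacle is (4). Unlike the other three identities, this is not purely algebraic and on a complete, possibly non-compact, manifold it requires the full strength of Chen's theorem on non-negativity of scalar curvature for complete ancient Ricci flows. I would associate to $(M,g,f)$ the self-similar ancient Ricci flow $g(t) = |t| \Phi_t^\ast g$ defined for $t<0$, where $\Phi_t$ is the one-parameter family of diffeomorphisms generated by the time-dependent vector field $\nabla f / |t|$, so that $R$ satisfies the evolution equation $(\partial_t - \Delta) R = 2|\ric|^2 \geq (2/n) R^2$. A maximum principle argument applied to $R_{\min}(t)$ then shows that if $R_{\min}(t_0) < 0$ at some $t_0 < 0$, then $R_{\min}$ would blow up in finite backward time, contradicting the fact that the flow is defined on all of $(-\infty,0)$. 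The technical heart of the argument, and the delicate point, is justifying the maximum principle in the complete non-compact setting; this uses the Cao--Zhou quadratic growth of $f$ and the resulting polynomial control on volume growth of shrinking Ricci solitons to cut off and localize the argument.
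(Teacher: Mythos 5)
The derivations of \eqref{eqn:traced_soliton_eqn}, of the third identity $\ric(\nabla f)=\tfrac{1}{2}\nabla R$, and of \eqref{eqn:soliton_constant} are all correct, and your ordering is the right one, since the gradient computation for \eqref{eqn:soliton_constant} genuinely uses the contracted divergence identity. The paper does not reprove these but refers to the literature; your computations are exactly the standard ones one would find there.

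For $R\geq 0$ your overall strategy --- pass to the self-similar ancient flow $g(t)=|t|\Phi_t^{\ast}g$ and invoke B.-L.~Chen's nonnegativity theorem for complete ancient Ricci flows --- is a legitimate route, and it is consistent with the way the paper itself treats solitons (it constructs exactly this flow in Subsection~2.3). However, the details you propose contain a circularity and a gap. First, to integrate the time-dependent field $\tfrac{1}{|t|}\nabla f$ to a global flow of diffeomorphisms you need $\nabla f$ to be a complete vector field. The paper's own argument for this, in Subsection~2.3, runs through the bound $|\nabla f|^2\leq (f+c)/\tau$, which is \eqref{eqn:soliton_constant} combined with $R\geq 0$ --- the very statement you are proving. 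Completeness of $\nabla f$ without a priori sign control on $R$ is itself a nontrivial theorem (Z.-H.~Zhang, 2009) and must be invoked separately. Second, and more seriously, the ingredient you name as carrying the non-compact maximum principle --- the Cao--Zhou quadratic growth of $f$ and the resulting polynomial volume growth --- is itself derived from $R\geq 0$: Cao--Zhou's upper bound on $f$ uses $|\nabla f|^2\leq (f+c)/\tau$, which again requires $R\geq 0$, and their volume estimate is downstream of that. Using Cao--Zhou to justify the cut-off in a proof of $R\geq 0$ is therefore circular. What actually carries the non-compact case in Chen's argument is his self-contained localization: he proves the local lower bound $R(x,t)\geq -\tfrac{n}{2t}$ by a cut-off in the distance function using only the Laplacian comparison at each fixed time, with no a priori growth control on $f$, volume, or curvature. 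That argument, or alternatively a direct elliptic argument based on the drift identity $\Delta_f R = \tfrac{R}{\tau} - 2|\ric|^2$ with a cut-off that does not presuppose $\inf R>-\infty$, is what should be cited; the Cao--Zhou growth estimates should not be.
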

\begin{proof}
This is standard, see for instance \cite{RFpartI}.
\end{proof}
The soliton function $f$ of a gradient shrinking Ricci soliton grows quadratically with respect to the distance from a fixed point.  In fact, the following lemma from  \cite{HasMull} gives precise bounds on the growth of $f$.
\begin{lemma}\label{lemma:soliton_growth}
There is a constant $a(n)$ such that if $(M^n,g,f)$ is a complete gradient shrinking Ricci soliton at scale $\tau>0$ then there is $p\in M$ where $f$  attains a minimum and for any $x\in M$
\begin{equation}\label{eqn:soliton_growth}
\frac{d_g(p,x)^2}{a(n)\tau} - a(n)\leq f(x) +c\leq a(n) \left(\frac{d_g(p,x)^2}{\tau}+1\right).
\end{equation}
Moreover, there is a constant $b(n)$ such that for any two minimum points $p_1,p_2$ of $f$ we have $d_g(p_1,p_2)\leq b(n)\sqrt\tau$.
\end{lemma}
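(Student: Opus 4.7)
The starting point is the soliton identity \eqref{eqn:soliton_constant}, which I would rewrite as $|\nabla f|^2 = \tilde f/\tau - R$ with $\tilde f:= f+c$. Since $R\ge 0$ by Proposition \ref{prop:soliton_identities}, $\tilde f \ge 0$ and $|\nabla f|^2\le \tilde f/\tau$, so $|\nabla \sqrt{\tilde f}|\le 1/(2\sqrt\tau)$ wherever $\tilde f>0$. Thus $\sqrt{\tilde f}$ is $(2\sqrt\tau)^{-1}$-Lipschitz on $M$, which will drive every subsequent estimate.

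Assuming for now that $f$ attains its infimum at some $p\in M$, $\nabla f(p)=0$ forces $\tilde f(p)=\tau R(p)$. Since $\Delta f(p)\ge 0$ at a minimum, \eqref{eqn:traced_soliton_eqn} gives $R(p)\le n/(2\tau)$ and hence $\tilde f(p)\le n/2$. Integrating the Lipschitz bound along a minimizing geodesic from $p$ to $x$ and squaring then yields the upper bound in \eqref{eqn:soliton_growth} with $a(n)$ chosen large enough. Existence of the minimum itself will follow once a quadratic lower bound at infinity is established: sublevel sets $\{\tilde f\le K\}$ will then be bounded, hence compact by completeness of $(M,g)$, and $\inf_M \tilde f$ will be attained.

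The heart of the argument is therefore the quadratic lower bound. Fix a reference point $x_0\in M$, let $\gamma:[0,L]\to M$ be a unit-speed minimizing geodesic to $x$ with tangent $T$, and use the soliton equation \eqref{def:gsrs} to write $\hess f(T,T) = 1/(2\tau) - \ric(T,T)$. Integrating twice,
\begin{equation*}
f(x) - f(x_0) = L\,T(f)(x_0) + \frac{L^2}{4\tau} - \int_0^L (L-s)\,\ric(T,T)\,ds,
\end{equation*}
so it suffices to control $\int_0^L(L-s)\ric(T,T)\,ds$ from above. I would run a Myers-type second variation argument: for a parallel orthonormal frame $\{E_i\}$ normal to $T$ and a compactly supported piecewise linear test function $\phi$ that equals $1$ away from the endpoints, the inequality $L''(0)\ge 0$ applied to the variation $\phi E_i$ and summed over $i$ produces $\int_0^L \phi^2 \,\ric(T,T)\,ds\le (n-1)\int_0^L(\phi')^2\,ds$; only a bounded endpoint defect remains, which I would absorb into the universal constant using the pointwise identity $R+|\nabla f|^2=\tilde f/\tau$ on unit neighbourhoods of the endpoints (bootstrapped from a base point where $\tilde f$ is small). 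The outcome is an inequality of the form $f(x)\ge L^2/(4\tau) - C(n)L - C(n)|T(f)(x_0)|$, and Cauchy--Schwarz absorbs the linear term into the quadratic one to give $\tilde f(x)\ge d_g(x_0,x)^2/(a(n)\tau) - a(n)$, as required.

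For the final assertion, I would apply the lower bound with $x_0=p_1$ and $x=p_2$ and combine with $\tilde f(p_2)\le n/2$ from the minimum analysis to conclude $d_g(p_1,p_2)^2\le (n/2+a(n))\,a(n)\tau$, which is the claimed bound with $b(n):=\sqrt{a(n)(n/2+a(n))}$. The main obstacle throughout is the second-variation step: on a general shrinking soliton the Ricci tensor has no pointwise lower bound, so $\int\ric(T,T)\,ds$ cannot be bounded by brute force; the Myers argument circumvents this by exploiting that $\gamma$ is \emph{minimizing}, but some care is needed to handle the endpoint contributions without any a priori curvature bound, which is where the soliton identity \eqref{eqn:soliton_constant} is needed a second time.
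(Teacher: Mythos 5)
Your preparation — the identity $\lvert\nabla f\rvert^2 = \tilde f/\tau - R$, the $(2\sqrt\tau)^{-1}$-Lipschitz bound on $\sqrt{\tilde f}$, the observation $\tilde f(p)\le n/2$ at a minimum, the deduction of the upper bound, and the existence-of-minimum and last-assertion arguments — are all correct and match the approach in the cited reference [HasMull] (following Cao--Zhou). The gap is in the key step, the quadratic lower bound. After the double integration you are left to control $\int_0^L(L-s)\ric(T,T)\,ds$, but the Myers inequality $\int_0^L\phi^2\ric(T,T)\,ds \le (n-1)\int_0^L(\phi')^2\,ds$ with a piecewise-linear trapezoid $\phi$ only bounds $\int\phi^2\ric$, and the discrepancy
\[
\int_0^L\bigl[(L-s)-\phi^2(s)\bigr]\ric(T,T)\,ds
\]
is not an endpoint defect: on the bulk $[1,L-1]$ the weight $(L-s)-\phi^2(s)=L-s-1$ is of order $L$, and since $\ric$ has no pointwise lower bound on a general shrinker, this term cannot be absorbed into a universal constant. (Trying to put $\phi^2=L-s$ instead makes $\int(\phi')^2$ diverge.) The circle cannot be closed this way.

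The fix, and what [HasMull]/Cao--Zhou actually do, is to integrate $\hess f(T,T)=1/(2\tau)-\ric(T,T)$ only once, obtaining $(f\circ\gamma)'(L)-(f\circ\gamma)'(0)=L/(2\tau)-\int_0^L\ric(T,T)\,ds$. For $\int_0^L\ric(T,T)\,ds$ the discrepancy $\int(1-\phi^2)\ric$ really is supported on unit neighbourhoods of the endpoints, and there you can substitute $\ric(T,T)=1/(2\tau)-(f\circ\gamma)''$, integrate by parts once, and bound the resulting boundary terms by $\lvert\nabla f\rvert\le\sqrt{\tilde f/\tau}$ together with the Lipschitz estimate on $\sqrt{\tilde f}$. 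This yields $\int_0^L\ric(T,T)\,ds \le C(n)+C(n)\sqrt{\tilde f(x_0)/\tau}+C(n)\sqrt{\tilde f(x)/\tau}$, and since $(f\circ\gamma)'(L)\le\lvert\nabla f(x)\rvert\le\sqrt{\tilde f(x)/\tau}$, the single-integration identity becomes a quadratic inequality in $\sqrt{\tilde f(x)}$ that solves to $\sqrt{\tilde f(x)}\ge L/(2\sqrt\tau)-C(n)$ once $x_0=p$. Everything else in your write-up then goes through unchanged. So the error is confined to using the twice-integrated identity; replacing it by the once-integrated one repairs the proof.
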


As a consequence, since $(M,g)$ has polynomial volume growth \cite{CarrilloNi}, then 
$$\int_M (4\pi \tau)^{-n/2} e^{-f} d\vol_g <+\infty.$$

Therefore, we can normalize $f$ as follows.
\begin{definition}
If $(M^n,g,f)$ is a gradient shrinking Ricci soliton at scale $\tau>0$ and
\begin{equation}\label{eqn:soliton_normalized}
\int_M (4\pi \tau)^{-n/2} e^{-f} d\vol_g = 1,
\end{equation}
then we will call $(M,g,f)$ a normalized shrinking Ricci soliton at scale $\tau>0$.
\end{definition}

From equation \eqref{def:gsrs} it follows that if $f_0,f_1\in C^\infty(M)$ are such that both $(M,g,f_0)$, $(M,g,f_1)$ are gradient shrinking Ricci solitons, then $L=f_1-f_0$ satisfies $\hess_g L=0$. A consequence of this is the following lemma, which asserts that the constant in \eqref{eqn:soliton_constant} does not depend on the normalized soliton function. For a proof, see \cite{CarrilloNi}.

\begin{lemma}\label{lemma:aux_constant_ngsrs}
Suppose that $f_i\in C^\infty(M)$, $i=0,1$ are such that $(M,g,f_i)$, $i=0,1$ are both normalized gradient shrinking soliton at scale $\tau>0$. Then
\begin{equation}\label{eqn:soliton_cs_same}
\tau(R + |\nabla f_0|^2) -f_0=\tau(R+|\nabla f_1|^2) -f_1.
\end{equation}
\end{lemma}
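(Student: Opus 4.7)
Subtracting the two soliton equations $\ric + \hess f_i = g/(2\tau)$ for $i=0,1$ gives $\hess_g L = 0$, where $L := f_1 - f_0$. Thus $X := \nabla L$ is parallel, so $|\nabla L|$ is constant and $\Delta L = 0$; applying the identity $\nabla R = 2\ric(\nabla f)$ from Proposition \ref{prop:soliton_identities} to both $f_0$ and $f_1$ and subtracting also forces $\ric(\nabla L) = 0$. In the degenerate case $\nabla L \equiv 0$, $L$ is a constant and the two normalizations combined give $\int e^{-L} u_0 \, d\vol_g = \int u_0 \, d\vol_g = 1$, so $L \equiv 0$ and the claim is trivial. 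From now on I assume $|\nabla L| > 0$.

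Set $c_i := \tau(R + |\nabla f_i|^2) - f_i$ (constant in space by Proposition \ref{prop:soliton_identities}). Expanding $|\nabla f_1|^2 - |\nabla f_0|^2 = 2\langle \nabla f_0, \nabla L\rangle + |\nabla L|^2$ rewrites the spatial constant as
\[
c_1 - c_0 = 2\tau\langle \nabla f_0, \nabla L\rangle + \tau|\nabla L|^2 - L.
\]
Integrating against the probability measure $u_0 \, d\vol_g$ and using $\dive(u_0 \nabla L) = u_0(\Delta L - \langle \nabla f_0, \nabla L\rangle) = -u_0 \langle \nabla f_0, \nabla L\rangle$, whose integral vanishes by the Gaussian decay of $u_0$ supplied by the quadratic growth of $f_0$ in Lemma \ref{lemma:soliton_growth}, yields
\[
c_1 - c_0 = \tau|\nabla L|^2 - \int_M L \, u_0 \, d\vol_g. \qquad (\star)
\]

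It remains to show $\int_M L \, u_0 \, d\vol_g = \tau|\nabla L|^2$. Here I would exploit the structural consequence that $X$ is a nowhere-vanishing parallel vector field with $\ric(X) = 0$: by the De Rham splitting theorem applied to the universal cover $\tilde M$, one has an isometric product decomposition $\tilde M = \tilde N \times \mathbb R$ with $X = a \, \partial_t$ and $a = |\nabla L| > 0$. The soliton equation in the splitting forces $\tilde f_0(\tilde x, t) = \tfrac{t^2}{4\tau} + \alpha t + \beta(\tilde x)$ with $\alpha$ constant (cross-derivative $\hess f_0(\partial_t, V) = 0$ since $\ric(\partial_t, V) = 0$), while $L(\tilde x, t) = at + L_0$. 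The normalization of $f_0$ becomes a Gaussian integral in $t$ giving $(4\pi\tau)^{-(n-1)/2}\int_{\tilde N} e^{-\beta} d\vol_{\tilde N} = e^{-\tau\alpha^2}$, and then the normalization of $f_1 = f_0 + L$ forces $L_0 = \tau a(2\alpha + a) = 0$, i.e.\ $\alpha = -a/2$. An elementary Gaussian moment calculation then gives $\int_M L \, u_0 \, d\vol_g = L_0 + a(-2\tau\alpha) = \tau a^2 = \tau|\nabla L|^2$, which substituted into $(\star)$ yields $c_1 = c_0$.

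The main obstacle is carrying out the De Rham splitting cleanly on a (possibly non-compact, possibly non-simply connected) gradient shrinking soliton and checking that the quadratic-plus-linear decomposition of $f_0$ and the linear form of $L$ descend from $\tilde M$ to $M$ and interact compatibly with the normalization integrals (the $\pi_1$-invariance of $f_i$ ensures this). Once this is in place, the remaining computation is elementary.
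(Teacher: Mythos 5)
The paper does not actually prove this lemma; it only records that $L:=f_1-f_0$ satisfies $\hess_g L=0$ and then cites Carrillo--Ni for the conclusion, so your argument is a genuinely self-contained alternative, and its skeleton is sound: reduce to $\hess L=0$, $|\nabla L|\equiv a$, integrate the constant $c_1-c_0=2\tau\langle\nabla f_0,\nabla L\rangle+\tau a^2-L$ against $d\nu_0=(4\pi\tau)^{-n/2}e^{-f_0}d\vol_g$ (the divergence term vanishes by the Gaussian decay from Lemma \ref{lemma:soliton_growth} together with the polynomial volume growth of shrinkers), and then verify $\int_M L\,d\nu_0=\tau a^2$ by an explicit Gaussian computation in a split coordinate. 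Two points should be fixed, though both are repairable. First, the detour through the universal cover is unnecessary and is the source of the ``main obstacle'' you flag: since $L$ is a \emph{globally defined} function on $M$ with $\hess L=0$ and $|\nabla L|=a>0$, the unit parallel field $\nabla L/a$ and the level sets of $L$ split $M$ itself isometrically as $N\times\mathbb R$ with $L=at+L_0$, so no de Rham splitting of $\tilde M$ and no descent/$\pi_1$ discussion is needed (if you do insist on the cover, deck transformations preserve the lift of $L$, hence fix the $t$-coordinate and act only on the $\tilde N$ factor, which is exactly the compatibility you were worried about). Second, the assertion ``$L_0=\tau a(2\alpha+a)=0$, i.e.\ $\alpha=-a/2$'' is not forced by anything: $\alpha$ and $L_0$ depend on the arbitrary choice of origin of the $t$-coordinate, and the normalization of $f_1$ only gives $L_0=\tau a(2\alpha+a)$. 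Luckily your computation never needs $L_0=0$: the normalization of $f_0$ gives $(4\pi\tau)^{-(n-1)/2}\int_N e^{-\beta}=e^{-\tau\alpha^2}$, the Gaussian mean of $t$ is $-2\tau\alpha$, hence $\int_M L\,d\nu_0=L_0-2\tau a\alpha$, and substituting $L_0=\tau a(2\alpha+a)$ yields $\tau a^2$ in every gauge, which is exactly what $(\star)$ requires. Either delete the ``$=0$'' or state it explicitly as a normalization of the $t$-origin.

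Two cosmetic remarks: $\ric(\nabla L)=0$ is automatic once $\nabla L$ is parallel (contract the Ricci identity), so you need not invoke the identity $\nabla R=2\ric(\nabla f)$ from Proposition \ref{prop:soliton_identities}; and in the degenerate case your one-line argument $\int e^{-L}\,d\nu_0=1$ with $L$ constant is fine. With these adjustments the proof is complete and arguably more elementary than chasing the reference, at the modest cost of the splitting-plus-Gaussian computation that any proof of this global normalization statement seems to require.
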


The lemma below clarifies the nature of the constant in \eqref{eqn:soliton_constant}, expressing it in terms of the $\mathcal W$ functional on a gradient shrinking Ricci soliton. We refer the reader to  \cite{HasMull} for a proof.
\begin{lemma}\label{lemma:soliton_identity_entropy}
Let $(M,g,f)$ be a normalized gradient shrinking Ricci soliton at scale $\tau>0$. Then $\mathcal W(g,f,\tau)$ is well defined and
\begin{equation*}
\tau (R+|\nabla f|^2) - f= -\mathcal W(g,f,\tau).
\end{equation*}
\end{lemma}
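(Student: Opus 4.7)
The strategy is to combine the pointwise soliton identities from Proposition \ref{prop:soliton_identities} with two applications of integration by parts against the weight $u=(4\pi\tau)^{-n/2}e^{-f}$, exploiting the fact that the quantity on the left-hand side is already known to be constant.

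First I would verify that $\mathcal W(g,f,\tau)$ is well-defined. By Lemma \ref{lemma:soliton_growth}, $f+c$ grows quadratically in the distance from a minimum point of $f$; from the soliton identity $\tau(R+|\nabla f|^2)=f+c$ (and $R\geq 0$), this in turn forces $|\nabla f|$ to grow at most linearly. Combined with the polynomial volume growth of gradient shrinking Ricci solitons \cite{CarrilloNi} and the super-exponential decay of $e^{-f}$, every integrand appearing in $\mathcal W(g,f,\tau)$ is absolutely integrable, and moreover cut-off arguments justify boundary-free integration by parts against $u$ for the specific quantities needed. Note also that by \eqref{eqn:soliton_constant} the quantity $\tau(R+|\nabla f|^2)-f$ is a constant, call it $c$, so the claim reduces to showing $\mathcal W(g,f,\tau)=-c$.

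Next I would rewrite the integrand of $\mathcal W$ using the traced soliton identity \eqref{eqn:traced_soliton_eqn}, $R=\tfrac{n}{2\tau}-\Delta f$, to obtain
\begin{equation*}
\tau(R+|\nabla f|^2)+f-n = f-\tfrac{n}{2}+\tau(|\nabla f|^2-\Delta f).
\end{equation*}
The key observation is that $\dive(e^{-f}\nabla f) = e^{-f}(\Delta f-|\nabla f|^2)$, so up to the constant $(4\pi\tau)^{-n/2}$, $(|\nabla f|^2-\Delta f)\,u$ is the divergence of a vector field that decays faster than any polynomial. Using smooth cut-offs supported in geodesic balls and the above growth/decay estimates, the boundary terms vanish in the limit, yielding
\begin{equation*}
\int_M (|\nabla f|^2-\Delta f)\,u\,d\vol_g = 0.
\end{equation*}
Since $\int_M u\,d\vol_g=1$ by \eqref{eqn:soliton_normalized}, this gives $\mathcal W(g,f,\tau)=\int_M f u\,d\vol_g -\tfrac{n}{2}$.

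Finally I would compute $\int_M f u\,d\vol_g$. Using \eqref{eqn:soliton_constant} in the form $f=\tau(R+|\nabla f|^2)-c$ and the same integration by parts identity,
\begin{equation*}
\int_M (R+|\nabla f|^2)\,u\,d\vol_g = \int_M (R+\Delta f)\,u\,d\vol_g = \tfrac{n}{2\tau},
\end{equation*}
so $\int_M f u\,d\vol_g = \tfrac{n}{2}-c$. Substituting gives $\mathcal W(g,f,\tau)=-c = -(\tau(R+|\nabla f|^2)-f)$.

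The only nontrivial point is the justification of the integration by parts on the complete, possibly non-compact manifold; this is the expected main obstacle but is standard once one combines the quadratic lower and upper bounds on $f$ from Lemma \ref{lemma:soliton_growth}, the derived linear bound on $|\nabla f|$, and polynomial volume growth, via a standard cut-off argument. All remaining steps are purely algebraic manipulations of the soliton identities.
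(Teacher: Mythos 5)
Your argument is correct and follows the same integration-by-parts strategy as the proof the paper defers to in \cite{HasMull}: rewrite the integrand via the traced soliton equation, cancel the $(|\nabla f|^2-\Delta f)$ term using $\dive(e^{-f}\nabla f)=e^{-f}(\Delta f-|\nabla f|^2)$, and then evaluate $\int f\,u$ by substituting $f=\tau(R+|\nabla f|^2)-c$ and applying the same identity once more. The justification of the cut-off argument via the quadratic growth of $f$, the consequent linear growth of $|\nabla f|$, and polynomial volume growth is exactly the standard bookkeeping, so this is a faithful proof along the established route.
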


In light of Lemma \ref{lemma:aux_constant_ngsrs} and Lemma \ref{lemma:soliton_identity_entropy}, given $(M,g,f)$, a normalized gradient shrinking Ricci soliton at scale $\tau$, the quantity $\mathcal W(g,f,\tau)$ depends only on $g$ and $\tau$, but not $f$. We see below that the scale of a normalized gradient shrinking soliton is also uniquely determined by $g$, unless it is a Gaussian soliton. From this it follows that $\mathcal W(g,f,\tau)$ in fact depends only on $g$.
 
Suppose that $(M,g,f_1)$ and $(M,g,f_2)$ are two normalized gradient shrinking Ricci solitons at scales $\tau_1$ and $\tau_2$ respectively. Then, for $c=\frac{1}{\tau_2}-\frac{1}{\tau_1}$, $\psi=f_2-f_1$ satisfies $\hess w =  c g$. Without loss of generality, we may assume that $c>0$, so $w$ is strictly convex and it attains a unique minimum $p\in M$, and we may also assume that $w(p)=0$. It follows that along any unit speed geodesic $\gamma$ starting at $p$, $\frac{d^2}{dr^2}w(\gamma(r)) = c$, so $w(\gamma(r))=\frac{c}{2} r^2$.  Therefore, the distance function $r(x)=d(x,p)$ satisfies $r^2=\frac{2}{c} w$ and $\hess r^2= 2 g$. From this, we can easily conclude that $(M,g)$ has to be isometric to the Euclidean space $(\mathbb R^n, g_{\mathbb R^n})$. In fact, $(M,g,f_i)$ are Gaussian solitons in different scales  and a direct computation shows that $\mathcal W(g,f_1,\tau_1)=\mathcal W(g,f_2,\tau_2)$.

 Therefore the following definition is natural.

\begin{definition}
Let $(M^n,g,f)$ be a normalized gradient shrinking Ricci soliton at scale $\tau>0$. We define the entropy $\bar\mu(g)$ of $(M,g,f)$ as
\begin{equation}
\bar\mu(g):= \mathcal W(g,f,\tau) = \int_M \left(\tau (|\nabla f|^2 +R) +f -n\right) \frac{e^{-f}}{(4\pi \tau)^{n/2}} d\vol_g.
\end{equation}
\end{definition}

\begin{remark}
From \cite{CarrilloNi} we know that $\bar\mu(g)\leq 0$, if $(M,g,f)$ has bounded curvature - or just a lower bound on Ricci curvature by \cite{Yokota}.
\end{remark}

\begin{lemma}[Lemma 2.3 in \cite{HasMull}]\label{lemma:shrinker_noncollapsing}
Let $(M^n,g,f)$ be a normalized gradient shrinking Ricci soliton at scale $\tau$ and suppose that $f$ attains a minimum at $p\in M$ and that $\bar \mu(g)\geq -\Lambda$. For every $r>0$, there is a constant $\kappa(r)=\kappa(n,\Lambda|r)>0$ such that if $B(x,s)\subset B(p,r\sqrt\tau)$, $0<s\leq \sqrt\tau$, then $\vol(B(x,s)) \geq \kappa(r) s^n$. \end{lemma}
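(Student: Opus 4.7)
The plan is to reduce the lemma to the entropy noncollapsing theorem (Theorem \ref{thm:entropy_noncollapsing}). Two ingredients are required: (i) a uniform scalar curvature bound on the enlarged ball $B(p, r\sqrt\tau)$, and (ii) a lower bound $\nu(g, \sigma^2) \geq -\Lambda$ valid for every $\sigma \in (0, \sqrt\tau\,]$.

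For (i), I would combine Proposition \ref{prop:soliton_identities} with Lemma \ref{lemma:soliton_identity_entropy}, which together identify the soliton constant in \eqref{eqn:soliton_constant} as $-\bar\mu(g)$. Dropping the non-negative term $\tau |\nabla f|^2$ gives $\tau R \leq f - \bar\mu(g)$; Lemma \ref{lemma:soliton_growth} (with $c = -\bar\mu(g)$) then produces $f(x) - \bar\mu(g) \leq a(n)\bigl(d_g(p,x)^2/\tau + 1\bigr)$. Restricting to $B(p, r\sqrt\tau)$ yields $R \leq C_r/\tau$ with $C_r := a(n)(r^2+1)$.

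For (ii), I would exploit the self-similar Ricci flow $(M, g(t))_{t \in (-\infty, 0)}$ induced by the soliton, normalized so that $g(-\tau) = g$ and $g(t) = (|t|/\tau)\,\phi_t^*g$ for some diffeomorphisms $\phi_t$. Applying Perelman's monotonicity of $\mu$ with $t_1 = -T$, $t_2 = -\tau$, and the scaling identity $\mu(g(-T), \rho) = \mu(g, \rho\tau/T)$, we obtain for any $\sigma \in (0, \tau]$ that
\begin{equation*}
\mu\bigl(g,\; \tau - (\tau - \sigma)\tau/T\bigr) \leq \mu(g, \sigma).
\end{equation*}
Sending $T \to \infty$ and invoking continuity of $\mu(g, \cdot)$ yields $\mu(g, \sigma) \geq \mu(g, \tau) = \bar\mu(g)$, the last equality holding because the soliton potential realizes the infimum (Carrillo--Ni). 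Hence $\nu(g, \sigma) \geq \bar\mu(g) \geq -\Lambda$ for every $\sigma \in (0, \tau]$.

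With both ingredients in place, given $B(x, s) \subset B(p, r\sqrt\tau)$ with $s \leq \sqrt\tau$, I would set $s_0 := \min\{s,\, \sqrt{\tau/C_r}\}$. Then $B(x, s_0) \subset B(p, r\sqrt\tau)$, $R \leq s_0^{-2}$ on this ball, and $s_0^2 \leq \tau$, so Theorem \ref{thm:entropy_noncollapsing} gives $\vol_g(B(x, s_0)) \geq C(n) e^{-\Lambda} s_0^n$. If $s_0 = s$ this is already the desired bound; otherwise $s_0 = \sqrt{\tau/C_r}$, and the monotonicity $\vol_g(B(x, s)) \geq \vol_g(B(x, s_0))$ combined with $s \leq \sqrt\tau$ yields $\vol_g(B(x, s))/s^n \geq C(n) e^{-\Lambda} C_r^{-n/2}$, which gives $\kappa(r) = \kappa(n, \Lambda \vert r)$ as required. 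The main technical subtlety is step (ii): naive rescaling $g \mapsto (\sigma/\tau)\,g$ does not yield $\mu(g, \sigma) = \bar\mu(g)$ directly, since scale invariance of $\mu$ alters both arguments simultaneously; Perelman's monotonicity along the self-similar flow together with a limit argument is the natural workaround.
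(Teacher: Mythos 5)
The paper cites this result from Haslhofer--M\"uller without proof, so there is no internal argument to compare against; your reconstruction follows the expected route (soliton identity $\Rightarrow$ curvature bound, entropy bound $\Rightarrow$ noncollapsing via a Perelman-type volume estimate), and your scaling bookkeeping in the monotonicity computation and the final dichotomy $s_0 = \min\{s, \sqrt{\tau/C_r}\}$ are both correct.

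There is, however, a genuine gap in the way you invoke the paper's entropy machinery. Gradient shrinking Ricci solitons need not be compact, and the two results you lean on are both stated in the paper only for compact manifolds: Theorem \ref{thm:entropy_noncollapsing} begins ``Let $(M,g)$ be a compact Riemannian manifold,'' and Perelman's monotonicity of $\mu$ is stated for ``a Ricci flow on a closed manifold.'' Applied to a noncompact shrinker (the Gaussian soliton, a cylinder, etc.) neither statement is available as written. One can in fact extend both to complete manifolds with bounded curvature (indeed Haslhofer--M\"uller's proof is essentially such an extension), but the extension is a real piece of work, not a remark; as it stands your argument uses exactly the kind of fact the lemma is meant to supply. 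In the same vein, the identity $\mu(g,\tau)=\bar\mu(g)$ (that the soliton potential realizes the infimum) is a theorem of Carrillo--Ni that relies on bounded curvature; and the passage to the limit $T\to\infty$ requires continuity of $\sigma\mapsto\mu(g,\sigma)$ at $\sigma=\tau$, which is standard for compact $M$ but needs justification on a noncompact shrinker. None of these is a wrong turn --- the overall strategy is the right one --- but you should either assume compactness, or explicitly flag that you are using noncompact versions of Theorem \ref{thm:entropy_noncollapsing}, Perelman's monotonicity, continuity of $\mu$, and the Carrillo--Ni minimization result, and indicate why those extensions hold under the hypotheses of the lemma.
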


Let $(M,g,f)$ be shrinking Ricci soliton at scale $1$. Since $(M,g)$ is complete with non-negative scalar curvature $|\nabla f|^2 = -R - f +c\leq -f+c$, by Proposition \ref{prop:soliton_identities}, it follows that $\nabla f$ is a complete vector field, since $f$ grows quadratically by \eqref{eqn:soliton_growth}. Hence,  we can define the one parameter family of diffeomorphisms $\varphi_t: M\rightarrow M$, $t\in (-\infty,0)$ so that
\begin{equation}\label{eqn:self_similar_phi}
\begin{aligned}
\frac{d}{dt} \varphi_t &= \frac{1}{|t|} \nabla f \circ\varphi_t\\
\varphi_{-1} &= id_M.
\end{aligned}
\end{equation}
Set $g(t)=|t|\varphi_t^* g$ and $f_t=\varphi_t^*f=f\circ\varphi_t$. Then we know that 
\begin{equation*}
\frac{\partial}{\partial t} g(t)=-2\ric(g(t)),
\end{equation*}
and that $(M,g(t),f_t)$ is a gradient shrinking Ricci soliton at scale $|t|$. We will call $(M,g(t))_{t\in (-\infty,0)}$ the self-similar Ricci flow induced by the gradient shrinking Ricci soliton $(M,g,f)$.

Moreover, \eqref{eqn:self_similar_phi} gives
\begin{equation*}
\frac{d}{dt} \varphi_t = (\varphi_t)_*(\nabla^{g(t)} f_t )\quad\textrm{and}\quad \frac{d}{dt}\varphi_t^{-1} = -\nabla^{g(t)} f_t \circ \varphi_t^{-1},
\end{equation*}
hence
\begin{equation}\label{eqn:ft_grad_evolution}
\frac{\partial f_t}{\partial t}=  |\nabla^{g(t)} f_t|_{g(t)}^2.
\end{equation}

Moreover, by \eqref{eqn:traced_soliton_eqn},
\begin{equation}\label{eqn:sol_conjugate_heat}
\frac{\partial f_t}{\partial t} = -\Delta_{g(t)} f_t +|\nabla^{g(t)} f_t|^2_{g(t)} -R_{g(t)} +\frac{n}{2|t|}, 
\end{equation}
hence $u=(4\pi |t|)^{-n/2} e^{-f}$ is a solution to the conjugate heat equation, by \eqref{eqn:f_che}.

\begin{proposition}\label{prop:backwards_forwards}
Let $(M^n,g(t))_{t\in (-\infty,0)}$ be a smooth complete Ricci flow, with bounded curvature at each time. Suppose that $u=(4\pi |t|)^{-n/2} e^{-f}$ is a solution to the conjugate heat equation and that, for each $t<0$, $(M,g(t),f(\cdot,t))$ is a normalized gradient shrinking Ricci soliton at scale $|t|$. Then
\begin{enumerate}
\item $\frac{\partial}{\partial t} f=|\nabla^{g(t)} f|_{g(t)}^2$, and there is a family of diffeomorphisms $\varphi_t: M\rightarrow M$, satisfying \eqref{eqn:self_similar_phi} such that $g(t)=|t|\varphi_t^* g(-1)$ and $f(x,t)= f(\varphi_t(x),-1)$, for every $x\in M$. In particular, $(M,g(t))_{t\in (-\infty,0)}$ is the selfsimilar Ricci flow associated to the normalized gradient shrinking Ricci soliton at scale $1$ $(M,g(-1),f(\cdot,-1))$, and if $p\in M$ is a critical point of $f(\cdot,-1)$ then it is a critical point of $f(\cdot,t)$, for every $t<0$.
\item $\bar\mu(g(t))$ is constant.
\item $w=|t|(f - \bar\mu(g(-1)))$ satisfies 
$\frac{\partial w}{\partial t}= \Delta w -\frac{n}{2}$.
\end{enumerate}
\end{proposition}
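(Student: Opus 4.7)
The plan is to prove the three parts in order. For (1), I would first derive $\partial_t f = |\nabla^{g(t)} f|^2$ by combining equation \eqref{eqn:f_che} (the equation satisfied by $f$ since $u=(4\pi|t|)^{-n/2}e^{-f}$ is a conjugate heat kernel) with the traced soliton identity $\Delta f = n/(2|t|) - R$ from Proposition \ref{prop:soliton_identities}: substitution cancels the terms $-\Delta f - R + n/(2|t|)$ in \eqref{eqn:f_che} and leaves exactly $|\nabla f|^2$. Next, set $\bar g := g(-1)$, $\bar f := f(\cdot,-1)$. Since $|\nabla \bar f|^2 = -R + \bar f + c \leq c + \bar f$ by Proposition \ref{prop:soliton_identities}, and $\bar f$ has quadratic growth by Lemma \ref{lemma:soliton_growth}, $\nabla^{\bar g}\bar f$ has at most linear growth and the flow $\varphi_t$ defined by \eqref{eqn:self_similar_phi} exists for all $t \in (-\infty,0)$. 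Setting $\tilde g(t) := |t|\varphi_t^*\bar g$ and $\tilde f(x,t) := \bar f(\varphi_t(x))$, the paragraph preceding the Proposition shows that $\tilde g$ is a Ricci flow with each slice $(M,\tilde g(t),\tilde f(\cdot,t))$ a normalized shrinking soliton at scale $|t|$, and \eqref{eqn:ft_grad_evolution} gives $\partial_t\tilde f = |\nabla^{\tilde g}\tilde f|^2$.

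The key step is to identify $g = \tilde g$ and $f = \tilde f$ on all of $(-\infty,0)$, given that both pairs satisfy the same Ricci flow together with the transport equation $\partial_t(\cdot) = |\nabla(\cdot)|^2$ and agree at $t = -1$. Forward in time, this follows from Chen--Zhu uniqueness of Ricci flow with bounded curvature together with uniqueness for the first-order equation once the metric is fixed. The main technical point is the backward direction, which I plan to handle by invoking Kotschwar's backward uniqueness for Ricci flows with bounded curvature; alternatively, one can avoid this by setting $\Phi_t := \varphi_t^{-1}$ and verifying directly from the soliton equation that $\frac{d}{dt}[\Phi_t^* g(t)] = -\Phi_t^* g(t)/|t|$, which integrates pointwise to $\Phi_t^* g(t) = |t|\bar g$, with $f = \tilde f$ then following from a first-order transport equation for $F = \tilde f - f$ with vanishing initial data. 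The critical point statement is then immediate: a zero $p$ of $\nabla^{\bar g}\bar f$ is a fixed point of $\varphi_t$ (a zero of its generating vector field), and the chain rule applied to $\tilde f = \bar f\circ\varphi_t$ gives $\nabla^{g(t)} f(p,t) = 0$.

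For (2), Perelman's monotonicity formula \eqref{eqn:monotonicity_formula} applied to the conjugate heat kernel $u_{(p,0)}$ has integrand $\left|\ric + \hess f - g/(2|t|)\right|^2$ that vanishes identically by the soliton equation; hence $\mathcal W(g(t),f(\cdot,t),|t|)$ is constant in $t$, and Lemma \ref{lemma:soliton_identity_entropy} identifies this constant with $\bar\mu(g(t))$. For (3), a direct computation using part (1) gives $\partial_t w = -(f - \bar\mu(g(-1))) + |t|\,|\nabla f|^2$, while $\Delta w = |t|\Delta f = n/2 - |t|R$ by the traced soliton equation, and Lemma \ref{lemma:soliton_identity_entropy} combined with part (2) yields $|t|\,|\nabla f|^2 = f - \bar\mu(g(-1)) - |t|R$. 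Substituting into $\partial_t w - \Delta w$ the $f$, $\bar\mu$ and $|t|R$ contributions cancel, leaving $\partial_t w - \Delta w = -n/2$ as claimed.
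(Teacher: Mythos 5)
Your derivation of $\partial_t f = |\nabla f|^2$ and your handling of parts (2) and (3) are fine: for (2), the paper instead uses scale- and diffeomorphism-invariance of $\mathcal W$ once part (1) is established, but the monotonicity-formula argument you give also works (modulo justifying the integration by parts on the complete non-compact soliton, which the quadratic growth of $f$ supplies); part (3) matches the paper's computation.

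The issue is in part (1). Your primary route (construct the reference self-similar flow $(\tilde g,\tilde f)$ and match it to $(g,f)$ by Chen--Zhu forward uniqueness plus Kotschwar backward uniqueness, then propagate $f=\tilde f$ by a transport equation) is logically sound but far heavier than what is needed, and is not what the paper does. The paper's proof is a one-line direct computation: pull back by the flow of $-\nabla^{g(t)} f(\cdot,t)$ and rescale by $|t|^{-1}$. Concretely, if $\psi_t$ is generated by $-\nabla^{g(t)} f(\cdot,t)$ with $\psi_{-1}=\mathrm{id}$, then the $t$-slice soliton equation gives
$\frac{d}{dt}\bigl[\psi_t^*g(t)\bigr]=\psi_t^*\bigl[-2\ric_{g(t)}-2\hess_{g(t)} f(\cdot,t)\bigr]=-\tfrac{1}{|t|}\psi_t^*g(t)$,
which integrates to $\psi_t^*g(t)=|t|g(-1)$; combined with $\partial_t f=|\nabla f|^2$ one similarly gets $\psi_t^* f(\cdot,t)=f(\cdot,-1)$, and $\varphi_t:=\psi_t^{-1}$ is then seen to be generated by $\tfrac{1}{|t|}\nabla^{g(-1)} f(\cdot,-1)$. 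No uniqueness theorem for Ricci flow, forward or backward, is invoked.

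Your proposed ``alternative'' that avoids Kotschwar is, however, circular as written. You set $\Phi_t:=\varphi_t^{-1}$, where $\varphi_t$ is generated by the \emph{reference} vector field $\tfrac{1}{|t|}\nabla^{\bar g}\bar f$, and then want to compute $\frac{d}{dt}[\Phi_t^*g(t)]$ ``directly from the soliton equation.'' But the generator of $\Phi_t$ is $-(\Phi_t)_*\bigl(\tfrac{1}{|t|}\nabla^{\bar g}\bar f\bigr)$, which is a priori unrelated to $-\nabla^{g(t)} f(\cdot,t)$; the equality of these two vector fields is equivalent to the identities $g(t)=|t|\varphi_t^*\bar g$ and $f(\cdot,t)=\varphi_t^*\bar f$ that you are trying to prove. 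Thus the Lie-derivative term in $\frac{d}{dt}[\Phi_t^*g(t)]$ cannot be simplified by the soliton equation at time $t$, and the ODE $\frac{d}{dt}[\Phi_t^*g(t)]=-\Phi_t^*g(t)/|t|$ does not follow. The fix is precisely the paper's choice: generate the diffeomorphisms by the ambient flow's own potential, $-\nabla^{g(t)} f(\cdot,t)$, rather than by the pushed-forward reference potential.
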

\begin{proof}
Assertion 1 follows by the evolution equation of $f$
$$\frac{\partial f}{\partial t}= -\Delta f +|\nabla f|^2 -R +\frac{n}{2|t|}$$
by substituting the traced soliton equation \eqref{eqn:traced_soliton_eqn}, of Proposition \ref{prop:soliton_identities}. Then, pulling back by the flow of $-\nabla f$ and rescaling we obtain a stationary flow $\tilde g(s)=g(-1)$, from which the remaining of Assertion 1 follows easily.

Assertion 2 then follows by the scale and diffeomorphism invariance of the entropy $\mathcal W$.

To prove Assertion 3, we  use \eqref{eqn:traced_soliton_eqn} and that $|t|(R+|\nabla f|^2)-f=-\bar\mu(g(t))=-\bar\mu(g(-1))$ to compute
\begin{equation*}
\frac{\partial w}{\partial t}=|t| |\nabla f|^2 - f+\bar\mu(g(-1))=-|t| R =|t|\Delta f- \frac{n}{2}=\Delta w -\frac{n}{2}.
\end{equation*}

\end{proof}

\section{A priori assumptions on Ricci flows}\label{sec:apriori}

Let $(M^n,g(t))_{t\in I}$, $t_{\textrm{sup}}=\sup I$, be a smooth complete Ricci flow.
\begin{itemize}
\item[(RF1)] $(M,g(t))_{t\in I}$ satisfies
\begin{equation*}
\sup_M |\riem|(\cdot,t) \leq \frac{C_I}{t_{\textrm{sup}}-t}\end{equation*}
\item[(RF2)] If $M$ is compact, and for every $t\in I$, $t<t_{\textrm{sup}}$,  $\nu\left(g(t),|t_{\textrm{sup}}-t|\right) \geq -\Lambda$.
\item[(RF3)] For every $x,y\in M$ and $s,t\in I$, $s<t$ the conjugate heat kernel $$u_{(x,t)}(y,s)=(4\pi |t-s|)^{-n/2} e^{-f_{(x,t)}(y,s)}$$ satisfies
\begin{equation*}
f_{(x,t)}(y,s) \geq \frac{d_{g(s)}(x,y)^2}{H|t-s|} - H.
\end{equation*}
Equivalently,
\begin{equation*}
u_{(x,t)}(y,s)\leq C_1(4\pi |t-s|)^{-n/2} e^{-\frac{d_{g(s)}(x,y)^2}{C_1|t-s|}}
\end{equation*}
\item[(RF4)] For every $x,y\in M$ and $s,t\in I$, $s<t$ the conjugate heat kernel $$u_{(x,t)}(y,s)=(4\pi |t-s|)^{-n/2} e^{-f_{(x,t)}(y,s)}$$ satisfies
\begin{equation*}
f_{(x,t)}(y,s) \leq H\left(\frac{d_{g(s)}(x,y)^2}{|t-s|} + 1\right).
\end{equation*}
Equivalently,
\begin{equation*}
u_{(x,t)}(y,s)\geq C_2(4\pi |t-s|)^{-n/2} e^{-\frac{d_{g(s)}(x,y)^2}{C_2|t-s|}}
\end{equation*}
\item[(RF5)] For every $x,y\in M$ and $s,t\in I$, $s<t$,  
\begin{equation*}
d_{g(s)}(x,y) \leq d_{g(t)}(x,y) + K\sqrt{|s|}.
\end{equation*}
\end{itemize}

\begin{definition}[Conjugate heat flows]
 Let $(M^n,g(t))_{t\in I}$, $I\subset (-\infty,0)$, be a smooth complete Ricci flow and let $u=(4\pi|t|)^{-n/2} e^{-f}\in C^\infty(M\times I)$ be a positive solution to the conjugate heat equation so that for every $t\in I$, $$\int_M u(\cdot,t)d\vol_{g(t)}=1.$$ 
 For every $t\in I$, we will denote by $\nu_{f,t}$ the probability measure on $M_t:=M\times \{t\}$ with $d\nu_{f,t}=(4\pi|t|)^{-n/2}e^{-f(\cdot,t)} d\vol_{g(t)}$. The $1$-parameter family $\nu_f=(\nu_{f,t})_{t\in I}$ is called a conjugate heat flow on $(M,g(t))_{t\in I}$.
 \end{definition}
  
 We will say that a conjugate heat flow  $(\nu_{f,t})_{t\in I}$  satisfies
\begin{itemize}
\item[(CHF1)] if for every $(x,t)\in M\times I$
\begin{equation}\label{CHF1}
\frac{d_{g(t)}(p,x)^2}{H |t|} - H\leq f(x,t).
 \end{equation}
\item[(CHF2)] if for every $(x,t)\in M\times I$
\begin{equation}\label{CHF2}
f(x,t) \leq H\left(\frac{d_{g(t)}(p,x)^2}{|t|}+1 \right).
 \end{equation}
\end{itemize}

\begin{definition}
Consider a sequence $I_j\subset \mathbb R$ of intervals that converge to the interval $I_\infty\subset \mathbb R$ in the sense that for any closed interval $J\subset I_\infty$, $J\subset I_j$ for large $j$.
\begin{enumerate}
\item We say that a pointed sequence $(M_j,g_j(t),p_j)_{t\in I_j}$ of Ricci flows converges to a pointed Ricci flow $(M_\infty,g_\infty(t),p_\infty)_{t\in I_\infty}$ if for some, and thus for any,  $t_0\in I_\infty$ the following holds: for every $R<+\infty$ there are smooth maps $F_j: B(p_\infty,t_0,R)\rightarrow M_j$, diffeomorphisms onto their image, such that $F_j(p_\infty) = p_j$ and $F_j^* g_j$ converges to $g_\infty$ in the smooth and uniform in compact subsets of $B(p_\infty,t_0,R)\times I_\infty$ topology.
\item Given a convergent sequence $(M_j,g_j(t),p_j)_{t\in I_j}\rightarrow (M_\infty,g_\infty(t),p_\infty)_{t\in I_\infty}$, we will say that $x_j\in M_j$ converges to $x_\infty\in M_\infty$, and denote by $x_j\rightarrow x_\infty$, if there is $R<+\infty$ and $F_j :B(p_\infty,t_0,R)\rightarrow M_j$ as in (1) such that $F_j^{-1}(x_j)\rightarrow x_\infty$.
\item Given a convergent sequence $(M_j,g_j(t),p_j)_{t\in I_j}\rightarrow (M_\infty,g_\infty(t),p_\infty)_{t\in I_\infty}$, we will say that a sequence of functions $f_j\in C^\infty(M_j \times I_j)$ converges to $f_\infty \in C^\infty(M_\infty\times I_\infty)$  if for any $R<+\infty$ there is $F_j :B(p_\infty,t_0,R)\rightarrow M_j$ as in (1) such that $F_j^* f_j$ converges smoothly uniformly in compact subsets of $B(p_\infty,t_0,R)\times I_\infty$ to $f_\infty$.
\item Given a convergent sequence $(M_j,g_j(t),p_j)_{t\in I_j}\rightarrow (M_\infty,g_\infty(t),p_\infty)_{t\in I_\infty}$, we will say that a sequence $(\nu_{j,t})_{t\in I_j}$ of conjugate heat flows converges to a conjugate heat flow $(\nu_{\infty,t})_{t\in I_\infty}$ if the associated solutions $u_j\in C^\infty(M_j \times I_j)$ to the conjugate heat equation converge to $u_\infty\in C^\infty(M_\infty\times I_\infty)$, associated to the conjugate heat flow $\nu_{\infty,t}$.
\end{enumerate}
\end{definition}

\begin{proposition}[Compactness of Ricci flows paired with conjugate heat flows]\label{prop:compactness_rf}
Let $(M^n_j,g_j(t),p_j)_{t\in I}$, $\sup I = 0$,  be a sequence of smooth complete Ricci flows satisfying (RF1). If $M_j$ are compact, suppose that $(M_j,g_j(t),p_j)_{t\in I}$ satisfies (RF2), while if $(M_j,g_j(t))_{t\in (-\infty,0)}$ are induced by gradient shrinking Ricci solitons $(M_j,\bar g_j,\bar f_j)$ suppose that $\bar\mu(\bar g_j)\geq -\Lambda$, for all $j$. Moreover, suppose that  there is sequence $(\nu_{j,t})_{t\in I}$ of conjugate heat flows that satisfy (CHF1) with respect to $p_j\in M_j$. Then there is a pointed smooth complete Ricci flow $(M_\infty,g_\infty(t),p_\infty)_{t\in I}$ satisfying (RF1), and a conjugate heat flow $(\nu_{\infty,t})_{t\in I}$ on $(M_\infty,g_\infty(t))_{t\in I}$, satisfying (CHF1) with respect to $p_\infty$, such that a subsequence of $(M_j,g_j(t),p_j)_{t\in I}$ converges to $(M_\infty,g_\infty(t),p_\infty)_{t\in I}$ and $(\nu_{j,t})_{t\in I}$ converges to $(\nu_{\infty,t})_{t\in I}$.
\end{proposition}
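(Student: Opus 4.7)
The plan is to carry out a two-stage compactness argument: first extract a smoothly converging subsequence of Ricci flows via Hamilton's compactness theorem, and then extract a converging subsequence of the conjugate heat flows via standard linear parabolic regularity, controlling the behavior at spatial infinity by the Gaussian upper bound encoded in (CHF1).

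\emph{Stage 1 (Hamilton compactness).} Assumption (RF1) combined with Shi's derivative estimates gives uniform bounds on all covariant derivatives of the curvature on every compact subinterval $J \subset I \cap \{t<0\}$. To invoke Hamilton's compactness one needs a uniform positive lower bound on $\mathrm{inj}_{g_j(t_0)}(p_j)$ at some $t_0 \in I$. In the compact case, (RF2) together with Theorem \ref{thm:entropy_noncollapsing} yields $\kappa$-noncollapsing at scale $|t_0|^{1/2}$, and the Type~I curvature bound converts this into the desired injectivity radius bound. In the soliton case, $\bar\mu(\bar g_j) \geq -\Lambda$ combined with Lemma \ref{lemma:shrinker_noncollapsing} gives $\kappa$-noncollapsing in neighborhoods of minima of $\bar f_j$; condition (CHF1) ensures such minima stay within uniformly bounded $g_j(t)$-distance of $p_j$, since otherwise the quadratic growth of a soliton potential together with (CHF1) would contradict the normalization $\int u_j(\cdot,t)\,d\vol_{g_j(t)} = 1$. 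After passing to a subsequence, Hamilton's theorem produces a pointed limit $(M_\infty,g_\infty(t),p_\infty)_{t\in I}$ with smooth convergence realized by diffeomorphisms $F_j: B(p_\infty,t_0,R)\to M_j$, and (RF1) is inherited in the limit.

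\emph{Stage 2 (parabolic compactness of $u_j$).} Pulling the densities $u_j = (4\pi|t|)^{-n/2} e^{-f_j}$ back to $M_\infty$ via $F_j$, condition (CHF1) provides the pointwise Gaussian upper bound
\begin{equation*}
u_j(x,t) \leq e^{H}(4\pi|t|)^{-n/2} \exp\!\left(-\frac{d_{g_j(t)}(p_j,x)^2}{H|t|}\right),
\end{equation*}
which is locally uniformly bounded on compact subsets of $M_\infty \times (I\cap\{t<0\})$. Each $u_j$ solves the linear conjugate heat equation with smoothly converging coefficients, so interior parabolic Schauder estimates give uniform $C^{k,\alpha}_{\mathrm{loc}}$ bounds, and Arzelà--Ascoli extracts a subsequence with $u_j \to u_\infty$ smoothly on compact sets, where $u_\infty \geq 0$ solves the limiting conjugate heat equation on $(M_\infty,g_\infty(t))$.

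\emph{Stage 3 (mass preservation, positivity, (CHF1) in the limit).} This is the step I expect to require the most care, as it rules out loss of mass at spatial infinity. Using the Type~I Ricci bound and Bishop--Gromov to bound the volume of $g_j(t)$-balls by an exponential in the radius, the Gaussian upper bound forces
\begin{equation*}
\sup_j \int_{M_j \setminus B_{g_j(t)}(p_j,R)} u_j(\cdot, t)\, d\vol_{g_j(t)} \xrightarrow[R\to\infty]{} 0
\end{equation*}
for each $t \in I \cap \{t<0\}$. Combined with smooth convergence on the complementary ball and the induced convergence of volume forms, this gives $\int_{M_\infty} u_\infty(\cdot,t)\, d\vol_{g_\infty(t)} = 1$. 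In particular $u_\infty \not\equiv 0$, whence the strong maximum principle applied to the (time-reversed) conjugate heat equation yields $u_\infty > 0$ everywhere, so $f_\infty := -\log u_\infty - \tfrac{n}{2}\log(4\pi|t|)$ is smooth. The limiting inequality (CHF1) for $f_\infty$ then follows by taking $j \to \infty$ in the pointwise bound, using that smooth Cheeger--Gromov convergence yields local uniform convergence of the distance functions $d_{g_j(t)}(p_j,\cdot)$. The main obstacle is precisely this tail control: without (CHF1) one could only extract a sub-probability measure in the limit, with mass potentially escaping to spatial infinity.
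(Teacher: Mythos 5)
Your three-stage plan — Hamilton compactness for the flows, interior parabolic estimates for $u_j$, and Gaussian-tail control to preserve total mass — is exactly the skeleton of the paper's proof, which records the same steps more tersely; Stages 2 and 3 are correct, and your identification of the tail estimate as the step ruling out mass loss is precisely where the care is needed.

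There is, however, one incorrect intermediate claim in the soliton branch of Stage 1. You assert that (CHF1) forces the minima of the soliton potentials $\bar f_j$ to lie within uniformly bounded $g_j(t)$-distance of $p_j$, arguing that otherwise the quadratic growth of $\bar f_j$ together with (CHF1) would contradict $\int u_j\,d\vol=1$. This is false: take the flat Gaussian soliton on $\mathbb{R}^n$ and let $\nu_j$ be the conjugate heat kernel based at $(p_j,0)$, a Gaussian centred at $p_j$; it satisfies (CHF1) with a universal constant, yet $p_j$ may be arbitrarily far from the origin, where $\bar f_j$ attains its minimum. The envisioned contradiction does not arise because the conjugate heat flows $(\nu_{j,t})$ in the hypothesis are arbitrary — they are not assumed to be the soliton conjugate heat flows — so the growth of $\bar f_j$ carries no information about $u_j$. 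The step is repaired without locating the soliton minima at all: (CHF1) gives $u_j(x,t)\leq e^{H}(4\pi|t|)^{-n/2}\exp\bigl(-d_{g_j(t)}(p_j,x)^2/(H|t|)\bigr)$, while (RF1) gives $\ric_{g_j(t)}\geq -(n-1)\frac{C_I}{|t|}g_j(t)$, so Bishop--Gromov yields $\vol_{g_j(t)}B(p_j,t,R\sqrt{|t|})\leq V_{-(n-1)C_I}(R)\,|t|^{n/2}$ with $V$ of at most exponential growth. If $\vol_{g_j(t)}B(p_j,t,\sqrt{|t|})\leq\varepsilon|t|^{n/2}$, volume-ratio monotonicity propagates this smallness to every larger scale, and integrating the Gaussian bound then forces $\int_{M_j}u_j\,d\vol_{g_j(t)}\leq C(n,C_I,H)\varepsilon<1$, a contradiction. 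Hence $\vol_{g_j(t)}B(p_j,t,\sqrt{|t|})\geq\kappa(n,C_I,H)|t|^{n/2}$, which together with (RF1), Shi's estimates and Cheeger's injectivity radius estimate gives exactly the noncollapsing needed for Hamilton's compactness. This argument treats the compact and soliton cases uniformly, so Lemma~\ref{lemma:shrinker_noncollapsing} (and in fact Theorem~\ref{thm:entropy_noncollapsing}) are not strictly required, though using them in the compact case, as you do, is of course also fine.
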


\begin{proof}
By  Theorem \ref{thm:entropy_noncollapsing} and Lemma \ref{lemma:shrinker_noncollapsing}, the standard smooth compactness theory for sequences of Ricci flows we can assume, by passing to a subsequence, that $(M_j,g_j(t),p_j)_{t\in I}$  smoothly converges to a complete Type I Ricci flow $(M_\infty,g_\infty(t),p_\infty)_{t\in I}$ with constant $C_I$.

Since the conjugate heat flows $(\nu_{j,t})_{t\in I}$ uniformly satisfy (CHF1) with respect to $p_j\in M_j$, it follows that the associated solutions $u_j>0$ to the conjugate heat equation satisfy uniform Gaussian $C^0$ bounds. Thus, by parabolic regularity we can also assume, passing to a further subsequence if necessary, that $u_j$ converges to a smooth solution $u_\infty$ of the conjugate heat equation on $M_\infty \times I$. The Gaussian bounds (CHF1) on $u_j$ also give that 
$$\int_{M_\infty} u_\infty (\cdot,t) d\vol_{g_\infty(t)}=1$$
for every $t\in I$, so $(\nu_{\infty,t})_{t\in I}$ with $d\nu_{\infty,t} = u_\infty(\cdot,t) d\vol_{g_\infty(t)}$ is a conjugate heat flow on $(M_\infty,g_\infty(t))_{t\in I}$,  that satisfies (CHF1) with respect to $p_\infty$.
\end{proof}

Very often we will need to convert integral estimates with respect to a conjugate heat kernel measure $\nu_{(p_1,t_1),t}$ to estimates with respect to a conjugate heat kernel measure $\nu_{(p_2,t_2),t}$ based at a space time point $(p_2,t_2)\not = (p_1,t_1)$. Unfortunately, these two probability measures do not satisfy an inequality of the form $\nu_{(p_2,t_2),t}\leq C\nu_{(p_1,t_1),t}$, even in the static Ricci flow in Euclidean space. The following lemma will be crucial in such situations. It will allow us to switch basepoints in our estimates, provided we can enhance a given integral estimate to an estimate that involves an additional weight of the from $e^{\alpha f}$, for an appropriate $\alpha\in (0,1)$. This is inspired by Bamler \cite{Bamler3}, where this idea is exploited to deal with general Ricci flows. Here, for Ricci flows satisfying (RF3) and (RF4), the upper and lower Gaussian bounds for the conjugate heat kernels provide much more flexibility, and essentially the following lemma is a direct consequence of these bounds.

 \begin{lemma}[Change of basepoint]\label{lemma:compare_kernels}
Let $(M^n,g(t))_{t\in [-1,0]}$ be a smooth complete Ricci flow satisfying (RF3) and (RF4), and points $p_1,p_2\in M$. Let $u_{(p_i,t_i)}(x,t)= (4\pi |t_1-t|)^{-n/2} e^{-f_i(x,t)}$, $i=1,2$,
be the conjugate heat kernels based at $(p_i,t_i)$. If $\frac{2C_1 - \beta C_2}{2C_1} \leq \alpha<1$, then for every $-1\leq t<0$ such that 
\begin{equation} \label{eqn:ratio}
\beta|t|\leq |t_i - t| \leq |t|,
\end{equation}
 for every $i=1,2$, we have that
\begin{equation*}
 u_{(p_1,t_1)}(x,t)\leq C (n,H|\beta) e^{\frac{(d_{g(t)}(p_1,p_2))^2}{C_1|t|}} e^{\alpha f_2} u_{(p_2,t_2)}(x,t).
\end{equation*}
\end{lemma}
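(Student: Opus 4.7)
The strategy is to exploit the two-sided Gaussian bounds (RF3) and (RF4) directly: the desired inequality is, at heart, a pointwise comparison between the Gaussian profile of $u_{(p_1,t_1)}$ and that of $u_{(p_2,t_2)}$, and with Gaussian bounds on both sides everything reduces to comparing exponents. Rewriting the claim as
\begin{equation*}
u_{(p_1,t_1)}(x,t) \leq C\, e^{d_{g(t)}(p_1,p_2)^2/(C_1|t|)}\,(4\pi |t_2-t|)^{-n/2} e^{-(1-\alpha) f_2(x,t)},
\end{equation*}
I would apply the upper bound (RF3) on the left to obtain
$u_{(p_1,t_1)} \leq C_1(4\pi|t_1-t|)^{-n/2}e^{-d_{g(t)}(p_1,x)^2/(C_1|t_1-t|)}$, and the lower Gaussian bound (RF4) on the right, in its $f$-form, to obtain $f_2(x,t) \leq -\log C_2 + d_{g(t)}(p_2,x)^2/(C_2|t_2-t|)$. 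The time-ratio prefactor is harmless: condition \eqref{eqn:ratio} gives $|t_1-t|/|t_2-t|\in [\beta,\beta^{-1}]$, so $(4\pi|t_1-t|)^{-n/2}\leq \beta^{-n/2}(4\pi|t_2-t|)^{-n/2}$, which is absorbed into a constant $C(n,C_1,C_2\mid\beta)$.

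The core of the argument is then an exponent comparison: it suffices to show
\begin{equation*}
(1-\alpha) f_2(x,t) \leq \frac{d_{g(t)}(p_1,x)^2}{C_1|t_1-t|} + \frac{d_{g(t)}(p_1,p_2)^2}{C_1|t|} + O(1).
\end{equation*}
To bring the $f_2$ estimate into the $p_1$-centred frame, I would use the triangle inequality $d_{g(t)}(p_2,x)\leq d_{g(t)}(p_1,x)+d_{g(t)}(p_1,p_2)$ together with the elementary bound $(a+b)^2 \leq 2a^2 + 2b^2$, yielding
\begin{equation*}
(1-\alpha) f_2(x,t) \leq (1-\alpha)\!\left[-\log C_2 + \frac{2\, d_{g(t)}(p_1,x)^2 + 2\, d_{g(t)}(p_1,p_2)^2}{C_2|t_2-t|}\right].
\end{equation*}
Matching coefficients term by term: for the $d_{g(t)}(p_1,x)^2$ term I need $2(1-\alpha)/(C_2|t_2-t|)\leq 1/(C_1|t_1-t|)$, and for the $d_{g(t)}(p_1,p_2)^2$ term I need $2(1-\alpha)/(C_2|t_2-t|)\leq 1/(C_1|t|)$. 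Using $|t_2-t|\geq \beta|t|$ and $|t_1-t|\leq |t|$ from \eqref{eqn:ratio}, both reduce to the single algebraic inequality $1-\alpha \leq \beta C_2/(2C_1)$, which is precisely the hypothesis $\alpha \geq (2C_1-\beta C_2)/(2C_1)$.

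The main technical point to watch is the sharp choice of the splitting constant in $(a+b)^2\leq (1+\eta)a^2 + (1+\eta^{-1})b^2$; taking $\eta=1$ simultaneously balances both the $d_{g(t)}(p_1,x)^2$ and $d_{g(t)}(p_1,p_2)^2$ contributions against the sharp threshold $(2C_1-\beta C_2)/(2C_1)$, and any other choice of $\eta$ would either overshoot one of them or force a worse hypothesis on $\alpha$. Beyond this, the proof is really just bookkeeping: collecting the $-(1-\alpha)\log C_2$, the $\beta^{-n/2}$ prefactor and the $C_1$ from (RF3) into the single constant $C(n,C_1,C_2\mid\beta)$. There are no PDE-theoretic obstacles here; the only reason the estimate is nontrivial is that the naive comparison $\nu_{(p_1,t_1),t}\leq C\,\nu_{(p_2,t_2),t}$ fails, and the weight $e^{\alpha f_2}$ is precisely what compensates for the mismatch of the Gaussian tails at the admissible scale.
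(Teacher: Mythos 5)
Your proof is correct and follows essentially the same route as the paper: both proofs reduce the claim, via the two-sided Gaussian bounds (RF3) and (RF4), to a comparison of exponents, carried out with the triangle inequality in the form $(a+b)^2\leq 2a^2+2b^2$ together with the bounds $\beta|t|\leq|t_i-t|\leq|t|$ from \eqref{eqn:ratio}. The only difference is cosmetic — the paper bounds the right-hand side $e^{\alpha f_2}u_{(p_2,t_2)}$ from below and then compares with the upper bound on $u_{(p_1,t_1)}$ at the end, while you work forward from the left-hand side — but the inequalities used, the role of the hypothesis $\alpha\geq\frac{2C_1-\beta C_2}{2C_1}$, and the resulting constant are identical.
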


\begin{proof}
Recall that assumptions (RF3) and (RF4) imply that for every $x\in M$ and $t<t_i$, 
\begin{equation}\label{eqn:p_i_t_i_gaussian}
 \frac{C_2e^{-\frac{(d_{g(t)}(p_i,x))^2}{C_2|t_i-t|}}}{(4\pi |t_i-t|)^{n/2}}
\leq \frac{e^{-f_i(x,t)}}{(4\pi |t_i-t|)^{n/2}} \leq \frac{C_1e^{-\frac{(d_{g(t)}(p_i,x))^2}{C_1|t_i-t|}}}{(4\pi |t_i-t|)^{n/2}}
\end{equation}

It follows that, for 
$$\frac{1-\alpha}{C_2}\leq \frac{\beta}{2C_1} \Longleftrightarrow  \frac{2C_1 - \beta C_2}{2C_1} \leq \alpha,$$
we obtain
\begin{equation}\label{eqn:lower_bound_weight}
\begin{aligned}
e^{\alpha f_2} u_{(p_2,t_2)}(x,t) &=(4\pi |t_2-t|)^{-n/2} e^{-(1-\alpha)f_2(x,t)}\\
&\geq \frac{C_2^{1-\alpha}}{(4\pi |t_2-t|)^{n/2}} e^{-\frac{(1-\alpha)(d_{g(t)}(p_2,x))^2}{C_2 |t_2-t|}} \\
&\geq \frac{C_2}{(4\pi |t_2-t|)^{n/2}} e^{-\frac{\beta(d_{g(t)}(p_2,x))^2}{2C_1 |t_2-t|}}.
\end{aligned}
\end{equation}

Now, the triangle inequality gives
$(d_{g(t)}(p_2,x))^2\leq 2(d_{g(t)}(p_1,p_2))^2 + 2(d_{g(t)} (p_1,x))^2$, hence
\begin{equation}\label{eqn:exp_product}
e^{-\frac{\beta (d_{g(t)}(p_2,x))^2}{2C_1 |t_2-t|}} \geq e^{-\frac{\beta (d_{g(t)}(p_1,x))^2}{C_1 |t_2-t|}} e^{-\frac{\beta(d_{g(t)}(p_1,p_2))^2}{C_1|t_2-t|}}.
\end{equation}

To estimate the first factor on the right-hand side of \eqref{eqn:exp_product}, note that \eqref{eqn:ratio} implies
\begin{equation}
\frac{\beta}{|t_2-t|} \leq \frac{1}{|t|}  \leq \frac{1}{|t_1-t|}
\end{equation}
therefore
\begin{equation}\label{eqn:exp_d_p1x}
e^{-\frac{\beta(d_{g(t)}(p_1,x))^2}{C_1 |t_2-t|}} \geq e^{-\frac{(d_{g(t)}(p_1,x))^2}{C_1 |t_1-t|}}.
\end{equation}
Similarly, we can estimate the second factor of \eqref{eqn:exp_product} as
\begin{equation}\label{eqn:exp_d_p1p2}
e^{-\frac{\beta (d_{g(t)}(p_1,p_2))^2}{C_1|t_2-t|}} \geq e^{-\frac{(d_{g(t)}(p_1,p_2))^2}{C_1 |t|}}.
\end{equation}
Finally, using once more \eqref{eqn:p_i_t_i_gaussian}, \eqref{eqn:lower_bound_weight} becomes 
\begin{equation}
\begin{aligned}
e^{\alpha f_2} u_{(p_2,t_2)}(x,t) &\geq \frac{C_2}{(4\pi |t_2-t|)^{n/2}} e^{-\frac{(d_{g(t)}(p_1,x))^2}{C_1 |t_1-t|}} e^{-\frac{(d_{g(t)}(p_1,p_2))^2}{C_1 |t|}}\\
&\geq \frac{C_2}{C_1}  \beta^{n/2} e^{-\frac{(d_{g(t)}(p_1,p_2))^2}{C_1 |t|}}u_{(p_1,t_1)} (x,t),
\end{aligned}
\end{equation}
which proves the lemma.
\end{proof}

The following lemma is again a direct consequence of the Gaussian upper bounds for the conjugate heat kernel (RF3). 

\begin{lemma}\label{lemma:int_ker_bounds}
Let $(M^n,g(t))_{t\in I}$, $\max I=0$, be smooth complete  Ricci flow satisfying 
(RF1) and (RF3), and let $\alpha\in [0,1)$ and $m\geq 0$. Then, for every $t\in I$, $t<0$,
\begin{align}
\int_M e^{\alpha f} d\nu_{(p,0),t}&\leq F_1(n,C_I,C_1|\alpha),\label{eqn:eaf}\\
\int_M \left(\frac{d_{g(t)}(p,\cdot)}{|t|^{1/2}}\right)^m d\nu_{(p,0),t}&\leq F_2(n,C_I,C_1|m).
\end{align}
Moreover, for every $\varepsilon>0$ there is $r=r(C_I,C_1,\alpha,m)$ such that
\begin{equation}
\int_{M\setminus B(p,t,r\sqrt{|t|})} e^{\alpha f(\cdot,t)}d\nu_{(p,0),t} + \int_{M\setminus B(p,t,r\sqrt{|t|})}\left(\frac{d_{g(t)}(p,\cdot)}{|t|^{1/2}}\right)^m d\nu_{(p,0),t}<\varepsilon.
\end{equation}
\end{lemma}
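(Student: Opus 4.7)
The plan is to reduce all three integrals to a single scale-invariant integral over $M$ with a Gaussian weight times a polynomial, and then bound the latter via Bishop--Gromov volume comparison in the parabolically rescaled metric, where the Type I assumption (RF1) converts the time-dependent curvature bound into a uniform one. First, (RF3) yields the pointwise lower bound $f(\cdot,t)\geq d_{g(t)}(p,\cdot)^2/(C_1|t|) - \log C_1$, which gives
\[
e^{\alpha f(\cdot,t)}u_{(p,0)}(\cdot,t)=(4\pi|t|)^{-n/2}e^{-(1-\alpha)f(\cdot,t)}\leq C_1^{1-\alpha}(4\pi|t|)^{-n/2}e^{-(1-\alpha)d_{g(t)}(p,\cdot)^2/(C_1|t|)}.
\]
The same bound with $\alpha=0$, multiplied by the extra factor $(d_{g(t)}(p,\cdot)/\sqrt{|t|})^m$ for the second integral, reduces both integrands to an expression of the form $P(d_{g(t)}(p,\cdot)/\sqrt{|t|})\cdot (4\pi|t|)^{-n/2}$, where $P(\rho)=C\rho^m e^{-c\rho^2}$ with positive constants $c,C$ depending only on $\alpha,m,C_1$.

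Next I would rescale to $\tilde g:=|t|^{-1}g(t)$. Under this rescaling $d_{\tilde g}=d_{g(t)}/\sqrt{|t|}$ and $dV_{g(t)}=|t|^{n/2}\,dV_{\tilde g}$, so the $|t|$ factors cancel and the integrand becomes simply $(4\pi)^{-n/2}P(d_{\tilde g}(p,\cdot))\,dV_{\tilde g}$. By (RF1), $|\riem_{\tilde g}|\leq C_I$, so $\ric_{\tilde g}\geq -(n-1)C_I$, and Bishop--Gromov comparison yields $\vol_{\tilde g}(B_{\tilde g}(p,\rho))\leq V(n,C_I,\rho)$, where $V$ grows at most exponentially in $\rho$. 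A layer-cake decomposition together with integration by parts then produces
\[
\int_M P(d_{\tilde g}(p,\cdot))\,dV_{\tilde g}\leq \int_0^\infty |P'(\rho)|\,V(n,C_I,\rho)\,d\rho,
\]
which is finite and depends only on $n,C_I,C_1,\alpha,m$, since the Gaussian decay of $P$ beats the exponential volume growth. This proves the two main bounds.

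Finally, the tail estimate follows from the same reduction: after rescaling, the integral over $M\setminus B(p,t,r\sqrt{|t|})$ becomes $(4\pi)^{-n/2}\int_{B_{\tilde g}(p,r)^c}P(d_{\tilde g}(p,\cdot))\,dV_{\tilde g}$, namely the tail starting at radius $r$ of the above convergent integral. It therefore tends to zero uniformly in $t$ as $r\to\infty$, and choosing $r=r(C_I,C_1,\alpha,m,\varepsilon)$ large enough makes the sum of both tails less than $\varepsilon$. The argument is essentially routine; the only nontrivial ingredient is the parabolic rescaling in the second step, which is precisely where the Type I bound is used to obtain a scale-free volume growth estimate.
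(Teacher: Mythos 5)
Your proposal is correct and follows essentially the same route as the paper's own (very terse) proof: rescale so that $t=-1$, use (RF3) to bound the integrands pointwise by a Gaussian in $d_{g(-1)}(p,\cdot)$, and use (RF1) to get at-most-exponential volume growth of balls, which makes the Gaussian integral converge. You have simply filled in the details the paper leaves implicit (the reduction of both integrands to a single profile $P(\rho)=C\rho^m e^{-c\rho^2}$, the Bishop--Gromov comparison in the rescaled metric, and the layer-cake argument).
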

\begin{proof}
By rescaling it suffices to prove the estimates assuming that $t=-1\in I$. Then, by the conjugate heat kernel bound (RF3), the functions $e^{\alpha f(\cdot,-1)} u_{(p,0)}(\cdot,-1)$ and $
(d_{g(-1)}(p,\cdot))^m u_{(p,0)}(\cdot,-1)$ are both bounded above by $Ce^{-\frac{(d_{g(-1)}(p,\cdot))^2}{C}}$, where $C$ depends on $C_1$ and $\alpha$ or $m$ accordingly. On the other hand, it follows by  assumption (RF1) ithe volume of balls of radius $r$ grow at most exponentially. This suffices to prove the result.
\end{proof}
\begin{remark}
Bamler-Zhang show in \cite{BZ2} that the volume of balls of radius $r>0$ still grows at most exponentially under a scalar curvature and entropy bound for the Ricci flow, although one needs to let the flow run for a bit, see Lemma 2.1 in \cite{BZ2}. Thus, a variant of Lemma \ref{lemma:int_ker_bounds} under a weaker scalar curvature bound does hold and would be sufficient for our purposes. Since in our setting we will have the (RF1) bound at our disposal, we will just use Lemma \ref{lemma:int_ker_bounds} for the sake of simplicity.
\end{remark}

Finally,  Proposition \ref{prop:RF35} below shows that any Ricci flow with a Type I scalar curvature bound and a lower entropy bound essentially satisfies assumptions (RF3-5). 

\begin{proposition}\label{prop:RF35}
Let $(M^n,g(t))_{t\in [-T, 0)}$ be a smooth compact Ricci flow satisfying 
\begin{align*}
\sup_M |R|(\cdot,t) &\leq \frac{C_I}{|t|},\\
\nu(g(-T), 2T) &\geq -\Lambda.
\end{align*}
Then, $(M,g(t))_{t\in [-\frac{T}{4},0)}$ satisfies (RF2-5), for some constants $\Lambda,C_1,C_2,H,K$ depending only on $n,C_I$ and $\Lambda$.
\end{proposition}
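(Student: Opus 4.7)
The plan is to establish each of (RF2)--(RF5) in turn, with the restriction from $[-T,0)$ to $[-\frac{T}{4},0)$ providing the ``buffer" needed to invoke the scalar-curvature based heat kernel and distance distortion estimates of Bamler--Zhang and Hallgren.

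First, (RF2) follows from Perelman's monotonicity formula for $\nu$ under Ricci flow. Given $t\in[-T/4,0)$, apply the monotonicity inequality recalled in the preliminaries with $t_1=-T$, $t_2=t$, and $\tau+t_2-t_1=2T$ (so $\tau=T-t$) to obtain $\nu(g(t),T-t)\geq \nu(g(-T),2T)\geq-\Lambda$. Since $|t_{\textrm{sup}}-t|=|t|\leq T/4\leq T-t$ and $\nu(g,\cdot)$ is non-increasing in the $\tau$-variable, this gives $\nu(g(t),|t|)\geq -\Lambda$.

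Next, for (RF3) I would invoke the Gaussian upper bound for conjugate heat kernels along Ricci flow with a Type I bound on scalar curvature, established in Bamler--Zhang \cite{BZ1}. More precisely, their estimate yields a constant $C_1=C_1(n,C_I,\Lambda)$ with
$$u_{(x,t)}(y,s)\leq C_1(4\pi|t-s|)^{-n/2}\exp\!\left(-\frac{d_{g(s)}(x,y)^2}{C_1|t-s|}\right)$$
for $-\frac{T}{4}\le s<t<0$, using the scalar curvature bound together with the entropy bound provided by (RF2). Equivalently, $f_{(x,t)}(y,s)\geq d_{g(s)}(x,y)^2/(H|t-s|)-H$, which is exactly (RF3). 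The lower Gaussian bound (RF4) then follows from Hallgren's refinement \cite{Hallgren_scalar}, where the Gaussian lower bound on the conjugate heat kernel is proved under the same hypotheses, producing a constant $C_2=C_2(n,C_I,\Lambda)$ and hence $H=H(n,C_I,\Lambda)$. In both cases, one needs to give up a definite amount of time at the initial boundary of the interval in order to promote the entropy bound at scale $T$ to a scale-$|t|$ estimate valid for the heat kernel argument, which is the reason for restricting from $[-T,0)$ to $[-\frac{T}{4},0)$.

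Finally, for (RF5) I would apply the lower distance distortion estimate of Bamler--Zhang from \cite{BZ1} (also \cite{BZ2}), which states that under a Type I bound on the scalar curvature and a lower entropy bound one has $d_{g(s)}(x,y)\leq d_{g(t)}(x,y)+K\sqrt{|s|}$ for $s<t$, with $K=K(n,C_I,\Lambda)$. Again the buffer between $-T$ and $-\frac{T}{4}$ is what allows this estimate to be applied uniformly on the shorter interval. No obstacle of substance arises here, since the proposition is essentially a bookkeeping statement that assembles (RF2)--(RF5) from existing scalar-curvature results; the only technical point to check is that the scaling and time-interval conventions match those of \cite{BZ1} and \cite{Hallgren_scalar} so that the constants depend only on $n,C_I,\Lambda$.
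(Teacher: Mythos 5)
Your overall strategy coincides with the paper's: (RF2) from Perelman's monotonicity of $\nu$, (RF3)--(RF4) from the scalar-curvature based conjugate heat kernel bounds, and (RF5) from the Bamler--Zhang distance distortion estimate. Your treatment of (RF2) is correct, and for (RF3)--(RF4) your citations are acceptable in spirit, though note that the upper bound in the Type~I form you need (valid for all $s<t<0$, with constants depending only on $n,C_I,\Lambda$) is not literally what \cite{BZ1} states -- their kernel bounds assume a uniform scalar curvature bound on the time interval -- whereas Lemma 3.1 of \cite{Hallgren_scalar}, which the paper invokes for both the upper and the lower bound, is formulated precisely for the Type~I scalar curvature setting; citing that lemma for both bounds closes this loose end.

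The genuine gap is in (RF5). You assert that Bamler--Zhang ``states that under a Type I bound on the scalar curvature and a lower entropy bound one has $d_{g(s)}(x,y)\leq d_{g(t)}(x,y)+K\sqrt{|s|}$,'' but that is not their statement: Theorem 1.1 of \cite{BZ2} gives $d_{g(s)}(x,y)\leq d_{g(t)}(x,y)+C\sqrt{t-s}$ only under a \emph{uniform} scalar curvature bound and an entropy bound at a fixed scale, on a time interval of definite length. Under the Type~I hypothesis $|R|\leq C_I/|t|$ the scalar curvature blows up as $t\to 0$, so the cited result cannot be applied in one stroke up to the singular time, and a ``buffer'' between $-T$ and $-T/4$ does not help with this, since the difficulty is at $t=0$, not at $t=-T$. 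The missing step -- which is the actual content of the paper's proof -- is a dyadic rescaling and summation argument: one sets $t_i=-2^{-i+1}$ and considers the rescaled flows $g_i(t)=2^i g(2^{-i}t+t_i)$ on $[0,1]$, checks that $\sup_M R_{g_i}\leq C_I$ and $\nu(g_i(0),2)\geq-\Lambda$ (both scale invariant consequences of the hypotheses), applies the Bamler--Zhang estimate on each such interval to get $d_{g(t_i)}\leq d_{g(t_{i+1})}+C\sqrt{t_{i+1}-t_i}$ together with the corresponding estimate inside each interval, and then telescopes: for $s\in(t_i,t_{i+1}]$ and $t\geq s$ the accumulated error is a geometric series $C\sum_k 2^{-(i+k)/2}\leq K\sqrt{|s|}$. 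Without this iteration the constant $K$ and the $\sqrt{|s|}$ scaling are not justified, so as written your argument for (RF5) does not go through.
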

\begin{proof}
By a time shift and rescaling, it suffices to prove the proposition assuming that $(M,g(t))_{t\in [-2,0)}$ satisfies
\begin{align*}
\sup_M |R|(\cdot,t)&\leq \frac{C_I}{|t|},\\
\nu(g(-2),4)&\geq -\Lambda.
\end{align*}
By the definition of $\nu$ we know that
\begin{equation}\label{eqn:nu}
\nu(g(t),|t|)\geq \nu(g(-2),2) \geq \nu(g(-2),4)\geq -\Lambda
\end{equation}
for every $t\in [-2,0)$, so $(M,g(t))_{t\in [-2, 0 )}$ satisfies (RF2).

The Gaussian upper and lower conjugate heat kernel bounds (RF3) and (RF4) follow directly from Lemma 3.1 in \cite{Hallgren_scalar}.

The lower distance distortion estimate (RF5) follows from Theorem 1.1 in \cite{BZ2} as follows. Let $t_i=-2^{-i+1}$, $i\geq 0$ integer. Then $t_{i+1}-t_i =2^{-i}$ and the rescaled flows $(M,g_i(t))_{t\in [0,1]}$, defined as
$$g_i(t) = 2^i g( 2^{-i} t +t_i)$$
satisfy 
\begin{align*}
\sup_M R_{g_i}(\cdot,t) &= 2^{-i} \sup_M R_g(\cdot, 2^{-i} t +t_i) \leq 2^{-i} \frac{C_I}{|t_{i+1}|} \leq C_I,\\
\nu(g_i(0),2)&=\nu(2^i g(t_i),2) =\nu(g(t_i),2^{-i+1})=\nu(g(t_i),|t_i|)\geq -\Lambda.
\end{align*}
Therefore, by Theorem 1.1 in \cite{BZ2}, for any $x,y\in M$ and $t\in [t_i,t_{i+1}]$
\begin{equation}
\begin{aligned}
d_{g(t_i)}(x,y)&= 2^{-i/2} d_{g_i(0)} (x,y)\\
&\leq 2^{-i/2}( d_{g_i( 2^i(t-t_i))} (x,y) + C\sqrt{2^i(t-t_i)}) \\
&=d_{g(t)}(x,y) + C \sqrt{t-t_i}.
\end{aligned}
\end{equation}
and similarly
$$d_{g(t)}(x,y) \leq d_{g(t_{i+1})}(x,y) + C\sqrt{t_{i+1}-t}.$$
In particular, 
$$d_{g(t_i)}(x,y) \leq d_{g(t_{i+1})}(x,y) + C\sqrt{t_{i+1}-t_i}.$$
It follows that for any $-2\leq s<t<0$, with $s\in (t_i,t_{i+1}]$, $t\in [t_{i+1+m},t_{i+2+m})$
\begin{equation}
\begin{aligned}
d_{g(s)}(x,y)&\leq d_{g(t_{i+1})}(x,y) + C\sqrt{t_{i+1}-s}\\
&\leq d_{g(t_{i+2})}(x,y) + C \sqrt{t_{i+2}-t_{i+1}} + C\sqrt{t_{i+1}-s}\\
&\leq d_{g(t_{i+1+m})}(x,y) + C \sum_{k=1}^m \sqrt{t_{i+1+k}-t_{i+k}} + C\sqrt{t_{i+1}-s}
 \\
&\leq d_{g(t)}(x,y) + C\left(\sqrt{t-t_{i+1+m}}+  \sum_{k=1}^m \sqrt{t_{i+k+1}-t_{i+k}} + \sqrt{t_{i+1}-s}\right)\\
&\leq d_{g(t)}(x,y) + C\left( 2^{-\frac{i+2+m}{2}}+ \sum_{k=1}^m 2^{-\frac{i+k}{2}} + 2^{-\frac{i}{2}}\right)\\
&\leq d_{g(t)}(x,y)  +C2^{-\frac{i}{2}}\left( 2^{-\frac{m+2}{2}}+\sum_{k=1}^m 2^{-\frac{k}{2}}+1\right)\\
&\leq d_{g(t)}(x,y) + K \sqrt{|t_{i+1}|}\leq d_{g(t)}(x,y) +K\sqrt{|s|}.
\end{aligned}
\end{equation}

\end{proof}

\section{Self-similar Ricci flows}\label{sec:selfsimilar}

\begin{definition}\label{def:k_selfsimilar}
Let $(M^n,g(t))_{t\in (-\infty,0)}$ be a Ricci flow induced by a gradient shrinking Ricci soliton. 
\begin{enumerate}
\item We define the spine $\mathcal S$ of $(M,g(t))_{t\in (-\infty,0)}$ so that $\nu_f=(\nu_{f,t})_{t\in (-\infty,0)}\in \mathcal S$ if and only if $(\nu_{f,t})_{t\in (-\infty,0)}$ is a conjugate heat flow with $d\nu_{f,t}=(4\pi |t|)^{-n/2}e^{-f(\cdot,t)} d\vol_{g(t)}$ such that  $(M,g(t),f(\cdot,t))$ is a normalized gradient shrinking Ricci soliton at scale $|t|$, for each $t<0$.
\item We define the point-spine $\mathcal S_{\textrm{point}}$ of $(M,g(t))_{t\in (-\infty,0)}$ as
$$\mathcal S_{\textrm{point}} =\{x\in M, \textrm{there is $\nu_f\in\mathcal S$ and $f(\cdot,t)$ attains a minimum at $x$, for every $t<0$}\}.$$
\end{enumerate}
\end{definition}

\begin{proposition}[Spine structure] \label{prop:spine_structure}
Let $(M^n,g(t))_{t\in (-\infty,0)}$ be the  Ricci flow induced by a gradient shrinking Ricci soliton. Then there is a maximal $0\leq k\leq n$ such that $(M,g(t))=(M'\times \mathbb R^k, g'(t)\oplus g_{\mathbb R^k})$, for some smooth complete Ricci flow $(M',g'(t))_{t\in (-\infty,0)}$. Moreover,
\begin{enumerate}
\item $(M',g'(t))_{t\in(-\infty,0)}$ is the Ricci flow induced by a gradient shrinking Ricci soliton with spine $\mathcal S'$ consisting of a unique conjugate heat flow $\nu_{f'}$. Moreover, for every $\nu_f\in\mathcal S$, there is a unique $a_f\in\mathbb R^k$ such that
$$f((z,b),t)=\frac{|b-a_f|^2}{4|t|} + f'(z,t)$$
for every $t<0$ and $(z,b)\in M'\times \mathbb R^k$, and all elements of $\mathcal S$ are of this form.

\item If $\nu_f\in\mathcal S$ satisfies (CHF2) for some $H<+\infty$ with respect to $p\in M$, then $p=(q,a_f)$ for some $q\in M'$. Moreover, for any $p\in\mathcal S_{\textrm{point}}$, there is a unique $\nu_f\in\mathcal S$ such that $f(\cdot,t)$ attain a minimum at $p$.

\item There is a non-empty $\mathcal K\subset M'$ such that $\mathcal S_{\textrm{point}}= \mathcal K\times \mathbb R^k$ and 
$$\diam_{g'(t)}(\mathcal K) \leq A(n)\sqrt{|t|},$$
for every $t<0$.
\end{enumerate}
\end{proposition}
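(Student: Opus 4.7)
The key observation driving the argument is that for any two $\nu_f,\nu_{f'}\in \mathcal{S}$, subtracting the two soliton equations $\ric_{g(t)}+\hess_{g(t)}f=\ric_{g(t)}+\hess_{g(t)}f'=g(t)/(2|t|)$ at a common time $t$ yields $\hess_{g(t)}(f-f')=0$. Hence $\nabla(f-f')$ is a parallel vector field on $(M,g(t))$ for every $t<0$. I would define $k\in\{0,1,\ldots,n\}$ to be the largest integer for which $(M,g(t))=(M',g'(t))\times\mathbb R^k$ splits as a product of smooth Ricci flows; such $k$ exists because $k=0$ is trivial and $k\leq n$, and the de Rham decomposition theorem guarantees the splitting is unique up to isometry of $M'$. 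Compatibility with the Ricci flow is automatic: the Euclidean factor is Ricci-flat and a parallel distribution in a Ricci-flat direction is preserved under the flow.

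Fix $\nu_f\in\mathcal S$ and use coordinates $(z,b)\in M'\times\mathbb R^k$. Since $\ric_{g(t)}$ vanishes in the $\mathbb R^k$-directions, restricting the soliton equation to this factor gives $\hess_b f=g_{\mathbb R^k}/(2|t|)$, while the mixed Hessian condition $\hess_{z,b}f=0$ (from $\ric_{z,b}=0$ and $g_{z,b}=0$ in the product) forces $\nabla_b f$ to be $z$-independent. Integrating yields
\[
f((z,b),t)=\frac{|b-a_f|^2}{4|t|}+f'(z,t)
\]
for some $a_f\in\mathbb R^k$, with the $t$-independence of $a_f$ following from the self-similar evolution of Proposition \ref{prop:backwards_forwards}. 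Using Fubini and $\int_{\mathbb R^k}e^{-|b-a_f|^2/(4|t|)}\,db=(4\pi|t|)^{k/2}$, the normalization of $f$ translates exactly to the normalization of $f'$ on $(M',g'(t))$ at scale $|t|$, and substituting back shows $(M',g'(t),f'(\cdot,t))$ is itself a normalized shrinking soliton at scale $|t|$, so $\nu_{f'}\in\mathcal S'$. For uniqueness, if $\nu_{f'_2}\in\mathcal S'$ then $\nabla(f'-f'_2)$ is parallel on $M'$; a non-zero such field would split off an additional Euclidean factor by de Rham, contradicting maximality of $k$, so $f'-f'_2$ is constant, and both normalizations force the constant to vanish. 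Hence $\mathcal S'=\{\nu_{f'}\}$ and the assignment $\nu_f\mapsto a_f$ is a bijection $\mathcal S\leftrightarrow\mathbb R^k$, establishing assertion (1).

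For assertion (2), if $\nu_f$ satisfies (CHF2) with basepoint $p=(q,a)\in M'\times\mathbb R^k$, then evaluating at $x=p$ gives $\tfrac{|a-a_f|^2}{4|t|}+f'(q,t)=f(p,t)\leq H$ uniformly in $t$. Since $f'(q,\cdot)$ is uniformly bounded below (by Lemma \ref{lemma:soliton_growth} combined with the time-constancy of the soliton constant from Lemma \ref{lemma:soliton_identity_entropy} and Proposition \ref{prop:backwards_forwards}), letting $|t|\to 0$ forces $a=a_f$, so $p=(q,a_f)$. The second half of (2) is then immediate from the decomposition: $f(\cdot,t)$ attains its minimum at $p=(q,a_f)$ iff $f'(\cdot,t)$ attains its minimum at $q$, and the uniqueness of $f'$ together with the determination $a_f=a$ selects a unique $\nu_f$. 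For (3), let $\mathcal K\subset M'$ be the minimum set of $f'(\cdot,-1)$; by Proposition \ref{prop:backwards_forwards} critical points of $f'$ are fixed by $\varphi'_t$ and $\min f'(\cdot,t)$ is time-constant, so $\mathcal K$ coincides with the minimum set at every $t<0$. Then $\mathcal S_{\textrm{point}}=\mathcal K\times\mathbb R^k$ from the characterization above, non-emptiness of $\mathcal K$ is the existence of a minimum from Lemma \ref{lemma:soliton_growth}, and the diameter bound $\diam_{g'(t)}(\mathcal K)\leq b(n-k)\sqrt{|t|}$ is the second statement of the same lemma, so $A(n):=\max_{0\leq m\leq n}b(m)$ works.

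The step I expect to be the main obstacle is verifying cleanly that the single-time de Rham splitting is compatible with the self-similar Ricci flow structure of Proposition \ref{prop:backwards_forwards}. Although it is morally clear that a parallel Ricci-flat distribution persists under the flow, checking that the diffeomorphisms $\varphi_t$ respect the product structure---in particular that $\varphi_t$ acts as a dilation by $1/\sqrt{|t|}$ centred at $a_f$ on the $\mathbb R^k$-factor, so that $|t|\varphi_t^*g_{\mathbb R^k}$ remains isometric to the standard Euclidean metric and $a_f$ is genuinely independent of $t$---requires a careful computation to ensure no time-drift accumulates.
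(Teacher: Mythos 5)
Your proof follows the same overall strategy as the paper's: fix the maximal product splitting $M=M'\times\mathbb{R}^k$, derive the Gaussian factorization
$f((z,b),t)=\tfrac{|b-a_f|^2}{4|t|}+f'(z,t)$,
and exploit the growth bounds for soliton potentials (Lemma~\ref{lemma:soliton_growth}) together with Proposition~\ref{prop:backwards_forwards} to deduce assertions (2) and (3). The genuine difference is the order of operations in assertion (1), and you have correctly flagged it yourself as the one delicate point. You derive the factorization by integrating the Hessian equations $\hess_{bb}f=g_{\mathbb{R}^k}/(2|t|)$, $\hess_{zb}f=0$ at each fixed time slice, which yields a center $a_f(t)$ that could a priori drift; you then need a separate computation (e.g.\ differentiating $\partial_{b^a}f=\tfrac{b^a-a_f^a(t)}{2|t|}$ using $\partial_t f=|\nabla f|^2$ and the soliton equation) to show $a_f'(t)=0$. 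The paper avoids this entirely by first invoking Proposition~\ref{prop:backwards_forwards} to find a minimum point $p=(q,a)$ of $f(\cdot,t)$ that is fixed under the self-similar diffeomorphisms $\varphi_t$, and then integrating the soliton equation along the $\mathbb{R}^k$-geodesics $s\mapsto(q,a+sb)$ emanating from $p$; the anchor $a$ is time-invariant from the outset and serves directly as $a_f$. Your version works once that computation is carried out, and the two routes buy essentially the same thing, but the paper's ordering makes the time-consistency automatic rather than a residual check. The remaining pieces of your argument match the paper's: the de Rham/maximality argument for uniqueness of $f'$ (the paper phrases it as ``$f'$ is unique up to affine functions, and maximality of $k$ excludes linear parts''), the use of the entropy-constant lower bound $f'\geq\bar\mu(g'(-1))-a(n)$ to pin $a=a_f$ from (CHF2) by sending $|t|\to0$, and the identification of $\mathcal{K}$ with the (time-invariant) minimum set of $f'$ together with the diameter bound from Lemma~\ref{lemma:soliton_growth}.
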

\begin{proof}
Let $0\leq k \leq n$ be the largest integer with the property that there is a Ricci flow $(M'\times \mathbb R^k, g'(t)\oplus g_{\mathbb R^k})_{t<0}$ which is isometric to $(M,g(t))_{t<0}$. In what follows, we will identify $M=M'\times \mathbb R^k$ and $g(t)=g'(t)\oplus g_{\mathbb R^k}$.

\begin{enumerate}
\item
Let $\nu_f \in \mathcal S$, which exists since $(M,g(t))_{t\in (-\infty,0)}$ is induced by a gradient shrinking Ricci soliton. By Proposition \ref{prop:backwards_forwards} we have that $\frac{\partial f}{\partial t} = |\nabla f|^2$ and $f(x,t)=f(\varphi_t(x),-1)$ for some $1$-parameter family of diffeomorphisms that fixes the critical points of $f(\cdot,-1)$, since it satisfies $\frac{d}{dt}\varphi_t = -\frac{1}{|t|}\nabla^{g(-1)} f(\varphi_t(\cdot),-1)$. 
In particular, any minimum point of $f(\cdot,-1)$ is also a minimum point of   $f(\cdot,t)$, for any $t<0$.

Thus, let $p=(q,a)\in M=M'\times \mathbb R^k$ be a minimum point of $f(\cdot,t)$, for every $t<0$. Define $f'\in C^\infty(M' \times (-\infty,0))$ as 
$$f'(z,t)=f((z,a),t),$$
and let $x^i$, $i=1,\ldots,k$ denote the coordinate functions that correspond to the $\mathbb R^k$ factor. 

Since $g(t)=g' (t)\oplus g_{\mathbb R^k}$, we have that $\nabla x^i=0$ on $M\times (-\infty,0)$. Moreover, since for every $t<0$, $(M,g(t),f(\cdot,t))$ we can easily check that the function $\langle\nabla f,\nabla x^i\rangle$ is constant in each level set of the map $(x^1,\ldots,x^k): M\rightarrow \mathbb R^k$. Therefore, by $\nabla f(\cdot,t)|_{(q,a)}=0$, it follows that $\nabla f(\cdot,t)|_{M'\times \{a\}}$ is tangent to $M'\times \{a\}$. Therefore, $\nabla^{g'(t)} f' = \nabla f(\cdot,t)$ on $M'\times \{a\}$, once we identify $M'$ with $M'\times \{a\}$. Consequently, for any two vector fields $V,W$ of $M$ tangent  to $M'\times \{a\}$ we compute
\begin{align*}
\left(\ric_{g'(t)} + \hess_{g'(t)} f' (\cdot,t)\right)(V,W)&=\left(\ric_{g(t)} + \hess_{g(t)} f(\cdot,t)\right)(V,W)\\
&=\frac{g(t)(V,W)}{2|t|}\\
&=\frac{g'(t)(V,W)}{2|t|},
\end{align*}
which proves that $(M',g'(t),f'(\cdot,t))$ is a gradient shrinking Ricci soliton at scale $|t|$.

Moreover, integrating the soliton equation \eqref{def:gsrs} along  geodesics of the form $$t\mapsto (q,a+tb),$$ for any $b\in\mathbb R^k$, we obtain that
\begin{equation}\label{eqn:sol_fcn_splitting}
f((z,b),t)=\frac{|b-a|^2}{4|t|} +f'(z,t).
\end{equation}
That $(M',g'(t),f'(\cdot,t))$ is normalized follows from that $(M,g(t),f(t))$ is normalized.

For the uniqueness of $f'$, recall that $f'(\cdot,t)$ is uniquely determined, up to adding a constant and linear function. Since $(M',g'(t),f'(\cdot,t))$ is normalized and does not split any more Euclidean factors, due to the maximality of $k$, this shows that $f'(\cdot,t)$ is in fact unique, depending only on $(M',g'(t))$. The uniqueness of $a$, in terms of the data $f$ and $g$ then follows from \eqref{eqn:sol_fcn_splitting}.
\item

 Now suppose that $\nu_f\in\mathcal S$ satisfies (CHF2) with respect to $p=(q,a')\in M'\times \mathbb R^k=M$, namely there is a constant $H<+\infty$ such that for every $t\in (-\infty,0)$
\begin{equation}\label{eqn:C_rep}
f(x,t) \leq H\left( \frac{d_{g(t)}(p,x)^2}{|t|} + 1\right).
\end{equation}
Therefore, setting $p=(z,b)=(q,a')$ in \eqref{eqn:sol_fcn_splitting} and \eqref{eqn:C_rep}, we obtain
\begin{equation}\label{eqn:sol_spine_bound}
f((q,a'),t)=\frac{|a'-a|^2}{4|t|}+f'(q,t) \leq H
\end{equation}
On the other hand, since for every $t<0$, $(M',g'(t),f'(\cdot,t))$ is a normalized gradient shrinking Ricci soliton at scale $|t|$, we know from Proposition \ref{prop:soliton_identities} and Proposition \ref{prop:backwards_forwards} that for every $t<0$
\begin{align*}
-f' &= -\mathcal W(g'(t),f'(\cdot,t),|t|) - |t|(R_{g'(t)}+|\nabla f'|^2(\cdot,t))\\
&\leq - \mathcal W(g'(t),f'(\cdot,t),|t|)\\
&=- \mathcal W(g'(-1),f'(\cdot,-1),1).
\end{align*}
Therefore, by \eqref{eqn:sol_spine_bound}, we obtain that
$$\sup_{t<0}\frac{|a'-a|^2}{4|t|} <+\infty,$$
which is only possible if $a'=a$.

If $p\in \mathcal S_{\textrm{point}}$, there is $\nu_f \in\mathcal S$ such that $f(\cdot,t)$ attains a minimum at $p$. By Lemma \ref{lemma:soliton_growth}, $\nu_f$ satisfies (CHF2) with the constant $a(n)$ of that lemma, with respect to $p$. It follows that $p=(q,a_f)$, where $a_f\in \mathbb R$ is the constant that determines $f$ via Assertion 1. In particular $a_f$ is determined uniquely from $p\in\mathcal S_{\textrm{point}}$, which proves Assertion 2.

\item

Since $\mathcal S$ is invariant under translation along the $\mathbb R^k$ factor we can easily verify that $\mathcal S_{\textrm{point}}$ is also invariant under translation along $\mathbb R^k$, hence $\mathcal S_{\textrm{point}}=\mathcal K\times\mathbb R^k$ for some $\mathcal K\subset M'$. By Lemma \ref{lemma:soliton_growth},  every soliton function attains a minimum, hence $\mathcal K$ is non-empty.

To estimate the diameter of $\mathcal K$, let $y_i=(z_i,0)\in\mathcal K\times \mathbb R^k$, $i=1,2$, be two points in $\mathcal S_{\textrm{point}}$ such that $d_{g(t)}(y_1,y_2)=d_{g'(t)}(z_1,z_2)$. 

By the definition of $\mathcal S_{\textrm{point}}$ there is are functions $f_i\in C^\infty(M\times (-\infty,0))$, $i=1,2$, such that $(M,g(t),f_i(\cdot,t))$ is a normalized gradient shrinking Ricci soliton at scale $|t|$ and $f_i(\cdot,t)$ attains a minimum at $y_i$, for every $t<0$. By Assertion 1 of this proposition, there are $a_i\in\mathbb R^k$, $i=1,2$ such that
$$f_i((z,b),t)=\frac{|b-a_i|^2}{4|t|} + f'(z,t)$$
and $(M',g'(t),f'(\cdot,t))$ is normalized gradient shrinking soliton at scale $|t|$, and $f'$ is uniquely determined by $(M',g'(t))$.

Since each $f_i$ attains a minimum at $y_i=(z_i,0)$, it follows that $a_i=0$ and both $z_i$ are points where $f'$ attains a minimum. Thus, by Lemma \ref{lemma:soliton_growth},
$$d_{g'(t)}(z_1,z_2)\leq b(n) \sqrt{|t|},$$
which suffices to prove the assertion.
\end{enumerate}
\end{proof}

\subsection{Convergence and selfsimilarity}

\begin{proposition}[Theorem 1.6 in \cite{ManteMull}]\label{prop:entropy_convergence}
Let $(M^n_j,g_j(t),p_j))_{t\in [A_j,0]}$, $A_j\rightarrow -\infty$, be a sequence of smooth compact Ricci flows satisfying (RF3). Suppose that it converges to the Ricci flow $(M,g(t),p)_{t\in (-\infty,0)}$ induced by a gradient shrinking Ricci soliton, and that the conjugate heat kernels $\nu_{(p_j,0)}$ converge to a conjugate heat flow $\nu_f \in\mathcal S$. Then, for every $t<0$, 
$$\mathcal W_{p_j}(|t|)\rightarrow \bar\mu(g(-1)).$$
\end{proposition}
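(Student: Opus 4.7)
The plan is to express $\mathcal{W}_{p_j}(|t|)$ as an integral and deduce its convergence from smooth compact-set convergence together with Gaussian tail estimates provided by (RF3).

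First, I would identify the limit value. Since $\nu_f\in\mathcal{S}$, the triple $(M,g(t),f(\cdot,t))$ is a normalized gradient shrinking Ricci soliton at scale $|t|$ for every $t<0$, so Lemma \ref{lemma:soliton_identity_entropy} gives $\mathcal{W}(g(t),f(\cdot,t),|t|)=\bar\mu(g(t))$, and by Proposition \ref{prop:backwards_forwards}(2) this common value is $\bar\mu(g(-1))$. Hence, writing $u_j=(4\pi|t|)^{-n/2}e^{-f_j}$ for the conjugate heat kernel based at $(p_j,0)$, the task reduces to establishing, for each fixed $t<0$, that
$$\int_{M_j}\bigl(|t|(R_{g_j}+|\nabla f_j|^2)+f_j-n\bigr)u_j\,d\vol_{g_j(t)}\;\longrightarrow\;\int_M\bigl(|t|(R_g+|\nabla f|^2)+f-n\bigr)u\,d\vol_{g(t)}.$$

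Pointwise smooth convergence of the integrand on compact sets is then immediate. The smooth convergence of the Ricci flows yields $R_{g_j}\to R_g$, while the convergence of the conjugate heat flows $\nu_{(p_j,0)}\to\nu_f$ (of the type extracted in Proposition \ref{prop:compactness_rf}) yields $u_j\to u$, $f_j\to f$ and $\nabla f_j\to\nabla f$ smoothly on compact subsets. A dominated convergence argument therefore gives convergence of the integrals restricted to any ball $B(p,-t,R\sqrt{|t|})$.

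The principal obstacle is controlling the tails outside such balls uniformly in $j$. On the limit side, Lemma \ref{lemma:soliton_growth} provides quadratic growth bounds on $f$, and the soliton identity $|t|(R+|\nabla f|^2)=f-\bar\mu(g(-1))$ bounds $|\nabla f|^2$ in terms of $f$; combined with the Gaussian decay of $u=(4\pi|t|)^{-n/2}e^{-f}$, the tail is negligible for large $R$. On the approximating side, (RF3) supplies $u_j\leq C_1(4\pi|t|)^{-n/2}e^{-d^2/(C_1|t|)}$ together with $f_j\geq d^2/(H|t|)-H$, but matching pointwise tail bounds on $|\nabla f_j|^2$ and $R_{g_j}$ are not an immediate consequence of (RF3) alone; supplying them is the hard step. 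Here I would invoke the standard Hamilton-type differential Harnack for the conjugate heat kernel under the Type I curvature and entropy control implicit in the compactness setup, which yields $|\nabla f_j|^2\lesssim (f_j+A)/|t|$ and a matching control on $R_{g_j}$ in the tail; this dominates the integrand by an integrable Gaussian of the form $P(d/\sqrt{|t|})\,e^{-d^2/(C|t|)}$, uniformly in $j$, for some polynomial $P$. Combined with the pointwise smooth convergence on compacts, the standard $\varepsilon$-$R$ argument then closes. This is precisely the content of Theorem 1.6 in \cite{ManteMull} to which the statement is credited.
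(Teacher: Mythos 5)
Your reduction of the problem to convergence of the entropy integrals, and your identification of the limit value via Lemma \ref{lemma:soliton_identity_entropy} and Proposition \ref{prop:backwards_forwards}, are fine, as is the dominated-convergence step on compact sets. The genuine gap is exactly where you locate the ``hard step'': the uniform-in-$j$ Gaussian domination of the tail integrand. The hypotheses of the proposition give only (RF3) together with smooth convergence on compact subsets; there is no Type I curvature bound, no entropy bound, and no assumption that would let you run a Hamilton-type gradient estimate for the conjugate heat kernel globally on $M_j$. Smooth convergence controls curvature only on compact sets, which is useless in the tail, and (RF3) gives an upper bound on $u_j$ and a lower bound on $f_j$ but no upper bound on $|\nabla f_j|^2$ or on $R_{g_j}$ far from $p_j$. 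The estimate $|\nabla f_j|^2\lesssim (f_j+A)/|t|$ you invoke is not a consequence of the stated assumptions, so as written the argument does not close.

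The paper's proof avoids this issue entirely by replacing uniform tail domination with two sign observations, treating the limsup and liminf separately. For the limsup, it integrates by parts and writes $\mathcal W_{p_j}(|t|)$ as the integral of $\bigl(|t|(R+2\Delta f_j-|\nabla f_j|^2)+f_j-n\bigr)\,d\nu_{(p_j,0),t}$, whose integrand is pointwise non-positive by Perelman's differential Harnack inequality; hence the full integral is bounded above by the integral over any ball $B(p_j,t,r)$, which converges by smooth convergence, and letting $r\to\infty$ and using Lemma 4.2 of \cite{ManteMull} gives $\limsup_j\mathcal W_{p_j}(|t|)\leq\bar\mu(g(-1))$. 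For the liminf, it keeps the original form $|t|(R+|\nabla f_j|^2)+f_j-n$ and notes that on the complement of $B(p_j,t,r\sqrt{|t|})$, for $r=r(n,H)$ large, this integrand is non-negative: $|\nabla f_j|^2\geq 0$, $R_{g_j}\geq -\tfrac{n}{2(t-A_j)}$ by the standard maximum-principle bound, and $f_j$ is large there by the (RF3) lower bound. So the tail can simply be discarded from below, and Fatou on the ball term gives $\liminf_j\mathcal W_{p_j}(|t|)\geq\bar\mu(g(-1))$. If you want to repair your proof, you should replace the claimed uniform gradient and scalar curvature tail bounds by this kind of one-sided argument, since no two-sided tail control is available under the hypotheses.
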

\begin{proof}
We give a brief account of the proof for the sake of completeness.

First of all, if $d\nu_{(p_j,0),t}=(4\pi |t|)^{-n/2} e^{-f_j(\cdot,t)} d\vol_{g_j(t)}$, integrating by parts we obtain
\begin{align*}
\mathcal W_{p_j}(|t|)&=\int_{M_j} \left(|t|\left(R_{g_j(t)}+|\nabla^{g_j(t)} f_j(\cdot,t)|_{g_j(t)}^2 \right)+f_j(\cdot,t) -n\right) d\nu_{(p_j,0),t} \\
&=\int_{M_j} \left(|t|\left(R_{g_j(t)} +2\Delta_{g_j(t)} f_j(\cdot,t)-|\nabla^{g_j(t)} f_j(\cdot,t)|_{g_j(t)}^2\right) +f_j(\cdot,t)-n \right) d\nu_{(p_j,0),t} \leq 0.
\end{align*}
and the integrand in the second equation above is non-positive by Perelman's Harnack inequality \cite{Perelman1}.

Thus, for every $r>0$,
\begin{equation}\label{eqn:limsupW}
\begin{aligned}
&\limsup_j \mathcal W_{p_j}(|t|) \leq\\
&\leq \lim_j \int_{B(p_j,t,r)} \left(|t|\left(R_{g_j(t)} +2\Delta_{g_j(t)} f_j(\cdot,t)-|\nabla^{g_j(t)} f_j(\cdot,t)|_{g_j(t)}^2\right) +f_j(\cdot,t)-n \right) d\nu_{(p_j,0),t} \\
&=\int_{B(p,t,r)}\left(|t|\left(R_{g(t)} +2\Delta_{g(t)} f(\cdot,t)-|\nabla^{g(t)} f(\cdot,t)|_{g(t)}^2\right) +f(\cdot,t)-n \right) d\nu_{f,t}.
\end{aligned}
\end{equation}
By Lemma 4.2 in \cite{ManteMull}
$$\bar \mu(g(t))=\mathcal W(g(t),f(\cdot,t),|t|)= \int_M \left(|t|\left(R_{g(t)} +2\Delta_{g(t)} f(\cdot,t)-|\nabla^{g(t)} f(\cdot,t)|_{g(t)}^2\right) +f(\cdot,t)-n \right) d\nu_{f,t} \leq 0, $$
therefore, by \eqref{eqn:limsupW},
\begin{equation}
\begin{aligned}
&\bar \mu(g(-1))=\bar\mu(g(t))=\mathcal W(g(t),f(\cdot,t),|t|)\\
&=\lim_{r\rightarrow +\infty}\int_{B(p,t,r)}\left(|t|\left(R_{g(t)} +2\Delta_{g(t)} f(\cdot,t)-|\nabla^{g(t)} f(\cdot,t)|_{g(t)}^2\right) +f(\cdot,t)-n \right) d\nu_{f,t},
\end{aligned}
\end{equation}
it follows that $\limsup_j \mathcal W_{p_j}(|t|) \leq \bar\mu(g(-1))$.

On the other hand,
\begin{equation}\label{eqn:W_two_integrals}
\begin{aligned}
\mathcal W_{p_j}(|t|)&= \int_{B(p_j,t,r\sqrt{|t|})} \left(|t|\left(R_{g_j(t)}+|\nabla^{g_j(t)} f_j(\cdot,t)|_{g_j(t)}^2 \right)+f_j(\cdot,t) -n\right) d\nu_{(p_j,0),t}\\
&+\int_{M_j\setminus B(p_j,t,r\sqrt{|t|})} \left(|t|\left(R_{g_j(t)}+|\nabla^{g_j(t)} f_j(\cdot,t)|_{g_j(t)}^2 \right)+f_j(\cdot,t) -n\right) d\nu_{(p_j,0),t}.
\end{aligned}
\end{equation}
By the standard lower bound for the scalar curvature $R_{g_j(t)}\geq -\frac{n}{2(t-A_j)}$ on $M_j$, and the lower bound (RF3) on $f_j(\cdot,t)$, it follows that for large enough $r=r(n,H)$ the second term in \eqref{eqn:W_two_integrals} is non-negative, for every $j$ large enough. Therefore,
\begin{equation*}
\begin{aligned}
&\liminf_j \mathcal W_{p_j}(|t|)=\\
&= \lim_j \int_{B(p_j,t,r\sqrt{|t|})} \left(|t|\left(R_{g_j(t)}+|\nabla^{g_j(t)} f_j(\cdot,t)|_{g_j(t)}^2 \right)+f_j(\cdot,t) -n\right) d\nu_{(p_j,0),t}\\
&+\liminf_j \int_{M_j \setminus B(p_j,t,r\sqrt{|t|})} \left(|t|\left(R_{g_j(t)}+|\nabla^{g_j(t)} f_j(\cdot,t)|_{g_j(t)}^2 \right)+f_j(\cdot,t) -n\right) d\nu_{(p_j,0),t}\\
&\geq \int_{B(p,t,r\sqrt{|t|})} \left(|t|\left(R_{g(t)}+|\nabla^{g(t)} f(\cdot,t)|_{g(t)}^2 \right)+f(\cdot,t) -n\right) d\nu_{f,t}\\
&+\int_{M\setminus B(p,t,r\sqrt{|t|})} \left(|t|\left(R_{g(t)}+|\nabla^{g(t)} f(\cdot,t)|_{g(t)}^2 \right)+f(\cdot,t) -n\right) d\nu_{f,t}\\
&=\bar\mu(g(t))=\bar\mu(g(-1)).
\end{aligned}
\end{equation*}
This proves the proposition.
\end{proof}

The following proposition can be seen as a uniqueness assertion of solutions to the conjugate heat equation satisfying (CHF1) with respect to some point of the spine $\mathcal S_{\textrm{point}}$ of a selfsimilar Ricci flow.

\begin{proposition}\label{prop:Spoint_S}
Let $(M^n_j,g_j(t),p_j)_{t\in [A_j,0]}$, $A_j\rightarrow -\infty$, be a sequence of smooth compact Ricci flows satisfying (RF2) and (RF3), that converges to a Ricci flow $(M,g(t),p)_{t\in (-\infty,0)}$ induced by a gradient shrinking Ricci soliton. Suppose that the conjugate heat flows $\nu_{(p_j,0)}$ smoothly converge to a conjugate heat flow $\nu_\infty$ and that  $p\in \mathcal S_{\textrm{point}}$.   
Then $\nu _\infty \in\mathcal S$. 
Moreover, passing to a subsequence, we may assume that $\mathcal W_{p_j}(|t|)\rightarrow\bar \mu(g(-1))$, for every $t<0$.
\end{proposition}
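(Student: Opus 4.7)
The plan is in two stages: first prove $\nu_\infty\in\mathcal{S}$, then deduce the entropy convergence from Proposition~\ref{prop:entropy_convergence}. By Proposition~\ref{prop:compactness_rf}, $\nu_\infty$ is a well-defined conjugate heat flow on the selfsimilar limit $(M,g(t))_{t\in(-\infty,0)}$ satisfying (CHF1) with respect to $p$. Since $p\in\mathcal{S}_{\text{point}}$, fix $\nu_{\tilde f}\in\mathcal{S}$ whose potential $\tilde f(\cdot,t)$ attains its minimum at $p$ for every $t<0$; by Lemma~\ref{lemma:soliton_growth}, $\nu_{\tilde f}$ also satisfies (CHF1) with respect to $p$. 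The main task is to identify $\nu_\infty=\nu_{\tilde f}$.

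First I revisit the proof of Proposition~\ref{prop:entropy_convergence}: its $\limsup$ direction relies only on Perelman's Harnack inequality and smooth convergence on compact sets, while the $\liminf$ direction uses only (RF3) and the uniform lower bound $R_{g_j(t)}\geq -n/(2(t-A_j))$ to render the relevant integrand non-negative outside a sufficiently large ball. Neither argument actually requires $\nu_\infty\in\mathcal{S}$ a priori; together they give
\begin{equation*}
\mathcal{W}_{p_j}(|t|)\to \mathcal{W}\bigl(g(t),f_\infty(\cdot,t),|t|\bigr)=:\mathcal{W}_\infty(|t|)
\end{equation*}
for every $t<0$, where $f_\infty$ is the potential of $\nu_\infty$. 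Since on the shrinker at scale $|t|$ the infimum $\mu(g(t),|t|)$ is attained at $\tilde f(\cdot,t)$ with value $\bar\mu(g(-1))$, this already yields $\mathcal{W}_\infty(|t|)\geq \bar\mu(g(-1))$.

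To upgrade this to $\mathcal{W}_\infty(|t|)=\bar\mu(g(-1))$ and conclude $\nu_\infty=\nu_{\tilde f}$, I will use that $p\in\mathcal{S}_{\text{point}}$ makes $p$ a fixed point of the selfsimilar diffeomorphisms $\varphi_t$ generated by $-\nabla\tilde f$, by Proposition~\ref{prop:backwards_forwards}. Pulling back the conjugate heat equation along $\varphi_t^{-1}$ converts the selfsimilar flow into the stationary background $(M,g(-1))$, on which $\nu_{\tilde f}$ corresponds to the time-independent profile $(4\pi)^{-n/2}e^{-\tilde f(\cdot,-1)}d\vol_{g(-1)}$. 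A uniqueness argument for positive solutions of this reduced problem with Gaussian asymptotics at the fixed point $p$---driven by the strict quadratic growth of $\tilde f$ near $p$ from Lemma~\ref{lemma:soliton_growth} together with parabolic comparison against the Gaussian barriers from (RF3)---then forces the pullback of $\nu_\infty$ to coincide with this fixed profile. Hence $\nu_\infty=\nu_{\tilde f}\in\mathcal{S}$, and $\mathcal{W}_{p_j}(|t|)\to\bar\mu(g(-1))$ for every $t<0$ follows from Proposition~\ref{prop:entropy_convergence}.

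The main obstacle is the uniqueness step just described: on the complete non-compact shrinker one must rule out distinct positive solutions of the conjugate heat equation sharing the same Gaussian centering at $p$. The hypothesis $p\in\mathcal{S}_{\text{point}}$ is essential here because it makes $p$ a fixed point of $\varphi_t$ and thereby enables the reduction to a stationary problem; without it, Proposition~\ref{prop:spine_structure} shows that elements of $\mathcal{S}$ related by translation along the Euclidean spine factor are genuinely distinct, so $\nu_\infty$ would not be uniquely determined.
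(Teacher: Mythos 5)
Your approach is genuinely different from the paper's: you try to identify $\nu_\infty$ with the unique spine element $\nu_{\tilde f}$ centered at $p$ via a parabolic uniqueness argument, whereas the paper establishes $\nu_\infty\in\mathcal S$ by an entropy-monotonicity squeeze: it extracts, for each fixed $\bar t$, scales $\tilde t_i\to 0$ and $\tilde T_i\to-\infty$ at which the pointed entropy drop is uniformly small, rescales around $p$ (crucial that $p\in\mathcal S_{\textrm{point}}$ so $\varphi_t(p)=p$ and the rescaled limits are isometric to the original), diagonalizes, applies Proposition~\ref{prop:entropy_convergence} to both rescaled sequences to see both limits have entropy $\bar\mu(g(-1))$, and concludes that the monotone quantity $\mathcal W_{p_j}(\tau)$ has vanishing drop across $[-4|\bar t|,-|\bar t|]$, which forces $(M,g(\bar t),f_\infty(\cdot,\bar t))$ to be a soliton.

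The gap in your proposal is the uniqueness step, which you flag as "the main obstacle" but never carry out, and I do not think the sketch closes. You propose to rule out distinct positive ancient solutions of the conjugate heat equation sharing the Gaussian centering (CHF1) at $p$ by "parabolic comparison against Gaussian barriers from (RF3)." But the proposition only provides an \emph{upper} Gaussian bound on $u_\infty$; no lower bound (RF4/CHF2) is assumed, so barrier comparison has only one side to work with. Even granting two-sided bounds, the statement you need — that every ancient positive unit-mass solution of the conjugate heat equation on a noncompact shrinker with Gaussian upper bounds centered at $p\in\mathcal S_{\textrm{point}}$ must be a soliton potential — is a nontrivial classification result, essentially of the same depth as the proposition itself; asserting it without proof leaves the argument circular in spirit. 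Proposition~\ref{prop:spine_structure}(2) gives uniqueness \emph{within} $\mathcal S$, but not that a general conjugate heat flow with these asymptotics must lie in $\mathcal S$.

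A secondary issue: you claim the limsup and liminf parts of Proposition~\ref{prop:entropy_convergence} go through without assuming $\nu_\infty\in\mathcal S$, giving $\mathcal W_{p_j}(|t|)\to\mathcal W(g(t),f_\infty(\cdot,t),|t|)$. The limsup step there uses that the integral of the Harnack integrand against $d\nu_{f,t}$ equals $\mathcal W$, which requires an integration-by-parts identity on the noncompact limit; the paper invokes this only for soliton potentials (Lemma 4.2 of \cite{ManteMull}). You would need to re-justify it for a general $f_\infty$ with only (CHF1) control, and in any case this only yields $\mathcal W_\infty(|t|)\geq\bar\mu(g(-1))$, with the upgrade to equality resting entirely on the unproved uniqueness claim.
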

\begin{proof}
Let  $d\nu_{(p_j,0),t} = (4\pi |t|)^{-n/2} e^{-f_j(\cdot,t)} d\vol_{g_j(t)}$ and $d\nu_{\infty,t} = (4\pi |t|)^{-n/2} e^{-f(\cdot,t)} d\vol_{g(t)}$, where  $f_j$ and $f$ are smooth functions on $M_j\times [A_j,0]$ and $M\times (-\infty,0)$ respectively. By assumption, $f_j\rightarrow f$. We need to show that $(M,g(\bar t),f(\cdot,\bar t))$ is a normalized gradient shrinking Ricci soliton at scale $|\bar t|$, for every $\bar t<0$. 
\\ 

Define $t_k=4^{-k} \bar t$ and $T_m = 4^m \bar t$, for integers $k,m\geq 0$. By assumption (RF2) and the monotonicity of $\mathcal W$, we know that for every $j$,
\begin{equation}\label{eqn:W_bounds} 
-\Lambda\leq\mu(g_j(t),|t|)\leq \mathcal W(g_j(t),f_j(\cdot,t),|t|) \leq 0.
\end{equation}
Thus, passing to a subsequence if necessary, we can always find $\tilde t_i = t_{k(i)}$ and $\tilde T_i = T_{m(i)}$ so that for every $j$
\begin{equation}\label{eqn:sed}
\mathcal W_{p_j}(|\tilde t_i|) - \mathcal W_{p_j}(4|\tilde t_i|) <1/i\quad \textrm{and} \quad
\mathcal W_{p_j}(|\tilde T_i|) - \mathcal W_{p_j}(4|\tilde T_i|) <1/i.
\end{equation}
To see this, note that each of the inequalities in \eqref{eqn:sed} can only fail for finitely many $t_k$ and $T_m$, by \eqref{eqn:W_bounds} and the monotonicity of $\mathcal W_{p_j}$. Therefore, there is a $\delta_i>0$ such that for each $j$, \eqref{eqn:sed} holds for some $t_k,T_m\in [-\delta_i^{-1},-\delta_i]$, with $k,m$ depending on both $i$ and $j$. By the compactness of  $[-\delta_i^{-1},-\delta_i]$, we obtain the claim by passing to a subsequence.
\\

Since $p\in\mathcal S_{\textrm{point}}$, there is a smooth function $\hat f\in C^\infty(M\times (-\infty,0))$ such that $p$ is a minimum point of $\hat f(\cdot,t)$ and $(M,g(t),\hat f(\cdot,t))$ is a normalized gradient shrinking Ricci soliton at scale $|t|$, for every $t<0$. 

By Proposition \ref{prop:backwards_forwards}, there is a $1$-parameter family $\varphi_t$ of diffeomorphisms of $M$ such that $g(t)=|t| \varphi_t^* g(-1)$, $\hat f(x,t)=\hat f(\varphi_t(x),t)$.

 Define $\tilde g_i(t)=|\tilde t_i|^{-1} g(t|\tilde t_i|)=\varphi_{\tilde t_i}^* g(t)$ and $\tilde G_i(t)=|\tilde T_i|^{-1} g(t|\tilde T_i|)=\varphi_{\tilde T_i}^* g(t)$. Notice that $\varphi_{\tilde t_i}(p)=\varphi_{\tilde T_i}(p)= p$, since $p$ is a critical point of $\hat f$. It follows that the sequences $(M,\tilde g_i (t),p)_{t\in (-\infty,0)}$ and $(M,\tilde G_i (t),p)_{t\in (-\infty,0)}$ both converge to $(M,g(t),p)_{t\in (-\infty,0)}$, as $i\rightarrow +\infty$, in fact they are isometric to $(M,g(t),p)_{t\in(-\infty,0)}$ with an isometry that fixes $p$. 
 
On the other hand, for each $i,j$, define $g_{i,j}(t)=|\tilde t_i|^{-1} g_j(t|\tilde t_i|)$ and $G_{i,j}(t)=|\tilde T_i|^{-1} g_j(t|\tilde T_i|)$. Then the pointed sequences $(M_j,g_{i,j}(t),p_j)_{t\in [A_j |\tilde t_i|^{-1},0]}$ and $(M_j,G_{i,j}(t),p_j)_{t\in [A_j |\tilde T_i|^{-1},0]}$ converge to \linebreak $(M,\tilde g_i(t),p)_{t\in(-\infty,0)}$ and $(M,\tilde G_i(t),p)_{t\in(-\infty,0)}$ respectively, as $j\rightarrow +\infty$. Moreover, by \eqref{eqn:sed},
\begin{equation}
\mathcal W_{g_{i,j},p_j}(1)-\mathcal W_{g_{i,j},p_j}(4) <1/i \quad \textrm{and}\quad \mathcal W_{G_{i,j},p_j}(1)-\mathcal W_{G_{i,j},p_j}(4) <1/i.
\end{equation}

We can thus construct diagonal subsequences $(M_l,\bar g_l(t),p_l)_{t\in [A'_l,0]}$ and $(M_l,\bar G_l(t),p_l)_{t\in [B'_l,0]}$, with \linebreak $A'_l,B'_l\rightarrow -\infty$,  such that
\begin{equation}\label{eqn:sed_diag}
\mathcal W_{\bar g_l,p_l}(1)-\mathcal W_{\bar g_l,p_l}(4) <1/l \quad \textrm{and}\quad \mathcal W_{\bar G_l,p_l}(1)-\mathcal W_{\bar G_l,p_l}(4) <1/l,
\end{equation}
both converging to $(M,g(t),p)_{t\in (-\infty,0)}$.
\\

Let $\bar u_l=(4\pi |t|)^{-n/2} e^{-\bar f_l}$ and $\bar U_l = (4\pi |t|)^{-n/2} e^{-\bar F_l}$ the conjugate heat kernels starting at $(p_l,0)$ of $(M_l, \bar g_l(t))_{t\in [A'_l,0]}$ and $(M_l, \bar G_l(t))_{t\in [B'_l,0]}$ respectively. Using the convergence of $(M_j,g_j(t),p_j)_{t\in [A_j,0]}$ to $(M,g(t), p)_{t\in(-\infty,0)}$ and assumption (RF3), by Proposition \ref{prop:compactness_rf} we can pass to a further subsequence so that $\bar f_l$ and $\bar F_l$ converge to smooth functions $\bar f_\infty$ and $\bar F_\infty$ in $M\times (-\infty,0)$, respectively.

Moreover, by \eqref{eqn:sed_diag} and the monotonicity formula \eqref{eqn:pointed_monotonicity} we obtain
\begin{equation}
\begin{aligned}
\int_{-4}^{-1}\int_{M_l} |t| \left| \ric_{\bar g_l} +\hess_{\bar g_l} \bar f_l -\frac{\bar g_l}{2|t|}\right|^2 \bar u_l d\vol_{\bar g_l(t)}dt&<1/l,\\
\int_{-4}^{-1}\int_{M_l} |t| \left| \ric_{\bar G_l} +\hess_{\bar G_l} \bar F_l -\frac{\bar G_l}{2|t|}\right|^2 \bar U_l d\vol_{\bar G_l(t)}dt&<1/l.
\end{aligned}
\end{equation} 
It follows that both $(M,g(t),\bar f_\infty(\cdot,t))$ and $(M,g(t),\bar F_\infty(\cdot,t))$ are gradient shrinking Ricci solitons at scale $|t|$, for every $t\in [-4,-1]$. Therefore, for every $t\in [-4,-1]$
\begin{equation}
\mathcal W(g(t),\bar f_\infty(\cdot, t),|t|) =\mathcal W(g(t),\bar F_\infty(\cdot, t),|t|)=\bar\mu(g(-1)),
\end{equation}
where $\mu(g(-1))$ is the entropy of the soliton $(M,g(-1),\hat f(\cdot,-1))$.

Therefore, applying Proposition \ref{prop:entropy_convergence}, using (RF4), we obtain that
\begin{equation}\label{eqn:WgG0}
|\mathcal W_{\bar g_l,p_l}(1) - \mathcal  W_{\bar G_l,p_l}(1)|\leq |\mathcal W_{\bar g_l,p_l}(1)  - \bar\mu(g(-1))| + |\mathcal W_{\bar G_l,p_l}(1)  - \bar\mu(g(-1))| \rightarrow 0.
\end{equation}
\\

Now, by passing to a subsequence of $(M_j,g_j(t),p_j)_{t\in [A_j,0]}$, we can assume that  there are  $\alpha_j,\beta_j>0$ such that $\alpha_j\rightarrow +\infty$, $\beta_j\rightarrow 0$, $\mathcal W_{g_j,p_j}(\alpha_j) =\mathcal W_{\bar g_l,p_l}(1)$ and
$\mathcal W_{g_j,p_j}(\beta_j) =\mathcal W_{\bar G_l,p_l}(1)$. 

Therefore, using \eqref{eqn:WgG0} we obtain
\begin{equation}
\int_{4\bar t}^{\bar t} \int_{M_j} |t| \left| \ric_{g_j} +\hess_{g_j} f_j -\frac{g_j}{2|t|}\right|^2 d\nu_{(p_j,0)} dt \leq  \mathcal W_{g_j,p_j}(\alpha_j) -\mathcal W_{g_j,p_j}(\beta_j) \rightarrow 0.
\end{equation}
Passing to the limit, we conclude that $(M,g(\bar t), f(\cdot,\bar t))$ is a gradient shrinking Ricci soliton at scale $|\bar t|$. Moreover, for $j$ large,
\begin{equation*}
\mathcal W_{g_j,p_j}(\alpha_j)\leq\mathcal W_{g_j,p_j}(|t|) \leq \mathcal W_{g_j,p_j}(\beta_j),
\end{equation*}
thus $\mathcal W_{g_j,p_j}(|t|)\rightarrow \bar\mu(g(-1))$.
\end{proof}

\subsection{Almost selfsimilar Ricci flows}
We will say that a pointed smooth complete Ricci flow $(M,g(t),p)_{t\in (-\infty,0)}$ is $k$-selfsimilar if $(M,g(t))_{t\in (-\infty,0)}$ is induced by a gradient shrinking Ricci soliton splitting $k$ Euclidean factors and $p\in \mathcal S_{\textrm{point}}$.

\begin{definition}
A pointed Ricci flow $(M^n,g(t),p)_{t\in (-2\delta^{-1} r^2,0)}$  is said to be $(k,\delta)$-selfsimilar at scale $r>0$ if $g_r(t)= r^{-2}g(r^2 t)$ satisfies the following: there is a $k$-selfsimilar Ricci flow $(\tilde M^n, \tilde g(t),q)_{t\in(-\infty,0)}$, and a diffeomorphism onto its image $F:B(q,-1,\delta^{-1}) \rightarrow M$ such that $F(q)=p$ and 
\begin{enumerate}
\item For every integer $0\leq l< \delta^{-1}$, we have $ \left|\tilde \nabla^l (F^* g_r - \tilde g)\right|_{\tilde g}<\delta$
in $B(q,-1,\delta^{-1})\times [-\delta^{-1},-\delta]$.
\item For every $(x,t)\in B(q,-1,\delta^{-1}) \times [-\delta^{-1},-\delta]$, we have $|d_{\tilde g(t)}(q,x) - d_{g_r(t)}(p,F(x)) |<\delta$.
\end{enumerate}

We define $\mathcal L_{p,r}=F( B(q,-1,\delta^{-1}) \cap \mathcal S_{\textrm{point}})$ and say that $(M,g(t),p)_{t\in (-2\delta^{-1} r^2,0]}$ is $(k,\delta)$-selfsimilar around $p$ at scale $r$ with respect to $\mathcal L_{p,r}$.
\end{definition}

\begin{remark}
Note that a $k$-selfsimilar Ricci flow may split more than $k$ Euclidean factors, so in general we know that $\mathcal S_{\textrm{point}}=\mathcal K\times \mathbb R^l$ for some $l\geq k$ and $\mathcal K$ satisfying the estimate of Proposition \ref{prop:spine_structure}. Thus, if a pointed Ricci flow is $(k,\delta)$-selfsimilar at scale $r$ with respect to $\mathcal L_{p,r}$, it is not necessarily true that the large scale geometry of $\mathcal L_{p,r}$ is  $k$-dimensional.
\end{remark}

\begin{lemma}\label{lemma:k_delta_convergence}
Let $\delta_j\rightarrow 0$ and $(M^n_j,g_j(t),p_j)_{t\in (-2\delta_j^{-1},0)}$  be a pointed sequence of complete Ricci flows converging smoothly to a complete pointed Ricci flow $(M_\infty,g_\infty(t),p_\infty)_{t\in (-\infty,0)}$.  Suppose that each $(M_j,g_j(t),p_j)_{t\in (-2\delta_j^{-1},0)}$ is $(k,\delta_j)$-selfsimilar at scale $1$. Then $(M_\infty,g_\infty(t),p_\infty)_{t\in (-\infty,0)}$ is $k$-selfsimilar with spine $\mathcal S$.

Moreover, if $M_j$ are compact and each $g_j(t)$ is defined in $(-2\delta_j^{-1},0]$ satisfying (RF2) and (RF3), then the conjugate heat flows $\nu_{(p_j,0)}$ converge to a conjugate heat flow $\nu_\infty\in\mathcal S$ and 
$$\mathcal W_{p_j}(|t|)\rightarrow \bar\mu(g_\infty(-1)),$$ for every $t<0$.
\end{lemma}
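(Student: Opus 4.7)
The plan is to extract a subsequential smooth limit of the reference $k$-selfsimilar flows supplied by the $(k,\delta_j)$-selfsimilarity hypothesis, identify this limit isometrically with $(M_\infty, g_\infty, p_\infty)$, and then invoke Proposition \ref{prop:Spoint_S} for the moreover part.

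\textbf{Step 1 (Extracting the reference limit).} For each $j$ the definition of $(k,\delta_j)$-selfsimilarity provides a $k$-selfsimilar flow $(\tilde M_j, \tilde g_j(t), q_j)_{t\in(-\infty,0)}$ and a diffeomorphism $F_j$ on $B(q_j,-1,\delta_j^{-1})$ with $F_j(q_j)=p_j$ and $C^{\lfloor\delta_j^{-1}\rfloor}$-closeness $\bigl|\tilde\nabla^l(F_j^* g_j-\tilde g_j)\bigr|<\delta_j$ on $B(q_j,-1,\delta_j^{-1})\times[-\delta_j^{-1},-\delta_j]$. Since $g_j\to g_\infty$ smoothly, $F_j^* g_j$ inherits uniform $C^\infty$ curvature bounds and non-collapsing on compact spacetime domains, and the $C^\infty$-closeness transfers these bounds to $\tilde g_j$ on balls around $q_j$ of radii tending to $+\infty$. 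Hamilton's compactness theorem then yields a pointed smooth subsequential limit $(\tilde M_\infty, \tilde g_\infty, q_\infty)_{t\in(-\infty,0)}$. The $\tilde g_j$ are induced by normalized shrinking solitons splitting at least $k$ Euclidean factors, with soliton potentials $\tilde f_j$ attaining a minimum at $q_j$; by Lemma \ref{lemma:soliton_growth} the $\tilde f_j$ have uniform quadratic growth bounds, so after a further subsequence they converge smoothly to $\tilde f_\infty$, with $\tilde f_\infty$ attaining a minimum at $q_\infty$. This certifies that $(\tilde M_\infty, \tilde g_\infty, q_\infty)$ is $k$-selfsimilar and that $q_\infty\in\mathcal S_{\textrm{point}}$.

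\textbf{Step 2 (Identification with the limit).} The smooth convergence of $g_j$ and of $\tilde g_j$ yields Cheeger-Gromov diffeomorphisms $\Psi_j$ and $\Phi_j$ from balls in $M_\infty$ around $p_\infty$ and in $\tilde M_\infty$ around $q_\infty$ into $M_j$ and $\tilde M_j$ respectively. For each fixed $R<+\infty$ and $j$ large enough so that $\delta_j^{-1}>R$, the composition $\Psi_j^{-1}\circ F_j\circ \Phi_j$ is defined on a slight shrinking of $B(q_\infty,-1,R)$, sends $q_\infty$ to $p_\infty$ in the limit, and pulls $g_\infty$ back to $\tilde g_\infty$ in the limit by virtue of $|\tilde\nabla^l(F_j^* g_j-\tilde g_j)|\to 0$. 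A diagonal extraction produces a pointed isometry $(\tilde M_\infty, \tilde g_\infty, q_\infty)\cong(M_\infty, g_\infty, p_\infty)$, transferring $k$-selfsimilarity and the property $p_\infty\in\mathcal S_{\textrm{point}}$ to $(M_\infty, g_\infty, p_\infty)$.

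\textbf{Step 3 (Moreover part).} Under the additional compactness, (RF2), and (RF3) assumptions, the Gaussian upper bound (RF3) implies that $\nu_{(p_j,0)}$ satisfies (CHF1) with uniform constants with respect to $p_j$. Proposition \ref{prop:compactness_rf} therefore yields a subsequential smooth limit conjugate heat flow $\nu_\infty$ on $(M_\infty,g_\infty(t))$. Since Step 2 has established $p_\infty\in\mathcal S_{\textrm{point}}$, the hypotheses of Proposition \ref{prop:Spoint_S} are all in place, and it applies verbatim to conclude both $\nu_\infty\in\mathcal S$ and $\mathcal W_{p_j}(|t|)\to \bar\mu(g_\infty(-1))$ for every $t<0$, after passing to a further subsequence.

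The main obstacle is Step 2: producing a genuine global pointed isometry between the two limits from only the local near-isometries $F_j$, whose domains are balls of the $j$-dependent radius $\delta_j^{-1}$. Conceptually this is routine, as $\delta_j^{-1}\to+\infty$ exhausts both limits, but executing the double diagonal extraction (simultaneously with respect to the compactness of $\tilde g_j$ and of $g_j$) requires some care. Everything else in the proof is a direct packaging of Proposition \ref{prop:compactness_rf}, Proposition \ref{prop:Spoint_S}, and Lemma \ref{lemma:soliton_growth}.
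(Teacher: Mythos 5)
Your overall architecture — extract a limit of the reference $k$-selfsimilar flows, identify that limit isometrically with $(M_\infty,g_\infty,p_\infty)$, then invoke Propositions \ref{prop:compactness_rf} and \ref{prop:Spoint_S} — is a genuinely different route from the paper, which instead pulls the soliton data back to $M_j$ via $F_j^{-1}$ and takes limits entirely on the converging sequence $(M_j,g_j,p_j)$, thereby avoiding a second compactness argument and the double diagonal extraction you flag as the delicate point. Steps 2 and 3 are in order, and the double diagonal extraction is indeed manageable.

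However, Step 1 has a real gap. You write that ``by Lemma \ref{lemma:soliton_growth} the $\tilde f_j$ have uniform quadratic growth bounds, so after a further subsequence they converge smoothly.'' Lemma \ref{lemma:soliton_growth} controls $\tilde f_j + c_j$, where $c_j = \tau(R+|\nabla \tilde f_j|^2) - \tilde f_j = -\bar\mu(\tilde g_j)$ by Lemma \ref{lemma:soliton_identity_entropy}. The definition of $k$-selfsimilar places no a priori lower bound on $\bar\mu(\tilde g_j)$, so the additive constants $c_j$ could escape to $+\infty$, in which case $\tilde f_j(q_j)\in [-a(n)-c_j,\,a(n)-c_j]\to -\infty$ and no smooth limit $\tilde f_\infty$ exists. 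Without a finite limit soliton potential you cannot certify $q_\infty\in\mathcal S_{\textrm{point}}$, and the whole of Step 1 breaks down.

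This is exactly the point the paper's proof spends its effort on: it first extracts a limit of $\hat f_j = f_j - \bar\mu(\bar g_j(-1))$ (which does have $j$-independent two-sided quadratic growth and so converges), then uses the soliton normalization $\int (4\pi|t|)^{-n/2}e^{-\bar f_j}d\vol = 1$ together with convergence of the geometry and of $\hat f_j$ to force $\int(4\pi|t|)^{-n/2}e^{-\hat f_j}d\vol$ to lie in a compact subset of $(0,+\infty)$, which yields $\inf_j\bar\mu(\bar g_j(-1)) > -\infty$; only then do the $f_j$ themselves subconverge. The same normalization argument can be run inside your framework (replace $\hat f_j$-convergence on $M_j$ with $\hat f_j$-convergence on $\tilde M_j$ after Hamilton compactness), so the gap is fixable, but as written your proposal asserts a conclusion of Lemma \ref{lemma:soliton_growth} that the lemma does not give.
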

\begin{proof}
Since each $(M_j,g_j(t),p_j)_{t\in (-2\delta_j^{-1},0)}$ is $(k,\delta_j)$-selfsimilar, there are 
\begin{enumerate}
\item $k$-selfsimilar Ricci flows $(\bar M_j,\bar g_j(t),\bar p_j)_{t\in (-\infty,0)}$ with spine $\mathcal S_j$ and point spine $\mathcal S_{j,\textrm{point}}$, with $\bar p_j\in\mathcal S_{j,\textrm{point}}$.
\item maps $F_j: B(\bar p_j,-1,\delta_j^{-1}) \rightarrow M_j$, diffeomorphisms onto their image such that $F_j(\bar p_j) = p_j$ and $|\bar\nabla^l(F_j^*g_j-\bar g_j)|_{\bar g_j} < \delta_j$, for every $0\leq l<\delta_j^{-1}$, and 
$$|d_{g_j(t)}(p_j,F_j(x)) - d_{\bar g_j(t)}(\bar p_j,x)|<\delta_j$$
in $B(\bar p_j,-1,\delta_j^{-1})\times [-\delta_j^{-1},-\delta_j]$.
\item $\nu_{\bar f_j}\in\mathcal S_j$ such that $\bar f_j(\cdot,t)$ attain a minimum at $\bar p_j$.
\end{enumerate}
Thus, for large $j$, we can define on $B(p_j,-1,(2\delta_j)^{-1})\times [-(2\delta_j)^{-1},-2\delta_j]$ the smooth functions $f_j=(F_j^{-1})^*\bar f_j$. Observe that $f_j(\cdot,t)$ attain a minimum at $p_j$ and for every $r>0$ 
\begin{align}
\min_{B(p_j,-1,r)\times [-r^2,-r^{-2}]} |t| \min(R_{g_j(t)},0)& \longrightarrow 0,\label{eqn:scalar}\\
\max_{B(p_j,-1,r)\times [-r^2,-r^{-2}]}\left|\nabla^l\left(\ric_{g_j(t)} + \hess_{g_j(t)} f_j - \frac{g_j(t)}{2|t|} \right)\right|_{g_j(t)} &\longrightarrow 0,\quad \textrm{for any $l\geq 0$},\label{eqn:d_soliton_eqn}\\
\max_{B(p_j,-1,r)\times [-r^2,-r^{-2}]} \left|\left( \frac{\partial}{\partial t} +\Delta_{g_j(t)} - R_{g_j(t)}\right)\left( (4\pi |t|)^{-n/2} e^{-f_j} \right) \right|&\longrightarrow 0\label{eqn:conj_h},
\end{align}

Moreover, by the growth bounds on soliton functions of Lemma \ref{lemma:soliton_growth}, we can ensure that for large $j$ and any $(x,t)\in  B(p_j,-1,\delta_j^{-1})\times [-\delta_j^{-1},\delta_j]$, $\hat f_j = f_j-\bar \mu(\bar g_j(-1))$ satisfies
\begin{equation}\label{eqn:hatf_growth}
\frac{d_{g_j(t)}(p_j,x)^2}{2a(n) |t|} - 2a(n) \leq \hat f_j(x,t) \leq 2a(n) \left( \frac{d_{g_j(t)}(p_j,x)^2}{|t|} + 1\right)
\end{equation}
and for every $r>0$
\begin{equation*}
\max_{B(p_j,-1,r)\times [-r^2,-r^{-2}]} \left| |t|(|\nabla^{g_j(t)} f_j |^2 +R_{g_j(t)}) - \hat f_j \right|\rightarrow 0,
\end{equation*}
by Lemma \ref{lemma:soliton_identity_entropy}.

In particular, for any $r<+\infty$ and large $j$ we have the estimates
\begin{align*}
|t| |\nabla^{g_j(t)} \hat f_j |^2 \leq \hat f_j-|t| R_{g_j(t)}&\leq 2a(n) \left( \frac{d_{g_j(t)}(p_j,x)^2}{|t|} + 1\right)\\
|\nabla^k\hess_{g_j(t)} \hat f_j|_{g_j(t)}&\leq C_k
\end{align*}
in $B(p_j,-1,r)\times [-r^2,-r^{-2}]$.

Therefore, passing to a further subsequence we can assume that $\hat f_j$ smoothly converge to a limit $\hat f_\infty$ on $(M_\infty,g_\infty(t),p_\infty)_{t\in (-\infty,0)
}$. Moreover, due to \eqref{eqn:hatf_growth},
$$\int_{B(p_j,-1,(2\delta_j)^{-1})} (4\pi |t|)^{-n/2} e^{-\hat f_j(\cdot,t)} d\vol_{g_j(t)} \longrightarrow \int_{M_\infty}  (4\pi |t|)^{-n/2} e^{-\hat f_\infty(\cdot,t)} d\vol_{g_\infty(t)}\in (0,+\infty)$$

Therefore, there is a constant $C>0$ such that for large $j$ 
\begin{equation*}
C^{-1}\leq e^{\bar \mu(\bar g_j(-1))}\int_{B(p_j,-1,(2\delta_j)^{-1})} (4\pi |t|)^{-n/2} e^{-f_j} d\vol_{g_j(t)}.
\end{equation*}
from which we obtain that
$$e^{-\bar \mu(\bar g_j(-1))} C^{-1} \leq \int_{B(p_j,-1,(2\delta_j)^{-1})}  (4\pi |t|)^{-n/2} e^{-f_j} d\vol_{g_j(t)} \leq 1.$$

This gives that $\inf_j \bar\mu(\bar g_j(-1))>-\infty$. Therefore, passing to a further subsequence we can assume that $\bar \mu(\bar g_j(-1))\rightarrow \bar \mu_\infty$ and $f_j \rightarrow f_\infty=\hat f_\infty +\bar \mu_\infty$. 

By \eqref{eqn:d_soliton_eqn} and the growth bounds \eqref{eqn:hatf_growth}, it follows that $(M_\infty, g_\infty(t),f_\infty(\cdot,t))$ is a normalized gradient shrinking Ricci soliton at scale $|t|$, since 
$$(4\pi |t|)^{-n/2} \int_{\bar M_j} e^{-\bar f_j(\cdot,t)} d\vol_{\bar g_j(t)} = 1.$$
 Moreover, by \eqref{eqn:conj_h} we obtain
$$\left( \frac{\partial}{\partial t} +\Delta_{g_j(t)} - R_{g_j(t)}\right)\left( (4\pi |t|)^{-n/2} e^{-f_j} \right)=0.$$
Thus, $(M_\infty,g_\infty(t))_{t\in(-\infty,0)}$ is induced by a normalized gradient shrinking soliton with  $\nu_{f_\infty}\in\mathcal S$ and $f_\infty$ attains a minimum at $p_\infty\in \mathcal S_{\textrm{point}}$, hence  $(M_\infty,g_\infty(t),p_\infty)_{t\in(-\infty,0)}$ is $0$-selfsimilar. 

In a similar manner, if $(M_j,g_j(t),p_j)_{t\in(-2\delta_j^{-1},0)}$ is $(k,\delta_j)$-selfsimilar, we can obtain limit functions $v_\infty^1,\ldots,v_\infty^k$ on $(M_\infty,g_\infty(t),p_\infty)_{t\in(-\infty,0)}$ satisfying
$$\hess_{g_\infty(t)} v^a = 0, \quad \langle \nabla v^a,\nabla v^b\rangle = \delta^{ab},$$
for any $a,b=1,\ldots,k$. This proves that $(M_\infty,g_\infty(t),p_\infty)_{t\in(-\infty,0)}$ isometrically splits $k$ Euclidean factors and thus it is $k$-selfimilar.

The remaining assertions, assuming (RF2), (RF3), and that $M_j$ is compact, follow by applying Proposition \ref{prop:Spoint_S}.

\end{proof}

\begin{corollary}
Fix $s>0$ and $\varepsilon>0$, and let $(M^n,g(t),p)_{t\in (-2\delta^{-1},0)}$ be a smooth compact Ricci flow satisfying (RF1-2) which is $(k,\delta)$-selfsimilar with respect to $\mathcal L_{p,1}$. Suppose that $x\in B(p,-1,s)$ is such that $(M,g(t),x)_{t\in (-2\delta^{-1},0)}$ is $(0,\delta)$-selfsimilar. If $0<\delta\leq \delta(n,C_I,\Lambda|s,\varepsilon)$ then 
$$d_{g(t)}(x,\mathcal L_{p,1}) \leq \varepsilon $$
for every $t\in [-\varepsilon^{-1},-\varepsilon]$.
\end{corollary}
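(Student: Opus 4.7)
I would argue by contradiction using the compactness framework developed in this section. Assume the conclusion fails: there exist sequences $\delta_j\to 0$, smooth compact Ricci flows $(M_j,g_j(t),p_j)_{t\in(-2\delta_j^{-1},0)}$ satisfying (RF1-2) that are $(k,\delta_j)$-selfsimilar with respect to $\mathcal L_{p_j,1}$, points $x_j\in B(p_j,-1,s)$ for which $(M_j,g_j(t),x_j)_{t\in(-2\delta_j^{-1},0)}$ is $(0,\delta_j)$-selfsimilar, and times $t_j\in[-\varepsilon^{-1},-\varepsilon]$ with
$$d_{g_j(t_j)}(x_j,\mathcal L_{p_j,1})>\varepsilon\sqrt{|t_j|}.$$
By Proposition \ref{prop:RF35}, for $j$ large each flow also satisfies (RF3-5) on a long backward interval with constants depending only on $n,C_I,\Lambda$.

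Next, I would use Proposition \ref{prop:compactness_rf} to pass to a subsequence so that $(M_j,g_j(t),p_j)$ converges smoothly to a complete pointed Ricci flow $(M_\infty,g_\infty(t),p_\infty)_{t\in(-\infty,0)}$, and simultaneously $\nu_{(p_j,0)}\to\nu_{f_\infty}$. By Lemma \ref{lemma:k_delta_convergence} the limit is $k$-selfsimilar with spine $\mathcal S$ and $p_\infty\in\mathcal S_{\mathrm{point}}$. Since $x_j\in B(p_j,-1,s)$ and $s$ is fixed, a further subsequence gives $x_j\to x_\infty\in M_\infty$ and $t_j\to t_\infty\in[-\varepsilon^{-1},-\varepsilon]$. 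Viewing the same flows as pointed at $x_j$ and applying Lemma \ref{lemma:k_delta_convergence} a second time to the $(0,\delta_j)$-selfsimilar pointed sequence $(M_j,g_j(t),x_j)$, we conclude that $(M_\infty,g_\infty(t),x_\infty)$ is $0$-selfsimilar. Because the spine structure is intrinsic to the Ricci flow $(M_\infty,g_\infty(t))$, this means $x_\infty\in\mathcal S_{\mathrm{point}}$ of the same limit.

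The crucial step is then to verify that $\mathcal L_{p_j,1}$ converges to $\mathcal S_{\mathrm{point}}$, in the sense that every $y_\infty\in\mathcal S_{\mathrm{point}}\cap \overline{B(p_\infty,-1,2s)}$ is a limit $y_j\to y_\infty$ with $y_j\in\mathcal L_{p_j,1}$. This follows from the definition: each $\mathcal L_{p_j,1}=F_j(B(q_j,-1,\delta_j^{-1})\cap\tilde{\mathcal S}_{\mathrm{point},j})$, where $(\tilde M_j,\tilde g_j(t),q_j)$ is the $k$-selfsimilar approximating model. The entropy bound (RF2) gives a uniform lower bound on $\bar\mu(\tilde g_j(-1))$ via $\mathcal W_{p_j}$, so by the soliton version of Proposition \ref{prop:compactness_rf} the models subconverge smoothly to a $k$-selfsimilar limit that is isometric to $(M_\infty,g_\infty(t),p_\infty)$ through the limit of the $F_j$; under this identification $\tilde{\mathcal S}_{\mathrm{point},j}$ passes to $\mathcal S_{\mathrm{point}}$.

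Choosing $y_j\in\mathcal L_{p_j,1}$ with $y_j\to x_\infty$ and using that the distance functions $d_{g_j(t_j)}$ converge uniformly on compact subsets to $d_{g_\infty(t_\infty)}$, we obtain $d_{g_j(t_j)}(x_j,y_j)\to 0$, which contradicts the assumed lower bound $\varepsilon\sqrt{|t_j|}\geq \varepsilon^{3/2}$. The main technical obstacle is the convergence $\mathcal L_{p_j,1}\to\mathcal S_{\mathrm{point}}$: one has to combine soliton compactness (relying on the uniform entropy bound) with the compatibility of the approximating diffeomorphisms $F_j$ to identify the limiting point-spine with the intrinsic $\mathcal S_{\mathrm{point}}$ of $(M_\infty,g_\infty(t))$.
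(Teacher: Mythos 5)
Your proposal is correct and follows essentially the same contradiction-and-compactness route as the paper: pass to a subsequential limit using Propositions \ref{prop:compactness_rf} and \ref{prop:RF35} and Lemma \ref{lemma:k_delta_convergence}, conclude both $p_\infty$ and $x_\infty$ lie in $\mathcal S_{\mathrm{point}}$ of the limit, and then use that $\mathcal L_{p_j,1}$ converges (as a set, via the approximating diffeomorphisms $F_j$) to $\mathcal S_{\mathrm{point}}$ to contradict the assumed lower bound $d_{g_j(t_j)}(x_j,\mathcal L_{p_j,1})>\varepsilon\sqrt{|t_j|}$. The only difference is one of exposition: the paper's proof states the final implication ``$p_\infty,x_\infty\in\mathcal S_{\mathrm{point}}$ $\Rightarrow$ $d_{g_j(t)}(x_j,\mathcal L_{p_j,1})\le\varepsilon\sqrt{|t|}$ for large $j$'' without spelling out why points of $\mathcal L_{p_j,1}=F_j(B(q_j,-1,\delta_j^{-1})\cap\tilde{\mathcal S}_{\mathrm{point},j})$ accumulate at every point of $\mathcal S_{\mathrm{point}}\cap B(p_\infty,-1,2s)$; you make this step explicit via the compactness of the approximating soliton models under the uniform entropy lower bound inherited from (RF2), which is exactly the right way to fill that gap.
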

\begin{proof}
Consider any sequence $\delta_j\rightarrow 0$ and smooth compact Ricci flows $(M_j,g_j(t),p_j)_{t\in (-2\delta_j^{-1},0)}$ satisfying (RF1-2) which are $(k,\delta_j)$-selfsimilar with respect to $\mathcal L_{p_j,1}$, and $x_j \in B(p_j,-1,s)\subset M_j$ such that $(M_j,g_j(t),x_j)_{t\in (-2\delta_j^{-1},0)}$ is $(0,\delta_j)$-selfsimilar. By Proposition \ref{prop:compactness_rf} and Lemma \ref{lemma:k_delta_convergence}, passing to a subsequence, we may assume that $(M_j,g_j(t),p_j)_{t\in (-2\delta_j^{-1},0)}$ converges to a smooth complete Ricci flow $(M_\infty,g_\infty(t),p_\infty)_{t\in (-\infty,0)}$, which is $k$-selfsimilar.

Again, passing to a subsequence we may assume that there is $x_\infty\in B(p_\infty,-1,s)\subset M_\infty$ such that $x_j\rightarrow x_\infty$, so $(M_j,g_j(t),x_j)_{t\in (-2\delta_j^{-1},0)}$ converges to $(M_\infty,g_\infty(t),x_\infty)_{t\in (-\infty,0)}$, which by Lemma \ref{lemma:k_delta_convergence} is also $0$-selfsimilar.

In particular, $p_\infty,x_\infty \in \mathcal S_{\textrm{point}}$, which proves that $d_{g_j(t)} (x_j,\mathcal L_{p_j,1})\leq \varepsilon $, for any $t\in [-\varepsilon^{-1},-\varepsilon]$ if $j$ is large. This suffices to prove the result.
\end{proof}

\subsubsection{$\mathcal W$-entropy drop and almost selfsimilar Ricci flows}

\begin{proposition}\label{prop:L}
Let $(M,g(t),p)_{t\in (-2\delta^{-2},0]}$ be a smooth compact Ricci flow satisfying (RF1-3) and suppose that it is $(0,\delta^2)$-selfsimilar at scale $1$ with respect to $\mathcal L_{p,1}$. If $0<\delta\leq \delta(n,C_I,\Lambda,C_1|r,\varepsilon)$, then for any $q\in \mathcal L_{p,1}\cap B(p,-1,r)$, 
$$\mathcal W_q(\varepsilon)-\mathcal W_q(\varepsilon^{-1})<\varepsilon$$
and $(M,g(t),q)_{t\in (-2\delta^{-1},0)}$ is $(0,\delta)$-selfsimilar.

Moreover, there is a product metric space $(\mathcal K\times \mathbb R^l, d_{\mathcal K\times \mathbb R^l})$, $0\leq l\leq n$, with $\diam(\mathcal K)\leq A(n)$, such that
$$d_{GH} \left(\mathcal L_{p,1}\cap B(p,-1,r), (\mathcal K\times \mathbb R^l )\cap B(\tilde p,r) \right) < \delta,$$
where $\mathcal L_{p,1}\cap B(p,-1,r)$ is assumed to carry the induced metric from $d_{g(-1)}$.
\end{proposition}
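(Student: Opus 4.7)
The plan is a contradiction and compactness argument for the first two assertions, combined with a direct appeal to the spine structure theorem (Proposition \ref{prop:spine_structure}) for the product statement. Suppose the first two conclusions fail. Then there exist fixed $\varepsilon,r>0$, a sequence $\delta_j\to 0$, smooth compact Ricci flows $(M_j,g_j(t),p_j)_{t\in(-2\delta_j^{-2},0]}$ satisfying (RF1-3) and $(0,\delta_j^2)$-selfsimilar at scale $1$ with respect to $\mathcal L_{p_j,1}$, and points $q_j\in\mathcal L_{p_j,1}\cap B(p_j,-1,r)$ for which (after passing to a subsequence, the same one) fails at level $\delta_j$.

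Applying Proposition \ref{prop:compactness_rf} and Lemma \ref{lemma:k_delta_convergence}, I pass to a subsequence converging in the smooth pointed Cheeger-Gromov sense to a complete $0$-selfsimilar Ricci flow $(M_\infty,g_\infty(t),p_\infty)_{t\in(-\infty,0)}$, with the conjugate heat flows $\nu_{(p_j,0)}$ converging to some $\nu_{f_\infty}\in\mathcal S$ and $p_\infty\in\mathcal S_{\textrm{point},\infty}$. Passing to a further subsequence, $q_j$ converges to some $q_\infty\in\overline{B(p_\infty,-1,r)}$. The crucial step is to verify $q_\infty\in\mathcal S_{\textrm{point},\infty}$: the $(0,\delta_j^2)$-selfsimilarity provides $0$-selfsimilar approximating flows $(\tilde M_j,\tilde g_j(t),\tilde p_j)$ and near-isometries $F_j$ with $q_j=F_j(\tilde q_j)$ for some $\tilde q_j\in\mathcal S_{\textrm{point}}(\tilde M_j)$; both $(\tilde M_j,\tilde g_j(t),\tilde p_j)$ and $(M_j,g_j(t),p_j)$ Cheeger-Gromov converge to the same limit, and the normalized soliton functions $\tilde f_j$ on $\tilde M_j$ with minimum at $\tilde q_j$, whose quadratic growth is controlled uniformly by Lemma \ref{lemma:soliton_growth} together with a uniform lower entropy bound coming from (RF2), subconverge to a normalized soliton function on $(M_\infty,g_\infty(t))$ attaining a minimum at $q_\infty$.

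Given $q_\infty\in\mathcal S_{\textrm{point},\infty}$, both remaining conclusions follow. For the entropy drop, uniform (RF3) ensures that $\nu_{(q_j,0)}$ satisfies (CHF1) with respect to $q_j$, so a further subsequence converges to a conjugate heat flow on $M_\infty$ satisfying (CHF1) w.r.t. $q_\infty$. Proposition \ref{prop:Spoint_S} applied with basepoints $q_j$ then yields $\mathcal W_{q_j}(|t|)\to\bar\mu(g_\infty(-1))$ for every $t<0$, so $\mathcal W_{q_j}(\varepsilon)-\mathcal W_{q_j}(\varepsilon^{-1})\to 0$, contradicting the failure of the first conclusion. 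For the $(0,\delta_j)$-selfsimilarity at $q_j$, the smooth pointed Cheeger-Gromov convergence $(M_j,g_j(t),q_j)\to(M_\infty,g_\infty(t),q_\infty)$ to a $0$-selfsimilar Ricci flow with $q_\infty\in\mathcal S_{\textrm{point},\infty}$ supplies, for each fixed small $\delta$ and all large $j$, a diffeomorphism from $B(q_\infty,-1,\delta^{-1})$ into $M_j$ exhibiting $(M_j,g_j(t),q_j)$ as $(0,\delta)$-selfsimilar directly from the definition, contradicting the second conclusion.

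The product structure is not a compactness statement: by $(0,\delta^2)$-selfsimilarity of $(M,g(t),p)$ with respect to $\mathcal L_{p,1}$, there is an approximating $0$-selfsimilar Ricci flow $(\tilde M,\tilde g(t),\tilde p)$ and a $\delta^2$-near-isometry $F:B(\tilde p,-1,\delta^{-2})\to M$ such that $\mathcal L_{p,1}=F(B(\tilde p,-1,\delta^{-2})\cap\mathcal S_{\textrm{point}}(\tilde M))$. Proposition \ref{prop:spine_structure} identifies $\mathcal S_{\textrm{point}}(\tilde M)=\mathcal K\times\mathbb R^l$ for some $l\geq 0=k$ with $\diam_{\tilde g(-1)}(\mathcal K)\leq A(n)$, and transporting via the near-isometry $F$ yields the Gromov-Hausdorff approximation of $\mathcal L_{p,1}\cap B(p,-1,r)$ by $(\mathcal K\times\mathbb R^l)\cap B(\tilde p,r)$ with error of order $\delta$. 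The main technical difficulty in the argument is the key claim $q_\infty\in\mathcal S_{\textrm{point},\infty}$, since it requires uniform control of the normalized soliton functions on the approximating selfsimilar Ricci flows in order to pass to a well-defined soliton structure on the limit with the prescribed minimum at $q_\infty$.
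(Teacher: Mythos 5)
Your handling of the product-structure assertion matches the paper's: it is read off directly from Proposition \ref{prop:spine_structure} and the near-isometry $F$ coming from the $(0,\delta^2)$-selfsimilarity. For the first two assertions, however, you are missing the observation that makes the paper's proof short: the mere fact that $q\in\mathcal L_{p,1}\cap B(p,-1,r)$, together with the $(0,\delta^2)$-selfsimilarity around $p$, already implies directly from the definition of $\mathcal L_{p,1}$ that $(M,g(t),q)$ is $(0,\delta)$-selfsimilar once $\delta$ is small relative to $r$ --- one restricts $F$ to a ball of radius $\delta^{-1}$ around the preimage $\tilde q\in\mathcal S_{\textrm{point}}$ of $q$, which still lies in $B(\tilde p,-1,\delta^{-2})$, and the loss from $\delta^2$ to $\delta$ absorbs the accompanying errors. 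The second assertion therefore requires no compactness at all, and the first then follows by taking limits with basepoint $q_j$ (not $p_j$) and applying Lemma \ref{lemma:k_delta_convergence} directly.

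The gap this creates in your argument for the second assertion is a genuine logical one. You suppose for contradiction that $(M_j,g_j(t),q_j)$ is not $(0,\delta_j)$-selfsimilar with $\delta_j\to 0$, and you appeal to smooth convergence to a $0$-selfsimilar flow with $q_\infty\in\mathcal S_{\textrm{point},\infty}$. But this convergence only yields, for each \emph{fixed} $\delta'>0$ and all $j$ large, that $(M_j,g_j(t),q_j)$ is $(0,\delta')$-selfsimilar. Since $\delta_j\to 0$, eventually $\delta_j<\delta'$, and $(0,\delta')$-selfsimilarity at a coarser parameter does not contradict failure of $(0,\delta_j)$-selfsimilarity at the finer one --- the conclusion of the proposition is at the \emph{same} $\delta$ as the hypothesis, which is precisely why it has to be obtained deterministically rather than by compactness. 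Your route to the entropy drop (limit with basepoint $p_j$, show separately that $q_\infty\in\mathcal S_{\textrm{point},\infty}$ by tracking the approximating soliton functions, then apply Proposition \ref{prop:Spoint_S} at the $q_j$) is plausible but considerably heavier than the paper's one-line reduction, and it also glosses over the fact that the uniform lower bound on $\bar\mu(\tilde g_j(-1))$ needed for the soliton-function growth control is not directly (RF2) on $M_j$, but must be extracted as in the proof of Lemma \ref{lemma:k_delta_convergence}.
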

\begin{proof}
Suppose that for some $\varepsilon>0$ there is a sequence $\delta_j\rightarrow 0 $ and a sequence of smooth compact Ricci flows $(M_j,g_j(t),p_j)_{t\in (-2\delta_j^{-1},0)}$ satisfying  satisfying (RF1), (RF2) and (RF3) which are $(0,\delta_j^2)$-selfsimilar at scale $1$, and suppose that there exist $q_j\in \mathcal L_{p_j,1}\cap B(p_j,-1,r)\subset M_j$ such that
\begin{equation}\label{eqn:entropy_contradiction}
\mathcal W_{q_j}(\varepsilon) - \mathcal W_{q_j}(\varepsilon^{-1}) \geq \varepsilon,
\end{equation}
for every $j$. Observe that since $q_j\in \mathcal L_{p_j,1}\cap B(p_j,-1,r)$ we also know that $(M_j,g_j(t),q_j)_{t \in (-2\delta_j^{-1},0)}$ is $(0,\delta_j)$-selfsimilar, for large $j$.

By (RF1), (RF2)  and Proposition \ref{prop:compactness_rf} we can assume, by passing to a subsequence, that \linebreak $(M_j,g_j(t),q_j)_{t\in (-2\delta_j^{-2},0)}$ converges to a smooth complete Ricci flow $(M_\infty,g_\infty(t),q_\infty)_{t\in(-\infty,0)}$. Thus, using Assumptions (RF2) and (RF3) we can apply Lemma \ref{lemma:k_delta_convergence} to conclude that passing to a further subsequence that $(M_\infty,g_\infty(t),q_{\infty})_{t\in (-\infty,0)}$ is $0$-selfsimilar and 
$$\mathcal W_{q_j}(|t|) \rightarrow \bar\mu(g_\infty(-1)).$$
In particular, for large $j$
$$\mathcal W_{q_j}(\varepsilon^{-1}) - \mathcal W_{q_j}(\varepsilon)<\varepsilon,$$
which contradicts \eqref{eqn:entropy_contradiction}.

The remaining assertion follows from $\mathcal L_{p,1} \cap B(p,-1,r)$ being the image of $(\mathcal K\times \mathbb R^l )\cap B(\tilde p,r)$ under a map with $\delta$-small distortion, due to $(M,g(t),p)_{t\in (-2\delta^{-2},0)}$ being $(0,\delta^2)$-selfsimilar.
\end{proof}

\begin{proposition}\label{prop:W_drop_small}
Let $(M^n_j,g_j(t),p_j)_{t\in [-2\delta_j^{-1},0]}$, $\delta_j\rightarrow 0$, be a sequence of compact Ricci flows satisfying (RF1-3) such that
\begin{equation}\label{eqn:prop_W_drop_small}
\mathcal W_{p_j}(\delta_j) - \mathcal W_{p_j}(\delta_j^{-1}) <\delta_j. 
\end{equation}
Then, passing to a subsequence, we may assume that $(M_j,g_j(t),p_j)_{t\in (-2\delta_j^{-1},0]}$ converges to a smooth complete Ricci flow $(M_\infty,g_\infty(t),p_\infty)_{t\in (-\infty,0)}$ which is induced by a gradient shrinking Ricci soliton and satisfies (RF1). Moreover, there is $D=D(n,H)<+\infty$ such that
\begin{equation}\label{eqn:Sc_distance}
d_{g(t)}(p_\infty,\mathcal S_{\textrm{point}}) \leq D \sqrt{|t|}.
\end{equation}
\end{proposition}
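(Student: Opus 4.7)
The plan is to extract a smooth pointed limit of the sequence of Ricci flows together with its conjugate heat flows, use Perelman's monotonicity formula to see that the limit is induced by a gradient shrinking Ricci soliton, and finally obtain the distance bound by combining the Gaussian lower bound on the limiting potential with the bound on the soliton potential at its minimum.

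First, I would apply Proposition \ref{prop:compactness_rf}. Assumptions (RF1) and (RF2) provide uniform curvature and non-collapsing control on each bounded subinterval of $(-\infty, 0)$, while (RF3) guarantees that $\nu_{(p_j,0)}$ satisfies (CHF1) with respect to $p_j$ with the same constant $H$. The proposition then produces a subsequence converging smoothly to a complete pointed Ricci flow $(M_\infty, g_\infty(t), p_\infty)_{t\in(-\infty,0)}$ satisfying (RF1), together with a conjugate heat flow $\nu_{f_\infty}$ satisfying (CHF1) with respect to $p_\infty$ with the same constant $H$.

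Next, I would apply the monotonicity formula \eqref{eqn:pointed_monotonicity} in conjunction with the hypothesis \eqref{eqn:prop_W_drop_small}: for any fixed closed interval $[s,t] \subset (-\infty, 0)$, and for $j$ sufficiently large that $[s,t] \subset [-\delta_j^{-1}, -\delta_j]$,
\begin{equation*}
\int_s^t 2|\tau| \int_{M_j} \left| \ric_{g_j(\tau)} + \hess_{g_j(\tau)} f_j(\cdot,\tau) - \frac{g_j(\tau)}{2|\tau|} \right|^2 d\nu_{(p_j,0),\tau}\, d\tau \leq \mathcal{W}_{p_j}(\delta_j) - \mathcal{W}_{p_j}(\delta_j^{-1}) < \delta_j.
\end{equation*}
Passing to the limit using smooth convergence on metric balls of arbitrary radius, together with the uniform Gaussian upper bound (RF3) to control the tails, the corresponding integral on $M_\infty \times [s,t]$ vanishes. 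By smoothness and the arbitrariness of $[s,t]$, the triple $(M_\infty, g_\infty(\tau), f_\infty(\cdot,\tau))$ is a normalized gradient shrinking Ricci soliton at scale $|\tau|$ for every $\tau < 0$. Proposition \ref{prop:backwards_forwards} then identifies $(M_\infty, g_\infty(t))_{t\in(-\infty,0)}$ with the selfsimilar Ricci flow induced by $(M_\infty, g_\infty(-1), f_\infty(\cdot,-1))$, and $\nu_{f_\infty} \in \mathcal{S}$.

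Finally, Lemma \ref{lemma:soliton_growth} furnishes a minimum point $q_t \in M_\infty$ of $f_\infty(\cdot,t)$, which by the definition of the point-spine lies in $\mathcal{S}_{\mathrm{point}}$. Evaluating the upper bound of that lemma at $x=q_t$ gives $f_\infty(q_t,t) + c \leq a(n)$, where $c = -\bar\mu(g_\infty(-1))$ by Lemma \ref{lemma:soliton_identity_entropy}. Since each $\mathcal{W}_{p_j}(1) \leq 0$ by Perelman's differential Harnack inequality, the limit $\bar\mu(g_\infty(-1)) = \lim_j \mathcal{W}_{p_j}(1) \leq 0$, hence $c \geq 0$ and $f_\infty(q_t,t) \leq a(n)$. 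Combining with the lower bound on $f_\infty$ supplied by (CHF1) applied at $(q_t,t)$,
\begin{equation*}
\frac{d_{g_\infty(t)}(p_\infty, q_t)^2}{H|t|} - H \leq f_\infty(q_t,t) \leq a(n),
\end{equation*}
yields $d_{g_\infty(t)}(p_\infty, q_t) \leq \sqrt{H(H+a(n))\,|t|}$, which is \eqref{eqn:Sc_distance} with $D = D(n,H)$. The main technical point will be passing the monotonicity integral to the limit without loss of mass at spatial infinity, which is handled by the uniform Gaussian decay of the conjugate heat kernels provided by (RF3) and Lemma \ref{lemma:int_ker_bounds}.
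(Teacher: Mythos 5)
Your proposal is correct and follows essentially the same route as the paper: extract a limit via Proposition \ref{prop:compactness_rf}, use the pointed monotonicity formula together with \eqref{eqn:prop_W_drop_small} to see the limit is induced by a gradient shrinking soliton with $\nu_{f_\infty}\in\mathcal S$, and then combine the (RF3)/(CHF1) lower bound on $f_\infty$ with an upper bound for $f_\infty$ at a spine point. The only (immaterial) difference is that the paper bounds $f_\infty$ at the minimum by $n/2$ directly from the soliton identities of Proposition \ref{prop:soliton_identities}, whereas you invoke Lemma \ref{lemma:soliton_growth} and $\bar\mu(g_\infty(-1))\le 0$ to get the bound $a(n)$; both yield the same dimensional constant $D(n,H)$.
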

\begin{proof}
The convergence to the Ricci flow induced by a gradient shrinking Ricci soliton follows from assumptions (RF1), (RF2) and (RF3), using Proposition \ref{prop:compactness_rf}, the monotonicity formula for $\mathcal W_{p_j}$ and \eqref{eqn:prop_W_drop_small}.

In particular, by assumption (RF3), the conjugate heat kernels $\nu_{(p_j,0)}$ converge to a conjugate heat flow $\nu_f\in \mathcal S$ on $(M,g_\infty(t))_{t\in (-\infty,0)}$ satisfying
\begin{equation}\label{eqn:f_lower_bound}
f(x,t) \geq \frac{d_{g_\infty(t)}(p,x)^2}{H|t|} - H, 
\end{equation}
where $H$ is the constant in assumption (RF3).

On the other hand, if $\hat p\in \mathcal S_{\textrm{point}}$ is a point where $f$ attains a minimum, by Proposition \ref{prop:soliton_identities} 
\begin{equation}\label{eqn:f_upper_bound}
\begin{aligned}
f(\hat p,t)&= |t|\left(R_{g_\infty(t)}(\hat p,t) +|\nabla^{g_\infty(t)} f|_{g_\infty(t)}^2(\hat p,t) \right)+\bar\mu(g_\infty(-1))\\
&\leq |t| R_{g_\infty(t)}(\hat p,t) = - |t|\Delta_{g_\infty(t)} f(\hat p,t) +\frac{n}{2} \leq \frac{n}{2},
\end{aligned}
\end{equation}
since $\bar\mu(g_\infty(-1))\leq 0$, $|\nabla^{g_\infty(t)} f|_{g_\infty(t)}^2(\hat p,t)=0$ and $\Delta_{g_\infty(t)} f(\hat p,t)\geq 0$.

Therefore, combining \eqref{eqn:f_lower_bound} with \eqref{eqn:f_upper_bound} gives
$$\frac{d_{g_\infty(t)}(p,\hat p)^2}{H|t|} - H\leq \frac{n}{2},$$
which proves \eqref{eqn:Sc_distance}. 
\end{proof}

\begin{corollary}\label{cor:distance_2D}
Let $(M^n,g(t),p)_{t\in (-2\delta^{-1},0)}$ be a smooth compact Ricci flow satisfying (RF1-3).  If $0<\delta\leq \delta(n,C_I,\Lambda,H|\varepsilon)$ and $\mathcal W_p(\delta)-\mathcal W_p(\delta^{-1}) <\delta$, then there is $q\in M$ with $d_{g(t)}(p,q)\leq 2D(n,H)\sqrt{|t|}$ for every $t\in [-\varepsilon^{-1},-\varepsilon]$ such that $(M,g(t),q)_{t\in (-2\varepsilon^{-1},0)}$ is $(0,\varepsilon)$-selfsimilar, where $D=D(n,H)<+\infty$ is the constant provided by Proposition \ref{prop:W_drop_small}. In particular, 
$$d_{g(t)}(p,\mathcal L_{q,1})\leq 2D \sqrt{|t|}$$
for every $t\in [-\varepsilon^{-1},-\varepsilon]$.
\end{corollary}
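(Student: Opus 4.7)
The plan is a standard contradiction-and-compactness argument, leveraging Proposition \ref{prop:W_drop_small} together with the definition of $(0,\varepsilon)$-selfsimilarity.

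Suppose the conclusion fails. Then there is $\varepsilon>0$ and a sequence of smooth compact Ricci flows $(M_j,g_j(t),p_j)_{t\in(-2\delta_j^{-1},0)}$ satisfying (RF1)--(RF3) with the same constants $C_I,\Lambda,H$, with $\delta_j\to 0$ and
$$\mathcal W_{p_j}(\delta_j)-\mathcal W_{p_j}(\delta_j^{-1})<\delta_j,$$
but for which there is \emph{no} $q\in M_j$ satisfying both $d_{g_j(t)}(p_j,q)\leq 2D\sqrt{|t|}$ on $[-\varepsilon^{-1},-\varepsilon]$ and $(M_j,g_j(t),q)_{t\in(-2\varepsilon^{-1},0)}$ being $(0,\varepsilon)$-selfsimilar.

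Applying Proposition \ref{prop:W_drop_small}, after passing to a subsequence, $(M_j,g_j(t),p_j)$ converges smoothly to a pointed Ricci flow $(M_\infty,g_\infty(t),p_\infty)_{t\in(-\infty,0)}$ induced by a gradient shrinking Ricci soliton, and there exists $\hat p_\infty\in\mathcal S_{\textrm{point}}$ with
$$d_{g_\infty(t)}(p_\infty,\hat p_\infty)\leq D\sqrt{|t|}\quad\text{for every }t<0.$$
Let $F_j:B(p_\infty,-1,R_j)\to M_j$, $R_j\to+\infty$, denote the smooth convergence maps with $F_j(p_\infty)=p_j$, and set $q_j=F_j(\hat p_\infty)$ (which is defined for all $j$ large enough). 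Then $(M_j,g_j(t),q_j)$ converges to $(M_\infty,g_\infty(t),\hat p_\infty)$, and by smooth convergence,
$$d_{g_j(t)}(p_j,q_j)\longrightarrow d_{g_\infty(t)}(p_\infty,\hat p_\infty)\leq D\sqrt{|t|}$$
uniformly on $t\in[-\varepsilon^{-1},-\varepsilon]$, so for $j$ sufficiently large $d_{g_j(t)}(p_j,q_j)\leq 2D\sqrt{|t|}$ on this interval.

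It remains to observe that $(M_j,g_j(t),q_j)_{t\in(-2\varepsilon^{-1},0)}$ is $(0,\varepsilon)$-selfsimilar for $j$ large. Since $\hat p_\infty\in\mathcal S_{\textrm{point}}$, by the definition of $\mathcal S_{\textrm{point}}$ there is $\nu_{\hat f}\in\mathcal S$ such that $\hat f(\cdot,t)$ attains a minimum at $\hat p_\infty$, i.e.\ $(M_\infty,g_\infty(t),\hat p_\infty)_{t\in(-\infty,0)}$ is a $0$-selfsimilar Ricci flow. The required diffeomorphisms and distance comparison in the definition of $(0,\varepsilon)$-selfsimilarity at scale $1$ around $q_j$ are then provided, for all $j$ large, by the smooth convergence maps themselves (composed appropriately so as to send $\hat p_\infty$ to $q_j$), combined with the fact that $d_{g_j(t)}(q_j,\cdot)\to d_{g_\infty(t)}(\hat p_\infty,\cdot)$ uniformly on compact sets. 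This contradicts the failure of the conclusion for $(M_j,g_j(t),p_j)$, and the corollary follows. The only mild subtlety is bookkeeping the convergence of the distance functions on $[-\varepsilon^{-1},-\varepsilon]$, which is immediate from smooth Cheeger--Gromov convergence of the metrics.
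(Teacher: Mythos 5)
Your argument is correct and follows essentially the same route as the paper's proof: contradiction via a sequence with $\delta_j\to 0$, apply Proposition \ref{prop:W_drop_small} to extract a selfsimilar limit flow with $d_{g_\infty(t)}(p_\infty,\mathcal S_{\textrm{point}})\le D\sqrt{|t|}$, and read off a contradiction. The paper is terser and leaves implicit the step you spell out (pulling $\hat p_\infty$ back to $q_j$ via the convergence maps and checking $(0,\varepsilon)$-selfsimilarity), but there is no substantive difference.
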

\begin{proof}
Fix $\varepsilon>0$ and consider a sequence of counterexamples, namely a sequence of smooth compact Ricci flows $(M_j,g_j(t),p_j)_{t\in (-2\delta_j^{-1},0)}$, $\delta_j\rightarrow 0$, satisfying (RF1-3), $\mathcal W_{p_j}(\delta_j)-\mathcal W_{p_j}(\delta_j^{-1}) <\delta_j$, but for each $j$ there is no $q\in M_j$ such that $(M_j,g_j(t),q)_{t\in (-2\varepsilon^{-1},0)}$ is $(0,\varepsilon)$-selfsimilar and $$d_{g_j(t)}(p_j,q)\leq 2D \sqrt{|t|}$$
for every $t\in [-\varepsilon^{-1},-\varepsilon]$

Let $D=D(n,H)$ be the constant provided by Proposition \ref{prop:W_drop_small}. Then, by Proposition \ref{prop:W_drop_small}, we may assume that $(M_j,g_j(t),p_j)_{t\in (-2\delta_j^{-1},0)}$ converges to the Ricci flow $(M_\infty,g_\infty(t),p_\infty)_{t\in(-\infty,0)}$ induced by a gradient shrinking Ricci soliton and $$d_{g_\infty(t)}(p_\infty,\mathcal S_{\textrm{point}})\leq D\sqrt{|t|},$$
for every $t<0$. This means that there is a $q_\infty\in \mathcal S_{\textrm{point}}$ such that $d_{g_\infty(t)}(p_\infty,q_\infty)\leq D\sqrt{|t|}$ for every $t<0$ and $q_j\in M_j$ such that $q_j\rightarrow q_\infty$. But then, for every $t\in [-\varepsilon^{-1},-\varepsilon]$
$$d_{g_j(t)}(p_j,q_j)\leq 2D\sqrt{|t|}$$
and $(M_j,g_j(t),q_j)_{t\in(-2\varepsilon^{-1},0)}$ is $(0,\varepsilon)$-selfsimilar, for large $j$,  which is a contradiction.

\end{proof}

\section{The heat equation under Ricci flow}\label{sec:heat_rf}

Let $(M,g(t))_{t\in I}$ be a Ricci flow and $v:M\times I \rightarrow \mathbb R$ be a solution to the heat equation \eqref{eqn:heat_equation_prelim}. Standard computations give
\begin{align}
\frac{\partial}{\partial t} v^2 &=\Delta v^2 - 2|\nabla v|^2,\\
\frac{\partial}{\partial t} |\nabla v|^2 &= \Delta |\nabla v|^2-2|\hess v|^2\\
\frac{\partial}{\partial t}\hess v&= \Delta_L \hess v,\label{eqn:evol_hess}
\end{align}
where $\Delta_L h_{ij}= \Delta h_{ij}+2R_{ikmj}h_{ij}-R_{ik}h_{kj}-h_{ik}R_{kj}$  denotes the Lichnerowicz Laplacian.

\begin{lemma}\label{lemma:Lich_type_I}
Let $(M,g(t))_{t\in I}$, $\sup I=0$ be smooth Ricci flow satisfying (RF1) and a family of symmetric $2$-tensors $S$ satisfying
\begin{equation}\label{eqn:Lich_subsolution}
\frac{\partial}{\partial t} S =\Delta_L S.
\end{equation}
Then, there is a positive integer $E=E(n,C_I)$ such that
\begin{equation}
\left(\frac{\partial}{\partial t} -\Delta\right)(|t|^E |S|^2)\leq -2|\nabla (|t|^{\frac{E}{2}} S)|^2.
\end{equation}
\end{lemma}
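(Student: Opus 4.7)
The plan is to compute $\bigl(\frac{\partial}{\partial t} - \Delta\bigr)|S|^2$ directly using \eqref{eqn:Lich_subsolution} and the evolution of the metric, extract a $|\nabla S|^2$ term with the correct sign, bound the remaining curvature terms using (RF1), and then choose the exponent $E$ large enough so that the weight $|t|^E$ absorbs the curvature contribution.

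First I would differentiate. Since $|S|^2 = g^{ik}g^{jl}S_{ij}S_{kl}$ and $\partial_t g^{ij} = 2R^{ij}$ under Ricci flow, together with $\partial_t S = \Delta_L S$, a direct computation gives
\begin{equation*}
\frac{\partial}{\partial t}|S|^2 = 2\langle S, \Delta_L S\rangle + 4 R^{ik}g^{jl}S_{ij}S_{kl}.
\end{equation*}
Expanding $\Delta_L$ and using $\Delta|S|^2 = 2\langle S,\Delta S\rangle + 2|\nabla S|^2$, the Laplacian piece of $\Delta_L$ produces $\Delta|S|^2 - 2|\nabla S|^2$, while the curvature terms (both those intrinsic to $\Delta_L$ and those coming from $\partial_t g^{-1}$) are all bilinear in $\riem, \ric$ and $S\otimes S$. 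By (RF1) we have $|\riem|(\cdot,t) \leq C_I/|t|$ and hence $|\ric|(\cdot,t)\leq C'(n)C_I/|t|$, so all these extra terms are pointwise bounded by $\frac{C(n,C_I)}{|t|}|S|^2$. Thus
\begin{equation*}
\left(\frac{\partial}{\partial t} - \Delta\right)|S|^2 \leq -2|\nabla S|^2 + \frac{C(n,C_I)}{|t|}|S|^2.
\end{equation*}

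Next I would insert the weight $|t|^E$. Since $|t|$ is spatially constant, $\Delta(|t|^E|S|^2) = |t|^E \Delta|S|^2$, and $\partial_t |t|^E = -E|t|^{E-1}$ (recall $t<0$), so
\begin{equation*}
\left(\frac{\partial}{\partial t} - \Delta\right)(|t|^E|S|^2) = -E|t|^{E-1}|S|^2 + |t|^E\left(\frac{\partial}{\partial t} - \Delta\right)|S|^2 \leq \bigl(C(n,C_I) - E\bigr)|t|^{E-1}|S|^2 - 2|t|^E|\nabla S|^2.
\end{equation*}
Choosing $E = E(n,C_I)$ to be any positive integer with $E \geq C(n,C_I)$ makes the first term non-positive, leaving
\begin{equation*}
\left(\frac{\partial}{\partial t} - \Delta\right)(|t|^E|S|^2) \leq -2|t|^E|\nabla S|^2.
\end{equation*}
Finally, again because $|t|^{E/2}$ depends only on $t$, one has $\nabla(|t|^{E/2}S) = |t|^{E/2}\nabla S$, so $|\nabla(|t|^{E/2}S)|^2 = |t|^E|\nabla S|^2$, which gives exactly the inequality asserted.

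There is no serious obstacle here; the only moderately delicate point is bookkeeping the precise coefficient of $|S|^2/|t|$ generated by the Lichnerowicz curvature terms and by $\partial_t g^{-1}$, in order to name the constant that determines $E$. Everything else is routine.
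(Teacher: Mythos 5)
Your proof is correct and follows essentially the same route as the paper: compute $(\partial_t-\Delta)|S|^2$ from the Lichnerowicz evolution and $\partial_t g^{-1}=2\ric$, bound the curvature terms via (RF1) by $C(n,C_I)|S|^2/|t|$, then choose $E\geq C(n,C_I)$ so the weight $|t|^E$ absorbs them. You also correctly retain the factor of $2$ in $-2|\nabla S|^2$, which the paper's displayed computation appears to drop (a harmless typo, since the lemma statement and your derivation agree).
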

\begin{proof}
By \eqref{eqn:evol_hess} and assumption (RF1)  we obtain that
\begin{equation*}
\frac{\partial}{\partial t} |S|^2 \leq \Delta |S|^2 - |\nabla S|^2+ \frac{E(n,C_I)|S|^2}{|t|}.
\end{equation*}
Therefore, 
\begin{equation*}
\begin{aligned}
\frac{\partial}{\partial t}(|t|^E |S|^2) &\leq -E |t|^{E-1} |S|^2 + |t|^E \left(\Delta |S|^2 -|\nabla S|^2) +\frac{E |S|^2}{|t|}\right)\\
&=\Delta (|t|^E |S|^2) - |\nabla (|t|^{E/2} S)|^2.
\end{aligned}
\end{equation*}
\end{proof}

\begin{corollary}\label{cor:subsolutions}
Let $(M,g(t))_{t\in I}$ be a smooth Ricci flow satisfying (RF1) and $v\in C^\infty(M\times I)$ be a solution to the heat equation $\frac{\partial}{\partial t}v= \Delta v$. Then, there is $E=E(n,C_I)$ such that, in the viscosity sense,
\begin{equation}\label{eqn:absolute_evolution}
\begin{aligned}
\left(\frac{\partial}{\partial t}-\Delta\right) |v|&\leq 0,\\
\left(\frac{\partial}{\partial t}-\Delta \right)|\nabla v|&\leq 0,\\
\left(\frac{\partial}{\partial t}-\Delta\right)( |t|^{E/2} |\hess v|)&\leq 0.
\end{aligned}
\end{equation}
\end{corollary}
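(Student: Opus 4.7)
The plan is to handle all three inequalities through a single Kato-type principle and then to match this principle to each case. The principle I would first establish is the following: if $G \in C^\infty(M \times I)$ is nonnegative and there exists a nonnegative quantity $K$ with
\begin{equation*}
(\partial_t - \Delta) G \le -2 K^2 \qquad \text{and} \qquad |\nabla G|^2 \le 4 K^2 G,
\end{equation*}
then $F = \sqrt{G}$ is a viscosity subsolution of the heat equation along $g(t)$. The verification is by the standard regularization $F_\varepsilon := \sqrt{G + \varepsilon^2}$, for which a direct computation gives
\begin{equation*}
(\partial_t - \Delta) F_\varepsilon \;=\; \frac{(\partial_t - \Delta) G}{2 F_\varepsilon} \;+\; \frac{|\nabla F_\varepsilon|^2}{F_\varepsilon}.
\end{equation*}
Since $|\nabla F_\varepsilon|^2 = |\nabla G|^2/(4 F_\varepsilon^2) \le K^2 G/F_\varepsilon^2 \le K^2$, the two hypotheses combine to make the right-hand side $\le 0$, so each $F_\varepsilon$ is a classical supersolution. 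As $F_\varepsilon \searrow F$ monotonically and locally uniformly, the stability of viscosity subsolutions under such limits yields the viscosity inequality for $F$.

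With the principle in hand, I would identify $(G,K)$ in each case. For the first inequality, I take $(G,K) = (v^2,|\nabla v|)$: the identity $(\partial_t - \Delta)v^2 = -2|\nabla v|^2$ is already recorded, and $|\nabla v^2|^2 = 4 v^2 |\nabla v|^2 = 4 K^2 G$ is immediate. For the second, I take $(G,K) = (|\nabla v|^2, |\hess v|)$: again the identity $(\partial_t - \Delta)|\nabla v|^2 = -2|\hess v|^2$ is recorded, and $\nabla |\nabla v|^2 = 2\,\hess v \cdot \nabla v$ combined with Cauchy--Schwarz gives $|\nabla G|^2 \le 4|\hess v|^2|\nabla v|^2 = 4 K^2 G$.

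The third case is where Lemma~\ref{lemma:Lich_type_I} enters. Since $\hess v$ solves the Lichnerowicz heat equation by (4.3), applying that lemma with $S = \hess v$ gives
\begin{equation*}
(\partial_t - \Delta)\bigl(|t|^E |\hess v|^2\bigr) \le -2\bigl|\nabla\bigl(|t|^{E/2} \hess v\bigr)\bigr|^2,
\end{equation*}
for the exponent $E = E(n,C_I)$ provided by the lemma, which is exactly the $E$ appearing in the statement. Setting $G := |t|^E |\hess v|^2$ and $K := |t|^{E/2}|\nabla \hess v|$, the first hypothesis of the principle is exactly the above. For the second, note that because $|t|^{E/2}$ depends only on time, $\nabla G = 2\bigl\langle \nabla(|t|^{E/2}\hess v), |t|^{E/2}\hess v\bigr\rangle$, and Cauchy--Schwarz yields $|\nabla G|^2 \le 4 K^2 G$.

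There is no genuine obstacle in this argument; the only point requiring any care is the viscosity formulation, and this is handled uniformly by the $\varepsilon$-regularization. The only sanity check is that the constant $E$ invoked in the statement is the one produced by Lemma~\ref{lemma:Lich_type_I}, which is the case by construction.
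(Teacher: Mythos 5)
Your proposal is correct, and it is precisely the argument the paper intends: the corollary is stated without a written proof because it is the standard Kato-type consequence of the three identities immediately preceding it and of Lemma~\ref{lemma:Lich_type_I}, and your $\varepsilon$-regularization $F_\varepsilon=\sqrt{G+\varepsilon^2}$ together with the verification $|\nabla G|^2\le 4K^2 G$ in each of the three cases is exactly how one makes this rigorous in the viscosity sense. The one small slip is terminological: after establishing $(\partial_t-\Delta)F_\varepsilon\le 0$ you call $F_\varepsilon$ a classical \emph{super}solution, when in fact this inequality says $F_\varepsilon$ is a \emph{sub}solution of the heat equation, consistent with the rest of your sentence.
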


\subsection{Polynomial growth estimates}

\begin{lemma}\label{lemma:bound_heat}
Let $(M,g(t),p)_{t\in [0,T)}$ be smooth complete Ricci flow, satisfying (RF1) and (RF3). Consider a non-negative subsolution $v$ of the heat equation, namely $(\partial_t-\Delta) v\leq0$ and suppose that for some constant $A_1<+\infty$
 $$0\leq v(x,0)\leq A_1 \left((d_{g(0)}(p,x))^k+1\right).$$ 
 Then there is a constant $A_2=A_2(n,C_I,C_1)<+\infty$ such that for every $t\in [0,T)$
\begin{equation*}
0\leq v(x,t) \leq A_2 \left(d_{g(0)}(p,x)^k +t^{k/2}+1\right).
\end{equation*}
\end{lemma}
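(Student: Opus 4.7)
The plan is to apply the representation formula together with the Gaussian upper bound on the conjugate heat kernel from (RF3), and then to control the resulting integral by a $k$-th moment estimate analogous to Lemma \ref{lemma:int_ker_bounds}.

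\textbf{Step 1: Representation via the conjugate heat kernel.} Since $v\geq 0$ satisfies $(\partial_t-\Delta)v\leq 0$, the maximum principle applied to the conjugate heat kernel based at $(x,t)$ gives, for any $t\in(0,T)$,
\begin{equation*}
v(x,t)\leq \int_M v(y,0)\, u_{(x,t)}(y,0)\,d\vol_{g(0)}(y),
\end{equation*}
an analogue of \eqref{eqn:duhamel} for subsolutions, obtained from \eqref{eqn:integral_product} together with \eqref{eqn:chk_dirac}.

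\textbf{Step 2: Apply the hypothesis and the triangle inequality.} Using the growth bound on the initial data and the inequality $d_{g(0)}(p,y)^k\leq 2^{k-1}\left(d_{g(0)}(p,x)^k+d_{g(0)}(x,y)^k\right)$, I obtain
\begin{equation*}
v(x,t)\leq A_1 2^{k-1}\left(d_{g(0)}(p,x)^k+1\right)\int_M u_{(x,t)}(y,0)\,d\vol_{g(0)}(y)+A_1 2^{k-1}\int_M d_{g(0)}(x,y)^k\, u_{(x,t)}(y,0)\,d\vol_{g(0)}(y).
\end{equation*}
The first integral equals $1$ since $u_{(x,t)}$ is a probability density, so it remains to bound the $k$-th moment of $u_{(x,t)}(\cdot,0)$ by a constant multiple of $t^{k/2}$.

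\textbf{Step 3: Moment estimate for the conjugate heat kernel.} By (RF3),
\begin{equation*}
u_{(x,t)}(y,0)\leq C_1(4\pi t)^{-n/2}e^{-d_{g(0)}(x,y)^2/(C_1 t)},
\end{equation*}
so it suffices to show
\begin{equation*}
\int_M d_{g(0)}(x,y)^k(4\pi t)^{-n/2}e^{-d_{g(0)}(x,y)^2/(C_1 t)}\,d\vol_{g(0)}(y)\leq C(n,C_I,C_1)\,t^{k/2}.
\end{equation*}
This is the time-translated analogue of the second estimate in Lemma \ref{lemma:int_ker_bounds}, applied to $u_{(x,t)}$ in place of $u_{(p,0)}$. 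It follows from the Gaussian upper bound combined with the at-most-exponential growth of $\vol_{g(0)}(B(x,\rho))$ coming from (RF1) at time $0$ (which provides a lower Ricci bound at time $0$, hence Bishop--Gromov control), exactly as in the proof of Lemma \ref{lemma:int_ker_bounds}. Scaling $\rho=\sqrt{C_1 t}\,s$ reduces the integral to $t^{k/2}\int_0^\infty s^{k+1} e^{-s^2}\tilde V(\sqrt{C_1 t}\,s)\,ds$ up to constants, with $\tilde V$ a normalized volume comparison factor, and this is uniformly bounded for $t$ in the (after parabolic rescaling, unit) interval.

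\textbf{Step 4: Conclude.} Combining Steps 1--3 yields
\begin{equation*}
v(x,t)\leq A_2\left(d_{g(0)}(p,x)^k+t^{k/2}+1\right)
\end{equation*}
for a constant $A_2=A_2(n,C_I,C_1)$, as desired.

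The only mildly delicate point is Step 3, since the conjugate heat kernel moment estimate is stated in the excerpt only for the backward heat kernel based at a spacetime point with $\sup I=0$; here we need the analogous bound for $u_{(x,t)}(\cdot,0)$ with the flow running forward from $0$. This is a purely time-translated, and moreover parabolically scale-invariant, version of Lemma \ref{lemma:int_ker_bounds}, so the argument goes through verbatim once one normalizes $T=1$.
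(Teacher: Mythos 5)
Your proposal is correct and takes essentially the same route as the paper: represent $v(x,t)$ by integrating $v(\cdot,0)$ against $d\nu_{(x,t),0}$, pass the $k$-th power through the triangle inequality, and reduce to the $k$-th moment estimate of Lemma \ref{lemma:int_ker_bounds} (which, after time-translation and parabolic rescaling to put the evaluation slice one unit before the basepoint, gives a constant depending only on $n,C_I,C_1$). The only cosmetic difference is that you invoke the convexity bound $d_{g(0)}(p,y)^k\leq 2^{k-1}\bigl(d_{g(0)}(p,x)^k+d_{g(0)}(x,y)^k\bigr)$ directly, whereas the paper achieves the same estimate via a case-split on whether $d_{g(0)}(x,y)$ is larger or smaller than $d_{g(0)}(p,x)$ and a corresponding decomposition of the integral over a ball and its complement.
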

\begin{proof}
Fix $(x,t)\in M\times (0,T)$ and set $R=d_{g(0)}(p,x)$.

 We then have
\begin{align*}
&v(x,t)\leq \int_M v(y,0) d\nu_{(x,t),0}(y)\leq A_1\int_M  \left((d_{g(0)}(p,y))^k+1\right) d\nu_{(x,t),0}(y).
\end{align*}
By the triangle inequality we have that $d_{g(0)}(p,y) \leq d_{g(0)}(p,x) +d_{g(0)}(x,y)$ so it follows that if $d_{g(0)}(x,y) \geq R$ then
$$(d_{g(0)}(p,y))^k +1\leq (d_{g(0)}(p,x) +d_{g(0)}(x,y))^k +1\leq 2^k (d_{g(0)}(x,y))^k +1 $$
whereas if $d_{g(0)}(x,y) < R$ then
$$(d_{g(0)}(p,y))^k +1\leq (d_{g(0)}(p,x) +d_{g(0)}(x,y))^k +1 \leq 2^k R^k +1 = 2^k (d_{g(0)}(p,x))^k + 1.$$
Therefore,
\begin{align*}
&v(x,t) \leq  A_1\int_M ( (d_{g(0)}(p,y))^k+1) d\nu_{(x,t),0}(y)\\
&\leq A_1 \left(\int_{M\setminus B(x,0, R)} ( (d_{g(0)}(p,y))^k+1) d\nu_{(x,t),0}(y) + \int_{B(x,0,R)} ( (d_{g(0)}(p,y))^k+1 ) d\nu_{(x,t),0}(y) \right)\\
&\leq A_1\left(\int_M (2^k (d_{g(0)}(x,y))^k +1 ) d\nu_{(x,t),0}(y)+ \int_M (2^k (d_{g(0)}(p,x))^k + 1) d\nu_{(x,t),0}(y)\right)\\
&\leq 2^k A_1 \int_M (d_{g(0)}(x,y))^k d\nu_{(x,t),0}(y))+2^k A_1(d_{g(0)}(p,x))^k +A_1\\
&\leq 2^k A_1 t^{k/2} \int_M \left(\frac{d_{g(0)}(x,y)}{t^{1/2}}\right)^k d\nu_{(x,t),0}(y)+2^k A_1((d_{g(0)}(p,x))^k +1)\\
&\leq A_2((d_{g(0)}(p,x))^k + t^{k/2}+1),
\end{align*}
where the last inequality used Lemma \ref{lemma:int_ker_bounds}.
\end{proof}

\begin{corollary}\label{cor:local_heat_estimates}
Let $(M,g(t),p)_{t\in[t_1,0]}$ be a smooth complete Ricci flow, satisfying (RF1) and (RF3), and $v: M\times [t_1,0]\rightarrow \mathbb R$ a solution to $\left(\frac{\partial}{\partial t} -\Delta\right)v =a$. Suppose that there are constants such that for any $x\in M$
\begin{equation}
|v(x,t_1)|^2 \leq A_1 ((d_{g(t_1)}(p,x))^k+1).
\end{equation}
Then, there are constants $E=E(n,C_I)$ and $A_2=A_2(n,C_I,C_1,t_1,a)<+\infty$ such that for any $(x,t)\in M\times [t_1,0]$
\begin{align*}
|v(x,t)|^2 + (t-t_1)|\nabla v|^2(x,t) +(t-t_1)^2 |t_1|^{-E} |t|^E |\hess v|^2 &\leq A_2 ((d_{g(t_1)}(p,x))^k+1)
\end{align*}
\end{corollary}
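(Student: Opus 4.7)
The plan is to produce each of the three bounds by applying Lemma \ref{lemma:bound_heat} to a carefully chosen nonnegative subsolution of the heat equation built out of $v$, whose initial value at $t=t_1$ inherits the polynomial growth of $|v(\cdot,t_1)|^2$. Since Lemma \ref{lemma:bound_heat} is stated for flows starting at time $0$, one first shifts time so that the flow runs on $[0,|t_1|]$ with singular time at the right endpoint; assumptions (RF1) and (RF3) are preserved, and the resulting bounds transfer back with the extra polynomial term $(t-t_1)^{k/2}$ absorbed into a constant depending on $t_1$.

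First, note $(\partial_t-\Delta)v^2 = -2|\nabla v|^2 \leq 0$, so $v^2$ is a nonnegative subsolution with $v^2(\cdot,t_1) \leq A_1((d_{g(t_1)}(p,\cdot))^k+1)$. Applying Lemma \ref{lemma:bound_heat} gives the first term. Next, set
\begin{equation*}
\phi_1 := (t-t_1)|\nabla v|^2 + v^2.
\end{equation*}
Using $\partial_t |\nabla v|^2 = \Delta|\nabla v|^2 - 2|\hess v|^2$ and $\partial_t v^2 = \Delta v^2 - 2|\nabla v|^2$, one computes $(\partial_t-\Delta)\phi_1 = -|\nabla v|^2 - 2(t-t_1)|\hess v|^2 \leq 0$. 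Since $\phi_1(\cdot,t_1) = v^2(\cdot,t_1)$, a second application of Lemma \ref{lemma:bound_heat} yields the $(t-t_1)|\nabla v|^2$ bound.

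For the Hessian term, invoke Lemma \ref{lemma:Lich_type_I} with $S=\hess v$ (whose evolution is governed by the Lichnerowicz Laplacian): there is $E=E(n,C_I)$ with $(\partial_t-\Delta)(|t|^E|\hess v|^2) \leq 0$. Form
\begin{equation*}
\phi_2 := (t-t_1)^2 |t|^E |\hess v|^2 + |t_1|^E (t-t_1)|\nabla v|^2 + |t_1|^E v^2.
\end{equation*}
A direct computation gives
\begin{equation*}
(\partial_t-\Delta)\phi_2 \leq 2(t-t_1)\bigl(|t|^E - |t_1|^E\bigr)|\hess v|^2 + \bigl(|t_1|^E - 2|t_1|^E\bigr)|\nabla v|^2,
\end{equation*}
and both bracketed terms are nonpositive on $[t_1,0]$ because $|t| \leq |t_1|$ there. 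Thus $\phi_2$ is a subsolution with $\phi_2(\cdot,t_1) = |t_1|^E v^2(\cdot,t_1) \leq A_1|t_1|^E((d_{g(t_1)}(p,\cdot))^k+1)$. Applying Lemma \ref{lemma:bound_heat} once more, absorbing $(t-t_1)^{k/2} \leq |t_1|^{k/2}$ into the constant, and dividing by $|t_1|^E$ produces the claimed Hessian bound; summing the three inequalities completes the proof.

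The main obstacle is the calibration in the Hessian step: the cross-term $2(t-t_1)|t|^E|\hess v|^2$ that appears when differentiating $(t-t_1)^2|t|^E|\hess v|^2$ must be absorbed by the negative contribution $-2c_1(t-t_1)|\hess v|^2$ coming from $c_1(t-t_1)|\nabla v|^2$. This forces $c_1 \geq |t|^E$ pointwise, which we realize with the sup value $c_1 = |t_1|^E$; the factor $|t_1|^{-E}$ in the conclusion is then the price paid for normalizing this choice. Consequently the constant $A_2$ depends on $t_1$, as stated.
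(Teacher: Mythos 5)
There is a genuine gap: you have silently set the source term $a$ to zero. The corollary allows $v$ to solve $\left(\partial_t-\Delta\right)v=a$ with $a$ a constant (and this matters — the application in Lemma \ref{lemma:regularized_soliton_function} has $a=-n/2$). The correct evolution is
\begin{equation*}
\left(\frac{\partial}{\partial t}-\Delta\right)v^2 = 2av - 2|\nabla v|^2,
\end{equation*}
and the $2av$ term is not signed, so $v^2$ is not a subsolution and neither are your $\phi_1$ or $\phi_2$. Lemma \ref{lemma:bound_heat} therefore cannot be applied to any of them as written. (Your treatment of $|\nabla v|^2$ and $\hess v$ is fine because $\nabla a=\hess a=0$ for constant $a$, so those evolution identities are unchanged; the problem is isolated to the $v^2$ pieces.)

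The fix, which is exactly what the paper does, is a Peter--Paul absorption plus an exponential correction: write $2av\leq v^2+a^2$, add the constant $a^2$ to the quantity (contributing nothing to $\left(\partial_t-\Delta\right)$), which gives $\left(\partial_t-\Delta\right)(\phi_2+|t_1|^E a^2)\leq \phi_2+|t_1|^E a^2$ after using the good $-|t_1|^E|\nabla v|^2$ term, and then observe that $e^{-(t-t_1)}(\phi_2+|t_1|^E a^2)$ is a genuine nonnegative subsolution with the correct polynomial initial bound. Once this correction is made, your argument goes through; in fact you do not need the three successive applications of Lemma \ref{lemma:bound_heat} — dividing the bound on $\phi_2$ by $|t_1|^E$ already delivers all three terms of the conclusion at once, which is how the paper organizes it.
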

\begin{proof}
We have, for $t\in [t_1,0]$ 
\begin{align*}
\left(\frac{\partial}{\partial t}-\Delta\right)v^2&=-2|\nabla v|^2+2av\leq -2|\nabla v|^2 +v^2 +a^2\\
\left(\frac{\partial}{\partial t}-\Delta\right) \left( (t-t_1) |\nabla v|^2 \right)&=-2(t-t_1)|\hess v|^2 + |\nabla v|^2\\
\left(\frac{\partial}{\partial t}-\Delta\right)\left((t-t_1)^2 |t_1|^{-E} |t|^E |\hess v|^2\right)&\leq  2(t-t_1) |t_1|^{-E}|t|^E |\hess v|^2\leq 2(t-t_1) |\hess v|^2
\end{align*}
Hence, for $w=v^2+a^2+(t-t_1)|\nabla v|^2 + (t-t_1)^2 |t_1|^{-E} |t|^E|\hess v|^2$ and $t\in [t_1,0]$
\begin{align*}
\left(\frac{\partial}{\partial t}-\Delta\right)w&\leq -2|\nabla v|^2 +v^2+a^2-2(t-t_1)|\hess v|^2 + |\nabla v|^2+2(t-t_1)  |\hess v|^2\\
&=-|\nabla v|^2 +v^2+a^2\leq w
\end{align*}
Thus, $\tilde w=e^{-(t-t_1)}w$ is a subsolution of the heat equation and
$$\tilde w(x,t_1) = v^2(x,t_1)+a^2\leq \tilde A_1 ((d_{g(t_1)}(p,x))^k+1)$$
 Applying Lemma \ref{lemma:bound_heat} we obtain that
 $$\tilde w(x,t)\leq  \bar A_1(( d_{g(t_1)}(p,x))^k +(t-t_1)^{k/2}+1),$$
 from which it follows that
 $$v^2+(t-t_1)|\nabla v|^2 +(t-t_1)^2 |t_1|^{-E} |t|^E |\hess v|^2 \leq e^{t-t_1} \tilde w\leq \bar A_1 e^{t-t_1} ( (d_{g(t_1)}(p,x))^k +(t-t_1)^{k/2}+1),$$
 which suffices to prove the result.
 \end{proof}

\subsection{Concentration estimates}
The following theorem from \cite{bamler2021entropy} describes a hypercontractivity property of solutions to the heat equation along a Ricci flow, and it is a consequence of the logarithmic Sobolev inequality  for  conjugate heat kernel measures in \cite{HeinNaber}. We will make extensive use of it, and its consequence Corollary \ref{cor:L2toL2weighted}, when dealing with integral estimates weighted by conjugate heat kernels. In particular, Theorem \ref{thm:hypercontractivity} will allow us to pass such estimates to smooth limits of Ricci flows, whereas Corollary \ref{cor:L2toL2weighted} will allow us to utilize Lemma \ref{lemma:compare_kernels} to change base points in the conjugate heat kernel weights.

\begin{theorem}[Theorem 12.1 in \cite{bamler2021entropy}]\label{thm:hypercontractivity}
Let $(M,g(t),p)_{t\in [t_1,0]}$ be a smooth Ricci flow on a compact manifold, and let $v:M\times [t_1,0]\longrightarrow \mathbb R$ be such that $v\geq 0$ and $(\partial_t -\Delta)v\leq 0$, in the viscosity sense. Then for $1<q\leq p<+\infty$ and $t_2\in [t_1,0]$ such that
$$\frac{|t_1|}{|t_2|} \geq \frac{p-1}{q-1}$$ we have
\begin{equation*}
\left(\int_M v^p(\cdot, t_2) d\nu_{(p,0),t_2}\right)^{1/p} \leq \left(\int_M v^q(\cdot, t_1) d\nu_{(p,0),t_1}\right)^{1/q}.
\end{equation*}
\end{theorem}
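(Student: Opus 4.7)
The plan is to adapt the classical Gross argument deriving hypercontractivity from a logarithmic Sobolev inequality to the time-dependent setting of Ricci flow. The key analytic input, in the same spirit as the Hein--Naber Poincar\'e inequality of Theorem \ref{thm:poincare}, is the log-Sobolev inequality for conjugate heat kernel measures: for every smooth $\phi$ on $M$ and every $s<0$,
\begin{equation*}
\int_M \phi^2 \log \phi^2 \, d\nu_{(p,0),s} - A_\phi \log A_\phi \leq 4|s|\int_M |\nabla \phi|^2 \, d\nu_{(p,0),s},
\end{equation*}
where $A_\phi = \int_M \phi^2 \, d\nu_{(p,0),s}$. This follows from Perelman's $\mathcal{W}$-entropy monotonicity \cite{Perelman1} via the argument of \cite{HeinNaber}, and is really the only substantive analytic fact the proof needs.

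First I would choose the interpolating exponent $p(s)$ on $[t_1,t_2]$ satisfying $p(t_1)=q$ and the ordinary differential equation $p'(s)|s|=p(s)-1$; the explicit solution is $p(s)=1+(q-1)|t_1|/|s|$, and the hypothesis $|t_1|/|t_2|\geq (p-1)/(q-1)$ is exactly what guarantees $p(t_2)\geq p$. By monotonicity of $L^r$-norms in $r$ (with respect to the probability measure $d\nu_{(p,0),s}$) it then suffices to prove that $F(s):=A(s)^{1/p(s)}$ is non-increasing, where $A(s):=\int_M v(\cdot,s)^{p(s)}\, d\nu_{(p,0),s}$. To legalize differentiating $A(s)$ and taking logarithms, I would first replace $v$ by $v+\varepsilon$ and smooth the viscosity subsolution by a standard parabolic regularization, carrying out the computation in that setting and letting $\varepsilon\to 0$ at the end.

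The core computation combines $\partial_s u - Ru = -\Delta u$, $\partial_s dV_{g(s)} = -R\, dV_{g(s)}$, and integration by parts. The cross terms $\int p(s) v^{p(s)-1}\nabla v\cdot\nabla u\, dV$ cancel between the contributions from $\partial_s u$ and from the heat inequality, leaving
\begin{equation*}
A'(s) \leq p'(s)\int_M v^{p(s)}\log v\, d\nu_{(p,0),s} - p(s)(p(s)-1)\int_M v^{p(s)-2}|\nabla v|^2\, d\nu_{(p,0),s}.
\end{equation*}
Applying the log-Sobolev inequality with $\phi=v^{p(s)/2}$ and using the ODE $p'(s)|s|=p(s)-1$ precisely cancels the gradient term, yielding $A'(s)\leq \tfrac{p'(s)}{p(s)}A(s)\log A(s)$. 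A direct differentiation of $\log F = (\log A)/p$ then gives $\tfrac{d}{ds}\log F(s)\leq 0$, which is the claim.

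The main obstacles are the two technical points already flagged: justifying the differentiation when $v$ is only a viscosity subsolution, which requires a careful approximation and a diagonal argument in the regularization parameter; and securing the sharp log-Sobolev constant $4|s|$ for the conjugate heat kernel measure, which is exactly what makes the ODE $p'(s)|s|=p(s)-1$ close the argument with equality. With these in place the proof reduces to the computation sketched above.
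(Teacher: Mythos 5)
Your proposal is correct and follows exactly the route the paper indicates: Theorem \ref{thm:hypercontractivity} is quoted from \cite{bamler2021entropy} without proof, being, as remarked in Section \ref{sec:heat_rf}, a consequence of the logarithmic Sobolev inequality for conjugate heat kernel measures of \cite{HeinNaber}, and your Gross-type argument with the interpolating exponent $p(s)=1+(q-1)|t_1|/|s|$ and log-Sobolev constant $4|s|$ (twice the Poincar\'e constant of Theorem \ref{thm:poincare}) is precisely that derivation. As a minor simplification, the viscosity-subsolution hypothesis is handled more cleanly by comparing $v$ with the solution of the heat equation having the same data at time $t_1$ via the maximum principle, and then proving the inequality for that smooth solution, rather than by regularizing $v$ itself.
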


\begin{corollary}\label{cor:L2toL2weighted}
Let $(M,g(t),p)_{t\in [t_1,0]}$ be smooth Ricci flow on a compact manifold, and let $d\nu_{(p,0),t}=(4\pi|t|)^{-n/2} e^{-f(\cdot,t)} d\vol_{g(t)}$. Let $v:M\times [t_1,0]\rightarrow \mathbb R$ be such that $v\geq 0$ and $(\frac{\partial}{\partial t} -\Delta)v\leq 0$, in the viscosity sense. Then for every $\alpha\in [0,1)$ there is $\beta\in (0,1)$ such that for every $t\in [\beta t_1,0]$ and measurable $X\subset M$
\begin{equation}
\int_X v^m(\cdot,t) e^{\alpha f(\cdot,t)} d\nu_{(p,0),t} \leq  \left(\int_X e^{\alpha s f(\cdot,t)} d\nu_{(p,0),t} \right)^{1/s} \left(\int_M v^2(\cdot,t_1) d\nu_{(p,0),t_1}\right)^{m/2},
\end{equation}
where $m\geq 2$ and $s=\frac{t_1/t +1}{t_1/t-m+1}$. In particular, if $(M,g(t))_{t\in [t_1,0]}$ satisfies (RF1) and (RF3), then
\begin{equation}
\int_M v^m(\cdot,t) e^{\alpha f(\cdot,t)} d\nu_{(p,0),t} \leq C(n,C_I,C_1|\alpha) \left(\int_M v^2(\cdot,t_1) d\nu_{(p,0),t_1}\right)^{m/2},
\end{equation}
for $t\in [\beta t_1,0]$.
\end{corollary}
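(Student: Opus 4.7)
The idea is to split the integrand $v^m e^{\alpha f}$ via H\"older, isolating the Gaussian weight on one factor and a pure power of $v$ on the other, and then apply the hypercontractivity estimate of Theorem \ref{thm:hypercontractivity} to the latter. The exponents are then tuned so that the hypercontractivity is applied at its borderline, which dictates the precise form of $s$.

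Concretely, for $s>1$ with conjugate exponent $s'=s/(s-1)$, H\"older's inequality applied to the probability measure $\nu_{(p,0),t}$ restricted to $X$ gives
\begin{equation*}
\int_X v^m e^{\alpha f}\, d\nu_{(p,0),t} \leq \left( \int_X e^{\alpha s f}\, d\nu_{(p,0),t} \right)^{1/s} \left( \int_X v^{m s'}\, d\nu_{(p,0),t} \right)^{1/s'}.
\end{equation*}
Enlarging the domain $X$ to $M$ in the second factor (the integrand is nonnegative) and invoking Theorem \ref{thm:hypercontractivity} with $q=2$ and $p=m s'$, which is admissible as soon as $|t_1|/|t|\geq m s'-1$, yields
\begin{equation*}
\left( \int_M v^{m s'}(\cdot, t)\, d\nu_{(p,0),t} \right)^{1/s'} \leq \left( \int_M v^2(\cdot, t_1)\, d\nu_{(p,0),t_1} \right)^{m/2}.
\end{equation*}
I would then pin down the H\"older exponent by imposing the borderline hypercontractivity condition $m s' = |t_1|/|t|+1$, i.e.\ $s' = (|t_1|/|t|+1)/m$. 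A direct manipulation then gives $s = s'/(s'-1) = (t_1/t+1)/(t_1/t-m+1)$, matching the statement (using $t_1/t=|t_1|/|t|$ since $t_1,t<0$). The requirement $s>1$ is equivalent to $|t_1|/|t|>m-1$, which determines a first upper bound on $\beta$ depending only on $m$.

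For the second, dimensional inequality, the only remaining task is to bound the weight factor $\int_M e^{\alpha s f}\, d\nu_{(p,0),t}$ by a constant depending only on $n,C_I,C_1$ and $\alpha$. Under the a priori assumptions (RF1) and (RF3), Lemma \ref{lemma:int_ker_bounds} gives exactly such a bound for $\int_M e^{\gamma f}\, d\nu_{(p,0),t}$ whenever $\gamma<1$, so it suffices to arrange $\alpha s<1$. Since $s=\frac{|t_1|/|t|+1}{|t_1|/|t|+1-m}\to 1$ as $|t|/|t_1|\to 0$, shrinking $\beta$ further (now depending on $\alpha$ as well as $m$) makes $\alpha s<1$, and the second inequality follows by chaining the bounds. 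The proof is essentially bookkeeping of exponents; the substantive ingredient is Theorem \ref{thm:hypercontractivity}, which is used as a black box, so I do not anticipate any genuine obstacle.
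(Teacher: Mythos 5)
Your proposal is correct and follows essentially the same route as the paper: the same H\"older splitting (your exponents $(s,s')$ with $p=ms'$ are exactly the paper's $\frac{m}{p}+\frac{1}{s}=1$), hypercontractivity from Theorem \ref{thm:hypercontractivity} applied at the borderline $p=1+t_1/t$ to get the stated $s$, and Lemma \ref{lemma:int_ker_bounds} with $\beta$ shrunk so that $\alpha s<1$ for the second inequality. No gaps.
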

\begin{proof}
By H\"older's inequality for any $p\geq m\geq 2$ and $s > 1$ such that $\frac{m}{p}+\frac{1}{s}=1$ we obtain
\begin{equation}\label{eqn:Holder_weight}
\begin{aligned}
&\int_X v^m(\cdot,t) e^{\alpha f(\cdot,t)} d\nu_{(p,0),t} \leq \left( \int_M v^p(\cdot,t) d\nu_{(p,0),t} \right)^{m/p}\left(\int_X e^{\alpha s f(\cdot,t)} d\nu_{(p,0),t} \right)^{1/s}.
\end{aligned}
\end{equation}
Now, for any $t\in (t_1,0)$ choose $p>2$ such that 
$$\frac{t_1}{t} = p-1 \quad \Longleftrightarrow \quad p=1+\frac{t_1}{t}\quad \Longleftrightarrow \quad s=\frac{1+t_1/t}{t_1/t-m+1}.$$
Then, by Theorem \ref{thm:hypercontractivity}
\begin{equation*}
\left( \int_M v^p(\cdot,t) d\nu_{(p,0),t}\right)^{m/p} \leq  \left(\int_M v^2(\cdot,t_1) d\nu_{(p,0),t_1}\right)^{m/2}, 
\end{equation*}
hence \eqref{eqn:Holder_weight} gives
\begin{equation}
\begin{aligned}
&\int_M v^m(\cdot,t) e^{\alpha f(\cdot,t)} d\nu_{(p,0),t}\leq \left(\int_M v^2(\cdot,t_1) d\nu_{(p,0),t_1}\right)^{m/2}\left(\int_X e^{\alpha s f(\cdot,t)} d\nu_{(p,0),t}\right)^{1/s}.
\end{aligned}
\end{equation}
In particular, if $(M,g(t))_{t\in [t_1,0]}$ satisfies (RF1) and (RF3), by Lemma \ref{lemma:int_ker_bounds} we have
$$\int_M e^{\alpha s f} d\nu_{(p,0),t}\leq C(n,C_I,C_1|\alpha s)$$
as long as $\alpha s<1$.

Thus, it suffices to choose $\beta\in (0,1]$ small enough so that for every $t\in [\beta t_1,0]$
\begin{equation*}
 s=\frac{1+t_1/t}{t_1/t-m+1} \leq \frac{1}{\alpha^{1/2}} \quad\Longleftrightarrow \quad |t|\leq \frac{\alpha^{-1/2}-1}{(m-1)\alpha^{-1/2}+1} \cdot |t_1|.
\end{equation*}
The result follows by choosing $\beta=\frac{\alpha^{-1/2}-1}{(m-1)\alpha^{-1/2}+1}$, since then $\alpha s\leq \alpha^{1/2}<1$.
\end{proof}

\subsection{Gradient estimates}
\begin{lemma}\label{lemma:space_time_Poincare}
Let $(M,g(t),p)_{t\in[-10,0]}$ be a smooth complete Ricci flow satisfying (RF1), and let  $\varepsilon>0$. 
Let $v=(v^1,\ldots,v^k):M \times [-10,0]\rightarrow \mathbb R^k$, $k\geq 1$,  be a solution to the heat equation with
\begin{equation}\label{eqn:poinc_tel_assumption1}
\frac{4}{3}\int_{-1}^{-1/4} \int_M  \langle\nabla v^a, \nabla v^b\rangle  d\nu_{(p,0),t} dt =\delta^{ab},
\end{equation}
for every $a,b=1\ldots k$ and suppose that for any $a=1,\ldots,k$
\begin{equation}\label{eqn:poin_tel_assumption4}
\int_{-10}^{-\varepsilon} \int_M |\hess v^a|^2 d\nu_{(p,0),t} dt <\delta \leq \delta(n,C_I|\varepsilon).
\end{equation}

Then, for any $a,b=1,\ldots,k$ 
\begin{equation}
\int_{-1}^{-\varepsilon} \int_M \left| \langle \nabla v^a,\nabla v^b \rangle - \delta^{ab} \right|^2 d\nu_{(p,0),t} dt <\varepsilon.
\end{equation}
\end{lemma}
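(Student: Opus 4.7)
Setting $\phi^{ab}(x,t) := \langle \nabla v^a,\nabla v^b\rangle_{g(t)}$ and $F^{ab}(t) := \int_M \phi^{ab}\, d\nu_{(p,0),t}$, the plan is to split
\begin{equation*}
\int_{-1}^{-\varepsilon}\!\int_M (\phi^{ab}-\delta^{ab})^2\, d\nu\, dt \leq 2\!\int_{-1}^{-\varepsilon}\!\int_M (\phi^{ab}-F^{ab}(t))^2\, d\nu\, dt + 2\!\int_{-1}^{-\varepsilon}\!(F^{ab}(t)-\delta^{ab})^2\, dt,
\end{equation*}
bounding the second summand by a time-uniform estimate on $F^{ab}$ coming from the Hessian hypothesis, and the first via a Poincar\'e inequality on each slice, amplified by hypercontractivity to provide enough integrability. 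Since the conclusion for $\varepsilon\leq 1/4$ would imply the conclusion for larger $\varepsilon\in(1/4,1]$ by inclusion of time intervals, I may assume $\varepsilon\leq 1/4$.

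For the second summand, the Bochner-type identity $(\partial_t-\Delta)\phi^{ab}=-2\langle\hess v^a,\hess v^b\rangle$ combined with \eqref{eqn:integral_product} applied to the conjugate heat flow yields
\begin{equation*}
\frac{d}{dt}F^{ab}(t) = -2\int_M \langle\hess v^a,\hess v^b\rangle\, d\nu_{(p,0),t}.
\end{equation*}
Cauchy--Schwarz in space and in time together with \eqref{eqn:poin_tel_assumption4} then give $|F^{ab}(t_1)-F^{ab}(t_2)|\leq 2\delta$ for all $t_1,t_2\in[-10,-\varepsilon]$. Since \eqref{eqn:poinc_tel_assumption1} forces the time-average of $F^{ab}$ on $[-1,-1/4]\subset[-10,-\varepsilon]$ to equal $\delta^{ab}$, the mean value theorem supplies a $t^*\in[-1,-1/4]$ with $F^{ab}(t^*)=\delta^{ab}$, and therefore $|F^{ab}(t)-\delta^{ab}|\leq 2\delta$ throughout $[-1,-\varepsilon]$, bounding the second summand by $C\delta^2$.

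For the first summand, at each $t$ I would apply the Hein--Naber Poincar\'e inequality (Theorem \ref{thm:poincare}) to $\phi^{ab}-F^{ab}(t)$, and use the pointwise bound $|\nabla\phi^{ab}|\leq |\hess v^a||\nabla v^b|+|\hess v^b||\nabla v^a|$ together with $|t|\leq 1$ to reduce to estimating the spacetime integral of $|\hess v|^2|\nabla v|^2$. Cauchy--Schwarz in space and in time splits this as a product of $L^4(d\nu\,dt)$ norms of $\hess v$ and $\nabla v$, and to bound each factor I plan to use the hypercontractivity estimate of Theorem \ref{thm:hypercontractivity} applied to the non-negative subsolutions $|\nabla v^a|$ and $|t|^{E/2}|\hess v^a|$ furnished by Corollary \ref{cor:subsolutions}. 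The $L^2$ input at $t=-10$ for the gradient comes from the monotonicity of $F^{aa}$ established above combined with \eqref{eqn:poinc_tel_assumption1}; the $L^2$ input for the Hessian comes from averaging \eqref{eqn:poin_tel_assumption4} over $[-10,-5]$ to locate a $t^\#$ at which $\int_M|\hess v^a|^2 d\nu_{t^\#}\leq\delta/5$. Hypercontractivity then promotes these to uniform bounds $\int_M|\hess v^a|^4 d\nu_t\leq C(n,C_I|\varepsilon)\delta^2$ and $\int_M|\nabla v^a|^4 d\nu_t\leq C(n,C_I)$ on $[-1,-\varepsilon]$, so the first summand is at most $C(n,C_I|\varepsilon)\delta$.

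Combining the two estimates gives $\int_{-1}^{-\varepsilon}\int_M(\phi^{ab}-\delta^{ab})^2\, d\nu\, dt\leq C(n,C_I|\varepsilon)\delta$, which is less than $\varepsilon$ once $\delta\leq\delta(n,C_I|\varepsilon)$. The technical crux is the Hessian $L^4$ upgrade: the time weight $|t|^{E/2}$ in the subsolution of Corollary \ref{cor:subsolutions} propagates as a $(|t^\#|/|t|)^{E/2}\sim\varepsilon^{-E/2}$ factor through the hypercontractive estimate, which is the source of the $\varepsilon$-dependence in the smallness threshold for $\delta$.
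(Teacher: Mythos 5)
Your argument is correct and follows essentially the same route as the paper: you pin the spatial averages $F^{ab}(t)=\int_M\langle\nabla v^a,\nabla v^b\rangle\,d\nu_{(p,0),t}$ near $\delta^{ab}$ using the evolution equation and the Hessian hypothesis, and then control the deviation from the average slice-wise by the Hein--Naber Poincar\'e inequality, with the integrability of $|\nabla\langle\nabla v^a,\nabla v^b\rangle|$ supplied by hypercontractivity applied to the subsolutions $|\nabla v^a|$ and $|t|^{E/2}|\hess v^a|$, exactly as in the paper's proof. The only difference is bookkeeping: by retaining the factor $\delta^{1/2}$ in the hypercontractive $L^4$ bound for the Hessian and using a single Cauchy--Schwarz you obtain a bound of order $\delta$, where the paper's three-factor H\"older splitting yields $\delta^{1/4}$; both are more than enough.
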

\begin{proof}
By the mean value theorem, for each $a,b$, there is $\bar t_{ab}\in [-1,-1/4]$ so that
$$\int_M \langle \nabla v^a,\nabla v^b\rangle (\cdot,\bar t_{ab}) d\nu_{(p,0),\bar t_{ab}} =\delta^{ab}.$$
Since, 
$$\left(\frac{\partial}{\partial t}-\Delta\right)\langle \nabla v^a,\nabla v^b\rangle = -2\langle \hess v^a,\hess v^b\rangle,$$
we obtain
\begin{align*}
\frac{d}{dt}\int_M \langle \nabla v^a,\nabla v^b\rangle d\nu_{(p,0),t}= -2\int_M \langle \hess v^a,\hess v^b\rangle d\nu_{(p,0),t}.
\end{align*}
Integrating, we conclude that for every $t\in [-10,-\varepsilon]$
\begin{align*}
c^{ab}(t):=&\int_M \langle \nabla v^a,\nabla v^b \rangle(\cdot,t) d\nu_{(p,0),t}=\\
&=\int_M \langle \nabla v^a,\nabla v^b \rangle(\cdot, \bar t_{ab}) d\nu_{(p,0),t_{ab}} - 2\int_{\bar t_{ab}}^t \int_M \langle\hess v^a,\hess v^b\rangle d\nu_{(p,0),s} ds,\\
&=\delta^{ab} - 2\int_{\bar t_{ab}}^t \int_M \langle\hess v^a,\hess v^b\rangle d\nu_{(p,0),s} ds.
\end{align*}
hence, by Young's inequality and  \eqref{eqn:poin_tel_assumption4},  for every $t\in [-10,-\varepsilon]$
\begin{equation}
\begin{aligned}\label{eqn:ct_1}
|c^{ab}(t)-\delta^{ab}|&\leq 2\int_{-1}^{-\varepsilon} \int_M  \left|\langle\hess v^a,\hess v^b\rangle \right| d\nu_{(p,0),s}ds\\
& \leq \int_{-1}^{-\varepsilon} \int_M |\hess v^a|^2 d\nu_{(p,0),s} ds +\int_{-1}^{-\varepsilon} \int_M |\hess v^b|^2 d\nu_{(p,0),s}ds\\
&\leq 20\delta,
\end{aligned}
\end{equation}
for every $t\in [-10,-\varepsilon]$.

Using the mean value inequality, we can also find $\tilde t\in [-10,-9]$, such that for each $a$
\begin{equation}
\int_M |\hess v^a|^2(\cdot,\tilde t) d\nu_{(p,0),\tilde t} \leq \int_{-10}^{-9}\int_M |\hess v^a|^2 d\nu_{(p,0),t}dt < \delta
\end{equation}

Therefore, by (RF1), Corollary \ref{cor:subsolutions} and the hypercontractivity Theorem \ref{thm:hypercontractivity}, for every $t\in [\tilde t,-\varepsilon]$ and $\sigma(t)=1+\frac{\tilde t}{t}\geq 2$ we obtain, if $0<\delta\leq 1/20$
\begin{equation}\label{eqn:sigma_estimates}
\begin{aligned}
\left(\int_M |\nabla v^a|^{\sigma(t)}(\cdot,t) d\nu_{(p,0),t}\right)^{1/\sigma(t)}& \leq  \left(\int_M |\nabla v^a|^2(\cdot,\tilde t) d\nu_{(p,0),\tilde t}\right)^{1/2} \leq 2, \\
\left(\int_M |\hess v^a|^{\sigma(t)}(\cdot,t) d\nu_{(p,0),t}\right)^{1/\sigma(t)} &\leq \frac{|\tilde t|^{E/2}}{|t|^{E/2}} \left(\int_M |\hess v^a|^{2}(\cdot,\tilde t) d\nu_{(p,0),\tilde t}\right)^{1/2} \leq C(n,C_I|\varepsilon).
\end{aligned}
\end{equation}

\noindent \textbf{Claim:}  If $0<\delta\leq 1/20$ we have
 \begin{equation}
\int_{-1}^{-\varepsilon} \int_M |\nabla \langle\nabla v^a,\nabla v^b \rangle |^2  d\nu_{(p,0),t}dt \leq  C \delta^{1/4},
\end{equation}
where $C=C(n,C_I | \varepsilon)<+\infty$.
\begin{proof}
Observe that $\nabla_k \langle\nabla v^a,\nabla v^b \rangle =\nabla_k\nabla_m v^a \nabla_m v^b + \nabla_m v^a \nabla_k \nabla_m v^b$, which gives for each $a,b=1,\ldots,k$
$$|\nabla \langle\nabla v^a,\nabla v^b\rangle |^2 \leq 2 ||\hess v||^2 ||\nabla v||^2,$$
where $||\hess v||=\max_a |\hess v^a|$ and $||\nabla v||=\max_a |\nabla v|$.

Take $p=4$ and define $q,\bar q>1$ by the relations $\frac{1}{p}+\frac{1}{q}=1$ and $\frac{2}{p}+\frac{1}{\bar q}=1$, so that $q=\frac{p}{p-1}=\frac{4}{3}$ and $\bar q = \frac{p}{p-2}=2$. Applying H\"older's inequality twice, we then estimate for each $a,b=1,\ldots,k$
\begin{align*}
&\int_{-1}^{-\varepsilon}\int_M |\nabla \langle\nabla v^a,\nabla v^b \rangle |^2 d\nu_{(p,0),t} dt\leq \\
&\leq 4 \int_{-1}^{-\varepsilon}\int_M ||\hess v||^2 ||\nabla v||^2  d\nu_{(p,0),t} dt \\
&= 4\int_{-1}^{-\varepsilon}\int_M ||\hess v||^{1/2} ||\hess v||^{3/2} ||\nabla v||^2 d\nu_{(p,0),t}dt\\
&\leq 4 \left( \int_{-1}^{-\varepsilon}\int_M ||\hess v||^2 d\nu_{(p,0),t}dt \right)^{1/4} \left( \int_{-1}^{-\varepsilon}\int_M ||\hess v||^2 ||\nabla v||^{8/3} d\nu_{(p,0),t}dt\right)^{3/4}\\
&\leq 4 \left( \int_{-1}^{-\varepsilon}\int_M ||\hess v||^2 d\nu_{(p,0),t}dt \right)^{1/4} \\& \left( \int_{-1}^{-\varepsilon} \int_M ||\hess v||^4 d\nu_{(p,0),t} dt \right)^{1/2} 
\left(\int_{-1}^{-\varepsilon} \int_M ||\nabla v||^{16/3} d\nu_{(p,0),t}dt \right)^{3/8}.
\end{align*}
Since $|\tilde t|\geq 9$, for any $t\in [-1,0)$, $\sigma(t)=1+\frac{|\tilde t|}{|t|}\geq 1+|\tilde t| \geq 10\geq \max\left(4,\frac{16}{3}\right)$. Therefore, by \eqref{eqn:sigma_estimates} and H\"older's inequality we obtain
\begin{equation*}
\int_{-1}^{-\varepsilon} \int_M |\nabla \langle \nabla v^a,\nabla v^b\rangle |^2 d\nu_{(p,0),t} dt \leq C(n,C_I|\varepsilon) \delta^{1/4},
\end{equation*}
which proves the claim.
\end{proof}

Applying the Poincar\'e inequality Theorem \ref{thm:poincare} at each $t\in [-1,-\varepsilon]$ we obtain
\begin{align*}
\int_{-1}^{-\varepsilon}\int_M \left| \langle \nabla v^a, \nabla v^b\rangle-c^{ab}\right|^2 d\nu_{(p,0),t} dt &\leq \int_{-1}^{-\varepsilon} 2|t| \int_M |\nabla \langle \nabla v^a, \nabla v^b\rangle |^2 d\nu_{(p,0),t}dt,\\
&\leq 2 \int_{-1}^{-\varepsilon}\int_M |\nabla \langle \nabla v^a, \nabla v^b\rangle |^2 d\nu_{(p,0),t}dt.
\end{align*}
Hence, by the Claim, if $0<\delta\leq 1/20$ 
\begin{equation}
\int_{-1}^{-\varepsilon}\int_M \left|\langle\nabla v^a,\nabla v^b \rangle -c^{ab}\right|^2 d\nu_{(p,0),t}dt \leq 2C\delta^{1/4}.
\end{equation}

Finally, using \eqref{eqn:ct_1} and the $L^2$-triangle inequality we obtain
\begin{align*}
&\left(\int_{-1}^{-\varepsilon} \int_M \left| \langle\nabla v^a,\nabla v^b \rangle  -\delta^{ab} \right|^2 d\nu_{(p,0),t} dt \right)^{1/2} \leq\\
&\leq \left(\int_{-1}^{-\varepsilon} \int_M \left| \langle\nabla v^a,\nabla v^b \rangle -c^{ab} \right|^2 d\nu_{(p,0),t}dt \right)^{1/2} +\left(\int_{-1}^{-\varepsilon} \int_M \left|c^{ab}-\delta^{ab}\right|^2 d\nu_{(p,0),t}dt \right)^{1/2}\\
&\leq 2^{1/2}C^{1/2} \delta^{1/8}+20\delta<\varepsilon,
\end{align*}
provided $\delta>0$ are chosen small enough.
\end{proof}

\begin{lemma}[Pointwise gradient estimate]\label{lemma:heat_gradient_bound}
Let $(M,g(t))_{t\in [-1,0]}$ be a smooth complete Ricci flow satisfying (RF1), (RF3), (RF4), (RF5), and $p\in M$. Then, for any $\varepsilon>0$  and $0<\delta\leq\delta(n,C_I,H,K|\varepsilon)$, if $v:M\times [-1,0]\rightarrow \mathbb R$ is a solution to the heat equation satisfying 
\begin{align}
\int_{-1}^{-\delta}\int_M \left| |\nabla v|^2 -1\right|^2 d\nu_{(p,0),t} dt&<  \delta,\label{eqn:L2gradone}
\end{align}
Then, there is $\gamma=\gamma(n,H)$ such that $|\nabla v|^2 (x,t)<1+\varepsilon$, for every $t\in [-\gamma,0]$ and $x\in B\left(p,t,\frac{1}{\varepsilon}\right)$.
\end{lemma}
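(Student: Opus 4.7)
The plan is a contradiction argument combining three ingredients: (i) the viscosity subsolution property of $(|\nabla v|^2-1)_+$ along Ricci flow, (ii) the change of basepoint Lemma~\ref{lemma:compare_kernels}, and (iii) the weighted hypercontractivity estimate Corollary~\ref{cor:L2toL2weighted}. Under Ricci flow the Bochner identity yields $(\partial_t-\Delta)|\nabla v|^2=-2|\hess v|^2\le 0$, so $|\nabla v|^2-1$ is a smooth subsolution and $w:=(|\nabla v|^2-1)_+$ is a non-negative viscosity subsolution of the heat equation. By the Duhamel formula \eqref{eqn:duhamel} and the Cauchy--Schwarz inequality against the probability measure $\nu_{(x_0,t_0),s}$,
$$w(x_0,t_0)^2\;\le\;\int_M w^2(\cdot,s)\,d\nu_{(x_0,t_0),s},\qquad s<t_0.$$

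Fix $\alpha\in\bigl(1-\frac{C_2}{4C_1},\,1\bigr)$ so that $\beta_1:=2C_1(1-\alpha)/C_2\le 1/2$, and let $\beta_2=\beta_2(\alpha)\in(0,1)$ be the time-window parameter provided by Corollary~\ref{cor:L2toL2weighted} with $m=2$. Define $\gamma:=\beta_2/8$, which depends only on $n,C_1,C_2$. Arguing by contradiction, suppose $w(x_0,t_0)\ge\varepsilon$ for some $t_0\in[-\gamma,0]$ and $x_0\in B(p,t_0,1/\varepsilon)$. Pick $s$ with $|s|\in[\beta_2/4,\beta_2/2]$; then $s<t_0$ and $|t_0|/|s|\le 1/2\le 1-\beta_1$, so Lemma~\ref{lemma:compare_kernels} applies with $(p_1,t_1)=(x_0,t_0)$ and $(p_2,t_2)=(p,0)$. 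Combined with the distance distortion (RF5) giving $d_{g(s)}(x_0,p)\le 1/\varepsilon+K$, this yields
$$\int_M w^2\,d\nu_{(x_0,t_0),s}\;\le\;C\,e^{(1/\varepsilon+K)^2/(C_1\beta_2/4)}\int_M w^2 e^{\alpha f_{(p,0)}}\,d\nu_{(p,0),s}.$$

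By the mean value theorem applied to the given integral hypothesis (assuming $\delta<1/2$), there exists $s_1\in[-1,-1/2]$ with $\int_M w^2(\cdot,s_1)\,d\nu_{(p,0),s_1}\le 2\delta$, since pointwise $w^2\le \bigl||\nabla v|^2-1\bigr|^2$. Applying Corollary~\ref{cor:L2toL2weighted} to the subsolution $w$ with $t_1=s_1$,
$$\int_M w^2 e^{\alpha f_{(p,0)}}\,d\nu_{(p,0),s}\;\le\;C'(n,C_I,C_1|\alpha)\cdot 2\delta.$$
Combining the last two displays yields $\varepsilon^2\le A(n,C_I,C_1,C_2,K,\varepsilon)\,\delta$, which is false once $\delta$ is chosen small in terms of the stated parameters; this contradicts $w(x_0,t_0)\ge\varepsilon$ and proves the lemma.

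The main difficulty is the calibration of $\alpha$ against the admissible time window: Lemma~\ref{lemma:compare_kernels} forces $\alpha$ close to $1$ as soon as one insists on a valid $\beta_1\in(0,1)$ (because $\beta_1\le 1$ only yields $\alpha\ge (2C_1-C_2)/(2C_1)$), whereas Corollary~\ref{cor:L2toL2weighted} then shrinks the time interval $[\beta_2(\alpha)s_1,0]$ on which the weighted estimate propagates. Balancing these two constraints is what forces $\gamma$ to depend on $C_1,C_2$ but keeps $\gamma$ independent of $\varepsilon$, with the $\varepsilon$-dependence absorbed into the threshold for $\delta$ via the prefactor coming from (RF5).
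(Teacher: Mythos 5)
Your proof is correct and rests on the same two pillars as the paper's: the change of basepoint Lemma~\ref{lemma:compare_kernels} combined with the weighted hypercontractivity Corollary~\ref{cor:L2toL2weighted}, with (RF5) absorbing the distance to $p$ into the constant. The parameter bookkeeping ($\alpha$ close enough to $1$ to make $\beta_1\le 1/2$, $\gamma=\beta_2/8$, $|s|\in[\beta_2/4,\beta_2/2]$, $s_1\in[-1,-1/2]$ via the mean value theorem) is self-consistent, and the viscosity subsolution inequality $w(x_0,t_0)\le\int_M w\,d\nu_{(x_0,t_0),s}$ for $w=(|\nabla v|^2-1)_+$ is valid since $|\nabla v|^2-1$ is a smooth subsolution and taking positive parts preserves that property.

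The one genuine (if modest) deviation from the paper is the way you pass from the representation formula to the $L^2$ hypothesis. The paper estimates $\int_M ||\nabla v|^2-1|\,d\nu_{(x_0,t_0),\bar t}$, transfers basepoints to get a weighted $L^1$ integral against $e^{\alpha f}\,d\nu_{(p,0)}$, and then splits that integral with two applications of H\"older so that a small factor $\delta^{1/2s_1}$ multiplies a separately bounded factor $I=II+III$. You instead truncate to $w=(|\nabla v|^2-1)_+$, use Jensen against the probability measure $\nu_{(x_0,t_0),s}$ to upgrade the pointwise bound to $w(x_0,t_0)^2\le\int_M w^2\,d\nu_{(x_0,t_0),s}$, and then apply the change of basepoint and Corollary~\ref{cor:L2toL2weighted} directly with $m=2$. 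This sidesteps the H\"older calibration and the need to separately bound the weighted $L^1$ norm of $|\nabla v|^2$, replacing it by a single application of the hypercontractivity estimate; in exchange you have to observe that the truncation $(\cdot)_+$ preserves the subsolution property (it does, and the resulting inequality for $w$ also follows directly from the Duhamel identity applied to $|\nabla v|^2-1$). Both routes arrive at the same contradiction and yield the same dependencies for $\gamma$ and $\delta$; yours is a bit tighter to write down.
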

\begin{proof}
Since $v$ is a solution to the heat equation, we have
\begin{equation}\label{eqn:evol_gradv}
\left(\frac{\partial}{\partial t} - \Delta\right) |\nabla v|^2 = - 2|\hess v|^2.
\end{equation}
Now, let $\gamma \in (0,1/4)$ be a constant that will be specified in the course of the proof, and let $t_0\in [-\gamma,0]$, $x_0 \in B\left(p,t_0,\frac{1}{\varepsilon}\right)$ and $u_{(p,0)}=(4\pi|t|)^{-n/2} e^{-f}$ be the backwards conjugate heat kernel starting at $(p,0)$. 

Now, by \eqref{eqn:evol_gradv}, we have that for any $-1\leq \bar t< t_0$
\begin{equation}\label{eqn:nablav_estimate_1}
\begin{aligned}
|\nabla v|^2(x_0,t_0) &\leq \int_M |\nabla v|^2(\cdot, \bar t) d\nu_{(x_0,t_0),\bar t}\leq 1+ \int_M \left| |\nabla v|^2(\cdot,\bar t) -1 \right| d\nu_{(x_0,t_0),\bar t}.
\end{aligned}
\end{equation}
Since $t_0 \geq -\gamma$, if $-1\leq \bar t\leq -2\gamma$, we have $t_0 - \bar t\geq \gamma$ and
$\frac{1}{2}\leq \frac{|t_0 - \bar t|}{|\bar t|}\leq  1$.
Under assumptions (RF3) and (RF4), we can apply Lemma \ref{lemma:compare_kernels} for $\beta=\frac{1}{2}$, $\alpha=\frac{4C_1-C_2}{4C_1}$, and then H\"older's inequality, to obtain
\begin{equation}\label{eqn:gr_change_center}
\begin{aligned}
\int_M \left||\nabla v|^2 -1\right| d\nu_{(x_0,t_0),\bar t} &\leq Ce^{\frac{d_{g(\bar t)} (p,x_0)^2}{C|\bar t|}} \int_M \left||\nabla v|^2 -1 \right| e^{\alpha f} d\nu_{(p,0),\bar t}\\
&\leq C(n,H,K|\varepsilon) \int_M \left||\nabla v|^2 -1 \right| e^{\alpha f} d\nu_{(p,0),\bar t}
\end{aligned}
\end{equation}
since
$
(d_{g(\bar t)} (p,x_0))^2 \leq 2(d_{g(t_0)}(p,x_0))^2 +2K^2\leq 2\varepsilon^{-2} + 2K^2,
$
due to the distance distortion estimate (RF5).

Therefore, combining \eqref{eqn:nablav_estimate_1} with  \eqref{eqn:gr_change_center}, and applying the mean value theorem in $[-4\gamma,-2\gamma]$, we obtain
\begin{equation}\label{eqn:nablav_estimate_2}
|\nabla v|^2(x_0,t_0) \leq 1+C(n,H,K|\varepsilon) \int_{-4\gamma}^{-2\gamma} \int_M \left||\nabla v|^2 -1 \right| e^{\alpha f} d\nu_{(p,0), t} d t.
\end{equation}

Now, in order to estimate the integral on the right-hand side of \eqref{eqn:nablav_estimate_2},  apply H\"older's inequality twice, first with $\frac{1}{s_1}+\frac{1}{s_2}=1$ for $s_2=\alpha^{-1/2}$ and then for $\frac{1}{2}+\frac{1}{2}=1$, to obtain by \eqref{eqn:L2gradone}
\begin{equation}\label{eqn:J}
\begin{aligned}
&\int_{-4\gamma}^{-2\gamma} \int_M \left||\nabla v|^2 -1 \right| e^{\alpha f} d\nu_{(p,0), t}d t \leq \\
&\leq \left(\int_{-4\gamma}^{-2\gamma} \int_M \left||\nabla v|^2 -1 \right| d\nu_{(p,0), t}d t \right)^{1/s_1} \left(\int_{-4\gamma}^{-2\gamma} \int_M \left||\nabla v|^2 -1 \right| e^{\sqrt{\alpha} f} d\nu_{(p,0), t} d t \right)^{1/s_2}\\
&\leq \delta^{1/2s_1} \left(\int_{-4\gamma}^{-2\gamma} \int_M \left||\nabla v|^2 -1 \right| e^{\sqrt{\alpha} f} d\nu_{(p,0), t} d t \right)^{1/s_2} = \delta^{1/2s_1} I^{1/s_2}
\end{aligned}
\end{equation}
where we set $I=\int_{-4\gamma}^{-2\gamma} \int_M \left||\nabla v|^2 -1 \right| e^{\sqrt{\alpha} f} d\nu_{(p,0), t} d t $.

To estimate the integral $I$, we first apply the $L^1$ triangle inequality and H\"older's inequality once more to obtain
\begin{equation}\label{eqn:I}
\begin{aligned}
I&\leq \underbrace{\int_{-4\gamma}^{-2\gamma} \int_M |\nabla v|^2 e^{\sqrt \alpha f} d\nu_{(p,0), t}dt}_{II} +\underbrace{\int_{-4\gamma}^{-2\gamma}\int_M e^{\sqrt\alpha f} d\nu_{(p,0), t} dt.}_{III}
\end{aligned}
\end{equation}
Since $\frac{\partial}{\partial t} |\nabla v| \leq \Delta |\nabla v|$, by Corollary \ref{cor:L2toL2weighted}, we can chooce $\gamma=\gamma(n,H)\in (0,1/4)$ so that
$$II=\int_{-4\gamma}^{-2\gamma} \int_M |\nabla v|^2 e^{\sqrt\alpha f} d\nu_{(p,0), t}dt\leq 4\gamma \int_{-1}^{-1/2}\int_M |\nabla v|^2 d\nu_{(p,0), t} dt.$$
Then, we can estimate, if $\delta>0$ is small enough,
\begin{align*}
\int_{-1}^{-1/2} \int_M |\nabla v|^2 d\nu_{(p,0), t} dt &\leq \frac{1}{2} + \int_{-1}^{-1/2} \int_M \left| |\nabla v|^2 -1 \right| d\nu_{(p,0), t}dt\\
&\leq \frac{1}{2} +\frac{1}{\sqrt 2}\left(  \int_{-1}^{-1/2} \int_M \left| |\nabla v|^2 -1 \right|^2 d\nu_{(p,0), t} dt\right)^{1/2}\\
&\leq \frac{1}{2} +\frac{1}{\sqrt 2} \delta^{1/2} \leq 1
\end{align*}
Therefore, $II \leq 4\gamma$, and by Lemma \ref{lemma:int_ker_bounds} we obtain that $III\leq C(n,C_I,C_1)$, hence $I\leq C(n,C_I,H)$.

It follows, by \eqref{eqn:nablav_estimate_2}, \eqref{eqn:J} and \eqref{eqn:I}, that $|\nabla v|^2(x_0,t_0)<1+\varepsilon$, if $0<\delta\leq \delta(n,C_I,H,K|\varepsilon)$.

\end{proof}

\section{The heat equation on shrinking solitons}\label{sec:heat_gsrs}
\subsection{The heat equation on a shrinking gradient Ricci soliton}
Let $(M,\bar g,\bar f)$ a shrinking gradient Ricci soliton, namely
\begin{equation}
\ric(\bar g)+\hess_{\bar g} \bar f =\frac{\bar g}{2}.
\end{equation}
Let $\varphi_t:M\rightarrow M$, $t<0$, the $1$-parameter family of diffeomorphisms that evolves by
\begin{equation}
\frac{d}{dt} \varphi_t = \frac{1}{|t|} \nabla^{\bar g} \bar f \circ \varphi_t.
\end{equation}
As we saw  in Subsection \ref{subsection:gsrs} that $g(t) = |t| \varphi_t^* \bar g$, $t<0$, is a Ricci flow on $M$.

Let $I\subset \mathbb R$ be an interval and $(v(\cdot,t))_{t\in I}$, $\sup I=0$, be a solution to the heat equation on $(M, g(t))_{t<0}$. 
Define $\bar v(s)=\bar v (\cdot,s)$ by the relation $v(x,t) = |t|^{1/2}\bar v (\varphi_t(x),s)$,
where $s=-\log(-t)$. Then, by direct computation, we obtain

\begin{equation}\label{eqn:drift_heat}
\frac{\partial \bar v}{\partial s} = \left(\Delta_{\bar f} +\frac{1}{2}\right)\bar v,
\end{equation}
where $\Delta_{\bar f} \bar v=\Delta_{\bar g} \bar v - \langle \nabla^{\bar g} \bar v, \nabla^{\bar g} \bar f\rangle$ is the associated drift Laplacian.

\subsubsection{The spectrum of $\Delta_{\bar f} +\frac{1}{2}$}
The following theorem is the combination of Theorems 1 and 2 in \cite{ChengZhou}.

\begin{theorem}\label{thm:spectrum}
Let $(M,\bar g,\bar f)$ be a normalized gradient shrinking Ricci soliton at scale $1$, $\bar\nu$ be the measure with $d\bar\nu=(4\pi)^{-n/2} e^{-\bar f} d\vol_{\bar g}$. Denote by $L^2_{\bar\nu}$, $H^1_{\bar\nu}$ the associated Lebesgue and Sobolev spaces.  Then, $\Delta_{\bar f}+\frac{1}{2}: L^2_{\bar\nu}\rightarrow L^2_{\bar\nu}$ has discrete spectrum 
$\frac{1}{2}=\lambda_0 >  0\geq \lambda_1\geq \lambda_2\geq \cdots$. Moreover, 
\begin{enumerate}
\item $L^2_{\bar\mu}$ has a countable orthonormal basis $\{\phi_i\}_{i=0}^\infty \subset L^2_{\bar\nu}$ that consists of eigenfunctions of $\Delta_{\bar f}+\frac{1}{2}$, namely $\left(\Delta_{\bar f}+\frac{1}{2}\right)\phi_i=\lambda_i \phi_i$. 
\item Given any $v\in H^1_{\bar \mu}$, there are unique $b_i=(v,\phi_i)_{L^2_{\bar\mu}}$, such that $v=\sum_{i=0}^{+\infty} b_i \phi_i$, where the convergence of the series is in $H^1_{\bar\mu}$.
\item The eigenspace corresponding to the eigenvalue $\frac{1}{2}$ consists of the constant functions. 
\item The eigenvalue $0$ appears with multiplicity $k\geq 1$, namely $\lambda_1=\cdots=\lambda_k=0$, if and only if $1\leq k \leq n$ and $(M,\bar g)$ is isometric to a product $(M'\times \mathbb R^k, g'\oplus g_{\mathbb R^k})$. In this case, up to isometry, we can express
$$\bar f(x,a)=\frac{|a|^2}{4}+f'(x),$$
where $f'(x)=\bar f(x,0)$ for any $x\in M'$ and $(M',g',f')$ is a normalized gradient shrinking Ricci soliton at scale $1$ and $\lambda_1(\Delta_{f'}+\frac{1}{2})<0$. Moreover, we can choose $\phi_i=x^i$, $i=1,\ldots,k$, where $x^i$ are the linear functions on $M'\times\mathbb R^k$ induced by the coordinate functions of $\mathbb R^k$.
\item If $\phi$ is an eigenfunction of $\Delta_{\bar f}+\frac{1}{2}$ with eigenvalue $\lambda\leq 0$ then
$$\int_M |\hess \phi|^2 d\bar\nu \leq  |\lambda| \int_M |\nabla \phi|^2 d\bar \nu<+\infty.$$
\end{enumerate}
\end{theorem}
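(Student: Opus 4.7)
The plan is to treat $\Delta_{\bar f}+\tfrac{1}{2}$ as a self-adjoint operator on $L^2_{\bar\nu}$, establish discreteness of its spectrum via the confining effect of the Gaussian-type weight, and prove the structural assertions (3)--(5) via weighted integration by parts and the weighted Bochner identity adapted to the soliton equation.

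First I would set up the functional analytic framework. For $\phi,\psi\in C_c^\infty(M)$, integration by parts against $d\bar\nu=(4\pi)^{-n/2}e^{-\bar f}d\vol_{\bar g}$ gives
\begin{equation*}
\int_M \phi(-\Delta_{\bar f}\psi)\,d\bar\nu = \int_M \langle \nabla\phi,\nabla\psi\rangle\,d\bar\nu,
\end{equation*}
and the quadratic growth of $\bar f$ furnished by Lemma~\ref{lemma:soliton_growth}, combined with polynomial volume growth from \cite{CarrilloNi}, ensures $d\bar\nu$ is a finite measure and that $C_c^\infty(M)$ is dense in the weighted Sobolev space $H^1_{\bar\nu}$. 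The associated Dirichlet form is closed, non-negative and symmetric, and its Friedrichs extension produces the self-adjoint operator we analyze below.

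Next I would prove discreteness of the spectrum by showing that the inclusion $H^1_{\bar\nu}\hookrightarrow L^2_{\bar\nu}$ is compact, which reflects the fact that $\bar f$ acts as a confining potential. Concretely, on the sublevel sets $\{\bar f\leq N\}$ (which are bounded by Lemma~\ref{lemma:soliton_growth}) standard Rellich--Kondrachov applies, while the quadratic lower bound $\bar f(x)+c\geq d_{\bar g}(p,x)^2/(a(n)) - a(n)$ forces $\int_{\{\bar f\geq N\}}e^{-\bar f}d\vol_{\bar g}\to 0$ fast enough that any $H^1_{\bar\nu}$-bounded family has uniformly vanishing $L^2_{\bar\nu}$-tails. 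The spectral theorem for self-adjoint operators with compact resolvent then yields the discrete sequence $\tfrac12\geq\lambda_0\geq\lambda_1\geq\cdots\to-\infty$ and the orthonormal basis of eigenfunctions of (1)--(2). For (3), the constants lie in $L^2_{\bar\nu}$ and $(\Delta_{\bar f}+\tfrac12)(1)=\tfrac12$, while
\begin{equation*}
\int_M \phi\Bigl(\Delta_{\bar f}+\tfrac{1}{2}\Bigr)\phi\,d\bar\nu = -\int_M |\nabla\phi|^2\,d\bar\nu + \tfrac{1}{2}\int_M \phi^2\,d\bar\nu
\end{equation*}
shows that any eigenfunction at $\tfrac12$ has $\nabla\phi\equiv 0$, identifying $\lambda_0=\tfrac12$ as the top eigenvalue with eigenspace the constants.

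The geometric core lies in (4) and (5), where the soliton equation $\ric_{\bar g}+\hess_{\bar g}\bar f=\bar g/2$ enters. A direct calculation yields the weighted Bochner identity
\begin{equation*}
\tfrac{1}{2}\Delta_{\bar f}|\nabla\phi|^2 = |\hess\phi|^2 + \langle\nabla\phi,\nabla\Delta_{\bar f}\phi\rangle + \tfrac{1}{2}|\nabla\phi|^2.
\end{equation*}
For an eigenfunction $(\Delta_{\bar f}+\tfrac12)\phi=\lambda\phi$ with $\lambda\leq 0$, substituting $\nabla\Delta_{\bar f}\phi=(\lambda-\tfrac12)\nabla\phi$ and integrating against $d\bar\nu$ gives $\int|\hess\phi|^2 d\bar\nu = |\lambda|\int|\nabla\phi|^2 d\bar\nu$, which is (5). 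When $\lambda=0$ this forces $\hess\phi\equiv 0$, so $\nabla\phi$ is a non-trivial parallel vector field. Given a $k$-dimensional eigenspace at $\lambda=0$, an orthonormal basis yields $k$ pointwise linearly independent parallel gradients, and the de Rham decomposition theorem produces the isometric splitting $(M,\bar g)\cong(M'\times\mathbb R^k, g'\oplus g_{\mathbb R^k})$ in (4). The expression $\bar f(x,a)=|a|^2/4+f'(x)$ follows by integrating the soliton equation along the $\mathbb R^k$-fibers, and then uniqueness of $f'$ and the strict inequality $\lambda_1(\Delta_{f'}+\tfrac12)<0$ come from the maximality of $k$: otherwise $(M',g')$ would split an additional Euclidean factor, contradicting maximality. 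The converse direction of (4) is immediate, since in a product the linear functions $x^1,\ldots,x^k$ verify $(\Delta_{\bar f}+\tfrac12)x^i=0$ directly from $\nabla\bar f=a/2+\nabla f'$.

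The main obstacle is the compactness argument of the second paragraph: on the non-compact shrinker one must carefully balance the Gaussian decay of $d\bar\nu$ against the (at worst polynomial) volume growth to upgrade Rellich--Kondrachov from compact sublevel sets to a global compact embedding, justifying the discrete spectrum and the basis expansion (1)--(2).
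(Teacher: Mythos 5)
Your proposal gives a from-scratch proof, whereas the paper simply attributes assertions (1)--(5) to Theorems 1 and 2 of Cheng--Zhou \cite{ChengZhou} and adds only one supplementary argument: the $H^1_{\bar\nu}$-convergence (rather than mere $L^2_{\bar\nu}$-convergence) of the eigenfunction expansion in (2). The route you sketch---compact embedding $H^1_{\bar\nu}\hookrightarrow L^2_{\bar\nu}$ from the quadratic growth of $\bar f$, spectral theorem for operators with compact resolvent, weighted Bochner formula with $\ric_{\bar f}=\tfrac{1}{2}\bar g$, and the de Rham decomposition theorem applied to the parallel gradient fields---is in substance the one Cheng--Zhou carry out, so you are supplying a full argument where the paper substitutes a citation; this is a legitimate and more self-contained route.

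That said, there are two points you gloss over that coincide with things the paper or its reference actually has to prove. First, the $H^1_{\bar\nu}$-convergence of the series in (2) is precisely what the paper supplies: the spectral theorem directly gives $L^2_{\bar\nu}$-convergence, and for the $H^1_{\bar\nu}$-claim the paper performs a Bessel-type estimate on the partial sums $v_k=\sum_{i\leq k}b_i\phi_i$, namely
\begin{equation*}
\int_M|\nabla v-\nabla v_k|^2\,d\bar\nu = \int_M|\nabla v|^2\,d\bar\nu - \sum_{i=0}^k\Bigl(\tfrac{1}{2}-\lambda_i\Bigr)b_i^2 \geq 0,
\end{equation*}
so that $\sum_i(\tfrac12-\lambda_i)b_i^2\leq\int_M|\nabla v|^2\,d\bar\nu$, the sequence $\nabla v_k$ is Cauchy in $L^2_{\bar\nu}$, and the limit is identified with $\nabla v$ by testing against compactly supported vector fields. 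You could alternatively invoke the form-domain description of the Friedrichs extension, but some justification beyond ``the spectral theorem yields (1)--(2)'' is required. Second, you establish $\lambda_0=\tfrac12$ with eigenspace the constants, but you never rule out eigenvalues in the open interval $(0,\tfrac12)$, which is part of the ordering $\lambda_1\leq 0$ in the statement. Your own weighted Bochner computation closes this if run unconditionally: integrating it for any eigenfunction $\phi$ gives $\int_M|\hess\phi|^2\,d\bar\nu=-\lambda\int_M|\nabla\phi|^2\,d\bar\nu$, which forces $\lambda\leq0$ whenever $\phi$ is non-constant, so the spectral gap is a corollary of the same identity you use for (5). (One must also justify the integrations by parts by a cutoff argument with $\eta_\rho$ built from $\bar f$, as the paper does elsewhere, e.g.\ in the proof of Lemma~\ref{lemma:hessian_grad}; this is implicit in your sketch.) With these two points made explicit your proof is sound.
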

\begin{proof}
The theorem is a combination of Theorems 1 and 2 in \cite{ChengZhou}. The only assertion that remains to prove is that if $v\in H^1_{\bar \nu}$ then the series $\sum_{i=0}^\infty b_i \phi_i$ converges in $H^1_{\bar \nu}$.

To see this, let $v_k = \sum_{i=0}^k b_i \phi_i$ and compute
\begin{align*}
0\leq\int_M |\nabla v - \nabla v_k|^2 d\bar \nu &= \int_M |\nabla v|^2 d\bar \nu + \int_M |\nabla v_k|^2 d\bar \nu -2\int_M \langle\nabla v_k,\nabla v\rangle d\bar \nu\\
&=\int_M |\nabla v|^2 d\bar \nu  +  \sum_{i=0}^{k} b_i^2 \lambda_i.
\end{align*}
Since $v\in H^1_{\bar \nu}$, it follows that
\begin{equation}\label{eqn:bi2li}
\sum_{i=0}^k b_i^2 |\lambda_i| \leq \int_M |\nabla v|^2 d\bar \nu<+\infty.
\end{equation}

For $k>l$ we obtain, integrating by parts,
\begin{equation*}
\int_M |\nabla v_k -\nabla v_l|^2 d\bar\nu = \sum_{i=l}^k b_i^2 |\lambda_i|,\end{equation*}
therefore, by \eqref{eqn:bi2li} the sequence $\nabla v_k$ is Cauchy in $L^2_{\bar \nu}$.

On the other hand, for every compactly supported smooth vector field $X$ on $M$, integrating by parts we obtain
\begin{align*}
\int_M \langle\nabla v_k, X\rangle d\bar \nu = -\int_M v_k \dive_f X d\bar \nu \longrightarrow -\int_M v \dive_f X d\bar \nu = \int_M \langle\nabla v,X\rangle d\bar \nu,
\end{align*}
where $\dive_f X:= \dive X - \langle \nabla f, X\rangle$. It follows that $\nabla v_k$ converges to $\nabla v$ in $L^2_{\bar \nu}$. 
\end{proof}

\begin{lemma}\label{lemma:proj_evol}
Let $(M,\bar g,\bar f)$ be a normalized gradient shrinking Ricci soliton at scale $1$, $\bar\nu$ be the measure induced by $d\bar\nu=(4\pi)^{-n/2} e^{-\bar f}$. Let $\bar v\in C^\infty(M\times I)$ be a smooth solution to \eqref{eqn:drift_heat}, such that $\max_{s\in [s_1,s_2]} ||\bar v(s)||_{H^1_{\bar \nu}}\leq C(s_1,s_2)$ for every $[s_1,s_2]\subset I$. Then,  if $\phi$ is an $L^2_{\bar\nu}$ unit-normalized eigenfunction of $\Delta_{\bar f}+\frac{1}{2}$, with associated eigenvalue $\lambda$, the function
 $I\ni s\mapsto (\bar v(s),\phi)_{L^2_\nu}$  is absolutely continuous and
$$\frac{d}{ds} (\bar v(s),\phi)_{L^2_{\bar\nu}} = \lambda (\bar v(s),\phi)_{L^2_{\bar\nu}}.$$
Therefore, in the $H^1_{\bar\nu}$ sense, for any $s,s_0\in I$,
$$\bar v(s) = \sum_{i=0}^{+\infty} (v(s_0),\phi_i)_{L^2_{\bar\nu}} e^{\lambda_i(s-s_0)} \phi_i.$$
\end{lemma}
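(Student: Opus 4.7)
The plan is to identify the projections $a(s) := (\bar v(s),\phi)_{L^2_{\bar\nu}}$ as the solutions of a simple ODE, and then invoke the expansion from Theorem \ref{thm:spectrum}(2) at each fixed time. First, I would differentiate under the integral sign. Since $\bar v$ is smooth and satisfies the uniform $H^1_{\bar\nu}$ bound on every compact subinterval $[s_1,s_2] \subset I$, standard parabolic regularity applied to $\frac{\partial \bar v}{\partial s} = (\Delta_{\bar f} + \tfrac{1}{2}) \bar v$ together with Cauchy--Schwarz against $\phi \in L^2_{\bar\nu}$ produces integrable dominating functions, so $a(s)$ is absolutely continuous and
$$\frac{d}{ds} a(s) = \int_M \Bigl(\Delta_{\bar f} + \tfrac{1}{2}\Bigr)\bar v \cdot \phi\, d\bar\nu.$$

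Next, I would move the operator onto $\phi$ by an integration by parts argument. Formally, $\Delta_{\bar f}$ is symmetric with respect to $d\bar\nu$: for compactly supported smooth $u,w$,
$$\int_M (\Delta_{\bar f} u)\, w\, d\bar\nu = -\int_M \langle \nabla u,\nabla w\rangle\, d\bar\nu = \int_M u\, (\Delta_{\bar f} w)\, d\bar\nu.$$
To justify this when $u = \bar v(s)$ and $w = \phi$ on the noncompact $M$, I would introduce cutoffs $\chi_R$ equal to $1$ on $B(p,R)$, supported in $B(p,2R)$, with $|\nabla \chi_R| \le 2/R$. The error after inserting $\chi_R$ is controlled by
$$\frac{2}{R}\int_{B(p,2R)\setminus B(p,R)} \bigl(|\nabla \bar v|\,|\phi| + |\bar v|\,|\nabla \phi|\bigr)\, d\bar\nu,$$
which tends to zero as $R \to \infty$ by Cauchy--Schwarz, because $\bar v(s), \phi \in H^1_{\bar\nu}$ and $\bar\nu$ is a finite (probability) measure, combined with dominated convergence on the annular regions. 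This yields
$$\frac{d}{ds} a(s) = \int_M \bar v \cdot \Bigl(\Delta_{\bar f} + \tfrac{1}{2}\Bigr)\phi\, d\bar\nu = \lambda\, a(s),$$
so $a(s) = a(s_0) e^{\lambda(s-s_0)}$ on $I$.

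Finally, for the stated series expansion, at each fixed $s \in I$ the assumption $\bar v(s) \in H^1_{\bar\nu}$ allows Theorem \ref{thm:spectrum}(2) to be applied, giving $\bar v(s) = \sum_i (\bar v(s),\phi_i)_{L^2_{\bar\nu}} \phi_i$ with convergence in $H^1_{\bar\nu}$. Combined with the ODE for each projection, one has $(\bar v(s),\phi_i)_{L^2_{\bar\nu}} = (\bar v(s_0),\phi_i)_{L^2_{\bar\nu}} e^{\lambda_i(s-s_0)}$, which is the advertised identity.

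The main obstacle is the integration by parts on the noncompact soliton, where one has to check that the cutoff error terms genuinely vanish; once the $H^1_{\bar\nu}$ hypothesis is in hand this is routine, but without it one cannot even define the right-hand side as an honest integral. A secondary, smaller issue is justifying the interchange of $\frac{d}{ds}$ and $\int_M$, which again reduces to producing uniform $L^1_{\bar\nu}$ dominators for $\bar v(\cdot)\phi$ and its $s$-derivative on compact subintervals, and for this the uniform $H^1_{\bar\nu}$ bound combined with Cauchy--Schwarz is exactly what is needed.
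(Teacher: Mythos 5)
There is a genuine gap at the very first step of your argument. You claim that differentiating under the integral sign is justified because ``standard parabolic regularity $\ldots$ together with Cauchy--Schwarz against $\phi\in L^2_{\bar\nu}$ produces integrable dominating functions.'' For this to work one would need $(\Delta_{\bar f}+\tfrac12)\bar v(s)\in L^2_{\bar\nu}$ uniformly on compact $s$-intervals, but this does not follow from the $H^1_{\bar\nu}$ hypothesis. Interior parabolic regularity gives local, \emph{unweighted} $L^2$ bounds on $\hess\bar v$; these do not translate into control of $\Delta_{\bar f}\bar v=\Delta\bar v-\langle\nabla\bar f,\nabla\bar v\rangle$ in the weighted space, and the drift term is particularly problematic since $|\nabla\bar f|$ grows like the distance while only $|\nabla\bar v|\in L^2_{\bar\nu}$ is known. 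In effect, you are implicitly assuming $\bar v(s)\in\mathrm{dom}(\Delta_{\bar f})\subset L^2_{\bar\nu}$, which is a strictly stronger hypothesis than $H^1_{\bar\nu}$ and is, roughly speaking, a consequence of the very spectral decomposition you are trying to establish.

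The paper avoids this difficulty by reordering the steps: it first tests against a sequence $\eta_j$ of compactly supported smooth functions converging to $\phi$ in $H^1_{\bar\nu}$, for which both $\frac{d}{ds}\int\bar v\eta_j\,d\bar\nu=\int(\Delta_{\bar f}+\tfrac12)\bar v\,\eta_j\,d\bar\nu$ and the integration by parts $\int(\Delta_{\bar f}\bar v)\eta_j\,d\bar\nu=-\int\langle\nabla\bar v,\nabla\eta_j\rangle\,d\bar\nu$ are manifestly legal, then integrates in $s$ to obtain an integral identity involving only $\bar v$, $\nabla\bar v$, $\eta_j$, $\nabla\eta_j$ --- i.e.\ purely $H^1$-level quantities --- and only \emph{then} passes $j\to\infty$. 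The limit identity shows directly that $s\mapsto(\bar v(s),\phi)_{L^2_{\bar\nu}}$ is absolutely continuous and identifies its a.e.\ derivative. Your cutoff idea is close in spirit, and could be made to work if you applied the cutoff \emph{before} differentiating in $s$ (e.g.\ take $\eta_R=\chi_R\phi$ and run the paper's argument); as written, however, the differentiation under the integral sign comes first and cannot be justified from the stated hypotheses.

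A secondary point: if you do move $\Delta_{\bar f}$ all the way from $\bar v$ onto $\chi_R\phi$ in a single integration by parts, the error involves $\Delta_{\bar f}\chi_R=\Delta\chi_R-\langle\nabla\bar f,\nabla\chi_R\rangle$, whose drift part is $O(1)$ (not $O(1/R)$) on the annulus because $|\nabla\bar f|\sim R$ there. One then needs the tail-of-an-$L^1$-function argument (as in the proof of Lemma \ref{lemma:hessian_grad}) rather than a $1/R$ prefactor. Your two-step integration by parts avoids this particular issue, but it is worth being aware of when organizing the cutoff computation.
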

\begin{proof}
Let $\eta_j$ be a sequence of compactly supported smooth functions which converge in $H^1_{\bar\nu}$ to $\phi$. Then
\begin{equation*}
\frac{d}{ds} \int_M \bar v(s) \eta_j d\bar\nu= \int_M \left(\Delta_{\bar f} +\frac{1}{2}\right) \bar v(s) \eta_j d\bar\nu=-\int_M \left(\langle \nabla \bar v(s),\nabla \eta_j\rangle -\frac{1}{2}\bar v(s) \eta_j \right)d\bar\nu.
\end{equation*}
Therefore,  for any $s_1<s_2$ in $I$
\begin{equation}
\int_M \bar v(s_2) \eta_j d\bar\nu = \int_M \bar v(s_1) \eta_j d\bar\nu -\int_{s_1}^{s_2} \int_M \left(\langle \nabla \bar v(s),\nabla \eta_j\rangle -\frac{1}{2}\bar v(s) \eta_j \right)d\bar\nu.
\end{equation}
Taking the limit in $j$, this gives
\begin{equation}
(\bar v(s_2),\phi)_{L^2_{\bar \nu}} = (\bar v(s_1),\phi)_{L^2_{\bar \nu}}-\int_{s_1}^{s_2} \int_M \left(\langle \nabla \bar v(s),\nabla \phi\rangle -\frac{1}{2}\bar v(s) \phi \right)d\bar\nu ds.
\end{equation}
It follows that $(\bar v(s),\phi)_{L^2_{\bar\nu}}$ is absolutely continuous and for almost every $s\in I$
\begin{equation}
\frac{d}{ds}(\bar v(s),\phi)_{L^2_{\bar\nu}}=\int_M \bar v(s) \left(\Delta_{\bar f}  +\frac{1}{2}\right)\phi d\bar\nu=\lambda (\bar v(s),\phi)_{L^2_{\bar\nu}}.
\end{equation}

\end{proof}

\subsection{Spectral gap and consequences}
In this section, let $(M,g(t))_{t\in (-\infty,0)}$ be a smooth complete Ricci flow that satisfies (RF1), and suppose that it is induced by a normalized gradient shrinking Ricci soliton $(M,\bar g,\bar f)$ at scale $1$ that satisfies  $-\Lambda \leq \bar\mu(\bar g)\leq 0$. Let $\nu_f\in\mathcal S$ be the conjugate heat flow induced by $\bar f$, namely $f(\cdot,t)=\bar f_t$, as in Subsection \ref{subsection:gsrs}, and suppose that $\nu_f$ satisfies (CHF1) with respect to $p\in M$.  

\begin{lemma}\label{lemma:gap}
Suppose that there is $q\in B(p,-1,R)$ such that $(M,g(t),q)_{t\in (-\infty,0)}$ is $k$-selfsimilar but not $(k+1,\eta)$-selfsimilar for some $1\leq k \leq n$ and $\eta>0$. Then, if $\lambda_i$, $i=0,1,\ldots$, are the eigenvalues of $\Delta_{\bar f}+\frac{1}{2}$, there is $\delta=\delta(n,C_I, \Lambda,H,R,\eta)>0$ such that $\lambda_i< -\delta$, for any $i\geq k+1$.
\end{lemma}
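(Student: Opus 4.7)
The plan is to argue by contradiction. Suppose the lemma fails. Then, after passing to a subsequence, there exist $\eta>0$ and normalized gradient shrinking Ricci solitons $(M_j,\bar g_j,\bar f_j)$ at scale $1$ satisfying (RF1) with constant $C_I$ and $\bar\mu(\bar g_j)\geq -\Lambda$, with $p_j\in\mathcal S_{j,\textrm{point}}$, such that each induced flow $(M_j,g_j(t),p_j)_{t\in(-\infty,0)}$ is $k$-selfsimilar but not $(k+1,\eta)$-selfsimilar, while the $(k{+}1)$-th eigenvalue $\lambda_{k+1,j}$ of $\Delta_{\bar f_j}+\tfrac{1}{2}$ tends to $0$. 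Note that $\lambda_{k+1,j}<0$ strictly, since otherwise Theorem \ref{thm:spectrum}(4) would force $(M_j,\bar g_j)$ to split at least $k+1$ Euclidean factors, contradicting the failure of $(k+1,\eta)$-selfsimilarity.

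First, I would apply Proposition \ref{prop:compactness_rf} together with Lemma \ref{lemma:k_delta_convergence} to extract a subsequential smooth pointed limit $(M_\infty,g_\infty(t),p_\infty)_{t\in(-\infty,0)}$ which is at least $k$-selfsimilar, induced by a normalized gradient shrinking Ricci soliton $(M_\infty,\bar g_\infty,\bar f_\infty)$ with $\bar\mu(\bar g_\infty)\geq -\Lambda$. By Proposition \ref{prop:spine_structure}, the splittings $M_j=M_j'\times\R^k$ pass to a splitting $M_\infty=M_\infty'\times\R^k$, and the first $k+1$ eigenfunctions of $\Delta_{\bar f_j}+\tfrac{1}{2}$ may be chosen as $\phi_{0,j}=1$ together with the $\R^k$-coordinate functions $\phi_{1,j},\ldots,\phi_{k,j}$, which converge smoothly to their counterparts on $M_\infty$.

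Let $\phi_{k+1,j}$ denote the $L^2_{\bar\nu_j}$-normalized eigenfunction associated to $\lambda_{k+1,j}$. Integrating by parts against $\phi_{k+1,j}$ in the eigenvalue equation yields $\int_{M_j}|\nabla\phi_{k+1,j}|^2\,d\bar\nu_j=\tfrac{1}{2}-\lambda_{k+1,j}\to\tfrac{1}{2}$, while Theorem \ref{thm:spectrum}(5) gives $\int_{M_j}|\hess\phi_{k+1,j}|^2\,d\bar\nu_j\leq |\lambda_{k+1,j}|\bigl(\tfrac{1}{2}-\lambda_{k+1,j}\bigr)\longrightarrow 0$. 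Since the elliptic equation $\Delta\phi-\langle\nabla\bar f_j,\nabla\phi\rangle=(\lambda_{k+1,j}-\tfrac{1}{2})\phi$ has uniformly smooth coefficients on bounded balls around $p_j$, local elliptic regularity and Moser iteration provide uniform local $C^\infty$ bounds. Passing to a further subsequence, I would obtain smooth convergence $\phi_{k+1,j}\to\phi_\infty$ on $M_\infty$ with $(\Delta_{\bar f_\infty}+\tfrac{1}{2})\phi_\infty=0$ and, via Fatou's lemma applied to the Hessian bound, $\hess_{\bar g_\infty}\phi_\infty=0$.

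The main obstacle is ensuring that $\phi_\infty\not\equiv 0$, i.e., preventing $L^2$ mass of $\phi_{k+1,j}$ from escaping to infinity. To handle this, I would combine uniform pointwise polynomial growth bounds on $\phi_{k+1,j}$ (obtained via Moser iteration, exploiting the fact that $\lambda_{k+1,j}$ is close to $0$ so the eigenfunctions behave like near-linear functions) with the Gaussian decay of $d\bar\nu_j$ implied by the quadratic lower bound $\bar f_j\geq d_{\bar g_j}(p_j,\cdot)^2/a(n)-c$ from Lemma \ref{lemma:soliton_growth}, where $c$ is controlled by $\bar\mu(\bar g_j)\geq -\Lambda$. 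These jointly yield uniform integrability of $\phi_{k+1,j}^2\,d\bar\nu_j$ on complements of large balls, so the normalization $\int\phi_\infty^2\,d\bar\nu_\infty=1$ is preserved in the limit. The orthogonality $\int\phi_{k+1,j}\phi_{i,j}\,d\bar\nu_j=0$ for $i=0,\ldots,k$ then passes to the limit, forcing $\phi_\infty$ to be $L^2_{\bar\nu_\infty}$-orthogonal to the constants and to the $\R^k$-coordinate functions on $M_\infty$. Combined with $\hess\phi_\infty=0$, this implies $\phi_\infty$ induces an additional isometric $\R$-factor, so by Theorem \ref{thm:spectrum}(4) the soliton $(M_\infty,\bar g_\infty)$ splits at least $k+1$ Euclidean factors, i.e., $(M_\infty,g_\infty(t),p_\infty)$ is $(k+1)$-selfsimilar. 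Applying the smooth convergence once more, for $j$ large enough $(M_j,g_j(t),p_j)$ would be $(k+1,\eta)$-selfsimilar, contradicting the hypothesis.
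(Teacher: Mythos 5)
Your overall architecture (contradiction, compactness, extract a limit eigenfunction with vanishing Hessian, obtain an extra Euclidean factor) matches the paper, but the crucial step — ensuring the limit is non-trivial — is handled very differently, and in your version it has a genuine gap.

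You normalize $\phi_{k+1,j}$ in $L^2_{\bar\nu_j}$ and claim that ``uniform pointwise polynomial growth bounds obtained via Moser iteration'' combined with Gaussian decay of $\bar\nu_j$ give tightness, so that $\int\phi_\infty^2\,d\bar\nu_\infty=1$ survives. This is where the argument is not complete. Moser iteration on balls of fixed radius around $p_j$ does give local $L^\infty$ bounds from local $L^2$ bounds, but it does not by itself prevent the entire $L^2_{\bar\nu_j}$ mass from drifting off to infinity: you would need a uniform-in-$j$ growth estimate $|\phi_{k+1,j}(x)|\leq C(1+d_{\bar g_j}(p_j,x))^m$ with constants independent of $j$, and the remark that $\lambda_{k+1,j}$ being close to $0$ ``makes eigenfunctions near-linear'' is heuristic — the Moser constant degrades at distance $R$ from the basepoint because the drift $|\nabla\bar f_j|$ grows like $R$, so the iteration does not directly yield the claimed uniform polynomial bound. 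Until such a uniform growth estimate is proved, it is possible that $\phi_\infty\equiv 0$ and the contradiction evaporates. The same tightness is also implicitly used when you pass the orthogonality relations $\int\phi_{k+1,j}\phi_{i,j}\,d\bar\nu_j=0$ to the limit.

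The paper sidesteps this entirely with a different normalization and a Poincar\'e trick. It normalizes $\int_{M_j}|\nabla\phi_j|\,d\bar\nu_j=1$, and observes that $\big|\nabla|\nabla\phi_j|\big|\leq |\hess\phi_j|$, so the spectral gap of $\Delta_{\bar f_j}$ (non-trivial eigenvalues $\leq-\tfrac12$) gives the Poincar\'e inequality
\begin{equation*}
\int_{M_j}\bigl(|\nabla\phi_j|-1\bigr)^2\,d\bar\nu_j \leq 2\int_{M_j}|\hess\phi_j|^2\,d\bar\nu_j\leq C|\lambda_j|\to 0.
\end{equation*}
In the smooth local limit this forces $|\nabla\phi_\infty|\equiv 1$ pointwise, so non-triviality is automatic — no control on the location of the $L^2$ mass is needed. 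A second Poincar\'e argument applied to $\langle\nabla v_j^a,\nabla\phi_j\rangle$, whose gradient is again controlled by $|\hess\phi_j|$ because $\hess v_j^a=0$, shows $\langle\nabla v_\infty^a,\nabla\phi_\infty\rangle\equiv 0$ pointwise, giving $k{+}1$ orthonormal parallel fields in the limit. So the spectral gap is used twice, and the choice of normalization is what makes the argument close without any polynomial growth estimate. If you want to pursue your $L^2$-normalized route, the missing ingredient is a uniform-in-$j$ polynomial growth bound for the eigenfunctions; otherwise I would recommend switching to the paper's $|\nabla\phi_j|$-normalization.
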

\begin{proof}
Suppose that there is a sequence $(M_j,g_j(t),p_j)_{t\in (-\infty,0)}$ of smooth complete Ricci flows satisfying (RF1) such that, for each $j$
\begin{itemize}
\item[A.] $(M_j,g_j(t))_{t\in (-\infty,0)}$ is induced by a normalized gradient shrinking Ricci soliton $(M,\bar g_j, \bar f_j)$ at scale $1$ with $-\Lambda \leq \mu(\bar g_j) \leq 0$.
\item[B.] there is a $q_j \in B(p_j, -1,R)$ such that $(M_j,g_j(t),q_j)_{t\in (-\infty,0)}$ is $k$-selfsimilar with spine $\mathcal S_j$ but not $(k+1,\eta)$-selfsimilar.
\item[C.] there is a sequence of conjugate heat flows $\nu_{f_j}\in\mathcal S_j$ such that each $\nu_{f_j}$ satisfies (CHF1) with respect to $p_j$. Moreover, $\bar g_j=g_j(-1)$, $\bar f_j(\cdot)=f_j(\cdot,-1)$.
\item[D.] there  is an eigenvalue $\lambda_j<0$ of $\Delta_{\bar f_j}+\frac{1}{2}$, such that $\lambda_j\rightarrow 0$. Let $d\bar \nu_j = (4\pi)^{-n/2} e^{-\bar f_j} d\vol_{\bar g_j}$ and suppose that $\phi_j\in L^2_{\bar \nu_j}$ is an eigenfunction for $\lambda_j$. 
\end{itemize}

By (RF1), Assumption A and Proposition \ref{prop:compactness_rf}, we can assume that $(M_j,g_j(t),p_j)_{t\in (-\infty,0)}$ converges to a smooth complete pointed Ricci flow $(M_\infty,g_\infty(t),p_\infty)_{t\in (-\infty,0)}$.
Moreover, by Assumption B and Lemma \ref{lemma:k_delta_convergence}, we may assume that $q_j\rightarrow q_\infty \in M_\infty$ so that $(M_\infty,g_\infty(t),q_\infty)_{t\in (-\infty,0)}$ is $k$-selfsimilar with spine $\mathcal S_\infty$, but not $k+1$-selfsimilar. Therefore $(M_\infty,g_\infty(t))_{t\in (-\infty,0)}$ splits exactly $k$-Euclidean factors.

Now, by Assumption C, we can further assume that the conjugate heat flows $\nu_{f_j}$ converge to some conjugate heat flow $\nu_{f_\infty}$ on $(M_\infty,g_\infty(t))_{t\in (-\infty,0)}$. Since $\nu_{f_j}\in\mathcal S_j$ we also get that $\nu_{f_\infty}\in\mathcal S_\infty$. Denote by $(M_\infty,\bar g_\infty,\bar f_\infty)$ the associated gradient shrinking Ricci soliton at scale $1$, namely $\bar g_\infty=g_\infty(-1)$, $\bar f_\infty=f_\infty(\cdot,-1)$ and set $d\bar \nu_\infty=(4\pi)^{-n/2} e^{-\bar f_\infty}$. 

Since each $(M_j,g_j(t),q_j)_{t\in (-\infty,0)}$ is $k$-selfsimilar, but not $(k+1,\eta)$-selfsimilar, $(M_j,\bar g_j,\bar f_j)$ splits exactly $k$ Euclidean factors. Therefore, for each $j$, we know by Theorem \ref{thm:spectrum} that the kernel of $\Delta_{\bar f_j}+\frac{1}{2}$ is spanned by $k$ eigenfunctions $v_j^a:M_j \rightarrow \mathbb R$, $a=1,\ldots,k$ which satisfy $\hess_{\bar g_j} v_j^a=0$ and can be normalized so that for any $a,b=1,\ldots,k$
\begin{equation}
\frac{1}{2}\int_{M_j} v_j^a v_j^b d\bar \nu_j=-\int_{M_j} v_j^a \Delta_{\bar f_j} v_j^b d\bar \nu_j=\int_{M_j} \langle\nabla v_j^a,\nabla v_j^b\rangle d\bar \nu_j = \delta^{ab}.
\end{equation}
In particular, since the vector fields $\nabla v_j^a$ are parallel, it follows that $\langle\nabla v_j^a,\nabla v_j^b\rangle = \delta^{ab}$
on $M_j$.

Moreover, since $\hess_{\bar g_j} v_j^a=0$, we can assume, passing to a further subsequence, that $v_j^a$ smoothly converge to $v_\infty^a$, such that  $\hess_{\bar g_\infty} v_\infty^a=0$ and $\langle\nabla v_\infty^a,\nabla v_\infty^b\rangle = \delta^{ab}$.

By Theorem \ref{thm:spectrum}, the eigenfunctions $\phi_j$ satisfy
\begin{align}
\int_{M_j} \langle \nabla v_j^a,\nabla \phi_j\rangle d\bar \nu_j&=0,\\
\int_{M_j} |\hess_{\bar g_j} \phi_j|^2 d\bar \nu_j &\leq |\lambda_j| \int_M |\nabla \phi_j|^2 d\bar \nu_j. \label{eqn:hess_phi_j}
\end{align}
and $|\nabla \phi_j|\in L^2_{\bar \nu_j}$.

We also compute
$$\nabla_i \sqrt{|\nabla \phi_j|^2+\varepsilon} = \frac{\nabla_i |\nabla \phi_j|^2}{2\sqrt{|\nabla \phi_j|^2+\varepsilon}}=\frac{\nabla_m \phi_j \nabla_i\nabla_m \phi_j}{\sqrt{|\nabla \phi_j|^2+\varepsilon}}$$
thus 
$$ \left| \nabla \sqrt{|\nabla \phi_j |^2+\varepsilon}\right|^2 \leq \frac{|\nabla \phi_j|^2}{|\nabla \phi_j|^2 +\varepsilon} |\hess_{\bar g_j} \phi_j|^2\leq |\hess_{\bar g_j} \phi_j|^2$$
Since, by Theorem \ref{thm:spectrum}, the non-trivial eigenvalues of $\Delta_{\bar f_j}$ are less than or equal to $-\frac{1}{2}$, it follows that
\begin{align*}
&\int_{M_j} \left( \sqrt{|\nabla \phi_j|^2 +\varepsilon} - \int_{M_j} \sqrt{|\nabla \phi_j|^2 +\varepsilon} d\bar \nu_j \right)^2 d\bar \nu_j \leq\\
&\leq 2\int_{M_j} \left|\nabla \sqrt{|\nabla \phi_j|^2+\varepsilon} \right|^2 d\bar \nu_j \leq 2\int_{M_j} |\hess_{\bar g_j} \phi_j|^2 d\bar \nu_j\\
&\leq 2|\lambda_j| \int_{M_j} |\nabla \phi_j|^2 d\bar \nu_j,
\end{align*}
where we used \eqref{eqn:hess_phi_j} in the last inequality.

Sending $\varepsilon\rightarrow0$ we thus obtain
\begin{equation}
\int_{M_j} \left(|\nabla \phi_j| - \int_{M_j} |\nabla \phi_j| d\bar \mu_j \right)^2 d\bar \nu_j \leq 2|\lambda_j| \int_{M_j} |\nabla \phi_j|^2 d\bar \nu_j.
\end{equation}
Normalizing $\phi_j$ so that $\int_{M_j} |\nabla \phi_j| d\bar \mu_j=1$ this becomes
\begin{equation}\label{eqn:nabla_phi_1}
\int_{M_j} \left(|\nabla \phi_j| - 1 \right)^2 d\bar \nu_j \leq 2|\lambda_j| \int_{M_j} |\nabla \phi_j|^2 d\bar \nu_j.
\end{equation}
By the $L^2$ triangle inequality we obtain
\begin{align*}
\left(\int_{M_j} |\nabla \phi_j |^2 d\bar \nu_j \right)^{1/2}&\leq \left( \int_{M_j}(|\nabla \phi_j| -1)^2 d\bar \nu_j \right)^{1/2}+1\\
&\leq \sqrt 2 |\lambda_j|^{1/2} \left(\int_{M_j} |\nabla \phi_j|^2 d\bar \nu_j\right)^{1/2} +1.
\end{align*}
Since $\lambda_j\rightarrow 0$, this gives
$$ \int_{M_j} |\nabla \phi_j|^2 d\bar \nu_j\leq C,$$
for large $j$. 

Applying this to  \eqref{eqn:hess_phi_j} and \eqref{eqn:nabla_phi_1} we obtain that
\begin{equation}\label{eqn:hessian_lambda}
\int_{M_j} (|\nabla \phi_j| - 1)^2 + |\hess_{\bar g_j} \phi_j|^2 d\bar \nu_j \leq C|\lambda_j|,
\end{equation}

Since $|\langle\nabla v_j^a,\nabla \phi_j\rangle| \leq |\nabla v_j^a | |\nabla \phi_j|$ and $|\nabla v_j^a|=1$ on $M_j$, we know that $\langle\nabla v_j^a,\nabla \phi_j\rangle\in L^2_{\bar \nu_j}$ because  $|\nabla \phi_j|\in L^2_{\bar \nu_j}$. Moreover, since $\hess_{\bar g_j} v_j^a = 0$,
$$|\nabla_i (\langle\nabla v_j^a,\nabla \phi_j\rangle)|=| \nabla_k v_j^a \nabla_i\nabla_k \phi_j |\leq |\nabla v_j^a| |\hess_{\bar g_j} \phi_j|=|\hess_{\bar g_j} \phi_j|,$$
so $|\nabla_i (\langle\nabla v_j^a,\nabla \phi_j\rangle)|\in L^2_{\bar \nu_j}$, by \eqref{eqn:hess_phi_j} and \eqref{eqn:hessian_lambda}.

Therefore, $\langle\nabla v_j^a,\nabla \phi_j\rangle\in H^1_{\bar \nu}$ and since the non-trivial eigenvalues of $\Delta_{\bar f_j}$ are $\leq -\frac{1}{2}$, by Theorem \ref{thm:spectrum}, and $\int_{M_j} \langle \nabla v_j^a,\nabla \phi_j \rangle d\bar \nu_j=0$ we obtain
\begin{equation}\label{eqn:ortho_lambda}
\int_{M_j} (\langle\nabla v_j^a,\nabla \phi_j\rangle)^2 d\bar \nu_j \leq 2\int_{M_j} |\nabla (\langle\nabla v_j^a,\nabla \phi_j\rangle)|^2 d\bar \nu_j\leq 2\int_{M_j} |\hess_{\bar g_j} \phi_j|^2 d\bar \nu_j \leq C|\lambda_j|\rightarrow 0.
\end{equation}

By elliptic regularity, passing to a further subsequence, we can assume that $\phi_j$ converge smoothly to a limit $\phi_\infty$ on $M_\infty$, which satisfies
\begin{align*}
|\hess_{\bar g_\infty} \phi_\infty|+|\langle \nabla v_\infty^a,\nabla \phi_\infty\rangle | + | |\nabla \phi_\infty|-1|=0 \quad \textrm{on $M_\infty$},
\end{align*}
due to \eqref{eqn:hessian_lambda} and \eqref{eqn:ortho_lambda}.

We conclude that $\nabla v_\infty^1,\ldots,\nabla v_\infty^k,\nabla \phi_\infty$ are $k+1$ parallel linearly independent vector fields on $(M_\infty, \bar g_\infty)$, thus it splits $k+1$ Euclidean factors which is a contradiction.
 \end{proof}

\subsubsection{Almost linear growth}

\begin{lemma}\label{lemma:growth_linear}
Suppose that there is $q\in B(p,-1,R)$ such that $(M,g(t),q)_{t\in (-\infty,0)}$ is $k$-selfsimilar but not $(k+1,\eta)$-selfsimilar, and that $v:M\times(-\infty,0)\rightarrow \mathbb R$ is a solution to the heat equation with $\max_{t\in [t_1,t_2]} ||v(\cdot,t)||_{H^1_{\nu_{f,t}}}\leq C(t_1,t_2)$ for every $t_1<t_2<0$.

There is $\varepsilon_0=\varepsilon_0(n,C_I,\Lambda,H,R,\eta)>0$ such that if for some $0<\varepsilon\leq \varepsilon_0$ \begin{equation}\label{eqn:integral_v_growth}
\int_M v^2 d\nu_{f,t} \leq C \left(|t|^{1+\varepsilon}+1\right),
\end{equation} 
 for every $t<0$, then $v$ is an affine function of the coordinates $x^1,\ldots,x^k$ induced by the splitting $M=M'\times \mathbb R^k$, and
 $$\int_M v d\nu_{f,t} = 0$$
 for every $t<0$.
\end{lemma}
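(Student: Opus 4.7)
I will reduce the problem from the evolving Ricci flow to the fixed shrinker via the rescaling of Section~\ref{sec:heat_gsrs}, expand in the eigenbasis of $\Delta_{\bar f}+\tfrac12$ given by Theorem~\ref{thm:spectrum}, and combine the spectral gap from Lemma~\ref{lemma:gap} with the growth hypothesis to eliminate every eigenmode of strictly negative eigenvalue. The surviving span consists of the constant and the centered linear coordinates $x^a-a_f^a$ coming from Proposition~\ref{prop:spine_structure}, yielding the affine conclusion.

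\textbf{Main argument.} Set $v(x,t)=|t|^{1/2}\bar v(\varphi_t(x),s)$ with $s=-\log|t|$; by Section~\ref{sec:heat_gsrs}, $\bar v$ solves $\partial_s\bar v=(\Delta_{\bar f}+\tfrac12)\bar v$, while $\varphi_t$ pushes $\nu_{f,t}$ to $\bar\nu$, so
\[
\int_M\bar v^2(\cdot,s)\,d\bar\nu=\tfrac{1}{|t|}\int_M v^2(\cdot,t)\,d\nu_{f,t}\le C\bigl(e^{-\varepsilon s}+e^{s}\bigr).
\]
Let $\{\phi_i\}_{i\ge 0}$ be an $L^2_{\bar\nu}$-orthonormal eigenbasis of $\Delta_{\bar f}+\tfrac12$ with eigenvalues $\tfrac12=\lambda_0>\lambda_1\ge\lambda_2\ge\cdots$. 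The local $H^1_{\nu_{f,t}}$-bound on $v$ transports to a local $H^1_{\bar\nu}$-bound on $\bar v$, so Lemma~\ref{lemma:proj_evol} applies: for any $s_0<0$ and $b_i=(\bar v(\cdot,s_0),\phi_i)_{L^2_{\bar\nu}}$,
\[
\int_M\bar v^2(\cdot,s)\,d\bar\nu=\sum_{i\ge 0}b_i^{\,2}\,e^{2\lambda_i(s-s_0)}.
\]
Lemma~\ref{lemma:gap} gives $\lambda_i\le-\delta$ for every $i\ge k+1$, with $\delta=\delta(n,C_I,\Lambda,H,\eta)>0$; take $\varepsilon_0=\delta$. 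Each summand is non-negative and thus controlled by the full integral, so $b_i^{\,2}e^{2\lambda_i(s-s_0)}\le C(e^{-\varepsilon s}+e^s)$ for every $s\in\R$. Letting $s\to-\infty$, the left side grows like $e^{2|\lambda_i||s|}$ and the right side like $e^{\varepsilon|s|}$; since $2|\lambda_i|\ge 2\delta>\varepsilon$, we conclude $b_i=0$ for every $i\ge k+1$.

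\textbf{Finishing.} Consequently $\bar v$ lies in the span of $\phi_0,\phi_1,\ldots,\phi_k$. By Theorem~\ref{thm:spectrum}(3)--(4), $\phi_0$ is constant while $\phi_1,\ldots,\phi_k$ can be chosen as normalizations of the centered coordinates $x^a-a_f^a$ from Proposition~\ref{prop:spine_structure}. Undoing the substitution $v=|t|^{1/2}\bar v\circ\varphi_t$: the zero-eigenvalue modes produce the time-independent functions $x^a-a_f^a$, while the $\lambda_0=\tfrac12$ mode produces a spacetime constant, so $v$ is affine in $x^1,\ldots,x^k$. For the zero-mean assertion, note that $\int_M v\,d\nu_{f,t}$ is conserved in $t$ by the heat/conjugate-heat pairing and equals $b_0\,|t_0|^{1/2}$; the identity $\int_M v\,d\nu_{f,t}=0$ is thus precisely the vanishing of the constant-mode coefficient, which is to be isolated by comparing the $b_0^{\,2}e^{s-s_0}$ summand with the $e^s$ tail of the bound as $s\to+\infty$ (equivalently $|t|\to 0$).

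\textbf{Main obstacle.} The substantive step is calibrating $\varepsilon_0$ strictly below twice the spectral gap. Lemma~\ref{lemma:gap} delivers the required uniform $\delta=\delta(n,C_I,\Lambda,H,\eta)$ precisely because the flow is assumed \emph{not} to be $(k+1,\eta)$-selfsimilar; absent this uniform gap, the $L^2$ growth condition could not separate the zero modes of $\Delta_{\bar f}$ from the strictly negative ones, and the whole backward-in-time forcing mechanism would collapse.
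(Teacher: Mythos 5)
Your main argument reproduces the paper's proof step for step: same rescaling $v=|t|^{1/2}\,\bar v\circ\varphi_t$, same eigenbasis expansion via Lemma~\ref{lemma:proj_evol}, same spectral gap from Lemma~\ref{lemma:gap}, and the same $s\to-\infty$ forcing to kill every mode with $\lambda_i\le-\delta$; taking $\varepsilon_0=\delta<2\delta$ is fine. On the affine conclusion you are aligned with the paper.

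The one step that does not go through as written is the zero-mean assertion. You propose to compare the summand $b_0^{\,2}e^{s-s_0}$ (which comes from $\lambda_0=\tfrac12$) with the $e^{s}$ tail of the bound as $s\to+\infty$. But both sides grow at exactly the rate $e^{s}$, so this comparison only yields $b_0^{\,2}\le C\,e^{s_0}$, \emph{not} $b_0=0$. The paper's ``Similarly, we conclude that $(\bar v(\cdot,0),\phi_0)_{L^2_{\bar\nu}}=0$'' runs into the same difficulty. Indeed the constant function $v\equiv 1$ satisfies all the stated hypotheses: it solves the heat equation, has bounded $H^1_{\nu_{f,t}}$-norm, and obeys $\int_M v^2\,d\nu_{f,t}=1\le C(|t|^{1+\varepsilon}+1)$, yet $\int_M v\,d\nu_{f,t}=1\ne0$. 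To obtain the zero-mean conclusion one either needs the stronger growth hypothesis $\int_M v^2\,d\nu_{f,t}\le C|t|^{1+\varepsilon}$ (without the ``$+1$''), which for $s\to+\infty$ forces $\int\bar v^2\,d\bar\nu\le Ce^{-\varepsilon s}\to 0$ and hence $b_0=0$, or one must impose the centering $\int_M v\,d\nu_{f,t}=0$ as a separate hypothesis --- which is what actually happens at the point of application in Proposition~\ref{prop:transformations}, where the $\hat v_j^a$ are normalized by subtracting $c_j=\int v^a_{j,\rho_j}\,d\nu_{(p_j,0),-\rho_j^2}$ so that the mean vanishes before the lemma is invoked.
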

\begin{proof}
Recall that $(M,g(t))_{t\in(-\infty,0)}$ is the Ricci flow induced by the normalized gradient shrinking Ricci soliton $(M,\bar g,\bar f)$, at scale $1$, with $\bar g=g(-1)$, $\bar f=f(\cdot,-1)$, namely there is a family of diffeomorphisms $\varphi_t:M\rightarrow M$, $\varphi_{-1}=id_M$, $\frac{d}{dt}\varphi_t=\frac{1}{|t|}\nabla^{\bar g} \bar f\circ\varphi_t$ such that $g(t)=|t|\varphi_t^{*} \bar g$ and $f(x,t)=\bar f(\varphi_t(x))$. Define the probability measure $\bar \nu$ on $M$ so that $d\bar \nu=(4\pi)^{-n/2}e^{-\bar f} d\vol_{\bar g}$ and the function $\bar v$ so that $v(x,t)=|t|^{1/2} \bar v(\varphi_t(x),s)$, $s=-\log |t|$. Clearly, for every $s_1<s_2$, $\max_{s\in [s_1,s_2]} ||\bar v(\cdot,s)||_{H^1_{\bar \nu}}\leq C(s_1,s_2)$.

Therefore,  for $t\leq -1$ and $s=-\log|t|\leq 0$, \eqref{eqn:integral_v_growth} becomes
\begin{equation*}
\int_M (\bar v(s))^2 d\bar \nu=|t|^{-1}\int_M v^2 d\nu_{f,t} \leq C|t|^{\varepsilon} = Ce^{\varepsilon |s|}.
\end{equation*}

Let's suppose that for some $i\geq k+1$, the corresponding eigenfunction $\phi_i$ of $\Delta_{\bar f}+\frac{1}{2}$ satisfies $(\bar v(0),\phi_i)_{L^2_\mu}\not=0$. 

Then, for any $s\leq 0$, by Lemma \ref{lemma:proj_evol} and Lemma \ref{lemma:gap}, we obtain
\begin{align*}
\int_M (\bar v(s))^2 d\bar \nu\geq  (\bar v(s), \phi_i)_{L^2_{\bar\nu}}^2 = e^{2\lambda_i s}(\bar v(0), \phi_i)_{L^2_{\bar\nu}} ^2 \geq e^{\delta |s|}(\bar v(0), \phi_i)_{L^2_{\bar\nu}} ^2
\end{align*}

It follows that for every $s\leq 0$
\begin{equation}
e^{\delta|s|}(\bar v(\cdot,0), \phi_i)_{L^2_{\bar\mu}} ^2 \leq C( e^{\varepsilon|s|}+1),
\end{equation}
which fails for large $|s|$, if $\varepsilon<\delta$, unless $(\bar v(\cdot,0), \phi_i)_{L^2_{\bar\mu}} ^2=0$. Therefore 
$$\bar v(s)= \sum_{i=1}^k (\bar v(0), x_i)_{L^2_{\bar\nu}} x^i+ e^{\frac{|s|}{2}}(\bar v(0),\phi_0)_{L^2_{\bar\nu}}.$$
Similarly, we conclude that $(\bar v(\cdot,0),\phi_0)_{L^2_{\bar\mu}} = 0$, and the result follows.
\end{proof}

\subsubsection{Hessian decay.}

\begin{lemma}\label{lemma:hessian_grad}
Let $(M,\bar g,\bar f)$ be a normalized gradient shrinking Ricci soliton at scale $1$, with bounded curvature, $\bar \nu$ be the probability measure induced by $d\bar \nu=(4\pi)^{-n/2} e^{-\bar f}d\vol_{\bar g}$. Let $\bar v\in C^\infty(M\times I)$ be a smooth solution to \eqref{eqn:drift_heat}, such that $\sup_{s\in [a,b]} ||\bar v(\cdot,s)||_{H^1_{\bar \nu}}\leq C(a,b)$, for any $[a,b]\subset I$.
 
Then for every $s_1<s_2$ in $I$
\begin{equation*}
2\int_{s_1}^{s_2} \int_M |\hess_{\bar g} \bar v|^2 d\bar \nu ds =  \int_M |\nabla \bar v(s_1)|^2 d\bar\nu - \int_M |\nabla \bar v(s_2)|^2 d\bar\nu<+\infty.
\end{equation*}
\end{lemma}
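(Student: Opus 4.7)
The plan is a weighted Bochner computation combined with a cutoff argument at infinity.

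First, I would establish the pointwise identity
\begin{equation}\label{eqn:plan_poi}
\frac{\partial}{\partial s} |\nabla \bar v|^2 = \Delta_{\bar f} |\nabla \bar v|^2 - 2|\hess_{\bar g} \bar v|^2.
\end{equation}
Applying the weighted Bochner formula
$$\Delta_{\bar f}\tfrac{1}{2}|\nabla u|^2 = |\hess u|^2 + \langle \nabla u, \nabla \Delta_{\bar f} u\rangle + (\ric_{\bar g} + \hess_{\bar g} \bar f)(\nabla u, \nabla u),$$
together with the soliton equation $\ric_{\bar g} + \hess_{\bar g} \bar f = \bar g/2$ and the evolution equation \eqref{eqn:drift_heat}, the two $\pm\tfrac{1}{2}|\nabla \bar v|^2$ contributions arising from the soliton term and from differentiating $\tfrac{1}{2}\bar v$ cancel, leaving \eqref{eqn:plan_poi}. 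Formally, since $\int_M \Delta_{\bar f}\phi\,d\bar\nu = 0$ via integration by parts with respect to the weight $e^{-\bar f}$, integrating \eqref{eqn:plan_poi} against $d\bar\nu$ gives $\frac{d}{ds}\int_M |\nabla \bar v|^2\,d\bar\nu = -2\int_M |\hess_{\bar g}\bar v|^2\,d\bar\nu$, and integration in $s$ over $[s_1,s_2]$ delivers the claim.

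The main obstacle is rigor, since only an $H^1_{\bar\nu}$ bound on $\bar v$ is assumed on compact subintervals, so a priori we have neither global $L^2_{\bar\nu}$ control of $\hess_{\bar g}\bar v$ nor license to differentiate under the integral. To justify the formal computation I would use a cutoff $\eta_R\in C^\infty_c(M)$ equal to $1$ on $B_{\bar g}(p,R)$, supported in $B_{\bar g}(p,2R)$, with $|\nabla \eta_R|_{\bar g}\leq C/R$, where $p$ is a minimum of $\bar f$ provided by Lemma \ref{lemma:soliton_growth}. The bounded curvature hypothesis combined with the soliton equation bounds $|\hess_{\bar g}\bar f|$ globally and $|\nabla \bar f|$ grows at most linearly; hence interior parabolic Schauder/Shi-type estimates on balls of unit radius (applied with a small time-margin inside $I$) produce pointwise bounds on $|\hess_{\bar g}\bar v|$ in terms of local $L^2_{\bar\nu}$-norms of $\bar v$ and $\nabla \bar v$ on neighboring balls, with constants depending polynomially on $d_{\bar g}(p,\cdot)$. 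Coupling these with the Gaussian-type decay of $e^{-\bar f}$ from the quadratic growth in Lemma \ref{lemma:soliton_growth} and the polynomial volume growth of shrinking solitons \cite{CarrilloNi} yields
$$\int_{s_1}^{s_2}\int_M |\hess_{\bar g}\bar v|^2\,d\bar\nu\,ds<+\infty$$
on any $[s_1,s_2]\subset I$, together with analogous tail control on $|\nabla \bar v|\,|\hess_{\bar g}\bar v|$.

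With integrability in hand, I test \eqref{eqn:plan_poi} against $\eta_R^2\,d\bar\nu$, integrate over $[s_1,s_2]\times M$, and integrate by parts once in space to transfer $\Delta_{\bar f}$ onto $\eta_R^2$, obtaining
$$\int_M \eta_R^2 |\nabla \bar v(s_2)|^2\,d\bar\nu - \int_M \eta_R^2|\nabla \bar v(s_1)|^2\,d\bar\nu = -2\int_{s_1}^{s_2}\!\!\int_M \eta_R^2|\hess_{\bar g}\bar v|^2\,d\bar\nu\,ds + \mathcal E_R,$$
with error
$$|\mathcal E_R|\leq \frac{C}{R}\int_{s_1}^{s_2}\int_{\{R\leq d_{\bar g}(p,\cdot)\leq 2R\}}|\nabla \bar v|\,|\hess_{\bar g}\bar v|\,d\bar\nu\,ds,$$
which tends to $0$ as $R\to\infty$ by Cauchy--Schwarz and the integrability established above. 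Dominated convergence of the remaining terms then yields the stated identity.
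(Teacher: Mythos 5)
Your pointwise computation of $\partial_s|\nabla\bar v|^2 = \Delta_{\bar f}|\nabla\bar v|^2 - 2|\hess_{\bar g}\bar v|^2$ is correct, and the overall cutoff strategy is the right one. However, the step where you claim to obtain a priori $\int_{s_1}^{s_2}\int_M|\hess_{\bar g}\bar v|^2\,d\bar\nu\,ds<+\infty$ from ``interior parabolic Schauder/Shi-type estimates'' coupled with the Gaussian decay of $e^{-\bar f}$ does not close. Interior estimates bound $|\hess_{\bar g}\bar v(x)|$ by an \emph{unweighted} local $L^2$ norm of $\bar v$ on a neighbouring ball; converting this to a weighted norm picks up a factor of $e^{\bar f(x)/2}$ (the weight at the centre), which then cancels \emph{exactly} against the $e^{-\bar f(x)}$ in $d\bar\nu$ once you square and integrate. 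What survives is the polynomially growing constant (coming from the linearly growing drift coefficient $\nabla\bar f$ and from ball volumes), and the uniform-in-time $H^1_{\bar\nu}$ bound alone is not enough to absorb a polynomial-in-$d(p,\cdot)$ weight against $|\bar v|^2\,d\bar\nu$. Since your error term $\mathcal E_R$ after a single integration by parts involves $|\nabla\bar v|\,|\hess_{\bar g}\bar v|$, it genuinely needs this a priori Hessian integrability, so the argument as written is circular.

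The paper sidesteps this entirely: it shifts the \emph{full} operator $\Delta_{\bar f}$ onto the cutoff (self-adjointness in $L^2_{\bar\nu}$, so two integrations by parts, which is legitimate because the cutoff has compact support), so the only error term is $\int_{s_1}^{s_2}\int_M|\nabla\bar v|^2\,\Delta_{\bar f}\eta_\rho\,d\bar\nu\,ds$, which depends only on $|\nabla\bar v|^2$ and is controlled by the assumed $H^1_{\bar\nu}$ bound. It also chooses the cutoff $\eta_\rho = \psi(\mathbf r/\rho)$ with $\mathbf r = 2\sqrt{\tilde f}$ rather than a geodesic cutoff, so that the drift contribution $\langle\nabla\bar f,\nabla\eta_\rho\rangle = \frac{\psi'(\mathbf r/\rho)}{2\rho}\,\mathbf r\,|\nabla\mathbf r|^2$ is uniformly bounded, giving $|\Delta_{\bar f}\eta_\rho|\le C(n)$; the Hessian term then passes to the limit by monotone convergence, and its finiteness falls out for free. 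Two concrete ways to repair your argument: (i) shift $\Delta_{\bar f}$ fully onto $\eta_R^2$, so the error becomes $\int\int|\nabla\bar v|^2\,\Delta_{\bar f}(\eta_R^2)\,d\bar\nu\,ds$ and no a priori Hessian control is needed; or (ii) first run a Caccioppoli estimate --- test the Bochner identity against $\eta_R^2$, use Young's inequality to absorb $4\int\eta_R\langle\nabla\eta_R,\hess\bar v(\nabla\bar v,\cdot)\rangle\,d\bar\nu$ into $\int\eta_R^2|\hess_{\bar g}\bar v|^2\,d\bar\nu$, and let $R\to\infty$ by monotone convergence --- which yields $\hess_{\bar g}\bar v\in L^2_{\bar\nu}(ds)$, after which your Cauchy--Schwarz argument for $\mathcal E_R\to 0$ does work.
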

\begin{proof}
A direct computation using the gradient shrinking Ricci soliton equation and the Bochner formula gives
\begin{equation}
\left(\frac{\partial}{\partial s}-\Delta_{\bar f}\right)|\nabla \bar v|^2 = - 2|\hess_{\bar g} \bar v|^2.
\end{equation}
Multiplying by a smooth function $\eta\geq 0$ with compact support and integrating we obtain
\begin{align*}
\frac{d}{ds} \int_M |\nabla \bar v|^2 \eta d\bar\nu &=\int_M (\Delta_{\bar f} |\nabla \bar v|^2 - 2|\hess_{\bar g} \bar v|^2 )\eta d\bar \nu\\
&=\int_M |\nabla \bar v|^2 \Delta_{\bar f} \eta d\bar\nu - 2\int_M |\hess_{\bar g} \bar v|^2 \eta d\bar\nu.
\end{align*}
It follows that for any $s_1<s_2$ in $I$,
\begin{equation}\label{eqn:integral_evol_grad}
\begin{aligned}
&\int_M |\nabla \bar v(s_2)|^2 \eta d\bar \nu - \int_M |\nabla \bar v(s_1)|^2 \eta d\bar\nu =\\
&= \int_{s_1}^{s_2} \int_M |\nabla \bar v|^2 \Delta_{\bar f} \eta d\bar\mu  - 2\int_{s_1}^{s_2}   \int_M |\hess_{\bar g} \bar v|^2 \eta d\bar\nu ds.
\end{aligned}
\end{equation}

Now, in order to construct a cutoff function $\eta$ with nice bounds, set $\tilde f=\bar f+c$ so that 
\begin{align*}
\Delta \tilde f +R -\frac{n}{2}&=0,\\
|\nabla \tilde f|^2+R-\tilde f&=0,
\end{align*}
as in Proposition \ref{prop:soliton_identities}. Moreover,  since $R\geq 0$ on a shrinking soliton, it follows that $\tilde f\geq0$.

Define $\mathbf r=2\sqrt{\tilde f}$. Then, when $\mathbf r>0$, it is smooth and since $R\geq 0$
\begin{align*}
\nabla_i \mathbf r&=\frac{2}{\mathbf r}\nabla_i \tilde f,\\
|\nabla \mathbf r|^2&= \frac{4}{\mathbf r^2}|\nabla \tilde f|^2 = \frac{4}{\mathbf r^2}\left( -R+\tilde f \right)=1-\frac{4R}{\mathbf r^2}\leq 1,\\
\Delta \mathbf r&= \frac{2}{\mathbf r} \Delta \tilde f-\frac{2}{\mathbf r^2}\nabla_i \mathbf r\nabla \tilde f_i \\
&=2\mathbf r^{-1}\left( -R+\frac{n}{2}\right) - \mathbf r^{-1}|\nabla \mathbf r|^2.
\end{align*}

Now, let $\psi:\mathbb R\rightarrow \mathbb R$ smooth,  such that $0\leq \psi\leq 1$, $\psi=1$ in $(-\infty,1]$, $\psi=0$ in $[2,+\infty)$, such that $|\psi'|+|\psi''|\leq C$.

Define for any $\rho\geq 1$, $\eta_\rho(x)=\psi(\mathbf r(x)/\rho)$. Then, the support of $\nabla \eta_\rho$ is in the set $\{\rho\leq \mathbf r\leq 2\rho\}$ and 
\begin{align*}
\nabla_i \eta_\rho&=\frac{\psi'(\mathbf r/\rho)}{\rho} \nabla_i \mathbf r,\\
\Delta \eta_\rho&=\frac{\psi'(\mathbf r/\rho)}{\rho} \Delta \mathbf r + \frac{\psi''(\mathbf r /\rho)}{\rho^2} |\nabla \mathbf r|^2.
\end{align*}
Therefore,
\begin{align*}
|\nabla \eta_\rho|^2&=\frac{(\psi'(\mathbf r/\rho))^2}{\rho^2}  |\nabla \mathbf r|^2 \leq \frac{C}{\rho^2}\\
|\langle\nabla \eta_\rho,\nabla f\rangle|&=\frac{|\psi'(\mathbf r/\rho)|}{2\rho} \mathbf r |\nabla \mathbf r|^2\leq C\\
|\Delta \eta_\rho|&=\left| \frac{\psi'(\mathbf r/\rho)}{\rho} \Delta \mathbf r + \frac{\psi''(\mathbf r/\rho)}{\rho^2} |\nabla \mathbf r|^2\right|\leq \frac{C}{\rho^2}\\
|\Delta_{\bar f} \eta_\rho| &\leq  C(n).
\end{align*}

Thus, since $\sup_{s\in [s_1,s_2]} ||\bar v(\cdot,s)||_{H^1_{\bar \mu}}\leq C(s_1,s_2)$, we obtain that
\begin{align*}
\lim_{\rho\rightarrow+\infty}  \int_M |\nabla \bar v(s_i)|^2 \eta_\rho d\bar\nu &=  \int_M |\nabla \bar v(s_i)|^2 d\bar\nu \\
\lim_{\rho\rightarrow+\infty} 2\int_{s_1}^{s_2}  \int_M |\hess_{\bar g} \bar v|^2 \eta_\rho d\bar\nu& = 2\int_{s_1}^{s_2}  \int_M |\hess_{\bar g} \bar v|^2  d\bar\nu ds\\
\lim_{\rho\rightarrow+\infty} \left|\int_{s_1}^{s_2} \int_M |\nabla \bar v|^2 \Delta_{\bar f} \eta_\rho d\bar\nu ds \right| &=\lim_{\rho\rightarrow+\infty} \left|\int_{s_1}^{s_2} \int_{\{\rho\leq \mathbf r\leq 2\rho\}} |\nabla \bar v|^2 \Delta_{\bar f} \eta_\rho d\bar\nu ds \right|\\
&\leq C\lim_{\rho\rightarrow +\infty} \int_{s_1}^{s_2}\int_{\{\rho\leq\mathbf  r\leq 2\rho\}}|\nabla \bar v|^2  d\bar\nu ds=0.
\end{align*}

Thus, by \eqref{eqn:integral_evol_grad} we conclude that 
$$\int_{s_1}^{s_2}\int_M |\hess_{\bar g} \bar v(s)|^2 d\bar \nu ds<+\infty$$ and
\begin{equation*}
\int_M |\nabla \bar v(s_2)|^2  d\bar\nu - \int_M |\nabla \bar v(s_1)|^2 d\bar\nu =  - 2\int_{s_1}^{s_2}  \int_M |\hess_{\bar g} \bar v|^2  d\bar\nu ds.
\end{equation*}

\end{proof}

\begin{lemma}\label{lemma:hessian_decay_solitons}
Suppose that there is $q\in B(p,-1,R)$ such that $(M,g(t),q)_{t\in (-\infty,0)}$  is $k$-selfsimilar but not $(k+1,\eta)$-selfsimilar, for some $1\leq k \leq n$ and $\eta>0$.  There is $\beta=\beta(n,C_I,\Lambda,H,R,\eta)>0$ such that if $v\in C^\infty(M\times I)$, $I\subset(-\infty,0)$ is a  solution to the heat equation with $\sup_{t\in [a,b]} ||v(\cdot,t)||_{H^1_{\nu_{f,t}}}\leq C(a,b)$, for any $[a,b]\subset I$, then for every $t_1,t_2\in I$, $t_1<t_2$
\begin{equation}
\int_{t_2}^{t_2/4} \int_M |\hess v|^2 d\nu_{f,t} d t \leq \left(\frac{|t_2|}{|t_1|}\right)^{\beta} \int_{t_1}^{t_1/4} \int_M |\hess v|^2 d\nu_{f,t} d t.
\end{equation}
\end{lemma}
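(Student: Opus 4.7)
The plan is to reduce the problem to the spectral analysis of the drift Laplacian on $(M,\bar g,\bar f)$ via the self-similar change of variables, and then apply the spectral gap from Lemma \ref{lemma:gap}.

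First I would pass to $\bar v(y,s)$ defined by $v(x,t)=|t|^{1/2}\bar v(\varphi_t(x),s)$ with $s=-\log|t|$, so that $\partial_s\bar v = (\Delta_{\bar f}+\frac12)\bar v$ as in \eqref{eqn:drift_heat}. A direct computation using $g(t)=|t|\varphi_t^*\bar g$ shows that $|\hess_{g(t)} v|^2_{g(t)}(x)=|t|^{-1}|\hess_{\bar g}\bar v|^2_{\bar g}(\varphi_t(x))$, and since $\varphi_t^*d\bar\nu=d\nu_{f,t}$ and $dt=|t|\,ds$, the space-time integrals match on the nose:
\begin{equation*}
\int_{t_a}^{t_b}\int_M |\hess v|^2\,d\nu_{f,t}\,dt \;=\;\int_{s_a}^{s_b}\int_M |\hess_{\bar g}\bar v|^2\,d\bar\nu\,d\sigma,
\end{equation*}
where $s_a=-\log|t_a|$, $s_b=-\log|t_b|$. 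Thus the lemma reduces to a statement about $\bar v$ on the soliton.

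Next I would expand $\bar v(s)=\sum_i b_i e^{\lambda_i s}\phi_i$ in $H^1_{\bar\nu}$ using Theorem \ref{thm:spectrum} and Lemma \ref{lemma:proj_evol}. By integration by parts the $\phi_i$ satisfy $\int_M\langle\nabla\phi_i,\nabla\phi_j\rangle\,d\bar\nu=(\tfrac12-\lambda_i)\delta_{ij}$, so
\begin{equation*}
\int_M|\nabla\bar v(s)|^2\,d\bar\nu \;=\;\sum_{i\ge 1} b_i^2\bigl(\tfrac12-\lambda_i\bigr)e^{2\lambda_i s}.
\end{equation*}
Applying Lemma \ref{lemma:hessian_grad} (which gives $\int|\hess_{\bar g}\bar v|^2=-\tfrac12\tfrac{d}{ds}\int|\nabla\bar v|^2$) and using $\lambda_0=\tfrac12$ together with $\lambda_1=\cdots=\lambda_k=0$ (the latter following from $k$-selfsimilarity and Theorem \ref{thm:spectrum}), the $i=0,\dots,k$ terms vanish and one gets
\begin{equation*}
\int_M|\hess_{\bar g}\bar v(s)|^2\,d\bar\nu \;=\;\sum_{i\ge k+1} b_i^2\,|\lambda_i|\bigl(\tfrac12+|\lambda_i|\bigr)e^{2\lambda_i s}.
\end{equation*}
Integration over $[s,s+\log 4]$ yields a non-negative series of the form $\sum_{i\ge k+1}a_i e^{2\lambda_i s}$ with $a_i\ge 0$.

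Finally, I would invoke the spectral gap of Lemma \ref{lemma:gap}: since $(M,g(t),p)_{t\in(-\infty,0)}$ is not $(k+1,\eta)$-selfsimilar, there exists $\delta=\delta(n,C_I,\Lambda,H,\eta)>0$ with $\lambda_i\le -\delta$ for $i\ge k+1$. Then for $s_1<s_2$ (i.e., $|t_1|>|t_2|$), termwise
\begin{equation*}
e^{2\lambda_i s_2} \;=\; e^{2\lambda_i s_1}\,e^{2\lambda_i(s_2-s_1)} \;\le\; e^{-2\delta(s_2-s_1)}\,e^{2\lambda_i s_1},
\end{equation*}
so the sum at $s_2$ is bounded by $e^{-2\delta(s_2-s_1)}$ times the sum at $s_1$. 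Translating back via $e^{-2\delta(s_2-s_1)}=(|t_2|/|t_1|)^{2\delta}$ gives the claim with $\beta=2\delta$.

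The main technical point is the self-similar Hessian/measure bookkeeping that produces the clean identity in the first paragraph; the conceptual heart is the identification of the kernel $\{\phi_0,\phi_1,\dots,\phi_k\}$ with exactly the constant and Euclidean-factor linear functions, which is precisely what makes the Hessian a sum over decaying modes only. Convergence of the formal series (and justification of the termwise differentiation in $s$) is handled by Lemma \ref{lemma:proj_evol} together with the uniform $H^1_{\bar\nu}$ bounds assumed in the hypothesis, so these steps are routine.
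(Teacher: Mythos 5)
Your proposal is correct and follows essentially the same route as the paper: the self-similar change of variables, the integrated Hessian--gradient identity of Lemma \ref{lemma:hessian_grad}, the eigenfunction expansion with $\lambda_1=\cdots=\lambda_k=0$, and the spectral gap of Lemma \ref{lemma:gap} yielding $\beta=2\delta$. The only (cosmetic) difference is that you differentiate the spectral series in $s$ to get a pointwise Hessian formula, whereas the paper works directly with the difference of gradient norms at $s$ and $s+\log 4$, which sidesteps the termwise-differentiation justification entirely.
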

\begin{proof}
Recall that $\nu_f\in\mathcal S$ satisfies (CHF1) with respect to $p$. Let $(M,\bar g, \bar f)$ be the associated normalized gradient shrinking Ricci soliton at scale $1$, with $\bar g=g(-1)$, $\bar f=f(\cdot,-1)$, and $g(t)=|t|\varphi_t^* \bar g$ and $f(x,t)=\bar f(\varphi_t(x))$, where $\varphi_t$ are the diffeomorhpisms given by Proposition \ref{prop:backwards_forwards}. Set $d\bar\nu = (4\pi)^{-n/2} e^{-\bar f} d\vol_{\bar g}$. 

Consider the associated solution $\bar v(x,s)=e^{\frac{|s|}{2}}v(\varphi^{-1}_{-e^{-s}}(x),-e^{-s})$ of \eqref{eqn:drift_heat}, so that for any $t,t_i \in I$ and $s=-\log |t|$, $s_i=-\log|t_i|$
\begin{align*}
\int_M |\nabla v|_{g(t)}^2 d\nu_{f,t}&= \int_M |\nabla \bar v (s)|_{\bar g}^2 d\bar \nu,\\
\int_{t_i}^{t_i/4} \int_M |\hess v|^2 d\nu_{f,t} d t&= \int_{s_i}^{s_i+\log 4} \int_M |\hess_{\bar g} \bar v(s)|^2 d\bar \nu ds.
\end{align*}
By Lemma \ref{lemma:hessian_grad}, it follows that
\begin{equation*}
2\int_{t_i}^{t_i/4} \int_M |\hess v|^2 d\nu_{f,t} dt =  \int_M |\nabla \bar v(s_i)|^2 d\bar \nu -  \int_M |\nabla \bar v(s_i +\log 4)|^2 d\bar\nu.
\end{equation*}

Since $\bar v(s)=\sum_{i=0}^{+\infty} (\bar v(s_0),\phi_i)_{L^2_{\bar\nu}} e^{\lambda_i(s-s_0)} \phi_i$ and the convergence is in $H^1_{\bar \nu}$, we have that
$$\nabla \bar v(s)=\sum_{i=0}^{+\infty} (\bar v(s_0),\phi_i)_{L^2_\mu} e^{\lambda_i(s-s_0)} \nabla \phi_i,$$
in $L^2_{\bar \nu}$, hence
\begin{align*}
\int_M |\nabla \bar v(s)|^2 d\bar\nu -\int_M \sum_{i=0}^k (\bar v(s),\phi_i)_{L^2_\nu} ^2 |\nabla \phi_i|^2 d\bar\nu&=\sum_{i=k+1}^{+\infty} (\bar v(s),\phi_i)_{L^2_{\bar\nu}}^2  \int_M |\nabla \phi_i|^2 d\bar\nu\\
&=\sum_{i=k+1}^{+\infty} (\bar v(s+\log 4),\phi_i)_{L^2_{\bar\nu}}^2 e^{-2\lambda_i \log 4}\int_M |\nabla \phi_i|^2 d\bar\nu.
\end{align*}
Similarly,
\begin{align*}
\int_M |\nabla \bar v(s+\log 4)|^2 d\bar\nu -\int_M \sum_{i=0}^k (\bar v(s+\log 4),\phi_i)_{L^2_{\bar\nu}} ^2 |\nabla \phi_i|^2 d\bar\nu= \sum_{i=k+1}^{+\infty}(\bar v(s+\log 4),\phi_i)_{L^2_{\bar\nu}}^2 \int_M |\nabla \phi_i|^2 d\bar\nu.
\end{align*}
Noting that
$$\int_M \sum_{i=0}^k (\bar v(s+\log 4),\phi_i)_{L^2_{\bar\nu}} ^2 |\nabla \phi_i|^2 d\bar\nu = \int_M \sum_{i=0}^k (\bar v(s),\phi_i)_{L^2_{\bar\nu}} ^2 |\nabla \phi_i|^2 d\bar\nu,$$
since $\lambda_1=\cdots=\lambda_k=0$, we obtain that for $s_1<s_2$
\begin{align*}
&\int_M |\nabla \bar v(s_2)|^2  d\bar\nu-  \int_M |\nabla \bar v (s_2+\log 4)|^2d\bar\nu=\\
&=\sum_{i=k+1}^{+\infty} (\bar v(s_2+\log 4),\phi_i)_{L^2_{\bar\nu}}^2 (e^{-2\lambda_i\log 4}-1 ) \int_M |\nabla \phi_i|^2 d\bar\nu\\
&=\sum_{i=k+1}^{+\infty} (\bar v(s_1+\log 4),\phi_i)_{L^2_{\bar\nu}}^2 e^{2\lambda_i (s_2-s_1)} (e^{-2\lambda_i \log 4}-1 ) \int_M |\nabla \phi_i|^2 d\bar\nu\\
&\leq  e^{-2\delta(s_2-s_1)}\sum_{i=k+1}^{+\infty} (\bar v(s_1+\log 4),\phi_i)_{L^2_{\bar\nu}}^2 (e^{-2\lambda_i \log 4} - 1 ) \int_M |\nabla \phi_i|^2 d\bar\nu\\
&= e^{-2\delta(s_2-s_1)}\left( \int_M |\nabla \bar v|^2 (\cdot,s_1) d\bar\nu-  \int_M |\nabla \bar v|^2(\cdot,s_1+\log 4) d\bar\nu\right).
\end{align*}

Hence
\begin{equation*}
\begin{aligned}
&\int_{t_2}^{t_2/4}\int_M |\hess_{g(t)} v|^2 d\nu_{f,t} dt =\int_{s_2}^{s_2+\log4}\int_M |\hess_{\bar g} \bar v|^2 d\bar \nu ds \\
&\leq e^{-2\delta(s_2-s_1)}\int_{s_1}^{s_1+\log 4}\int_M |\hess_{\bar g} \bar v|^2 d\bar\nu ds=\left(\frac{|t_2|}{|t_1|}\right)^{2\delta}\int_{t_1}^{t_1/4}\int_M |\hess_{g(t)} v|^2 d\nu_{f,t} dt. 
\end{aligned}
\end{equation*}

\end{proof}

\section{Splitting maps}\label{sec:splitting_maps}
In this section we explore some basic properties of $(k,\delta)$-splitting maps that we will use frequently in the remaining of the paper. 

\begin{definition}[$(k,\delta)$-splitting map]
\label{def:splitting_map}
Let $(M,g(t),p)_{t\in [-r^2,0]}$ be a smooth complete pointed Ricci flow and let $1\leq k\leq n$. Then $v=(v^1,\ldots,v^k): M\times (-r^2,0)\rightarrow \mathbb R^k$ is a $(k,\delta)$-splitting map around $p$ at scale $r$ if
\begin{enumerate}
\item Each $v^a$ solves the heat equation $\frac{\partial v^a}{\partial t} = \Delta_{g(t)} v^a$.
\item For every $a=1,\ldots,k$
\begin{equation}\label{eqn:def_splitting_map_hessian}
\int_{-r^2}^{-\delta r^2}\int_{M} | \hess_{g(t)} v^a|^2 d\nu_{(p,0),t} dt \leq\delta
\end{equation}
\item For any $a,b=1,\ldots,k$
\begin{equation}\label{eqn:def_splitting_map_gradient}
\int_{-r^2}^{-\delta r^2}\int_{M} \left| \langle\nabla v^a,\nabla v^b\rangle -\delta^{ab} \right|^2  d\nu_{(p,0),t} dt\leq\delta r^2.
\end{equation}
\end{enumerate}
Similarly, if $(g(t))$ is only smooth for $t<0$ and $\nu$ is a conjugate heat flow on $(M,g(t))_{t\in[-r^2,0)}$ then we say that a solution to the heat equation $v:M\times (-r^2,0)\rightarrow \mathbb R^k$ is a $(k,\delta)$-splitting map with respect to $\nu$ at scale $r$, if \eqref{eqn:def_splitting_map_hessian} and \eqref{eqn:def_splitting_map_gradient} hold with respect to  $\nu$.
\end{definition}
If $v=(v^1,\ldots,v^k)$ is a $(k,\delta)$-splitting map we will often use the notation
\begin{align*}
||\nabla v|| &=\max_a |\nabla v^a|,\\
||\hess v|| &=\max_a |\hess v^a|.
\end{align*}

\begin{lemma}[Change of center and scale of splitting maps]\label{lemma:near_splitting}
 Let $(M,g(t))_{t\in [-1,0]}$ be a smooth compact  Ricci flow and let $v:M\times (-1,0)\rightarrow \mathbb R^k$ be a $(k,\delta)$-splitting map around $p$ at scale $1$, and fix $r_0 \in (0,1]$ and $\varepsilon>0$.
\begin{enumerate}
\item If $0<\delta\leq\delta(r_0|\varepsilon)$ then $v$ is also an $(k,\varepsilon)$-splitting map around $p$ at scale $r$ for every $r\in [r_0,1]$.
\item Fix $s>0$. If $(M,g(t))_{t\in [-1,0]}$ satisfies (RF1), (RF3-4) then there is $\gamma=\gamma(n,H)\in (0,1]$ such that if $0<\delta\leq \delta(n,C_I,H|s,\varepsilon)$ then $v$ is a $(k,\varepsilon)$-splitting map around any $q\in B(p,-1,s)$ at scale $\gamma$.
\end{enumerate}
\end{lemma}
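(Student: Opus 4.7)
Part (1) is essentially tautological: for $r\in[r_0,1]$ and $\delta\leq\varepsilon r_0^2$, the interval $[-r^2,-\varepsilon r^2]$ is contained in $[-1,-\delta]$, so both integrals defining a $(k,\varepsilon)$-splitting at scale $r$ are dominated by the scale-$1$ hypothesis and bounded by $\delta\leq\min(\varepsilon,\varepsilon r^2)$. Taking $\delta(r_0|\varepsilon):=\varepsilon r_0^2$ works.

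For Part (2), the central idea is to switch the basepoint of the conjugate heat kernel from $q$ to $p$ using Lemma~\ref{lemma:compare_kernels}: for an admissible $\alpha\in[0,1)$ depending only on $C_1,C_2$,
\[u_{(q,0)}(x,t)\leq C(n,C_1,C_2)\,e^{d_{g(t)}(p,q)^2/(C_1|t|)}\,e^{\alpha f_{(p,0)}(x,t)}\,u_{(p,0)}(x,t).\]
Under (RF1), Hamilton's Type I distance distortion gives $d_{g(t)}(p,q)\leq s+C(C_I)$ for $t\in[-1,0]$, so on $t\in[-\gamma^2,-\varepsilon\gamma^2]$ the first exponential factor is a constant $M=M(n,C_I,C_1,C_2|s,\varepsilon,\gamma)$. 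The plan is to choose $\gamma=\gamma(n,C_1,C_2)\in(0,1]$ small enough that $[-\gamma^2,0]$ sits in the regime $[\beta_0 t_0,0]$ where the weighted hypercontractivity Corollary~\ref{cor:L2toL2weighted} applies, with a reference time $t_0\in[-1,-1/2]$ selected by the mean value theorem so that $\int_M|\hess v^a|^2(\cdot,t_0)\,d\nu_{(p,0),t_0}\leq 2\delta$ and $\int_M(G^{ab})^2(\cdot,t_0)\,d\nu_{(p,0),t_0}\leq 2\delta$, where $G^{ab}:=\langle\nabla v^a,\nabla v^b\rangle-\delta^{ab}$.

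For the Hessian estimate, $|t|^{E/2}|\hess v^a|$ is a non-negative viscosity subsolution by Corollary~\ref{cor:subsolutions}, so Corollary~\ref{cor:L2toL2weighted} with $m=2$ gives $\int_M|\hess v^a|^2\,e^{\alpha f_{(p,0)}}\,d\nu_{(p,0),t}\leq C\delta/|t|^E$ on $[\beta_0 t_0,0]$. Combining with the basepoint switch and integrating in time yields
\[\int_{-\gamma^2}^{-\varepsilon\gamma^2}\int_M|\hess v^a|^2\,d\nu_{(q,0),t}\,dt\leq C'(s,\varepsilon,n,C_I,C_1,C_2)\,\delta\leq\varepsilon\]
for $\delta$ small enough, giving the Hessian condition of the $(k,\varepsilon)$-splitting definition at scale $\gamma$.

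The gradient bound is the main obstacle, since $G^{ab}$ is not a subsolution but only satisfies $(\partial_t-\Delta)G^{ab}=-2\langle\hess v^a,\hess v^b\rangle$. My plan is to first extend the $L^2$-smallness of $G^{ab}$ to all times $t\in[t_0,0]$ by a Gronwall argument applied to
\[
\tfrac{d}{dt}\!\int_M(G^{ab})^2\,d\nu_{(p,0),t}\leq 2\!\int_M(G^{ab})^2\,d\nu_{(p,0),t}+2\!\int_M|\hess v^a|^2|\hess v^b|^2\,d\nu_{(p,0),t},
\]
where the Hessian-quartic source term is controlled by Cauchy--Schwarz and Corollary~\ref{cor:L2toL2weighted} with $m=4$ applied to $|t|^{E/2}|\hess v^a|$ (yielding a source of order $\delta^2/|t|^{2E}$), producing $\int_M(G^{ab})^2\,d\nu_{(p,0),t}\leq C(s,\varepsilon,\gamma)\,\delta$ for $t\in[t_0,0]$. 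To transfer this to the base $q$, the change of basepoint introduces the weight $e^{\alpha f_{(p,0)}}$, which is handled by Hölder's inequality with exponents $1/s_1+1/s_2=1$, $\alpha s_2<1$, interpolating the $L^2$-smallness $\|G^{ab}\|_{L^2}\lesssim\sqrt{\delta}$ against the uniform higher $L^p$-bounds on $|\nabla v^a|$ obtained by Theorem~\ref{thm:hypercontractivity} applied to the non-negative subsolution $|\nabla v^a|^2$. This yields $\int_M(G^{ab})^2\,e^{\alpha f_{(p,0)}}\,d\nu_{(p,0),t}\leq C\delta^{\sigma}$ for some $\sigma>0$, and integrating over $t\in[-\gamma^2,-\varepsilon\gamma^2]$ produces the required gradient bound $\leq\varepsilon\gamma^2$ for $\delta\leq\delta(n,C_I,C_1,C_2|s,\varepsilon)$.
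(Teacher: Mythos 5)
Part (1) is correct and matches the paper. Part (2) follows the same core strategy as the paper — Lemma~\ref{lemma:compare_kernels} for the basepoint change, Corollary~\ref{cor:L2toL2weighted} (hypercontractivity) to handle the $e^{\alpha f}$ weight, and H\"older interpolation against the gradient/Hessian $L^p$ bounds — but you insert an unnecessary Gronwall argument before the interpolation. You propagate $\int_M (G^{ab})^2\,d\nu_{(p,0),t}$ forward in time because you think the hypothesis only controls that quantity up to a reference time $t_0$, but in fact the $(k,\delta)$-splitting hypothesis at scale $1$ already gives
\[
\int_{-1}^{-\delta}\int_M (G^{ab})^2\,d\nu_{(p,0),t}\,dt\leq\delta,
\]
and once $\delta\leq\varepsilon\gamma^2$ we have $[-\gamma^2,-\varepsilon\gamma^2]\subset[-1,-\delta]$, so the space-time $L^2$ smallness is available directly on the interval you need. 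The paper's proof uses precisely this: it writes $(G^{ab})^2 e^{\alpha f}=(G^{ab})^{2/s_1}\cdot(G^{ab})^{2/s_2}e^{\alpha f}$, pulls out the $\delta^{1/s_1}$ factor from the hypothesis, and handles the weighted factor by the $L^1$-triangle inequality and Corollary~\ref{cor:L2toL2weighted} applied to $\lvert\nabla v^a\rvert$. Your Gronwall step can be made to work on a time interval bounded away from $0$ (the source $\delta^2/\lvert t\rvert^{2E}$ stays finite), but it buys nothing and forces you to track more $\gamma,\varepsilon$-dependent constants; the pointwise-in-time bound it produces is strictly more than the subsequent H\"older step uses. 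The rest of your interpolation is sound, though you will want to be a bit more careful that the exponents $s_1,s_2$ are chosen with $\alpha s_2<1$ so that $\int e^{\alpha s_2 f}\,d\nu_{(p,0),t}$ is finite by Lemma~\ref{lemma:int_ker_bounds}, which is the paper's choice as well.
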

\begin{proof}
Assertion 1 easily follows from the estimate
\begin{align*}
&\int_{-r^2}^{-\varepsilon r^2} \int_M \left(r^{-2}|\langle\nabla v^a,\nabla v^b\rangle -\delta^{ab} |^2 +|\hess v^a|^2 \right) d\nu_{(p,0),t} dt\\
&\leq \int_{-1}^{-\delta} \int_M \left(r^{-2}|\langle\nabla v^a,\nabla v^b\rangle -\delta^{ab} |^2 +|\hess v^a|^2 \right) d\nu_{(p,0),t} dt\\
&<r_0^{-2}\delta +\delta<\varepsilon,
\end{align*}
if we choose $\delta>0$ small enough.

Proving Assertion 2 requires us to convert integral estimates in $M\times [-1,-\delta]$ with respect to the conjugate heat kernel measure $d\nu_{(p,0)}=u_{(p,0)} d\vol_{g(t)}$, $u_{(p,0)}=(4\pi|t|)^{-n/2} e^{-f}$, to estimates in $M\times [-\gamma^2,-\varepsilon\gamma^2]$ with respect to the conjugate heat kernel measure $d\nu_{(q,0)}=u_{(q,0)}d\vol_{g(t)}$, for any $q\in B(p,-1,s)$, and an appropriate choice of $\gamma\in (0,1)$. This is achieved by applying Lemma \ref{lemma:compare_kernels} and the concentration estimates of Corollary \ref{cor:L2toL2weighted}.

Namely, choosing $\alpha=\frac{2C_1-C_2}{2C_1}$ and applying Lemma \ref{lemma:compare_kernels} for $\beta=1$, under assumptions (RF3) and (RF4), we obtain that for any $t\in [-r^2,0)$
\begin{equation}\label{eqn:comparison_1}
e^{\alpha f(\cdot,t)} u_{(p,0)}(\cdot,t) \geq C(n,H) e^{-\frac{(d_{g(t)}(p,q))^2}{C_1 |t|}}u_{(q,0)}(\cdot,t).
\end{equation}
Since, by (RF1), $|\riem|(x,t)\leq C_I \varepsilon^{-1} \gamma^{-2}$ for $(x,t)\in M\times [-1,-\varepsilon \gamma^2]$, we have the standard distance distortion estimate
$$d_{g(t)}(p,q) \leq C(n,C_I,\varepsilon \gamma^2) d_{g(-1)}(p,q)\leq C(n,C_I,\varepsilon \gamma^2) s$$
for every $t\in [-1,-\varepsilon \gamma^2]$. Thus, for every $t\in [-1,-\varepsilon \gamma^2]$, inequality \eqref{eqn:comparison_1} becomes
\begin{equation}\label{eqn:comparison_2}
\begin{aligned}
e^{\alpha f(\cdot,t)} u_{(p,0)}(\cdot,t) &\geq C(n,H) e^{-\frac{(d_{g(t)}(p,q))^2}{C_1 |t|}} u_{(q,0)}(\cdot,t) \\
&\geq C(n,H)  e^{-C(n,C_I,C_1,\gamma|\varepsilon)s^2} u_{(q,0)}(\cdot,t).
\end{aligned}
\end{equation}
Therefore,
\begin{equation}\label{eqn:change_centers}
\begin{aligned}
&\int_{-\gamma^2}^{-\varepsilon\gamma^2} \int_M \left(\gamma^{-2} \left| \langle\nabla v^a,\nabla v^b\rangle -\delta^{ab} \right|^2+|\hess v^a|^2 \right) d\nu_{(q,0),t} dt \leq \\
&\leq C(n,H) e^{C(n,C_I,C_1,\gamma|\varepsilon)s^2}\int_{-\gamma^2}^{-\varepsilon\gamma^2} \int_M \left(\gamma^{-2} \left| \langle\nabla v^a,\nabla v^b\rangle -\delta^{ab} \right|^2+|\hess v^a|^2 \right) e^{\alpha f} d\nu_{(p,0),t} dt.
\end{aligned}
\end{equation}
Now, by H\"older's inequality for $\frac{1}{s_1}+\frac{1}{s_2}=1$, $s_2= \alpha^{-1/2}$, the $L^2$ triangle inequality, and assuming that $0<\delta<\varepsilon \gamma^2$, we can estimate
\begin{equation*}
\begin{aligned}
&\int_{-\gamma^2}^{-\varepsilon \gamma^2} \int_M \left| \langle \nabla v^a,\nabla v^b\rangle - \delta^{ab} \right|^2 e^{\alpha f} d\nu_{(p,0),t}dt \leq\\
&\leq \left( \int_{-\gamma^2}^{-\varepsilon \gamma^2} \int_M \left| \langle \nabla v^a,\nabla v^b\rangle - \delta^{ab} \right|^2 d\nu_{(p,0),t}dt\right)^{1/s_1} \\
&\cdot \left(\int_{-\gamma^2}^{-\varepsilon \gamma^2} \int_M \left| \langle \nabla v^a,\nabla v^b\rangle - \delta^{ab} \right|^2 e^{\sqrt\alpha f} d\nu_{(p,0),t} dt\right)^{1/s_2}\\
&\leq \delta^{1/s_1}\left[ \left( \int_{-\gamma^2}^{-\varepsilon\gamma^2} \int_M \langle \nabla v^a,\nabla v^b\rangle^2 e^{\sqrt\alpha f} d\nu_{(p,0),t}dt \right)^{1/2} + \left( \int_{-\gamma^2}^{-\varepsilon\gamma^2} \int_M  e^{\sqrt\alpha f} d\nu_{(p,0),t}dt \right)^{1/2}\right]^{2/s_2}\\
&\leq \delta^{1/s_1}\left[ \left(\int_{-\gamma^2}^{-\varepsilon \gamma^2} \int_M ||\nabla v||^4 e^{\sqrt\alpha f} d\nu_{(p,0),t} dt\right)^{1/2}+ \left( \int_{-\gamma^2}^{-\varepsilon\gamma^2} \int_M  e^{\sqrt\alpha f} d\nu_{(p,0),t} dt \right)^{1/2}\right]^{2/s_2}.
\end{aligned}
\end{equation*}
Hence, \eqref{eqn:change_centers} becomes
\begin{equation}\label{eqn:estimates_wrt_eaf}
\begin{aligned}
\int_{-\gamma^2}^{-\varepsilon\gamma^2} \int_M \left(\gamma^{-2} \left| \langle\nabla v^a,\nabla v^b\rangle -\delta^{ab} \right|^2+|\hess v^a|^2 \right) &d\nu_{(q,0),t)} dt \leq \\
\leq C(n,C_I,H,\gamma,s|\varepsilon) &\left(\gamma^{-2}\delta^{1/s_1}( I^{1/2} +II^{1/2})^{2/s_2} + III \right),
\end{aligned}
\end{equation}
where
\begin{align*}
I&=\int_{-\gamma^2}^{-\varepsilon \gamma^2} \int_M ||\nabla v||^4 e^{\sqrt\alpha f} d\nu_{(p,0),t} dt,\\
II&=\int_{-\gamma^2}^{-\varepsilon\gamma^2} \int_M  e^{\sqrt\alpha f} d\nu_{(p,0),t} dt, \\
III&=\int_{-\gamma^2}^{-\varepsilon \gamma^2} \int_M |\hess v^a|^2 e^{\alpha f} d\nu_{(p,0),t} dt.
\end{align*}
From Lemma \ref{lemma:int_ker_bounds} and assumptions (RF1), (RF3), we obtain that $II\leq C(n,C_I,C_1)$. In order to estimate the quantities $I$ and $III$, recall from Corollary \ref{cor:subsolutions} that for any $a=1,\ldots,k$
\begin{align*}
\left(\frac{\partial}{\partial t}-\Delta\right)|\nabla v^a| \leq 0,\\
\left(\frac{\partial}{\partial t}-\Delta\right)\left(|t|^{E/2} | \hess v^a|\right) \leq 0,
\end{align*}
where $E=E(n,C_I)$.

It follows from Corollary \ref{cor:L2toL2weighted} that there is  $\gamma=\gamma(n,H)$ such that for every $t_1\in[-1,-1/2]$ and $t\in [-\gamma^2,-\varepsilon \gamma^2]$
\begin{equation}\label{eqn:timewise_concentration}
\begin{aligned}
\int_M |\nabla v^a|^4(\cdot,t) e^{\sqrt\alpha f(\cdot,t)} d\nu_{(p,0),t} &\leq  C(n,C_I,C_1) \left( \int_M |\nabla v^a|^2(\cdot,t_1) d\nu_{(p,0),t_1}\right)^2 \\
\int_M |t|^E |\hess v^a|^2(\cdot,t) e^{\alpha f(\cdot,t)} d\nu_{(p,0),t}&\leq C(n,C_I, C_1)\int_M |t_1|^E |\hess v^a|^2(\cdot,t_1) d\nu_{(p,0),t_1}.
\end{aligned}
\end{equation}
Now, applying the mean value theorem to estimate the right-hand side of \eqref{eqn:timewise_concentration} and then integrating the left-hand side in the interval $[-\gamma^2,-\varepsilon\gamma^2]$, we obtain
\begin{align*}
I&=\int_{-\gamma^2}^{-\varepsilon\gamma^2}\int_M |\nabla v^a|^4(\cdot,t) e^{\sqrt\alpha f(\cdot,t)} d\nu_{(p,0),t} dt\\
&\leq C(n,C_I)\left(\int_{-1}^{-1/2}  \int_M |\nabla v^a|^2(\cdot,t_1) d\nu_{(p,0),t_1}dt_1\right)^2 \leq C(n,C_I)\\
III&=\int_{-\gamma^2}^{-\varepsilon\gamma^2}\int_M |t|^E |\hess v^a|^2(\cdot,t) e^{\alpha f(\cdot,t)} d\nu_{(p,0),t}dt\\
&\leq C(n,C_I,C_1)\int_{-1}^{-1/2}\int_M |t_1|^E |\hess v^a|^2(\cdot,t_1) d\nu_{(p,0),t_1}dt_1\leq C(n,C_I,C_1)\delta,
\end{align*}
since $v$ is a $(k,\delta)$-splitting map around $p$. Therefore, if $0<\delta\leq \delta(n,C_I,H,s|\varepsilon)$, it follows from \eqref{eqn:estimates_wrt_eaf} that $v$ is a $(k,\varepsilon)$-splitting map around $q$.

\end{proof}

\begin{lemma}[Gradient estimate of a splitting map]\label{lemma:gradient_one}
Fix $\varepsilon>0$ and let $(M,g(t))_{t\in[-1,0]}$ be a smooth complete Ricci flow satisfying (RF1), (RF3-5), and $p\in M$. There is $\gamma=\gamma(n,H)$ such that if  $0<\delta\leq\delta(n,C_I|\varepsilon)$ and $v: M \times [-r^2,0]\rightarrow \mathbb R^k$ a $(k,\delta)$-splitting map around $p$ at scale $r$, then for any $t\in [-\gamma r^2,0]$ and $x\in B\left(p,t,r\right)$
$$|\nabla v^a|(x,t)\leq 1+\varepsilon.$$
\end{lemma}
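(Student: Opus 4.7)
The approach is a routine reduction to the scalar pointwise gradient estimate Lemma~\ref{lemma:heat_gradient_bound}, applied componentwise to $v = (v^1,\ldots,v^k)$ after parabolic rescaling to scale $1$.

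Without loss of generality I may assume $\varepsilon \in (0,1)$, the conclusion being monotone in $\varepsilon$. The first step is to set $\tilde g(t) := r^{-2} g(r^2 t)$ on $t \in [-1, 0]$ and $\tilde v^a(x,t) := r^{-1} v^a(x, r^2 t)$. A direct calculation shows: assumptions (RF1) and (RF3)--(RF5) transfer to $(M,\tilde g(t))$ with the same constants $C_I, C_1, C_2, H, K$; each $\tilde v^a$ solves the heat equation on $(M,\tilde g(t))$; one has $|\nabla \tilde v^a|_{\tilde g}^2(x,t) = |\nabla v^a|_g^2(x, r^2 t)$; and by uniqueness of the conjugate heat kernel the measures satisfy $d\tilde\nu_{(p,0),t} = d\nu_{(p,0),r^2 t}$. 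Combining these with the splitting-map inequality at $b=a$ and the substitution $s = r^2 t$ yields
\[
\int_{-1}^{-\delta}\int_M \bigl| |\nabla \tilde v^a|_{\tilde g}^2 - 1 \bigr|^2 \, d\tilde\nu_{(p,0),t}\, dt \;\leq\; \delta,
\]
which is precisely hypothesis \eqref{eqn:L2gradone} of Lemma~\ref{lemma:heat_gradient_bound} applied to the scalar function $\tilde v^a$.

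Next, fixing an index $a$, I would apply Lemma~\ref{lemma:heat_gradient_bound} to $\tilde v^a$ with tolerance $\varepsilon' := 2\varepsilon + \varepsilon^2 \in (0,1)$, chosen so that $\sqrt{1+\varepsilon'} = 1 + \varepsilon$. This produces $\gamma = \gamma(n, C_1, C_2) \in (0,1]$ and a smallness threshold $\delta \leq \delta(n, C_I \mid \varepsilon)$ such that $|\nabla \tilde v^a|_{\tilde g}^2(x,t) < 1 + \varepsilon'$ for every $t \in [-\gamma, 0]$ and every $x$ in the $\tilde g(t)$-ball of radius $(\varepsilon')^{-1}$ around $p$. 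Since $(\varepsilon')^{-1} \geq 1$ and since distances in $\tilde g(t)$ are $r^{-1}$ times distances in $g(r^2 t)$, this $\tilde g(t)$-ball corresponds to the $g(r^2 t)$-ball of radius $r(\varepsilon')^{-1} \geq r$. Translating back via $s = r^2 t$ we obtain $|\nabla v^a|_g^2(x, s) < 1 + \varepsilon'$, hence $|\nabla v^a|(x, s) \leq \sqrt{1+\varepsilon'} = 1 + \varepsilon$, for every $s \in [-\gamma r^2, 0]$ and $x \in B(p, s, r)$. Running this argument independently for each $a = 1,\ldots, k$ completes the proof.

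The main technical content---combining the change-of-basepoint Lemma~\ref{lemma:compare_kernels} with the hypercontractivity Corollary~\ref{cor:L2toL2weighted} to promote an integral gradient bound to a pointwise one---is already contained in Lemma~\ref{lemma:heat_gradient_bound}. The only genuinely new work here is the scaling bookkeeping and the observation that in the pointwise gradient estimate the components $v^a$ decouple, so the scalar lemma may be applied to each one separately. I do not anticipate any essential obstacle.
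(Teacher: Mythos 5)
Your proposal is correct and follows essentially the same route as the paper, whose entire proof is to apply Lemma~\ref{lemma:heat_gradient_bound} componentwise; your rescaling bookkeeping (scale $r$ to scale $1$, scale invariance of (RF1), (RF3--5) and of the conjugate heat kernel measure) is exactly what is implicit there. The only trivial slip is the claim $\varepsilon'=2\varepsilon+\varepsilon^2\in(0,1)$, which requires $\varepsilon<\sqrt{2}-1$ rather than $\varepsilon<1$; since the conclusion for small $\varepsilon$ implies it for all larger $\varepsilon$, this is harmless.
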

\begin{proof}
Apply Lemma \ref{lemma:heat_gradient_bound} to each component of $v$.
\end{proof}

\begin{lemma}[Normalizing a splitting map]\label{lemma:normalize_splitting}
Let $(M,g(t),p)_{t\in [-1,0]}$ be a smooth complete Ricci flow and $v:M \times [-1,0]\rightarrow \mathbb R$ a $(k,\delta)$-splitting map around $p$ at scale $1$. Then for every  $r\in [2\sqrt \delta, 1]$ there is a unique lower triangular $k\times k$ matrix $T_r$ such that  $v_r=T_r v$, with $v_r^a=(T_r)^a_m v^m$, which satisfies for every $a,b=1,\ldots,k$
\begin{equation}\label{eqn:Tr_on}
\frac{4}{3r^{2}}\int_{-r^2}^{-r^2/4}\int_M \langle \nabla v_r^a,\nabla v_r^b \rangle  d\nu_{(p,0),t} dt =  \delta^{ab}.
\end{equation}
Then $||T_r-I_k||\leq C(n) r^{-2}\sqrt\delta$, where $||\cdot||$ denotes the maximum norm of a $k\times k$ matrix.
\end{lemma}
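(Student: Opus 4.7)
The plan is to interpret the normalization condition as the requirement that the symmetric matrix
\[
G^{ab}(r) := \frac{4}{3r^{2}} \int_{-r^{2}}^{-r^{2}/4} \int_{M} \langle \nabla v^{a},\nabla v^{b}\rangle \, d\nu_{(p,0),t}\, dt
\]
be sent to the identity by conjugation with $T_r$, i.e.\ $T_r\, G(r)\, T_r^{\top} = I_k$. First I would show that $G(r)$ is close to $I_k$: since $r\in[2\sqrt\delta,1]$ we have $[-r^{2},-r^{2}/4] \subset [-1,-\delta]$, so by Cauchy--Schwarz (with respect to the probability measure $d\nu_{(p,0),t}\,dt$ on this interval of length $3r^{2}/4$) and Definition~\ref{def:splitting_map},
\[
|G^{ab}(r)-\delta^{ab}| \leq \frac{4}{3r^{2}}\sqrt{\tfrac{3r^{2}}{4}}\left(\int_{-1}^{-\delta}\!\!\int_{M}|\langle\nabla v^{a},\nabla v^{b}\rangle-\delta^{ab}|^{2}d\nu_{(p,0),t}dt\right)^{\!1/2} \leq C(n)\, r^{-1}\sqrt\delta.
\]
In particular, once $\delta$ is small enough (which is implicit in the assertion), $G(r)$ is positive definite and within $C(n)r^{-1}\sqrt\delta \leq C(n)r^{-2}\sqrt\delta$ of $I_k$ in operator norm.

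Next, I would obtain $T_r$ by inverting the Cholesky factorization of $G(r)$. Concretely, there is a unique lower triangular matrix $L_r$ with strictly positive diagonal such that $G(r)=L_r L_r^{\top}$ (existence and uniqueness being standard for positive definite symmetric matrices), and then $T_r := L_r^{-1}$ is the unique lower triangular matrix with positive diagonal satisfying $T_r G(r) T_r^{\top}=I_k$. Equivalently, $v_r = T_r v$ is obtained by applying Gram--Schmidt to $v^{1},\ldots,v^{k}$ using the inner product $(v^{a},v^{b})\mapsto G^{ab}(r)$; this procedure produces coefficients depending lower-triangularly on the inputs, which accounts for both existence and uniqueness.

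Finally, to control $\|T_r - I_k\|$ I would use that the Cholesky map $G\mapsto L$ and matrix inversion are both smooth in a neighborhood of the identity, so $G(r)\mapsto T_r$ is Lipschitz near $I_k$ with a dimensional constant; applying this to the estimate $\|G(r)-I_k\|\leq C(n)r^{-1}\sqrt\delta$ yields $\|T_r - I_k\|\leq C(n) r^{-2}\sqrt\delta$ (the slightly weaker power of $r$ leaves room for absorbing constants). If one prefers an explicit argument, one may run Gram--Schmidt entry by entry: the diagonal entries of $T_r$ are of the form $(G^{aa}-\sum (\cdot)^{2})^{-1/2} = 1+O(\|G-I\|)$, and the subdiagonal entries are of the form $-G^{ab}+O(\|G-I\|^{2})=O(\|G-I\|)$, so a straightforward induction on the row index gives the claimed bound. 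The only real subtlety is ensuring $G(r)$ is genuinely positive definite over the whole range $r\in[2\sqrt\delta,1]$, which is precisely why the lower bound $r\geq 2\sqrt\delta$ is imposed.
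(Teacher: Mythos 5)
Your proof is correct and follows essentially the same route as the paper: estimate the Gram matrix $G(r)$ via Cauchy--Schwarz and the splitting-map hypothesis, then obtain $T_r$ from the Cholesky factorization of $G(r)$ (with uniqueness coming from uniqueness of Cholesky, under the usual positive-diagonal convention, which the paper also uses implicitly). A minor bonus of your version: by applying Cauchy--Schwarz over $[-r^2,-r^2/4]$ rather than $[-1,-\delta]$ you obtain the sharper bound $\|G(r)-I_k\|\leq C(n)r^{-1}\sqrt\delta$, whereas the paper bounds the $L^1$ integral over the larger interval and only records $r^{-2}\sqrt\delta$; both suffice for the stated conclusion since $r\le 1$.
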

\begin{proof}
Since $v$ is a $(k,\delta)$-splitting map around $p$ at scale $1$, by H\"older's inequality we obtain, for every $r\in[2\sqrt\delta,1]$,
\begin{align*}
\frac{4}{3r^2}\int_{-r^2}^{-\frac{r^2}{4}} \int_M& \left| \langle \nabla v^l,\nabla v^m \rangle -\delta^{lm} \right|  d\nu_{(p,0),t} dt\leq \frac{4}{3r^2} \int_{-1}^{-\delta} \int_M \left| \langle \nabla v^l,\nabla v^m \rangle -\delta^{lm} \right| d\nu_{(p,0),t} dt\\
&\leq \frac{4}{3r^2}\left( \int_{-1}^{-\delta} \int_M \left| \langle \nabla v^l,\nabla v^m \rangle -\delta^{lm} \right|^2 d\nu_{(p,0),t} dt \right)^{1/2} \leq\frac{4}{3r^2} \delta^{1/2}.
\end{align*}

Therefore, there is lower triangular $k\times k$ matrix $\tilde T$ such that $||\tilde T-I_k||\leq C(n) r^{-2} \sqrt{\delta}$ and $\tilde T v$ satisfies
\begin{equation}\label{eqn:tilde_T_on}
\frac{4}{3r^2}\int_{-r^2}^{-\frac{r^2}{4}} \int_M  \langle \nabla (\tilde T v)^a,\nabla (\tilde T v)^b\rangle d\nu_{(p,0),t} dt =\delta^{ab}.
\end{equation}

To prove the uniqueness assertion, consider the bilinear form 
$$B(f,g)=\frac{4}{3r^2}\int_{-r^2}^{-\frac{r^2}{4}} \int_M \langle \nabla f,\nabla g \rangle d\nu_{(p,0),t} dt.$$
and denote also by $B$ the matrix  $B_{ab} = B(v^a,v^b)$. Equations \eqref{eqn:Tr_on} and \eqref{eqn:tilde_T_on} then imply that
$$T_r^*B T_r = I_k = \tilde T^* B \tilde T,$$
so
$$B=(T_r^{-1})^*T_r^{-1} =  (\tilde T^{-1})^* \tilde T^{-1}.$$
By the uniqueness of the Cholesky decomposition, it follows that $T_r =\tilde T$. Therefore,
$$||T_r - I_k||\leq C(n) r^{-2} \sqrt \delta.$$
\end{proof}

\begin{proposition}[Compactness of splitting maps]\label{prop:s_map_compactness}
Let $(M_j,g_j(t),p_j)_{t\in (-1,0]}$ be a pointed sequence of smooth compact Ricci flows satisfying (RF1-3), a sequence $\varepsilon_j$ such that $\liminf_j \varepsilon_j = \varepsilon \in [0,1]$ and a sequence $v_j:M_j\times [-1,0]\rightarrow \mathbb R^k$ of $(k,\varepsilon_j)$-splitting maps around $p_j$ at scale $1$, normalized so that
\begin{equation}\label{eqn:compactness_average_assumption}
\int_{-1}^0\int_{M_j} v_j^a d\nu_{(p_j,0),t}dt = 0.
\end{equation}
Then, there is a pointed smooth complete  Ricci flow $(M_\infty,g_\infty(t),p_\infty)_{t\in (-1,0)}$ satisfying (RF1), a conjugate heat flow $\nu_{\infty,t}$, satisfying (CHF1) with respect to $p_\infty$, and a $(k,\varepsilon)$-splitting map $v_\infty:M_\infty\times (-1,0)\rightarrow \mathbb R^k$ with respect to $\nu_\infty$ at scale $1$, which, after passing to a subsequence, are the limits of $(M_j,g_j(t),p_j)_{t\in (-1,0]}$, $\nu_{(p_j,0)}$ and $v_j$ respectively.

Moreover, $v_\infty$ satisfies
\begin{equation}\label{eqn:compactness_average_conclusion}
 \int_{M_\infty} v_\infty^a d\nu_{\infty,t} dt=0,
\end{equation}
for every $t\in (-1,0)$ and
\begin{equation}\label{eqn:compactness_B_convergence}
\lim_{j\rightarrow+\infty}\int_{-1}^{-1/4}\int_{M_j} \langle\nabla v_j^a,\nabla v_j^b\rangle d\nu_{(p_j,0),t} dt = \int_{-1}^{-1/4} \int_{M_\infty}\langle\nabla v_\infty^a,\nabla v_\infty^b\rangle d\nu_{\infty,t}dt
\end{equation}
\end{proposition}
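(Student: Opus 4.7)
The plan is to extract convergent subsequences using the compactness results already established, and then verify that the limit $v_\infty$ inherits the splitting map estimates together with the normalization conditions. First I would apply Proposition \ref{prop:compactness_rf} to pass to a subsequence along which $(M_j,g_j(t),p_j)_{t\in (-1,0]}$ converges smoothly to $(M_\infty,g_\infty(t),p_\infty)_{t\in (-1,0)}$ and $\nu_{(p_j,0)}$ converges to a conjugate heat flow $\nu_\infty$; the assumption (RF3) via Lemma \ref{lemma:int_ker_bounds} supplies the Gaussian upper bounds needed to ensure that the $u_{(p_j,0)}$ subconverge in $C^\infty_{\textrm{loc}}$ to a positive solution $u_\infty$ of the conjugate heat equation of total mass one.

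Next I would establish uniform local bounds on the maps $v_j$ sufficient to extract a smooth limit. The key observation is that since $v_j^a$ solves the heat equation and $u_{(p_j,0)}$ the conjugate heat equation, the integration by parts identity \eqref{eqn:integral_product} shows $t\mapsto \int_{M_j} v_j^a\,d\nu_{(p_j,0),t}$ is constant, and hence \eqref{eqn:compactness_average_assumption} gives $\int_{M_j} v_j^a\,d\nu_{(p_j,0),t}=0$ \emph{at every time} $t\in[-1,0]$. Combined with the monotonicity $\frac{d}{dt}\int|\nabla v_j^a|^2\,d\nu_{(p_j,0),t}=-2\int|\hess v_j^a|^2\,d\nu_{(p_j,0),t}\leq 0$ and the gradient estimate \eqref{eqn:def_splitting_map_gradient}, this yields uniform bounds $\int (v_j^a)^2+|\nabla v_j^a|^2\,d\nu_{(p_j,0),t}\leq C$ via Theorem \ref{thm:poincare}. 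The lower Gaussian bound implicit in (RF3) (together with the uniform curvature bound (RF1)) implies $u_{(p_j,0)}\geq c(K)>0$ on any fixed compact set $K$ of space-time, so these weighted $L^2$ bounds become local $L^2$ bounds with respect to Lebesgue measure. Since $|\nabla v_j^a|$ is a heat subsolution by Corollary \ref{cor:subsolutions}, parabolic mean value inequalities promote these to $L^\infty_{\textrm{loc}}$ bounds on $|\nabla v_j^a|$, and the zero-average condition then produces $L^\infty_{\textrm{loc}}$ bounds on $v_j^a$ itself. Standard parabolic Schauder estimates then yield uniform higher-derivative bounds, so up to a subsequence $v_j\to v_\infty$ in $C^\infty_{\textrm{loc}}$ to a solution of the heat equation along $g_\infty(t)$.

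Finally I would verify that $v_\infty$ is a $(k,\varepsilon)$-splitting map with respect to $\nu_\infty$ and that the averaging/convergence properties pass to the limit. The essential ingredient is \emph{tightness}: one needs the integrals $\int_{M_j\setminus B(p_j,t,R)}(\cdot)\,d\nu_{(p_j,0),t}$ to tend to zero as $R\to\infty$ uniformly in $j$, for the relevant integrands $(v_j^a)^2$, $|\langle\nabla v_j^a,\nabla v_j^b\rangle-\delta^{ab}|^2$ and $|\hess v_j^a|^2$. For the integrands involving $|\nabla v_j^a|$ and $|\hess v_j^a|$ this is provided by the hypercontractivity Corollary \ref{cor:L2toL2weighted}: the $L^2_\nu$ bound at time $-1$ controls $\int(\cdot)^{2+\eta}e^{\alpha f}\,d\nu$ at slightly later times, and combining with Lemma \ref{lemma:int_ker_bounds} and H\"older's inequality bounds the tails by a power of $\int e^{\alpha s f}\,d\nu_{(p_j,0),t}$ on the complement, which is small for large $R$ by (RF3). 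A similar argument handles $(v_j^a)^2$ using the pointwise bound $v_j^a\leq C(1+d_{g_j(-1)}(p_j,\cdot))$ from Corollary \ref{cor:local_heat_estimates}. With uniform tightness in hand, smooth local convergence together with Fatou's lemma gives
\[
\int_{-1}^{-\varepsilon}\!\!\int_{M_\infty}|\hess v_\infty^a|^2\,d\nu_{\infty,t}\,dt\leq\liminf_j\int_{-1}^{-\varepsilon_j}\!\!\int_{M_j}|\hess v_j^a|^2\,d\nu_{(p_j,0),t}\,dt\leq\liminf_j\varepsilon_j=\varepsilon,
\]
and analogously for the gradient estimate; the conservation of $\int v_j^a\,d\nu_{(p_j,0),t}=0$ passes to \eqref{eqn:compactness_average_conclusion}; and the space-time integral in \eqref{eqn:compactness_B_convergence} converges because the integrand is uniformly bounded in $L^\infty_{\textrm{loc}}$ while tightness controls the tails. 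The principal obstacle is precisely this uniform tightness on possibly non-compact limits—it is the combination of hypercontractivity and (RF3) that removes it.
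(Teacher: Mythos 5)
Your proposal follows essentially the same route as the paper: Proposition~\ref{prop:compactness_rf} for the Ricci flows and conjugate heat flows; conservation of $\int v_j^a\,d\nu_{(p_j,0),t}$ plus Hein--Naber Poincar\'e to get uniform $L^2_\nu$ bounds on $v_j^a$ and $|\nabla v_j^a|$; lower Gaussian bounds to localize; the parabolic mean value inequality to promote to $L^\infty_{\mathrm{loc}}$; parabolic regularity and a diagonal argument for smooth local convergence; Fatou for the limit splitting map estimates; and tail estimates via Lemma~\ref{lemma:int_ker_bounds} for the averaging identity \eqref{eqn:compactness_average_conclusion} and for \eqref{eqn:compactness_B_convergence}. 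Two small points deserve attention. First, the lower Gaussian bound $u_{(p_j,0)}\geq c>0$ on compacta is not ``implicit in (RF3)''; (RF3) is the \emph{upper} Gaussian bound on $u$, and the lower bound is (RF4) --- the paper's own proof explicitly invokes (RF4) even though the proposition is stated under (RF1-3), so you inherited an inconsistency in the source, but you should still name the correct assumption. Second, your proposal to control the tails of $(v_j^a)^2$ via Corollary~\ref{cor:local_heat_estimates} does not apply as stated: that corollary requires a pointwise polynomial growth bound on $v_j^a(\cdot,-1)$, which is not a priori available from the $(k,\varepsilon_j)$-splitting hypothesis (you only have an $L^2_\nu$ bound before the mean value step). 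The paper circumvents this by using the timewise $L^2_\nu$ bound $\int (v_j^a)^2\,d\nu_{(p_j,0),t}\leq C(t+1)^{-1}$, which follows from monotonicity of $t\mapsto\int(v_j^a)^2 d\nu$ and the space-time bound, together with Cauchy--Schwarz and the tail estimate of Lemma~\ref{lemma:int_ker_bounds}; that route requires nothing beyond what is already established and is what you should use. With these corrections the argument is sound and matches the paper's.
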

\begin{proof}
By Proposition \ref{prop:compactness_rf}, after passing to a subsequence, we can assume that $(M_j,g_j(t),p_j)_{t\in (-1,0)}$ converges to a limit $(M_\infty,g_\infty(t),p_\infty)_{t\in (-1,0)}$, and that $\nu_{(p_j,0)}$ converges to a conjugate heat flow $\nu_{\infty}$. To obtain the convergence of $v_j$ we need to establish the necessary a priori estimates.

Applying H\"older's inequality, for large $j$ we obtain
\begin{align*}
&\int_{-1}^{-\varepsilon_j} \int_{M_j} | \nabla v_j^a|^2 d\nu_{(p_j,0),t}dt \leq 1+\int_{-1}^{-\varepsilon_j} \int_{M_j} \left| | \nabla v_j^a|^2 - 1\right| d\nu_{(p_j,0),t} dt \\
&\leq 1+\left(\int_{-1}^{-\varepsilon_j} \int_{M_j} \left| | \nabla v_j^a|^2 - 1 \right|^2 d\nu_{(p_j,0),t} dt\right)^{1/2}\leq 1+\varepsilon_j^{1/2}\leq 3.
\end{align*}
Therefore, since $t\mapsto \int_{M_j} v_j^a(\cdot,t) d\nu_{(p_j,0),t}$ is constant, by \eqref{eqn:compactness_average_assumption} the Hein--Naber Poincare inequality Theorem \ref{thm:poincare} implies that
\begin{equation}\label{eqn:splitting_map_L2_bound}
\begin{aligned}
\int_{-1}^{-\varepsilon_j}\int_{M_j} (v_j^a)^2 d\nu_{(p_j,0),t} dt  &\leq \int_{-1}^{-\varepsilon_j} 2|t| \int_{M_j} |\nabla v_j^a|^2 d\nu_{(p_j,0),t} dt \\
&\leq 2\int_{-1}^{-\varepsilon_j} \int_{M_j} |\nabla v_j^a|^2 d\nu_{(p_j,0),t} dt \leq 6.
\end{aligned}
\end{equation}
Moreover, since $\frac{\partial}{\partial t} (v_j^a)^2 \leq \Delta (v_j^a)^2$, the function $t\mapsto \int_{M_j}(v_j^a)^2 d\nu_{(p_j,0),t}$ is non-increasing, hence \eqref{eqn:splitting_map_L2_bound} implies
\begin{equation}\label{eqn:splitting_map_L2_bound_2}
\int_{-1}^{0}\int_{M_j} (v_j^a)^2 d\nu_{(p_j,0),t} dt \leq 10,
\end{equation}
for large $j$.

By the lower bound (RF4) on the conjugate heat kernels $u_{(p_j,0)}$, \eqref{eqn:splitting_map_L2_bound_2} we obtain for every $t\in (-1,0)$ and $r>0$
\begin{equation*}
\begin{aligned}
 \int_{B(p_j,t,r)} (v_j^a)^2 d\vol_{g_j(t)} &\leq C_2^{-1} (4\pi | t|)^{n/2} \int_{B(p_j,t,r)} e^{\frac{d_{g_j( t)}(p_j,\cdot)^2}{C_2 |t|}}(v_j^a)^2 d\nu_{(p_j,0), t} \\
 &\leq C_2^{-1} (4\pi |t|)^{n/2} e^{\frac{r^2}{C_2|t|}}  \int_{B(p_j,t,r)} (v_j^a)^2 d\nu_{(p_j,0),t}.
  \end{aligned}
\end{equation*}
Now for any $(x,\bar t)\in M_j\times (-1,0)$ and $\rho>0$ such that $\bar t-\rho^2 \in (0,1)$ we know by the triangle inequality and the distance distortion bound (RF5) that
$$B(x,\bar t, \rho) \subset B(p_j,\bar t, d_{g_j(\bar t)}(p_j,x) + \rho)\subset B(p_j,t, d_{g_j(\bar t)}(p_j,x) + \rho +K),$$
for any $t\in [\bar t-\rho^2,\bar t]$. 

Therefore,
\begin{equation*}
\int_{\bar t-\rho^2}^{\bar t} \int_{B(x,\bar t,\rho)} (v_j^a)^2 d\vol_{g_j(t)} dt \leq C e^{\frac{d_{g_j(\bar t)}(p_j,x)^2}{C_2|\bar t|}}\int_{-1}^0 \int_{M_j}(v_j^a)^2 d\nu_{(p_j,0),t} dt \leq Ce^{\frac{d_{g_j(\bar t)}(p_j,x)^2}{C_2|\bar t|}},
\end{equation*}
where $C$ only depends on $n$, the constant $C_2$ from assumption (RF4) bound $C_2$, and the distance distortion constant $K$ from assumption (RF5).

Since $(v_j^a)^2$ satisfies $\partial_t (v_j^a)^2\leq \Delta (v_j^a)^2$, and assumption (RF1-2) are valid, we can apply the parabolic mean value inequality (Theorem 25.2 in \cite{RF_Part3}) in each domain $B(x,\bar t, \rho)\times [\bar t-\rho^2,\bar t]$  for which the estimate above holds, to obtain that
$$(v_j^a)^2(x,\bar t) \leq C(n,C_I, \Lambda,C_2, K| \rho)e^{\frac{d_{g_j(\bar t)}(p_j,x)^2}{C_2|\bar t|}}.$$
This bound, by parabolic regularity, suffices to bound derivatives of any order, uniform in compact regions of $M_\infty\times (-1,0)$, after pulling back each $v_j$ to $M_\infty$, via the diffeomorphisms associated to the convergence of $(M_j,g_j(t),p_j)$ to $(M_\infty,g_\infty(t),p_\infty)$. Therefore, we establish that a subsequence of $v_j$ converges to a smooth limit $v_\infty:M_\infty\times (-1,0)\rightarrow \mathbb R^k$, namely $v_j$ and its derivatives converge uniformly in compact subsets of  $M_\infty\times (-1,0)$.

Now, since the functions $|\hess v_j^a|^2$ and $\left|\langle\nabla v_j^a,\nabla v_j^b\rangle - \delta^{ab}\right|^2$ are non-negative, by Fatou's Lemma it follows that
$$\int_{-1}^{-\varepsilon}\int_{M_\infty} \left(|\hess v_\infty^a|^2 + \left|\langle\nabla v_\infty^a,\nabla v_\infty^b\rangle - \delta^{ab}\right|^2\right) u_\infty d\vol_{g_\infty(t)} dt \leq \varepsilon,$$
hence $v_\infty$ is a $(k,\varepsilon)$-splitting map with respect to $\nu_\infty$ at scale $1$.

To obtain \eqref{eqn:compactness_average_conclusion}, observe that \eqref{eqn:splitting_map_L2_bound_2} also implies such that for all $j$ and $t'\in [-1,0)$
\begin{equation}\label{eqn:compactness_L2_bound_timewise}
\int_{M_j} (v_j^a)^2d\nu_{(p_j,0),t'}\leq  (t'+1)^{-1}\int_{-1}^{t'} \int_{M_j} (v_j^a)^2 d\nu_{(p_j,0),t}dt \leq 6(t'+1)^{-1},
\end{equation}
since $t\mapsto \int_{M_j} (v_j^a)^2(\cdot,t) d\nu_{(p_j,0),t}$ is non-increasing. 

Therefore, by they Cauchy--Schwarz inequality and Lemma \ref{lemma:int_ker_bounds}, using (RF1) and (RF3), we know that for any $r\geq r(t,\delta)$,
\begin{align*}
 &\int_{M_j\setminus B(p_j,t,r\sqrt{|t|})} |v_j^a| d\nu_{(p_j,0),t}\leq\\
&\leq \left(\int_{M_j\setminus B(p_j,t,r\sqrt{|t|})} (v_j^a)^2 d\nu_{(p_j,0),t}\right)^{1/2} \left(\int_{M_j\setminus B(p_j,t,r\sqrt{|t|})}  d\nu_{(p_j,0),t}\right)^{1/2}\\
&\leq C(t+1)^{-1/2}\left(\int_{M_j\setminus B(p_j,t,r\sqrt{|t|})}  d\nu_{(p_j,0),t} \right)^{1/2}\leq \delta.
\end{align*}

Therefore, by Fatou's Lemma,
\begin{align*}
\left|\int_{M_\infty\setminus B(p_\infty,t,r\sqrt{|t|})} v_\infty^a d\nu_{\infty,t}\right| &\leq \int_{M_\infty\setminus B(p_\infty,-t,r\sqrt{|t|})} |v_\infty^a| d\mu_{\infty,t}\\
&\leq \liminf_j \int_{M_j\setminus B(p_j,t,r\sqrt{|t|})} |v_j^a| d\nu_{(p_j,0),t} \leq \delta.
\end{align*}
On the other hand, by \eqref{eqn:compactness_average_assumption}
\begin{align*}
&\left|\int_{B(p_j,t,r\sqrt{|t|})} v_j^a d\nu_{(p_j,0),t}\right| \leq \int_{M_j\setminus B(p_j,t,r\sqrt{|t|})} |v_j^a| d\nu_{(p_j,0),t}<\delta,
\end{align*}
therefore, for large $j$,
\begin{equation*}
\left| \int_{B(p_\infty,t,r\sqrt{|t|})} v_\infty^a d\nu_{\infty,t}\right|<2\delta.
\end{equation*}
We conclude that for every $\delta>0$ and $t\in (-1,0)$
$$\left|\int_{M_\infty} v_\infty^a d\nu_{\infty,t} \right| \leq 3\delta,$$
which suffices to prove \eqref{eqn:compactness_average_conclusion}.

The proof of \eqref{eqn:compactness_B_convergence} is similar, after noting that  
$$|\langle \nabla v_j^a,\nabla v_j^b\rangle | \leq |\langle \nabla v_j^a,\nabla v_j^b\rangle - \delta^{ab}| + \delta^{ab}\leq |\langle \nabla v_j^a,\nabla v_j^b\rangle - \delta^{ab}| + 1,$$
which gives 
$$|\langle \nabla v_j^a,\nabla v_j^b\rangle |^2 \leq \left( |\langle \nabla v_j^a,\nabla v_j^b \rangle - \delta^{ab}| + 1 \right)^2\leq 2|\langle \nabla v_j^a,\nabla v_j^b \rangle - \delta^{ab}| ^2 +2.$$
This allows us to obtain the uniform bound
\begin{align*}
&\int_{-1}^{-1/4}\int_{M_j} |\langle\nabla v_j^a,\nabla v_j^b\rangle|^2 d\nu_{(p_j,0),t}dt\leq\\
&\leq  2 \int_{-1}^{-1/4}\int_{M_j} (|\langle \nabla v_j^a,\nabla v_j^b \rangle - \delta^{ab}| ^2 +1)d\nu_{(p_j,0),t} dt \leq C.
\end{align*}
Now, the same line of reasoning used to prove  \eqref{eqn:compactness_average_conclusion} via \eqref{eqn:compactness_L2_bound_timewise} proves \eqref{eqn:compactness_B_convergence}.

\end{proof}

Combining Lemma \ref{lemma:k_delta_convergence} with Proposition \ref{prop:s_map_compactness} and an easy contradiction-compactness argument we obtain the following corollary.

\begin{corollary}
Let $\varepsilon>0$ and $(M,g(t),p)_{t\in (-2\delta^{-1},0]}$ be a smooth compact Ricci flow satisfying (RF1-3) and let $v:M\times[-1,0] \rightarrow \mathbb R^k$ be a $(k,\delta)$-splitting map around $p$ at scale $1$. If $0<\delta\leq \delta(n,C_I, \Lambda,H|\varepsilon)$ then the following holds. If $(M,g(t),p)_{t\in (-2\delta^{-1},0]}$ is $(0,\delta)$-selfsimilar at scale $1$ then it is $(k,\varepsilon)$-selfsimilar at scale $1$.
\end{corollary}

In the next section we will see that the converse is also true. Under some a-priori assumptions, if a pointed Ricci flow is $(k,\delta^2)$-selfsimilar, with $\delta>0$ small enough, then there exists a $(k,\varepsilon)$-splitting map, which is in some sense, optimal.

\section{Construction of splitting maps}\label{sec:construction}

In this section we show that, under certain assumptions such as a Ricci flow $(M,g(t),p)$ being $(k,\delta)$-selfsimilar, $(k,\varepsilon)$-splitting maps exist. Moreover, the splitting maps we will construct are optimal in the sense that their hessian is controlled by the geometry of the flow, in terms of the pointed $\mathcal W$ entropy. A fact that will be crucial later on, is that this control is linear.

\subsection{Regularization of the potential of a conjugate heat kernel}
We first need the following lemma, which constructs nice regularizations of conjugate heat kernel potentials.
\begin{lemma}\label{lemma:regularized_soliton_function}
Fix $0<\varepsilon<1$, $T\geq 10$, $R<+\infty$, let $(M,g(t),p)_{t\in [-2\delta^{-1},0]}$ be a smooth compact Ricci flow satisfying (RF1-5), and denote by $d\nu_{(p,0),t} = (4\pi |t|)^{-n/2}e^{-f(\cdot,t)} d\vol_{g(t)}$ the conjugate heat kernel measure starting at $(p,0)$.

Let  $0<\delta\leq\delta(n,C_I,\Lambda,H,K|\varepsilon,T,R)$
and suppose that either $(M,g(t),p)_{t\in [-2\delta^{-1},0]}$ is $(0,\delta)$-selfsimilar at scale $1$ or 
\begin{equation*}
\mathcal W_p(\delta) - \mathcal W_p(\delta^{-1})<\delta.
\end{equation*}  
 
 Then, there is a solution $w: M \times [-T,0]\rightarrow \mathbb R$ to 
\begin{equation*}
\frac{\partial w}{\partial t}  =\Delta w - \frac{n}{2}
\end{equation*}
and a constant $E=E(n,C_I)$ such that \begin{enumerate}
\item for any $(x,t)\in M\times [-T,0]$ and any $\bar t\in [-T,0]$
\begin{equation}\label{eqn:regularized_soliton_bound}
|w|^2(x,t) +|\nabla w|^2(x,t)+ |t|^E|\hess w|^2(x,t)\leq C(n,C_I,\Lambda,C_2,K|T)\left((d_{g(\bar t)}(p,x))^4+1\right) 
\end{equation} 
\item
for any $t\in [-T,0)$ it satisfies
\begin{equation}\label{eqn:regularized_soliton_quantity}
\int_M \left| |t| \ric + \hess w -\frac{g}{2}\right|^2 d\nu_{(p,0),t} \leq C(n,C_I)\left(\frac{T}{|t|}\right)^E\left( \mathcal W_p(1) - \mathcal W_p(2T) \right).
\end{equation}
\end{enumerate}
and it satisfies in $B(p,-1,\varepsilon^{-1})\times [-T,-\varepsilon]$
\begin{equation}\label{eqn:w_close_f}
\left|w-|t|(f-\mathcal W_p(1)) \right| < \varepsilon. 
\end{equation}
Moreover, for any $\alpha\in (0,1)$, if $T= T(\alpha)$ we have the  concentration estimate
\begin{equation}\label{eqn:reg_sol_concentration_1}
|t|^E\int_M \left| |t| \ric + \hess w -\frac{g}{2}\right|^2 e^{\alpha f} d\nu_{(p,0),t} \leq C(n,C_I,C_1|\alpha)\left( \mathcal W_p(1) - \mathcal W_p(2 T) \right),
\end{equation}
for any $t\in [-10,0]$.
\end{lemma}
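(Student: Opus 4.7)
The plan is to realize $w$ as a parabolic regularization of $|t|(f-\mathcal{W}_p(1))$, which would satisfy $\partial_t w = \Delta w - n/2$ exactly on a self-similar flow (by Proposition \ref{prop:backwards_forwards}(3)). The key observation is that the tensor $S = |t|\ric + \hess w - g/2$ evolves by the Lichnerowicz heat equation whenever $\partial_t w = \Delta w - n/2$: using $\partial_t(|t|\ric) = -\ric + |t|\Delta_L\ric$, $\partial_t(g/2) = -\ric$, $\Delta_L g = 0$, and the standard fact that $\partial_t \hess w = \Delta_L\hess w$ (apply this to $w + (n/2)t$, which solves the heat equation, and note adding a time-dependent constant does not affect the Hessian), one finds $\partial_t S = \Delta_L S$. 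This puts us in the setting of Lemma \ref{lemma:Lich_type_I}.

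The construction goes as follows. By Perelman's pointed monotonicity \eqref{eqn:pointed_monotonicity} and the mean value theorem on $[-2T,-T]$, pick $t_0\in[-2T,-T]$ with
\[
2|t_0|\int_M |\ric + \hess f - g/(2|t_0|)|^2\,d\nu_{(p,0),t_0} \leq \frac{1}{T}\bigl(\mathcal{W}_p(1)-\mathcal{W}_p(2T)\bigr),
\]
and define $w$ on $M\times[t_0,0]$ as the forward solution of $\partial_t w = \Delta w - n/2$ with initial data $w(\cdot,t_0) = |t_0|(f(\cdot,t_0) - \mathcal{W}_p(1))$. By assumption (RF3)–(RF4), this initial data has quadratic growth in $d_{g(t_0)}(p,\cdot)$; applying Corollary \ref{cor:local_heat_estimates} with $k=4$ and using (RF1), (RF5) to compare distances at times $t_0$ and $\bar t$ yields \eqref{eqn:regularized_soliton_bound}. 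At $t=t_0$, $S(t_0) = |t_0|(\ric + \hess f - g/(2|t_0|))$, so the chosen $t_0$ gives $\int_M |S(t_0)|^2\,d\nu_{(p,0),t_0} \leq 2(\mathcal{W}_p(1)-\mathcal{W}_p(2T))$. By Lemma \ref{lemma:Lich_type_I}, $|t|^E|S|^2$ is a subsolution of the heat equation, so pairing against the conjugate heat kernel yields the monotonicity
\[
|t|^E\int_M |S|^2\,d\nu_{(p,0),t} \;\leq\; |t_0|^E \int_M|S|^2(t_0)\,d\nu_{(p,0),t_0} \;\leq\; 2(2T)^E\bigl(\mathcal{W}_p(1)-\mathcal{W}_p(2T)\bigr),
\]
which rearranges to \eqref{eqn:regularized_soliton_quantity}. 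For the concentration estimate \eqref{eqn:reg_sol_concentration_1}, the Kato-type argument shows $|t|^{E/2}|S|$ is a non-negative viscosity subsolution of the heat equation, so Corollary \ref{cor:L2toL2weighted} with $m=2$ and $t_1 = t_0$ gives exactly the desired exponentially weighted bound, provided the hypercontractivity window $[\beta t_0,0]$ contains $[-10,0]$; this forces $T\geq 10/\beta(\alpha)$, explaining the dependence $T = T(\alpha)$.

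The proximity estimate \eqref{eqn:w_close_f} will be obtained by a contradiction/compactness argument, and I expect this to be the main technical step. Suppose there is a sequence of flows $(M_j,g_j(t),p_j)$, each $(0,\delta_j)$-self-similar with $\delta_j\to 0$, and constructed functions $w_j$ violating \eqref{eqn:w_close_f} at some point of $B(p_j,-1,\varepsilon^{-1})\times[-T,-\varepsilon]$. By Proposition \ref{prop:compactness_rf}, Lemma \ref{lemma:k_delta_convergence} and Proposition \ref{prop:Spoint_S}, we may pass to a subsequence so that $(M_j,g_j(t),p_j)$ converges to a self-similar limit $(M_\infty,g_\infty(t),p_\infty)$ with $p_\infty\in\mathcal{S}_{\textrm{point}}$, $\nu_{(p_j,0)}\to \nu_{f_\infty}\in\mathcal{S}$, and $\mathcal{W}_{p_j}(1)\to\bar\mu(g_\infty(-1))$. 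We may also assume $t_0^j\to t_0^\infty\in[-2T,-T]$. On the limit flow, Proposition \ref{prop:backwards_forwards}(3) gives that $|t|(f_\infty - \bar\mu(g_\infty(-1)))$ itself solves $\partial_t w = \Delta w - n/2$ with the limiting initial data $|t_0^\infty|(f_\infty(t_0^\infty) - \bar\mu(g_\infty(-1)))$. Parabolic regularity, together with the uniform polynomial growth bounds established in the first step, gives smooth local convergence of $w_j$ to the unique forward solution on $(M_\infty,g_\infty)$ with that initial datum, which must therefore equal $|t|(f_\infty - \bar\mu(g_\infty(-1)))$. Since $|t|(f_j - \mathcal{W}_{p_j}(1)) \to |t|(f_\infty - \bar\mu(g_\infty(-1)))$ smoothly on compact sets, the difference $w_j - |t|(f_j - \mathcal{W}_{p_j}(1))$ converges to $0$ uniformly on $B(p_j,-1,\varepsilon^{-1})\times[-T,-\varepsilon]$, contradicting the assumed failure of \eqref{eqn:w_close_f} for large $j$. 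The main delicacy is ensuring that the growth bounds propagate uniformly along the sequence so that passage to the limit, and in particular uniqueness of the forward solution among functions of quartic growth, actually applies.
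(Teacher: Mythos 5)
Your proposal takes essentially the same route as the paper's proof: pick a time slice via the mean value theorem applied to Perelman's monotonicity formula, solve the drift heat equation forward from the conjugate heat kernel potential at that slice, use the Lichnerowicz evolution of $S=|t|\ric+\hess w-g/2$ together with Lemma \ref{lemma:Lich_type_I} for the $L^2$ monotonicity, invoke Corollary \ref{cor:L2toL2weighted} for the weighted concentration estimate, and finally close the proximity bound \eqref{eqn:w_close_f} via a compactness-and-uniqueness argument. Your derivation of $\partial_t S=\Delta_L S$ (including the remark that $w+(n/2)t$ solves the honest heat equation so \eqref{eqn:evol_hess} applies) is correct and slightly more explicit than the paper.

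There is one concrete slip: you apply the mean value theorem on $[-2T,-T]$ and then define $w$ as the forward solution from $t_0\in[-2T,-T]$. If $t_0$ is close to, or equal to, $-T$, the bounds from Corollary \ref{cor:local_heat_estimates} carry the factors $(t-t_0)$ and $(t-t_0)^2$ in front of $|\nabla w|^2$ and $|\hess w|^2$, which degenerate as $t\to t_0^+$; hence the stated pointwise estimate \eqref{eqn:regularized_soliton_bound} with a constant depending only on $(n,C_I,\Lambda,C_2,K|T)$ cannot be extracted uniformly on $M\times[-T,0]$. The paper avoids this by running the mean value argument on $[-2T,-\tfrac32 T]$, so that $\bar T\ge\tfrac32 T$ and $t-(-\bar T)\ge T/2>0$ for every $t\in[-T,0]$, making the degenerate factors harmless. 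Your argument needs this same restriction; once you place $t_0$ in an interval bounded away from $-T$, everything else in your write-up, including the $T=T(\alpha)$ dependence from the hypercontractivity window and the compactness argument for \eqref{eqn:w_close_f}, goes through as intended.
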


\begin{proof}
Let $f$ be such that $d\nu_{(p,0),t}=(4\pi|t|)^{-n/2} e^{-f(\cdot,t)} d\vol_{g(t)}$.  By the monotonicity formula \eqref{eqn:pointed_monotonicity} we know that
\begin{equation}\label{eqn:f_sol}
\int_{-2T}^{-T} \int_M |t| \left| \ric + \hess f -\frac{g}{2|t|} \right|^2 d\nu_{(p,0),t} dt \leq \mathcal W_p(1)- \mathcal W_p (2 T)=:\eta.
\end{equation}

By the mean value theorem, there is $\bar T\in [\frac{3}{2}T,2T]$ such that
\begin{equation}\label{eqn:soliton_quantity_small}
\frac{T}{2\bar T} \int_M \left| \bar T \ric +\hess (\bar T f(\cdot,-\bar T)) -\frac{g}{2}\right|^2 d\nu_{(p,0),-\bar T}  \leq \eta.
\end{equation}

Let $w:M\times [-\bar T,0]\rightarrow \mathbb R$ be the solution to 
\begin{equation*}
\frac{\partial w}{\partial t} = \Delta w -\frac{n}{2}
\end{equation*}
with the initial condition $w(\cdot,-\bar T)=\bar T \left(f(\cdot,-\bar T) -\mathcal W_p(1)\right)$.

\noindent\textit{1. Pointwise bounds:} We first prove that $w$ satisfies the estimate \eqref{eqn:regularized_soliton_bound}. By (RF2) we obtain that
$$-\Lambda\leq \mu(g(-1),1)\leq \mathcal W_p(1)\leq 0.$$
Combining this with the Gaussian upper bounds on conjugate heat kernels (RF4) and (RF1), we can apply Corollary \ref{cor:local_heat_estimates}, to obtain that there is a constant $E=E(n,C_I)$ such that $w$ satisfies
\begin{align*}
|w(x,t)|^2+&(t+\bar T)|\nabla w|^2(x,t)+(t+\bar T)^2|\bar T|^{-E} |t|^E |\hess w|^2\leq C(n,C_I,C_2,\Lambda|T)( (d_{g(-\bar T)}(p,x))^4 +1),
\end{align*}
for every $t\in [-\bar T,0]$ and $x\in M$. 

Then, for every $\bar t\in [-T,0]$, the distance distortion estimate (RF5) implies
$$d_{g(-\bar T)}(p,x)\leq d_{g(\bar t)}(p,x)+2K T,$$ and gives \eqref{eqn:regularized_soliton_bound}, since $2T\geq\bar T\geq \frac{3}{2} T > T$.

\noindent \textit{2. $L^2$-bounds for $t\in [-T,0]$:}
By the standard evolution equation of Ricci curvature under Ricci flow  
$$\left(\frac{\partial}{\partial t} -\Delta_L \right)\ric =0$$ 
and \eqref{eqn:evol_hess}, it follows that $\left(\frac{\partial}{\partial t} -\Delta_L\right)\left(|t|\ric +\hess w -\frac{g}{2}\right)=0$. Thus,  by (RF1) and Lemma \ref{lemma:Lich_type_I} we obtain that there is $E=E(n,C_I)$ such that $S= |t| \ric +\hess w -\frac{g}{2}$ satisfies  
\begin{equation}\label{eqn:w_soliton_quantity_evol}
\left(\frac{\partial}{\partial t} -\Delta\right)(|t|^E |S|^2) \leq 0,
\end{equation}
which implies that for any $t\in [-\bar T, 0]$
\begin{equation}\label{eqn:evol_int_E}
\frac{d}{dt} \int_M |t|^E |S|^2 d\nu_{(p,0),t} \leq 0.
\end{equation}

Since, by \eqref{eqn:soliton_quantity_small}, we know that 
\begin{equation}\label{eqn:estimate_at_barT}
\int_M |\bar T|^E |S|^2d\nu_{(p,0),-\bar T} \leq \frac{2 \eta \bar T}{T} |\bar T|^E \leq C(n,C_I) T^E \eta,
\end{equation}
integrating \eqref{eqn:evol_int_E} we obtain that for any $t\in[-\bar T,0]$
\begin{equation}\label{eqn:v_soliton_q}
|t|^E\int_M \left| |t|\ric +\hess w -\frac{g}{2}\right|^2 d\nu_{(p,0),t}   \leq C(n,C_I) T^E  (\mathcal W_p(1)- \mathcal W_p (2T)),
\end{equation}
which proves \eqref{eqn:regularized_soliton_quantity}.

\noindent \textit{3. Concentration $L^2$-bounds for $t\in [-10,0)$:}
By (RF1) and Corollary \ref{cor:subsolutions} we also know that 
$$\left(\frac{\partial}{\partial t} -\Delta\right)( |t|^{E/2} |S|)\leq 0.$$ Thus, by (RF1), (RF3), Corollary \ref{cor:L2toL2weighted} and \eqref{eqn:w_soliton_quantity_evol}, we know that for every $\alpha\in(0,1)$ there is $T=T(\alpha)<+\infty$ such that for every $t\in [-10,0]$
\begin{equation}
\begin{aligned}
\int_M |t|^E |S|^2 e^{\alpha f} d\nu_{(p,0),t} &\leq C(n,C_I,C_1 |\alpha) \int_M T^E |S|^2  d\nu_{(p,0),-T}, \\
&\leq C(n,C_I,C_1 |\alpha) \int_M |\bar T|^E |S|^2 d\nu_{(p,0),-\bar T}.
\end{aligned}
\end{equation}
Therefore, by \eqref{eqn:estimate_at_barT},  we obtain for any $t\in [-10,0]$
\begin{equation}
\begin{aligned}
&|t|^E\int_M \left||t|\ric +\hess w-\frac{g}{2} \right|^2 e^{\alpha f} d\nu_{(p,0),t}\leq\\
&\leq C(n,C_I,C_1 |\alpha) |\bar T|^E \int_M \left|\bar T\ric +\hess w-\frac{g}{2} \right|^2 d\nu_{(p,0),-\bar T}\\
&\leq C(n,C_I,C_1 |\alpha) (\mathcal W_p (1) -\mathcal W_p(2 T)),
\end{aligned}
\end{equation}
which proves \eqref{eqn:reg_sol_concentration_1}.

\noindent \textit{4. Proximity to the potential of the conjugate heat kernel:} Suppose that there is a sequence $\delta_j\rightarrow 0$ and a pointed sequence smooth compact Ricci flows $(M_j,g_j(t),p_j)_{t\in (-2 \delta_j^{-1},0]}$ satisfying (RF1-5), which are either $(0,\delta_j)$-selfsimilar around $p_j$ at scale $1$ or \begin{equation*}
\mathcal W_p(\delta_j) - \mathcal W_p(\delta_j^{-1})<\delta_j,
\end{equation*}
and let $f_j$ be such that $d\nu_{(p_j,0),t} = (4\pi |t|)^{-n/2} e^{-f_j(\cdot,t)} d\vol_{g_j(t)}$.

By Proposition \ref{prop:compactness_rf}, Lemma \ref{lemma:k_delta_convergence}  and Proposition \ref{prop:W_drop_small}, after passing to a subsequence, we can assume that  \linebreak $(M_j,g_j(t),p_j)_{t\in (-2 \delta_j^{-1},0)}$ converges smoothly to a pointed Ricci flow $(M_\infty,g_\infty(t),p_\infty)_{t\in (-\infty,0)}$, which is induced by a gradient shrinking Ricci soliton with spine $\mathcal S$. Moreover, the conjugate heat flows $\nu_{(p_j,0)}$ converge to a conjugate heat flow $\nu_{f_\infty}\in\mathcal S$ on $(M_\infty,g_\infty(t))_{t\in (-\infty,0)}$, $f_\infty$ satisfies (CHF1) and (CHF2) with respect to $p_\infty$, by (RF3-4), and
\begin{equation}\label{eqn:entropies_converge}
\mathcal W_{p_j}(1)  \rightarrow \bar\mu(g_\infty(-1)).
\end{equation}

 Since $\nu_{f_\infty}\in\mathcal S$, $(M_\infty, g_\infty(t), f_\infty(\cdot,t))$ is a normalized gradient shrinking Ricci soliton at scale $|t|$, for every $t<0$. Therefore,  Proposition \ref{prop:backwards_forwards} implies that $\tilde w_\infty(t)= |t| (f_\infty(t) - \bar\mu(g_\infty(-1))$
satisfies 
\begin{equation}\label{eqn:evolution_tilde_w}
\frac{\partial \tilde w_\infty}{\partial t} = \Delta \tilde w_\infty -\frac{n}{2}
\end{equation}
in $M_\infty\times (-\infty,0)$.

Consider also $\bar T_j\in[\frac{3}{2} T,2T]$, and the functions $w_j: M_j\times [-\bar T_j,0)\rightarrow \mathbb R$ such that
\begin{align*}
\frac{\partial w_j}{\partial t} &= \Delta w_j -\frac{n}{2},\\
w_j(\cdot,-\bar T_j) &= \bar T_j (f_j(\cdot,-\bar T_j) -\mathcal W_{p_j}(1)),
\end{align*}
constructed above. Passing to a further subsequence, we can assume that $\bar T_i\rightarrow \bar T_\infty \in [\frac{3}{2} T,2 T]$.

Parabolic regularity, the locally uniform bounds in $M_j\times [\bar T_j,0]$ from \eqref{eqn:regularized_soliton_bound}, the smooth convergence $f_j(\cdot,-\bar T_j)\rightarrow f_\infty(\cdot,-\bar T_\infty)$ and \eqref{eqn:entropies_converge}, imply that $w_j$ smoothly locally converges to a function $w_\infty:M_\infty\times [-\bar T_\infty,0)\rightarrow \mathbb R$ that satisfies
\begin{equation}\label{eqn:vinfty_evolution}
\begin{aligned}
\frac{\partial w_\infty}{\partial t} &= \Delta w_\infty - \frac{n}{2},\\
w_\infty(\cdot,-\bar T_\infty) &= \bar T_\infty (f_\infty(\cdot,-\bar T_\infty) -\bar\mu(g_\infty(-1)) = \tilde w_\infty(\cdot,-\bar T_\infty).
\end{aligned}
\end{equation}

Since $w_\infty$ and $\tilde w_\infty$ both grow at most quadratically, by \eqref{eqn:regularized_soliton_bound} and (CHF2) respectively, and satisfy the same initial condition at $t=-\bar T_\infty$, we conclude that $w_\infty=\tilde w_\infty$ in $M_\infty\times [-\bar T_\infty,0)$, from which \eqref{eqn:w_close_f} follows.

\end{proof}

\subsection{Sharp splitting maps}

\begin{definition}\label{def:independence}
Let $(X,d,p)$ be a complete pointed metric space, $\mu>0$ and  $R<+\infty$. We will call a set of points $\{x_i\}\subset B(p,R)$, $i=0,\ldots,k$,  
\begin{enumerate}
\item $(k,\mu)$-independent in $B(p,R)$ if there is no $0\leq l<k$ and $x_i'\in \mathbb R^l$, $i=0,\ldots,k$ such that 
$$\left| d(x_i,x_j) - |x'_i-x'_j| \right|<\mu R.$$
\item $(k,\mu,D)$-independent in $B(p,R)$ if there is no $0\leq l <k$, a metric space $(K,d_K)$ with $\diam_{d_K}(K)\leq D$, and points $x_i'\in  K\times \mathbb R^l$, $i=0,\ldots,k$ such that for any $i,j=0,\ldots,k$ 
$$|d(x_i,x_j)-d_{K\times \mathbb R^l}(x_i',x_j')|<D+\mu R,$$
where $K\times \mathbb R^l$ is endowed with the product metric, namely
$$(K\times \mathbb R^l, d_{K\times \mathbb R^l}((q_1,a_1),(q_2,a_2))=\left((d_K(q_1,q_2) )^2+ |a_2-a_1|^2\right)^{1/2},$$
\end{enumerate}
\end{definition}

\begin{remark}\label{rmk:ind}
With a simple contradiction argument we can show that if $X\subset B(0,R)\subset\mathbb R^n$ does not contain any $(k,\delta)$-independent subsets then there is a $k-1$ plane $L$ in $\mathbb R^n$ such that $X\subset B_{\varepsilon R}(L)$, provided that $0<\delta\leq \delta(n|\varepsilon)$. For this, by rescaling we may assume that $R=1$ and then observe that if there is no $L$ so that $X\subset B_{\varepsilon}(L)$, then there is $\{x_i\}_{i=0}^k\subset X$ such that $x_i\not\in B_{\varepsilon } (<x_0,\ldots,x_{i-1}>)$ for every $i=1,\ldots, k$, where $<x_0,\ldots,x_m>$ denotes the $m$ plane in $\mathbb R^n$ defined by $x_0,\ldots,x_m\in\mathbb R^n$. These points will be $(k,\delta)$-independent, if $\delta>0$ is small enough, since otherwise we could take $\delta\rightarrow 0$ to construct an isometric embedding of $\{x_i\}_{i=0}^k$ into $\mathbb R^{k-1}$, which is impossible.
\end{remark}

\begin{lemma}\label{lemma:independences}
Let $(K,d_K)$ be a metric space with $\diam_{d_K}(K)\leq D$, and a collection of $k+1$ points, $x_i=(q_i,a_i)\in B((q,0),R)\subset K\times \mathbb R^k$, $i=0,\ldots,k$, where $K\times \mathbb R^k$ is endowed with the product metric $d_{K\times \mathbb R^k}$, as in Definition \ref{def:independence}. Then 
\begin{enumerate}
\item If  $\{x_i\}$ is $(k,\mu,D)$-independent  then $\{a_i\}\subset B(0,R)$ is $(k,\mu)$-independent.
\item If $\{a_i\}\subset B(0,R)$ is $(k,3\mu)$-independent and $R\geq \frac{D}{\mu}$, then $\{x_i\}$ is $(k,\mu,D)$-independent.
\end{enumerate}
\end{lemma}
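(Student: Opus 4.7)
My plan is to prove both parts of Lemma~\ref{lemma:independences} by contraposition, using two elementary inequalities for the product metric $d_{K\times\mathbb{R}^l}((q,a),(q',a'))=\sqrt{d_K(q,q')^2+|a-a'|^2}$: namely $|d_{K\times\mathbb{R}^l}((q,a),(q',a'))-|a-a'||\le d_K(q,q')$, and $|\sqrt{c^2+u^2}-\sqrt{c^2+v^2}|\le |u-v|$ (so that changing only the $\mathbb{R}^l$-factor changes the product distance by at most the change in the Euclidean distance).

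For Part (1), I would suppose $\{a_i\}$ fails to be $(k,\mu)$-independent, witnessed by some $0\le l<k$ and $a_i'\in\mathbb{R}^l$ with $\bigl||a_i-a_j|-|a_i'-a_j'|\bigr|<\mu R$. The natural candidate witnessing failure of $(k,\mu,D)$-independence of $\{x_i\}$ is to reuse the same metric space $K$ and set $x_i':=(q_i,a_i')\in K\times\mathbb{R}^l$. The second elementary inequality above yields
\[
|d_{K\times\mathbb{R}^k}(x_i,x_j)-d_{K\times\mathbb{R}^l}(x_i',x_j')|\le \bigl||a_i-a_j|-|a_i'-a_j'|\bigr|<\mu R<D+\mu R,
\]
verifying the required tolerance. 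The delicate point is the ball constraint $x_i'\in B((q_0,0),R)$: the $a_i'$ are a priori arbitrarily located in $\mathbb{R}^l$, but they may be translated (and rigidly rotated within $\mathbb{R}^l$) freely without changing pairwise distances. Since $\{a_i\}\subset B(0,R)\subset\mathbb{R}^k$ and the two distance matrices differ by at most $\mu R$, Cayley--Menger stability lets us position the $a_i'$ so that $|a_i'|\le R$, and the extra slack $D$ in $D+\mu R$ absorbs any residual discrepancy from the $K$-factor.

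For Part (2), I would suppose $\{x_i\}$ fails to be $(k,\mu,D)$-independent, giving witnesses $l<k$, a metric space $(K',d_{K'})$ of diameter $\le D$, and points $x_i'=(q_i',a_i')\in B((q',0),R)\subset K'\times\mathbb{R}^l$ with $|d_{K\times\mathbb{R}^k}(x_i,x_j)-d_{K'\times\mathbb{R}^l}(x_i',x_j')|<D+\mu R$. The claim is that the $\mathbb{R}^l$-components $\{a_i'\}$ witness the failure of $(k,3\mu)$-independence of $\{a_i\}$. The first elementary inequality applied to both product metrics yields
\[
\bigl|d_{K\times\mathbb{R}^k}(x_i,x_j)-|a_i-a_j|\bigr|\le d_K(q_i,q_j)\le D,\qquad
\bigl|d_{K'\times\mathbb{R}^l}(x_i',x_j')-|a_i'-a_j'|\bigr|\le d_{K'}(q_i',q_j')\le D,
\]
and the triangle inequality combines these into
\[
\bigl||a_i-a_j|-|a_i'-a_j'|\bigr|<2D+(D+\mu R)=3D+\mu R.
\]
The hypothesis $R\ge D/\mu$, i.e.\ $D\le\mu R$, then absorbs the excess to give $3D+\mu R\le 3\mu R$, contradicting the $(k,3\mu)$-independence of $\{a_i\}$.

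The conceptual point behind both parts is that the slack $D$ built into the tolerance $D+\mu R$ in the definition of $(k,\mu,D)$-independence is exactly what is needed both to accommodate a nontrivial compact factor $K$ of diameter $D$ and to translate to pure Euclidean independence in $\mathbb{R}^k$; the proof is essentially bookkeeping with the Pythagorean inequality. The main obstacle I anticipate is the ball constraint in Part (1), where one must carefully place the Euclidean witnesses $\{a_i'\}$ inside a ball of radius $R$ in $\mathbb{R}^l$ while respecting the assumed near-isometry of distance matrices; once that positioning is justified, the rest follows mechanically.
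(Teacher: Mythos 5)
Your overall strategy (contraposition in both directions, comparing the product distance with the Euclidean distance of the $\mathbb{R}^k$/$\mathbb{R}^l$ components, reusing the same $q_i$) is the same as the paper's, but Part (2) as written has a quantitative gap that makes the final step fail. You lose $D$ twice from the two Pythagorean comparisons and once more from the hypothesis $|d_{K\times\mathbb{R}^k}(x_i,x_j)-d_{K'\times\mathbb{R}^l}(x_i',x_j')|<D+\mu R$, arriving at $\bigl||a_i-a_j|-|a_i'-a_j'|\bigr|<3D+\mu R$; the claim that $D\le\mu R$ gives $3D+\mu R\le 3\mu R$ is false (it would require $D\le\tfrac{2}{3}\mu R$; with $D=\mu R$ you only get $4\mu R$), so you do not contradict the $(k,3\mu)$-independence of $\{a_i\}$. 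The paper avoids this by using one-sided estimates: $d_{K\times\mathbb{R}^k}(x_i,x_j)\ge |a_i-a_j|$ costs nothing, while $d_{K'\times\mathbb{R}^l}(x_i',x_j')\le |a_i'-a_j'|+D$ costs one $D$ (and symmetrically in the other direction), giving $\bigl||a_i-a_j|-|a_i'-a_j'|\bigr|<2D+\mu R\le 3\mu R$. The repair is easy, but your bookkeeping as stated does not close.

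In Part (1) your distance estimate is actually sharper than the paper's: the $1$-Lipschitz bound $|\sqrt{c^2+u^2}-\sqrt{c^2+v^2}|\le|u-v|$ gives distortion $<\mu R$, whereas the paper uses $\sqrt{D^2+t^2}\le D+t$ and only gets $<D+\mu R$. However, your appeal to ``Cayley--Menger stability'' to force the witnesses into the radius-$R$ ball is asserted rather than proved, and the budget you propose is doubtful: repositioning or rescaling a configuration whose distance matrix is $\mu R$-close to that of $\{a_i\}$ so that it fits in $B(0,R)$ can cost on the order of $\mu R$ in the distances (already two witness points at distance $2R+\mu R$ need circumradius larger than $R$), and this is not absorbed by the leftover slack $D$ in the regime relevant to the paper, where $D=A(n)\le\mu R$. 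To be fair, the paper's own proof simply takes the Euclidean witnesses in $B(0,R)$ and sets $y_i=(q_i,b_i)$ without commenting on this containment, so you are not being held to a higher standard here; but if you want the positioning step to be part of your argument you need an explicit quantitative estimate (or an observation that the lemma is only applied with ample slack), not a name-drop of Cayley--Menger.
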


\begin{proof}[Proof of Assertion 1]
Suppose that $\{x_i=(q_i,a_i)\}$, as in Assertion 1, is $(k,\mu,D)$-independent but $\{a_i\}\subset B(0,R)$ is not $(k,\mu)$-independent. Namely, there is $0\leq l<k$ and $b_i\in \mathbb R^l$, $i=0,\ldots,k$ such that for every $i,j=0,\ldots,k$
$$\left| |a_i-a_j| - |b_i - b_j|\right|<\mu R.$$
Define $y_i\in K\times \mathbb R^l$ as $y_i =(q_i,b_i)$. Then
$$d_{K\times \mathbb R^k}(x_i,x_j) - d_{K\times \mathbb R^l}(y_i,y_j) = \sqrt{(d_K(q_i,q_j))^2+|a_i-a_j|^2} - \sqrt{(d_K(q_i,q_j))^2 +|b_i-b_j|^2}.$$
To simplify notation, we will use $d$ to denote both $d_{K\times \mathbb R^k}$ and $d_{K\times \mathbb R^l}$.

Therefore, by the assumption $d_K(q_i,q_j)\leq D$, 
$$|a_i-a_j| - \sqrt{D^2+|b_i-b_j|^2}\leq d(x_i,x_j) - d(y_i,y_j) \leq  \sqrt{D^2 +|a_i-a_j|^2 } -|b_i-b_j|,$$
and since $\sqrt{D^2 +t^2}\leq D+t$ for every $t\geq 0$, we obtain that
$$|a_i-a_j| -|b_i-b_j|-D\leq d(x_i,x_j) - d(y_i,y_j) \leq D+|a_i-a_j| - |b_i - b_j|,$$
thus 
$$|d(x_i,x_j) - d(y_i,y_j)| < D+\mu R.$$
It follows that $\{x_i\}$ is not $(k,\mu,D)$-independent, which is a contradiction.
\end{proof}
\begin{proof}[Proof of Assertion 2]
Suppose that $R\geq \frac{D}{\mu}$ and that $\{a_i\}\subset B(0,R)\subset \mathbb R^k$ is $(k,3\mu)$-independent, but $\{x_i\}$ is not $(k,\mu,D)$-independent. Then, there is a compact metric space $(\tilde K,d_{\tilde K})$ with $\diam_{d_{\tilde K}}(\tilde K)\leq D$, $0\leq l <k$ and $y_i=(\tilde q_i,b_i)\in  \tilde K\times \mathbb R^l$ such that
$$\left| d(x_i,x_j) - d(y_i,y_j)\right| < D+\mu R.$$
Using again $d$ to denote both $d_{K\times \mathbb R^k}$ and $d_{K\times \mathbb R^l}$, we have
\begin{align*}
d(x_i,x_j) &= \sqrt{d_K(q_i,q_j)^2 +|a_i - a_j|^2},\\
d(y_i,y_j) &= \sqrt{d_{\tilde K}(\tilde q_i,\tilde q_j)^2 +|b_i - b_j|^2},
\end{align*}
thus, by $\diam_{d_K}(K)\leq D$ and $\diam_{d_{\tilde K}} (\tilde K)\leq D$,
\begin{align*}
d(x_i,x_j) - d(y_i,y_j)&\geq |a_i - a_j| - |b_i-b_j| -D,\\
d(y_i,y_j) - d(x_i,x_j) &\geq |b_i - b_j| -|a_i-a_j| - D. 
\end{align*}
It follows that
\begin{align*}
\left| |a_i-a_j| - |b_i - b_j| \right| &\leq \left| d(x_i,x_j) - d(y_i,y_j) + D \right|\\
&\leq \left| d(x_i,x_j) - d(y_i,y_j)\right| + D\\
&\leq 2D+\mu R\\
&\leq 3\mu R,
\end{align*}
since $D\leq \mu R$. Therefore, $\{a_i\}$ is not $(k,3\mu)$-independent, since $b_i\in \mathbb R^l$ with $0\leq l <k$, which is a contradiction.
\end{proof}

\begin{proposition}\label{prop:sharp_splitting}
Fix $\varepsilon>0$ and $\mu>0$. Let $(M,g(t),p)_{t\in [-2 \delta^{-1},0]}$ be a smooth compact  Ricci flow satisfying (RF1-5). Then there is $T=T(n,H)\geq 10$, $\bar\delta=\bar \delta(n,C_I,\Lambda,H,K|R,\varepsilon)>0$, $C=C(n,C_I,H|R,\mu,\varepsilon)$ and $D'=D'(n,H)<+\infty$ with the following significance.

Suppose that $R\geq \frac{D'}{\mu}$ and $\{x_i\}_{i=0}^k\subset B(p,-1,R)$ is a $(k,\mu,D')$-independent subset at $t=-1$, such that 
for each $i=0,\ldots,k$, either $(M,g(t),x_i)_{t\in [-2\delta^{-1},0]}$ is $(0,\delta)$-selfsimilar, or $\mathcal W_{x_i}(\delta)-\mathcal W_{x_i}(\delta^{-1})<\delta$. Let
 \begin{equation}\label{eqn:E_small}
\mathcal E:=
\sum_{i=0}^k \mathcal W_{x_i} (1) - \mathcal W_{x_i}(2 T)<\delta'
\end{equation}

If $0<\max(\delta,\delta')\leq \bar\delta$ then there is a $(k,\varepsilon)$-splitting map $v=(v^1,\ldots,v^k): M\times [-1,0]\rightarrow \mathbb R^k$ at scale $1$ around $p$ such that 
\begin{equation}
\int_M |\hess  v^a |^2 d\nu_{(p,0),t}  \leq C \mathcal E,  \textrm{ for } t\in [-1,-\varepsilon].\label{eqn:optimal_splitting_hessian}
\end{equation}
for any $a=1,\ldots,k$ and $t\in [-1,-\varepsilon]$.

\end{proposition}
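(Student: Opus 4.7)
The idea is to build a basis of regularized soliton potentials $w^i$ centered at the $x_i$ using Lemma \ref{lemma:regularized_soliton_function}, take differences, and normalize.

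Since each $(M,g(t),x_i)_{t\in[-2\delta^{-1},0]}$ is $(0,\delta)$-selfsimilar at scale $1$, fixing $\alpha\in(0,1)$ close to $1$ and taking $T=T(\alpha)$, Lemma \ref{lemma:regularized_soliton_function} produces $w^i:M\times[-T,0]\to\mathbb R$ solving $\partial_t w^i-\Delta w^i=-n/2$ together with the weighted estimate
\begin{equation*}
|t|^E\int_M |S^i|^2 e^{\alpha f_i}\,d\nu_{(x_i,0),t} \leq C(n,C_I,C_1|\alpha)\bigl(\mathcal W_{x_i}(1)-\mathcal W_{x_i}(2T)\bigr),
\end{equation*}
for every $t\in[-10,0)$, where $S^i:=|t|\ric+\hess w^i-g/2$ and $u_{(x_i,0)}=(4\pi|t|)^{-n/2}e^{-f_i}$. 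Define $\tilde v^a:=w^a-w^0$ for $a=1,\ldots,k$. The source term cancels, so each $\tilde v^a$ solves the heat equation, and $\hess \tilde v^a=S^a-S^0$.

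I then transfer the Hessian bound from $d\nu_{(x_i,0)}$ to $d\nu_{(p,0)}$. Since $x_i\in B(p,-1,R)$ and by (RF1--5) the distance $d_{g(t)}(p,x_i)^2$ is bounded on $[-\varepsilon^{-1},-\varepsilon]$ by $C(R,K,\varepsilon)$, Lemma \ref{lemma:compare_kernels} (applied with $\beta$ tied to the available interval) yields a pointwise comparison of the form $u_{(p,0)}(\cdot,t)\leq C(R)\, e^{\alpha f_i}\, u_{(x_i,0)}(\cdot,t)$, whence
\begin{equation*}
\int_M |\hess \tilde v^a|^2\,d\nu_{(p,0),t}\leq C(n,C_I,C_1,C_2|R,\varepsilon)\,\mathcal E,\qquad t\in[-1,-\varepsilon].
\end{equation*}

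The final step is linear normalization. Let $B_{ab}(t):=\int_M\langle\nabla\tilde v^a,\nabla\tilde v^b\rangle\,d\nu_{(p,0),t}$ and $\bar B_{ab}:=\frac{4}{3}\int_{-1}^{-1/4}B_{ab}(t)\,dt$. A compactness/contradiction argument shows $\bar B$ has smallest eigenvalue bounded below by $c(n|R,\mu)>0$: by Proposition \ref{prop:compactness_rf} and Lemma \ref{lemma:k_delta_convergence}, any contradicting sequence subconverges to a $0$-selfsimilar flow at $p_\infty\in\mathcal S_{\textrm{point}}$ with spine $\mathcal K\times\mathbb R^l$, $l\geq 0$; the uniqueness built into Lemma \ref{lemma:regularized_soliton_function} identifies the limit of $w^i$ with $|t|(f_\infty^i-\bar\mu)$, which on the Euclidean factor of the spine is $\frac{1}{4}|y-a_i|^2$ plus a term independent of $y$. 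Hence $\nabla(w^a_\infty-w^0_\infty)$ projects to $-\tfrac12(a_a-a_0)$ in $\mathbb R^l$, and the $(k,\mu,A(n))$-independence of $\{x_i\}$, combined with Lemma \ref{lemma:independences}(1) and Remark \ref{rmk:ind}, forces $l\geq k$ with $\{a_i\}$ spanning $k$ linearly independent differences quantitatively in terms of $R,\mu$. Taking $T$ to be the Cholesky factor normalizing $\tilde v$ so that $v:=T\tilde v$ has $\bar B(v)=I_k$ gives $\|T\|\leq C(n|R,\mu)$ and preserves the Hessian bound up to this constant. Finally, Lemma \ref{lemma:space_time_Poincare} promotes the Hessian bound on $v$ to the required integral gradient orthonormality, verifying that $v$ is a $(k,\varepsilon)$-splitting map.

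\textbf{Main obstacle.} The nontrivial step is the uniform spectral lower bound on $\bar B$. It requires identifying the limit of the regularized potentials $w^i$ with the canonical soliton potential at the limit of $x_i$ (the polynomial growth bound \eqref{eqn:regularized_soliton_bound} and the uniqueness argument at the end of Lemma \ref{lemma:regularized_soliton_function} are both needed), and then translating the geometric $(k,\mu,A(n))$-independence into a quantitative linear-algebraic non-degeneracy of the limiting gradients, with constants depending only on $n$, $R$, $\mu$.
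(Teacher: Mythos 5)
The proposal takes essentially the same approach as the paper: build regularized soliton potentials $w^i$ via Lemma \ref{lemma:regularized_soliton_function}, form the differences $\tilde v^a = w^a - w^0$ (the paper also subtracts constants, which is immaterial), transfer the weighted Hessian bound to $\nu_{(p,0)}$ via Lemma \ref{lemma:compare_kernels}, establish non-degeneracy by a compactness/contradiction argument that identifies the limits of $w^i$ with $|t|(f_{i,\infty}-\bar\mu)$ and reads off the gradients of $\tilde v^a_\infty$ on the Euclidean factor of the spine, then normalize by a lower-triangular matrix and finish with Lemma \ref{lemma:space_time_Poincare}. The only difference from the paper's proof is a presentational one: you frame the contradiction as a uniform lower bound $\lambda_{\min}(\bar B)\geq c(n|R,\mu)$ on the gradient Gram matrix, whereas the paper frames it as non-existence or unboundedness of the Cholesky normalizers $A_j$; these are equivalent since $\bar B = (A^*A)^{-1}$ and so $\|A\|\leq \lambda_{\min}(\bar B)^{-1/2}$.
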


\begin{proof}
Let $D'<+\infty$ be a constant that will be specified during the proof to depend only on $n$ and $H$, let $R\geq \frac{D'}{\mu}$, and suppose that $\{x_i\}_{i=0}^k\subset B(p,-1,R)$ is a $(k,\mu,D')$-independent subset in $(M,g(-1))$, satisfying the assumptions of the proposition.

Apply Lemma \ref{lemma:regularized_soliton_function}, for any $\delta>0$ small enough, to each of the points $x_i$, $i=0,\ldots, k$, setting $\alpha=\alpha(n,C_1,C_2)=\frac{2C_1-C_2}{2C_1}$. Then, there is $T=T(\alpha)=T(n,C_1,C_2)\geq 10$ and $w^i:M\times [-T,0]\longrightarrow \mathbb R$, $i=0,1,\ldots, k$, such that for all $t\in [-10,-\varepsilon]$
\begin{equation}\label{eqn:w_soliton_E}
|t|^E\int_M \left| |t|\ric + \hess w^i -\frac{g}{2} \right|^2 e^{\alpha f_i} d\nu_{(x_i,0),t}\leq C(n,C_I,H)  \mathcal E,
\end{equation}
where $d\nu_{(x_i,0),t}=(4\pi |t|)^{-n/2} e^{-f_i(\cdot,t)} d\vol_{g(t)}$.

Choosing $\beta=1$ in Lemma  \ref{lemma:compare_kernels}, we see that the conjugate heat kernels $u_{(x_i,0)}=(4\pi|t|)^{-n/2} e^{-f_i}$ and $u_{(p,0)}$, satisfy 

\begin{equation}\label{eqn:change_basepoint_splitting}
\begin{aligned}
u_{(p,0)}(\cdot,t) &\leq C(C_1,C_2) e^{\frac{(d_{g(t)}(x_i,p))^2}{C_1 |t|}}  e^{\alpha f_i(\cdot,t)} u_{(x_i,0)}(\cdot,t)\\
&\leq C(n,C_1,C_2) e^{C(n,C_I|\varepsilon) \frac{(d_{g(-1)}(x_i,p))^2}{C_1 |t|}}  e^{\alpha f_i(\cdot,t)} u_{(x_i,0)}(\cdot,t)\\
&\leq C(n,C_1,C_2) e^{C(n,C_I,C_1|\varepsilon) R^2} e^{\alpha f_i(\cdot,t)} u_{(x_i,0)}(\cdot,t)\\
&\leq C(n,C_I,C_1,C_2|R,\varepsilon) e^{\alpha f_i(\cdot,t)} u_{(x_i,0)}(\cdot,t).
\end{aligned}
\end{equation}
for every $t\in [-T,-\varepsilon]$. Here we also used the distance distortion estimate
$$d_{g(-1)}(x_i,p)\geq C(n,C_I|\varepsilon) d_{g(t)}(x_i,p)$$
for all $t\in [-T,-\varepsilon]$, due to (RF1).

For each $a=1,\ldots,k$, define $ L^a= w^a-w^0-w^a(x_0,-1) +w^0(x_a,-1)$, which we readily see that is a solution to the heat equation. Denote $L:M \rightarrow \mathbb R^k$,  $L=(L^1,\ldots,L^k)$.

\noindent\textit{1. Hessian estimate:}
For every $t\in[-T,-\varepsilon]$ and $a=1,\ldots,k$, we have
\begin{align*}
&\int_M |\hess L^a|^2 d\nu_{(p,0),t} \leq\\
&\leq \int_M\left| |t| \ric + \hess w^a-\frac{g}{2} -|t| \ric - \hess w^0 +\frac{g}{2}\right|^2 d\nu_{(p,0),t} \\
&\leq \int_M 2\left| |t| \ric + \hess w^a-\frac{g}{2} \right|^2 d\nu_{(p,0),t}+  \int_M 2\left| |t| \ric + \hess w^0-\frac{g}{2} \right|^2 d\nu_{(p,0),t}.
\end{align*}

Hence, by \eqref{eqn:w_soliton_E} and \eqref{eqn:change_basepoint_splitting}, for every  $t\in [-10,-\varepsilon]$
\begin{equation}\label{eqn:weighted_hessian_estimate}
\begin{aligned}
&\int_M |\hess L^a|^2 d\nu_{(p,0),t} \leq \\
&\leq C(n,C_I,C_1,C_2|R,\varepsilon) \left(2\int_M \left| |t| \ric + \hess w^0-\frac{g}{2} \right|^2 e^{\alpha f_0} d\nu_{(x_0,0),t} \right.\\
&\left.+  2\int_M \left| |t| \ric + \hess w^a-\frac{g}{2} \right|^2 e^{\alpha f_a} d\nu_{(x_a,0),t} \right)\\
&\leq C(n,C_I,C_1,C_2|R,\varepsilon)  \mathcal E.
\end{aligned}
\end{equation}

\noindent \textit{2. Pointwise estimates:} By the estimates \eqref{eqn:regularized_soliton_bound} on each $w^i$, the triangle inequality and the distance distortion estimate of assumption (RF5), we obtain that for every $(x,t)\in M\times [-T,0]$, and every $\bar t\in [-1,0]$
\begin{equation}\label{eqn:L_grad_est_global}
\begin{aligned}
|L^a|^2(x,t) + &|\nabla L^a|^2 (x,t) +|t|^E|\hess L^a|^2(x,t)\\
&\leq C(n,C_I,\Lambda, C_1, C_2, K|R)\left( (d_{g(\bar t)} (p,x))^4+1 \right),
\end{aligned}
\end{equation}
for some constant $E=E(n,C_I)<+\infty$. 

\noindent \textit{3. Normalization:}
We will show that given $\varepsilon>0$, if $0<\delta\leq \delta(n,C_I,\Lambda,C_1,C_2,K|R,\varepsilon)$ there exists a lower triangular $k\times k$ matrix $A$ is such that $ v= AL$ satisfies
\begin{equation*}
\frac{4}{3}\int_{-1}^{-1/4}\int_M \langle \nabla v^a,\nabla v^b\rangle d\nu_{(p,0),t}  dt =\delta^{ab}
\end{equation*}
and $||A||\leq C(n,\mu)$. Therefore, for any $a=1,\ldots,k$ and $t\in [-10,-\varepsilon]$
 \begin{equation}\label{eqn:hessian_tildeL}
 \int_M |\hess v^a |^2 d\nu_{(p,0),t} \leq C(n,C_I,C_1,C_2|R,\mu,\varepsilon) \mathcal E.
 \end{equation}

To prove this, suppose there is an $\varepsilon>0$, a sequence $\delta_j\rightarrow 0$, a sequence $(M_j,g_j(t),p_j)_{t\in [-2\delta_j^{-1},0]}$ of pointed smooth complete Ricci flows satisfying (RF1-5), and points $x_{i,j}\in M_j$, $i=0,1,\ldots,k$, such that the following hold.
\begin{itemize}
\item [A.] For every $j$, $\{x_{i,j}\}_{i=0}^k\subset B(p_j,-1,R)$ is $(k,\mu,D')$-independent at $t=-1$.
\item[B.] For each $i=0,\ldots, k$, either $(M_j,g_j(t),x_{i,j})_{t\in [-2\delta_j^{-1},0]}$ is $(0,\delta_j)$-selfsimilar, or $$\mathcal W_{x_{i,j}}(\delta_j)-\mathcal W_{x_{i,j}}(\delta_j^{-1})<\delta_j.$$
\item[C.] For each $i=0,1,\ldots,k$, there are functions $w^i_j:M_j\times [-T,0]\rightarrow \mathbb R$ such that
\begin{equation*}
\frac{\partial w^i_j}{\partial t}=\Delta w^i_j-\frac{n}{2},
\end{equation*}
satisfying the estimates of Lemma \ref{lemma:regularized_soliton_function} with $\varepsilon_j=1/j$ with respect to the points $x_{i,j}$ respectively, in particular estimate \eqref{eqn:w_close_f}.
\item[D.] There is $L_j:M_j\times [-T,0]\rightarrow \mathbb R^k$ defined as $L_j=(L_j^1,\ldots,L_j^k)$, where  for  $a=1,\ldots,k$, $L^a_j:M_j\times [-T,0]\rightarrow \mathbb R$ are the solutions to the heat equation  defined by 
$$L^a_j=w^a_{j}-w^0_j- w^a_j(x_{0,j},-1) +w^0_j(x_{0,j},-1).$$
\item[E.] Either there is no sequence of lower triangular $k\times k$ matrices $A_j$  such that $v_j=A_j L_j$ satisfy for $a,b=1,\ldots,k$
$$ \frac{4}{3} \int_{-1}^{-1/4}\int_{M_j} \langle \nabla v_j^a,\nabla v_j^b\rangle d\nu_{(p_j,0),t}  dt = \delta^{ab}$$
 or, if it exists, then $||A_j||\rightarrow +\infty$.\end{itemize}

By (RF1-3) and Proposition \ref{prop:compactness_rf}, we can assume that the sequence $(M_j,g_j(t),p_j)_{t\in [-2\delta_j^{-1},0]}$ converges, smoothly and locally in compact sets, to a Ricci flow $(M_\infty,g_\infty(t),p_\infty)_{t\in (-\infty,0)}$, $x_{i,j}\rightarrow x_{i,\infty}\in M_\infty \cap B(p_\infty,-1,2R)$, and that the conjugate heat flows $\nu_{(x_{i,j},0)}$ and $\nu_{(p_j,0)}$ converge to conjugate heat flows $\nu_{i,\infty}$ (with $d\nu_{i,\infty,t}=(4\pi |t|)^{-n/2} e^{-f_{i,\infty}}d\vol_{g_\infty(t)}$) and $\nu_\infty$,  which satisfy (CHF1) and (CHF2) with respect to $x_{i,\infty}$ and $p_\infty$ respectively, due to (RF3) and (RF4). 

By Assumption B, Lemma \ref{lemma:k_delta_convergence}, and Proposition \ref{prop:W_drop_small} it follows that  the Ricci flow $(M_\infty,g_\infty(t))_{t\in (-\infty,0)}$ is induced by a gradient shrinking Ricci soliton with spine $\mathcal S$, and $\nu_{i,\infty}\in\mathcal S$ for every $i=0,\ldots,k$. Hence $(M_\infty,g_\infty(t),f_{i,\infty}(\cdot,t))$ is a normalized gradient shrinking Ricci soliton at scale $|t|$, for every $t\in (-\infty,0)$. Moreover, $d_{g_\infty(-1)}(x_{i,\infty}, \mathcal S_{\textrm{point}}) \leq D=D(n,H)$, for every $i=0,\ldots,k$.


Therefore, by Proposition \ref{prop:spine_structure}, there is $0\leq l \leq n$ such that $M_\infty=M'_\infty\times \mathbb R^l$, $g_\infty(t)=g'_\infty(t)\oplus \mathbb R^l$ and $\mathcal S_{\textrm{point}}=\mathcal K\times \mathbb R^l$ for some $\mathcal K\subset M_\infty'$ with $\diam_{g'_\infty(t)}(\mathcal K)\leq A(n)\sqrt{|t|}$. 



Since $\{x_{i,j}\}_{i=0}^k$ are $(k,\mu,D')$-independent at $t=-1$, for every $j$,  their limits $\{x_{i,\infty}\}_{i=0}^k$ are also $(k,\mu, D')$-independent at $t=-1$ and contained in $B_{g'_\infty(-1)}(\mathcal K, D) \times \mathbb R^l$, with $$\diam_{g'_\infty(-1)}(B_{g'_\infty(-1)}(\mathcal K, D)) \leq 
D+A.$$
Therefore, setting $D'=D'(n,H)=D(n,H)+A(n)$, it follows that $l\geq k$, since $l<k$ contradicts the definition of $(k,\mu, D')$-independence.

Now, since each $(M_\infty,g_\infty(t),f_{i,\infty}(\cdot,t))$ is a normalized gradient Ricci soliton for every $t<0$,  and each $\nu_{i,\infty}\in \mathcal S$ satisfies (CHF2) with respect to $x_{i,\infty}=(q_i,z_i)\in M'_\infty \times \mathbb R^l$, Proposition \ref{prop:spine_structure} asserts that there is $f'\in C^\infty(M'\times (-\infty,0))$ such that for every $t\in (-\infty,0)$ and $(q,x)\in M'_\infty\times\mathbb R^l$
\begin{equation}\label{eqn:fiinfty_form}
f_{i,\infty}((q,z),t)=\frac{|z-z_i|^2}{4|t|} + f'_\infty(q,t),
\end{equation}
and $(M'_\infty,g'_\infty(t),f'_\infty(\cdot,t))$ is a normalized gradient Ricci soliton that does not split any Euclidean factors, and is independent of $i=0,\ldots,k$.  

Now, by Assumption C, after passing to a further subsequence we can assume that the functions $w^i_j\in C^\infty(M_j\times [-T,0))$ converge to functions $w^i_\infty\in C^\infty(M_\infty\times [-T,0))$, due to parabolic regularity and estimate \eqref{eqn:regularized_soliton_bound}. Moreover, 
$$w^i_\infty = |t|(f_{a,\infty} - \mu(g_\infty(-1)) ),$$
since $\mathcal W_{x_{i,j}}(1)\rightarrow \bar \mu(g_\infty(-1))$, by Proposition \ref{prop:entropy_convergence}.

Therefore, for each $a=1,\ldots,k$, $L_j^a$ smoothly converges to a smooth function $L_\infty^a$ such that
$$L^a_{\infty}=w_\infty^a-w_\infty^0-w^a_\infty(x_{0,\infty},-1) +w^0_\infty(x_{0,\infty},-1).$$
Thus, by \eqref{eqn:fiinfty_form} we obtain that $L^a_\infty((q,z),t)=-\frac{1}{2}\langle z_a-z_0,z-z_0\rangle$ and
\begin{equation}
\nabla L^a_\infty = -\frac{1}{2} (z_a^m-z_0^m)\frac{\partial}{\partial x^m}\label{eqn:nablaLiinfty}
\end{equation}
 where $x^m$ denote the coordinates on $M_\infty$  which correspond to the Euclidean factor $\mathbb R^l$, and $z_a=(z_a^1,\ldots,z_a^l)\in\mathbb R^l$.

Now, the $(k,\mu,D')$-independence of $\{x_{i,\infty}\}_{i=0}^k$ implies that $\{z_i\}_{i=0}^k\subset B(0,R)\subset \mathbb R^l$ is $(k,\mu)$-independent, by Lemma \ref{lemma:independences}, since $R\geq \frac{D'}{\mu}$. It follows that the linear functions $\frac{1}{2} \langle z_a-z_0,x\rangle$, $a=1,\ldots,k$ are linearly independent  linear functionals in $\mathbb R^l$.

Therefore, the exists a lower triangular $k\times k$ matrix $A_\infty$, with $||A_\infty||\leq C(n,\mu)$ such that if $L_\infty=(L^1_\infty,\ldots,L^k_\infty)$ then $\tilde L_\infty=(\tilde L^1_\infty,\ldots,\tilde L^k_\infty)$ with $\tilde L_\infty=A_\infty L_\infty$ satisfy
\begin{equation}
\frac{4}{3}\int_{-1}^{-1/4} \int_{M_\infty} \langle \nabla \tilde L^a_\infty,\nabla \tilde L^b_\infty\rangle  d\nu_{\infty,t} dt =\delta^{ab}
\end{equation}
 
Now, by \eqref{eqn:L_grad_est_global} and Lemma \ref{lemma:int_ker_bounds}
\begin{equation*}
\int_{-1}^{-1/4} \int_{M_j} \langle \nabla L^a_j,\nabla L^b_j \rangle d\nu_{(p_j,0),t}  dt \rightarrow \int_{-1}^{-1/4}\int_{M_\infty}\langle \nabla L^a_\infty,\nabla L^b_\infty\rangle d\nu_{\infty,t} dt,
\end{equation*}
hence there are lower triangular $k\times k$ matrices $A_j\rightarrow A_\infty$ so that 
$v_j=(v_j^1,\ldots,v_j^k)$, defined as $v_j=A_j L_j$, satisfy
\begin{equation}
\frac{4}{3}\int_{-1}^{-1/4} \int_{M_j} \langle \nabla  v^a_j,\nabla v^b_j\rangle  d\nu_{(p_j,0),t} dt =\delta^{ab},
\end{equation}
which is a contradiction.
 
 \noindent \textit{4. $L^2$-gradient estimates:} If $0<\delta\leq \delta'(n,C_I,C_1,C_2|R,\mu,\varepsilon')$ then by \eqref{eqn:E_small} and \eqref{eqn:hessian_tildeL}, for any $a=1,\ldots,k$ 
 \begin{equation}\label{eqn:hess_v_small}
 \int_{-10}^{-\varepsilon'}\int_M |\hess v^a|^2 d\nu_{(p,0),t}dt <\varepsilon',
 \end{equation}
hence, applying Lemma \ref{lemma:space_time_Poincare} to $v$ and choosing $0<\varepsilon'=\varepsilon'(n,C_I|\varepsilon)<\varepsilon$, we  also obtain
 \begin{equation*}
 \int_{-1}^{-\varepsilon} \int_M \left| \langle \nabla v^a,\nabla v^b \rangle -\delta^{ab}\right|^2 d\nu_{(p,0),t} dt <\varepsilon,
 \end{equation*}  
for any $a,b=1,\ldots,k$. Thus, $v$ is a $(k,\varepsilon)$-splitting map satisfying  all the estimates of the lemma.
 \end{proof}
 \begin{definition}
Let $T=T(n,H)$ and $D'=D'(n,H)<+\infty$ be the constants provided by Proposition \ref{prop:sharp_splitting}, $r>0$, and let $(M^n,g(t))_{t\in I}$, $ [-2Tr^2,0]\subset I$, be a smooth compact Ricci flow satisfying (RF1-5).
 
 Given $\{x_i\}_{i=0}^k \subset M$, we define
\begin{equation*}
\mathcal E^k_r(\{x_i\}_{i=0}^k)=\sum_{i=0}^k \mathcal W_{x_i}(r^2)-\mathcal W_{x_i}(2Tr^2).
\end{equation*}
Moreover, if $p\in M$, $\frac{D'}{\mu}\leq R<+\infty$  and $[-2Tr^2,0] 
\subset (-2\delta^{-1}r^2,0)\subset I$, we define
\begin{equation*}
\begin{aligned}
\mathcal E^{(k,\mu,\delta,R)}_r(p)= \inf&\left\{ \mathcal E^k_r(\{x_i\}_{i=0}^k),\{x_i\}_{i=0}^k \subset B(p,-r^2,Rr)\textrm{ is $(k,\mu, D'r)$-independent at $t=-r^2$}, \right.\\
&\left. \textrm{and for each $i=0,\ldots,k$, either } (M,g(t),x_i)_{t\in (-2\delta^{-2}r^2,0)} \textrm{is $(0,\delta)$-selfsimilar, or} \right.\\
&\left.\mathcal W_{x_i}(\delta r^2)-\mathcal W_{x_i}(\delta^{-1} r^2)<\delta.\right\}. 
\end{aligned}
\end{equation*}
We call $\mathcal E^{(k,\mu,\delta,R)}_r(p)$ the $(k,\mu,\delta,R)$-entropy pinching around $p$ at scale $r$.
\end{definition}

\begin{theorem}\label{thm:sharp_splitting}
Fix $\varepsilon>0$ and $0<\mu\leq \frac{1}{6}$. Let $(M,g(t),p)_{t\in(-2\delta^{-2},0]}$ be a smooth compact Ricci flow satisfying (RF1-5). Suppose that there is $q\in B(p,-1,R)$ such that $(M,g(t),q)_{t\in(-2\delta^{-2},0)}$ is $(k,\delta^2)$-selfsimilar around $q$ at scale $1$ with respect to $\mathcal L_{q,1}$, and if $q\not = p$ suppose that
$$\mathcal W_p(\delta)-\mathcal W_p(\delta^{-1}) <\delta.$$

There is $D'=D'(n,H)<+\infty$ such that if $R\geq \frac{D'}{\mu}$ and $0<\delta\leq \delta(n,C_I,\Lambda,H,K|R,\mu,\varepsilon)$, then there is a $(k,\varepsilon)$-splitting map $v:M\times [-1,0]\rightarrow \mathbb R^k$, $v=(v^1,\ldots,v^k)$, at scale $1$ around $p$ such that
\begin{equation*}
\int_M |\hess  v^a|^2 d\nu_{(p,0),t} \leq C(n,C_I,\Lambda,H|R,\mu,\varepsilon) \mathcal E^{(k,\mu,\delta,R)}_1(p),
\end{equation*}
for every $a=1,\ldots,k$ and $t\in [-1,-\varepsilon]$.
\end{theorem}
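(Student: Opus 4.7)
The plan is to reduce Theorem \ref{thm:sharp_splitting} to Proposition \ref{prop:sharp_splitting} by exhibiting an admissible collection $\{x_i\}_{i=0}^k \subset B(p,-1,R)$ that nearly minimizes the entropy $k$-pinching and whose entropy drop is small enough to satisfy the threshold of that proposition. First, I would verify the admissible set is non-empty. The $(k,\delta^2)$-selfsimilarity gives a $k$-selfsimilar model $(\tilde M,\tilde g(t),q)_{t\in(-\infty,0)}$ and a diffeomorphism onto its image $F : B(q,-1,\delta^{-2}) \rightarrow M$ with $F(q)=p$, with $\delta^2$-small distortion on $[-\delta^{-2},-\delta^2]$. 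By Proposition \ref{prop:spine_structure}, $\tilde M = M' \times \mathbb R^k$ and $\mathcal S_{\textrm{point}} = \mathcal K \times \mathbb R^k$ with $\diam(\mathcal K) \leq A(n)$. Fix $q_0\in\mathcal K$ and pick $\tilde a_0=0, \tilde a_i = \tfrac{R}{2k}e_i$, $i=1,\ldots,k$, which are $(k,3\mu)$-independent in $\mathbb R^k$. Since $R \geq A(n)/\mu$, Lemma \ref{lemma:independences}(2) implies $\tilde x_i := (q_0, \tilde a_i)$ is $(k,\mu,A(n))$-independent in $B(q,-1,R)$. Transferring via $F$, the points $x_i := F(\tilde x_i) \in B(p,-1,R)$ remain $(k,\mu,A(n))$-independent at $t=-1$ for $\delta$ small.

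Next, I would check that each $(M,g(t),x_i)_{t\in(-2\delta^{-1},0)}$ is $(0,\delta)$-selfsimilar at scale $1$. Since $\tilde x_i \in \mathcal S_{\textrm{point}}$, $(\tilde M,\tilde g(t),\tilde x_i)_{t\in(-\infty,0)}$ is $0$-selfsimilar, and composing $F$ with the natural translation identifying $\tilde x_i$ with the basepoint yields a diffeomorphism exhibiting $(0,\delta)$-selfsimilarity of $x_i$, provided $\delta$ is small in a manner depending on $R$. (If a hands-on check gets tedious, the same conclusion follows by a short contradiction/compactness argument using Proposition \ref{prop:compactness_rf} and Lemma \ref{lemma:k_delta_convergence}.) Hence the admissible set is non-empty and $\mathcal E^{(k,\mu,\delta,R)}_1(p)<+\infty$.

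Now choose a near-minimizer $\{x_i^\ast\}_{i=0}^k$ with $\mathcal E_1^k(\{x_i^\ast\})\leq 2\mathcal E^{(k,\mu,\delta,R)}_1(p)$. Each $x_i^\ast$ is $(0,\delta)$-selfsimilar, so applying Proposition \ref{prop:L} with parameter $1/(2T)$, where $T=T(n,C_1,C_2)$ is the constant from Proposition \ref{prop:sharp_splitting}, and invoking monotonicity of $\mathcal W_{x_i^\ast}$, yields
\[
\mathcal W_{x_i^\ast}(1) - \mathcal W_{x_i^\ast}(2T) \;\leq\; \mathcal W_{x_i^\ast}(1/(2T)) - \mathcal W_{x_i^\ast}(2T) \;<\; \tfrac{1}{2T},
\]
provided $\delta\leq \delta(n,C_I,\Lambda,C_1,T)$. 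Summing, $\mathcal E_1^k(\{x_i^\ast\}) \leq (k+1)/(2T)$, which by shrinking $\delta$ further (in the same fashion) falls below the threshold $\bar\delta$ of Proposition \ref{prop:sharp_splitting}. That proposition then produces a $(k,\varepsilon)$-splitting map $v$ at scale $1$ around $p$ with
\[
\int_M |\hess v^a|^2\, d\nu_{(p,0),t} \;\leq\; C(n,C_I,C_1,C_2\mid R,\mu,\varepsilon)\,\mathcal E_1^k(\{x_i^\ast\}) \;\leq\; 2C\,\mathcal E^{(k,\mu,\delta,R)}_1(p),
\]
for every $t\in[-1,-\varepsilon]$, which is the assertion.

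The main technical obstacle I expect is the bookkeeping in the second step: namely, converting the $(k,\delta^2)$-selfsimilarity assumption at $p$ into $(0,\delta)$-selfsimilarity at each $x_i$, with all constants---in particular the $R$-dependence of the required smallness of $\delta$---fitting together. This requires care because the defining parabolic neighborhoods for selfsimilarity at $x_i$ differ from those at $p$ (they are translated by up to $R$ in space), so the distortion and spatial-derivative bounds inherited from $F$ must be reorganized to match the definition at the new basepoints. All other ingredients---non-emptiness, near-minimization, and the entropy drop estimate---are direct consequences of results already established in Sections \ref{sec:selfsimilar} and \ref{sec:construction}.
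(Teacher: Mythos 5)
Your overall strategy matches the paper's: construct an admissible collection to show $\mathcal E^{(k,\mu,\delta,R)}_1(p)$ is finite and small, pick a near-minimizer, and feed it to Proposition \ref{prop:sharp_splitting}. Two issues, however, need attention.

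First, the specific choice $\tilde a_i = \tfrac{R}{2k}e_i$ fails to be $(k,3\mu)$-independent in $B(0,R)$ for $k\geq 2$ and $\mu$ close to $\tfrac{1}{6}$. All pairwise distances among $\tilde a_0,\ldots,\tilde a_k$ are at most $\tfrac{R\sqrt 2}{2k}$, which for $k\geq 2$ is less than $\tfrac{R}{2}$ and hence less than $3\mu R$ for any $\mu\geq\tfrac{\sqrt 2}{6k}$; mapping all the points to a single point in $\mathbb R^0$ then has distortion below the threshold, so the set is not even $(1,3\mu)$-independent. A near-regular simplex of size $\approx R$ must be used instead, and even then the independence for the full range $\mu\leq\tfrac16$ is not a consequence of the explicit formula but of Remark \ref{rmk:ind}, which is what the paper invokes.

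Second, you apply Proposition \ref{prop:L} to the near-minimizers $x_i^\ast$ to bound $\mathcal W_{x_i^\ast}(1)-\mathcal W_{x_i^\ast}(2T)$, but Proposition \ref{prop:L} is stated for $q\in\mathcal L_{p,1}$ where the \emph{central} point is $(0,\delta^2)$-selfsimilar, and the near-minimizer need not lie in $\mathcal L_{p,1}$. The intended reading is to apply the proposition with $p=q=x_i^\ast$, using that $x_i^\ast\in\mathcal L_{x_i^\ast,1}$ automatically; this works only after upgrading $(0,\delta)$-selfsimilarity of $x_i^\ast$ to $(0,(\delta')^2)$-selfsimilarity, which requires $\delta\leq(\delta')^2$ and a correspondingly smaller threshold for $\delta$ in the theorem, a point your writeup glosses over. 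The paper takes a cleaner route: apply Proposition \ref{prop:L} once, at $p$, to deduce both the GH-structure of $\mathcal L_{p,1}\cap B(p,-1,R)$ and the entropy-drop bound for all $q$ there; then use Lemma \ref{lemma:independences}(2) and Remark \ref{rmk:ind} to locate a $(k,\mu,A(n))$-independent subset of $\mathcal L_{p,1}\cap B(p,-1,R)$, which immediately bounds the infimum $\mathcal E^{(k,\mu,\delta,R)}_1(p)<\bar\delta/2$ and makes the entropy-drop threshold for any near-minimizer automatic. Your route reaches the same place but at the cost of reapplying the proposition at each $x_i^\ast$ and tracking the extra self-similarity bookkeeping you flag at the end.
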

\begin{proof}
Apply Proposition \ref{prop:sharp_splitting} to obtain $\bar \delta=\bar\delta(n,C_I,\Lambda,H,K|R,\mu,\varepsilon)>0$ and $D'=D'(n,H)<+\infty$ such that the conclusions of Proposition \ref{prop:sharp_splitting} hold.

Moreover, by Corollary \ref{cor:distance_2D}, if $0<\delta\leq \delta(n,C_I,\Lambda,H|\varepsilon)$ then
\begin{equation}\label{eqn:small_distance}
d_{g(-1)}(p,\mathcal L_{q,1})\leq 2D.
\end{equation}
Let $q'\in \mathcal L_{q,1}$ be such that $d_{g(-1)}(p,q')\leq 2D$.

By Proposition \ref{prop:L}, if  $0<\delta\leq \delta(n,C_I,\Lambda,C_1|R,\frac{\bar\delta}{k},\delta')\leq \bar \delta$,  then $\mathcal L_{q,1}\cap B(q,-1,5R)$ at time $t=-1$ is $\delta'$-GH close to a ball of radius $5R$ in a  product metric space $(\mathcal K\times \mathbb R^l,d_{\mathcal K\times \mathbb R^l})$, $l\geq k$, with $\diam(\mathcal K)\leq A(n)$. Moreover, for each $x\in \mathcal L_{q,1}\cap B(q,-1,5R)$
\begin{equation}\label{eqn:d1_entropy_drop}
\mathcal W_x(1)-\mathcal W_x(2T)<\frac{\bar\delta}{2k}
\end{equation}
and $(M,g(t),x)_{t\in (-2\delta^{-1},0)}$ is $(0,\delta)$-selfsimilar.

Now, using Remark \ref{rmk:ind}, take any $(k,6\mu)$-independent subset of $B(0,R/2)\subset \mathbb R^k\subset\mathbb R^l$. This will also be $(k,3\mu)$-independent in $B(0,R)$. Using the $\delta'$-GH approximation, Lemma \ref{lemma:independences} and \eqref{eqn:small_distance}, we can then construct a subset $\{x_i\}_{i=0}^k\subset \mathcal L_{q,1} \cap B(p,-1,R)$ which is $(k,\mu,D')$-independent at $t=-1$, if $\delta'$ is small enough depending on $n$, $\mu$ and $R$.

Hence, by \eqref{eqn:d1_entropy_drop},
$$\mathcal E^{(k,\mu,\delta,R)}_{1}(p)\leq \sum_{i=0}^k\mathcal W_{x_i}(1)-\mathcal  W_{x_i}(2 T) <\bar\delta/2.$$
Thus, we can apply Proposition \ref{prop:sharp_splitting} for any $(k,\mu,D')$-independent at $t=-1$ subset $\{y_i\}_{i=0}^k\subset B(p,-1,R)$ such that either $(M,g(t),y_i)_{t\in (-2\delta^{-2},0)}$ is $(0,\delta)$-selfsimilar, or $\mathcal W_{y_i}(\delta)-\mathcal W_{y_i}(\delta^{-1})<\delta$, for each $i$, and take the infimum. Namely, take any such $\{y_i\}_{i=0}^k$ satisfying
$$\sum_{i=0}^k\mathcal W_{x_i}(1)-\mathcal  W_{x_i}(2 T) \leq 2  \mathcal E^{(k,\mu,\delta,R)}_{1}(p)<\bar\delta,$$ 
and let $v$ be a $(k,\varepsilon)$-splitting map constructed from $\{y_i\}_{i=0}^k$ using  Proposition \ref{prop:sharp_splitting}.
\end{proof}

\begin{proof}[Proof of Theorem \ref{intro_thm:sharp_splitting_maps}]
By Proposition \ref{prop:RF35}, $(M,g(t),p)_{t\in (-2\delta^{-2},0]}$ satisfies (RF1-5) for some constants $C_I,\Lambda,C_1,C_2,K$ depending only on $n,C_I,\Lambda$. Thus Theorem \ref{thm:sharp_splitting} proves the result.
\end{proof}

\section{A Geometric Transformation Theorem}\label{sec:GTT}

\begin{theorem}\label{thm:transformations}
Let $\varepsilon>0$, $\eta>0$, $0<\mu\leq \frac{1}{6}$, $1\leq k\leq n$, $R\geq \frac{D'}{\mu}$, where $D'=D'(n,H)<+\infty$, and $(M,g(t),p)_{t\in (-2\delta^{-2},0]}$ be a smooth compact Ricci flow satisfying (RF1-5). Let $r\in (0,1)$ and suppose that for every $s\in[r,1]$ there is a $q_s\in B(p,-s^2,Rs)$ such that $(M,g(t),q_s)_{t\in (-2\delta^{-2},0)}$ is $(k,\delta^2)$-selfsimilar but not $(k+1,\eta)$-selfsimilar at scale $s$, and if $q_s\not = p$ then
\begin{equation*}
\mathcal W_p (\delta s^2)-\mathcal W_p(\delta^{-1} s^2) <\delta
\end{equation*}
holds.

Let  $v$ be a $(k,\delta)$-splitting map $v$ around $p$ at scale $1$.

If $0<\delta\leq\delta(n,C_I,\Lambda,C_1,C_2,K,\eta|R,\mu,\varepsilon)$  then for every $r\leq s\leq 1$ there is a lower triangular $k\times k$ matrix  $T_s$ such that
\begin{enumerate}
\item $v_s:=T_s v=(v_s^1,\ldots,v_s^k)$ is a $(k,\varepsilon)$-splitting map around $p$ at scale $s$, normalized so that for any $a,b=1,\ldots,k$
\begin{equation*}
\frac{4}{3s^2} \int_{-s^2}^{-s^2/4} \int_M \langle \nabla v_s^a,\nabla v_s^b\rangle d\nu_{(p,0)} dt = \delta^{ab}.
\end{equation*}
\item Let $s_j= \frac{1}{2^j}$. Then there $\theta=\theta(n,C_I, \Lambda,C_1)$ such that for every $r\leq s\leq 1$ and $a=1,\ldots,k$
\begin{equation}
\begin{aligned}
 \int_{-s^2}^{-\frac{s^2}{4}} \int_M &|\hess v_s^a|^2 d\nu_{(p,0),t} dt \leq \\
 &\leq C(n,C_I,\Lambda,C_1,C_2,\eta|R,\mu)  \left(\delta s^\theta+\sum_{s\leq s_j\leq 1} \left( \frac{s}{s_j} \right)^\theta \mathcal E^{(k,\mu,\delta,R)}_{s_j} (p)\right).
\end{aligned}
\end{equation}
\end{enumerate}
\end{theorem}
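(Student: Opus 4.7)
The plan is a downward induction on the dyadic scales $s_j = 2^{-j} \in [r,1]$, combining an extend-and-improve contradiction argument based on the spectral gap Lemma \ref{lemma:gap} and the rigidity Lemma \ref{lemma:growth_linear} for Assertion (1), with a telescoping estimate using the sharp splitting maps of Theorem \ref{thm:sharp_splitting} and the H\"older Hessian decay of Lemma \ref{lemma:hessian_decay_solitons} for Assertion (2). At each dyadic scale $s_j$ for which the inductive hypothesis provides that $v|_{M\times[-s_j^2,0]}$ is close enough to a $(k,\delta')$-splitting map at scale $s_j$, Lemma \ref{lemma:normalize_splitting} produces a unique lower triangular $T_{s_j}$ realizing the prescribed normalization, with $\|T_{s_j} T_{s_{j-1}}^{-1} - I_k\|$ small.

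For Assertion (1), suppose a sequence of counterexamples $(M_i,g_i(t),p_i)$ with $\delta_i\to 0$ admits a largest dyadic failure scale $\bar s_i \in [r_i,1]$ at which $v_{\bar s_i} = T_{\bar s_i} v_i$ fails to be a $(k,\varepsilon)$-splitting map at scale $\bar s_i$. Rescale $\tilde g_i(t) = \bar s_i^{-2} g_i(\bar s_i^2 t)$ and $\tilde v_i = \bar s_i^{-1} v_{2\bar s_i}$ (using the normalization at the next larger dyadic scale, where the induction still holds). The $(k,\delta_i^2)$-selfsimilarity at all scales in $[r_i,1]$ becomes $(k,\delta_i^2)$-selfsimilarity at all scales in $[1, 1/\bar s_i] \to [1,\infty)$. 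Propositions \ref{prop:compactness_rf} and \ref{prop:s_map_compactness}, together with Lemma \ref{lemma:k_delta_convergence}, yield a subsequential limit consisting of a smooth complete $k$-selfsimilar Ricci flow $(M_\infty,g_\infty(t),p_\infty)_{t\in(-\infty,0)}$ that is not $(k+1,\eta)$-selfsimilar, a limit conjugate heat flow $\nu_\infty \in \mathcal S$, and a heat solution $\tilde v_\infty : M_\infty \times (-\infty,0) \to \mathbb R^k$ which by construction is a $(k,0)$-splitting map at every scale $s\geq 1$. Applying the Hein--Naber Poincar\'e inequality Theorem \ref{thm:poincare} componentwise across scales gives the polynomial growth $\int_{M_\infty} (\tilde v_\infty^a)^2 d\nu_{\infty,t} \leq C(|t|+1)$, so Lemma \ref{lemma:growth_linear} (whose $\varepsilon_0$ is uniform under the hypotheses via Lemma \ref{lemma:gap}) forces each $\tilde v_\infty^a$ to be an affine function of the Euclidean coordinates $x^1,\ldots,x^k$ induced by the splitting $M_\infty = M_\infty' \times \mathbb R^k$. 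The limiting normalization then pins $\tilde v_\infty$ to an isometry of $\mathbb R^k$ applied to the projection onto the Euclidean factor, contradicting failure at scale $1$ in the limit.

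For Assertion (2), at each dyadic $s_j \in [s,1]$ Theorem \ref{thm:sharp_splitting} produces a ``sharp'' $(k,\varepsilon)$-splitting map $w_j$ at scale $s_j$ around $p$ with $\int_M |\hess w_j^a|^2 d\nu_{(p,0),t} \leq C \mathcal E^{(k,\mu,\delta,R)}_{s_j}(p)$ on $[-s_j^2, -\varepsilon s_j^2]$. Write $v_{s_j} = A_j w_j + \psi_j$ where $A_j$ is a lower triangular matrix aligning the normalizations at scale $s_j$; the difference $\psi_j$ is a heat solution whose gradient Gram matrix at scale $s_j$, in the $\nu_{(p,0)}$-weighted inner product, vanishes. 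Passing to the selfsimilar limit, this forces the $\mathbb R^k$-coefficients in the $L^2_{\bar\nu}$-expansion of $\psi_j$ (Theorem \ref{thm:spectrum} and Lemma \ref{lemma:proj_evol}) to vanish, so Lemma \ref{lemma:hessian_decay_solitons} applies and gives a H\"older decay
\begin{equation*}
\int_{-s^2}^{-s^2/4}\int_M |\hess \psi_j|^2 d\nu_{(p,0),t}\, dt \leq C\left(\frac{s}{s_j}\right)^{\theta} \int_{-s_j^2}^{-s_j^2/4}\int_M |\hess \psi_j|^2 d\nu_{(p,0),t}\, dt,
\end{equation*}
up to error terms from the closeness of the true flow to the selfsimilar model, controlled by $\delta^2$ via Lemma \ref{lemma:k_delta_convergence}. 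Iterating this over the dyadic scales between $s$ and $1$: the Hessian defect of $v_{s_j}$ at scale $s_j$ is at most the sharp contribution $C\mathcal E^{(k,\mu,\delta,R)}_{s_j}(p)$ (via $A_j w_j$) plus the inherited defect from scale $s_{j-1}$ decayed by $(s_j/s_{j-1})^\theta$, and the initial $\delta$ defect at scale $1$ contributes $\delta s^\theta$ after iteration. Summing the geometric-type series yields the claimed bound.

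The principal obstacle is the contradiction argument of Step 2, which requires extracting limits that simultaneously respect the splitting normalizations at all larger dyadic scales and preserve enough integrability to invoke the rigidity Lemma \ref{lemma:growth_linear}; this in turn rests on the uniform compactness of conjugate heat flows from Proposition \ref{prop:compactness_rf} and the entropy convergence Proposition \ref{prop:entropy_convergence}. A secondary difficulty is to verify that $\psi_j$ has truly vanishing linear part in the selfsimilar limit, which requires a careful Cholesky bookkeeping between the Gram matrices of $v_{s_j}$ and $w_j$.
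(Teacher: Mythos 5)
Your plan mirrors the paper's proof closely. For Assertion (1) you use the same rescaling-and-compactness contradiction as the paper's Proposition \ref{prop:transformations}, relying on the rigidity Lemma \ref{lemma:growth_linear} to force the blow-up limit to be linear. For Assertion (2), the decomposition of each $v_s$ into a sharp splitting map contribution (Theorem \ref{thm:sharp_splitting}) plus an orthogonal remainder whose Hessian decays at a H\"older rate (Lemma \ref{lemma:hessian_decay_solitons}), followed by a dyadic iteration, is exactly the content of the paper's Lemmas \ref{lemma:hessian_decay} and \ref{lemma:decay_pinching} together with Proposition \ref{prop:decay_transformations}.

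Two caveats are worth flagging. First, your claimed growth rate $\int_{M_\infty}(\tilde v_\infty^a)^2\,d\nu_{\infty,t}\leq C(|t|+1)$ is an overstatement. The transformation matrices accumulate as $\|T_{s_1}T_{s_2}^{-1}\|\leq (s_2/s_1)^{C(n)\sqrt\varepsilon}$ by Lemma \ref{lemma:normalize_splitting}, so the rescaled gradient at parabolic time $t$ has $L^2$-norm of order $|t|^{C(n)\sqrt\varepsilon/2}$, and the Hein--Naber Poincar\'e inequality then gives $\int(\tilde v_\infty^a)^2\,d\nu_{\infty,t}\leq C|t|^{1+C(n)\sqrt\varepsilon}$ (Claim 1 in the paper's proof), not $C(|t|+1)$. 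Your conclusion survives because Lemma \ref{lemma:growth_linear} permits exponent $1+\varepsilon'$ for $\varepsilon'<\varepsilon_0(n,C_I,\Lambda,H,\eta)$, but this is precisely why the smallness of $\varepsilon$ relative to the spectral-gap threshold $\varepsilon_0$ of Lemma \ref{lemma:gap} matters; if the growth were truly linear there would be no $\varepsilon$-dependence to track. Second, you elide the hypercontractivity-based Hessian concentration estimate (Claim 2 in the paper's proof). This is not a routine technicality: to go from $\hess\tilde v_\infty=0$ on the limit to smallness of $\int|\hess\hat v_j|^2$ for large $j$ (and then, via Lemmas \ref{lemma:space_time_Poincare} and \ref{lemma:near_splitting}, to the existence of the forbidden transformation at the next smaller scale), one needs the Hessian mass to concentrate on compact sets uniformly in $j$. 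Fatou only yields the one-sided inequality $\int|\hess\hat v_\infty|^2\leq\liminf\int|\hess\hat v_j|^2$, which is vacuous here; the reverse direction requires the tail control supplied by Claim 2.
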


\subsection{Adjusting a splitting map}

\begin{proposition}[Transformations]\label{prop:transformations}
Fix $\eta>0$, $\varepsilon>0$, $1\leq k\leq n$, $r\in (0,1)$ and 
let $(M,g(t),p)_{t\in (-2\delta^{-2},0]}$  be pointed smooth complete Ricci flow satisfying (RF1-3). Suppose that for every $s\in [r,1]$ there is $q_s\in B(p,-s^2,Rs)$ such that $(M,g(t),q_s)_{t\in (-2\delta^{-2},0)}$ is $(k,\delta^2)$-selfsimilar  but not  $(k+1,\eta)$-selfsimilar at scale $s$ around $q_s$. Moreover, if $q_s\not = p$ we further assume that
\begin{equation}
\mathcal W_p (\delta s^2 ) - \mathcal W_p ( \delta^{-1} s^2) <\delta.
\end{equation}

Let $v$ be a $(k,\delta)$-splitting map around $p$ at scale $1$. If  $0<\delta \leq \delta(n,C_I,\Lambda,C_1,R,\eta|\varepsilon)$ then for each $s\in [r,1]$ there is a lower triangular $k\times k$ matrix $T_s$ such that
\begin{enumerate}
\item $v_s:=T_s v$ is a $(k,\varepsilon)$-splitting map at scale $s$.
\item For any $a,b=1,\ldots,k$,
 \begin{equation}\label{eqn:vr_normalization}
\frac{4}{3s^2}\int_{-s^2}^{-s^2/4}\int_M \langle \nabla v_s^a,\nabla v_s^b\rangle  d\nu_{(p,0),t} dt = \delta^{ab}.
\end{equation}
\item $|| T_s\circ T_{2s}^{-1} - I_k||<C(n)\sqrt\varepsilon$ and whenever $r\leq s_1\leq s_2 \leq \frac{1}{2}$, we have 
\begin{equation}
||T_{s_1}\circ T_{s_2}^{-1}|| \leq \left(\frac{s_2}{s_1}\right)^{C(n)\sqrt{\varepsilon}} \quad \textrm{and}\quad ||T_{s_1}||\leq (1+\varepsilon) s_1^{-C(n)\sqrt\varepsilon},
\end{equation}
where $||\cdot||$ denotes the maximum norm of a $k\times k$ matrix.
\end{enumerate}
\end{proposition}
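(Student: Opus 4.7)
The plan is to proceed by downward induction on the scale $s$ along the dyadic values $s_j = 2^{-j}$, establishing the three assertions simultaneously, with the single-step comparison $\|T_s T_{2s}^{-1} - I_k\| \leq C(n)\sqrt{\varepsilon}$ as the central quantitative claim. For each $s \in [r,1]$ I would define $T_s$ directly via Lemma \ref{lemma:normalize_splitting} as the unique lower triangular matrix making $v_s = T_s v$ satisfy the normalization (2), with existence of $T_s$ being part of the inductive content. The base case $s=1$ is immediate since $v$ is a $(k,\delta)$-splitting map at scale 1, so $\|T_1 - I_k\| \leq C(n)\sqrt{\delta}$ by Lemma \ref{lemma:normalize_splitting}. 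Inductively, assuming the conclusions hold at scale $2s$, I would first note that $v_{2s}$ is an almost splitting map at scale $s$ with only slightly weaker constants (the loss coming from restricting the time interval), then apply Lemma \ref{lemma:normalize_splitting} at scale $s$ to produce $T_s$ and set $A := T_s T_{2s}^{-1}$.

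The heart of the argument is the bound $\|A - I_k\| \leq C(n)\sqrt{\varepsilon}$, which I would obtain by a contradiction and compactness argument. Suppose it fails along a sequence of flows satisfying the hypotheses with $\delta_j \to 0$ and $(k,\varepsilon)$-splitting maps $v_{2s_j, j}$ at scales $2s_j$. Rescaling so that the failing scale becomes $s=1$, the rescaled flows are $(k,\delta_j^2)$-selfsimilar but not $(k+1,\eta)$-selfsimilar at scale $1$. Using Propositions \ref{prop:compactness_rf} and \ref{prop:s_map_compactness}, I pass to a limit pointed Ricci flow $(M_\infty, g_\infty(t), p_\infty)_{t\in(-\infty,0)}$, which by Lemma \ref{lemma:k_delta_convergence} is $k$-selfsimilar but not $(k+1,\eta)$-selfsimilar, together with a limiting heat-equation solution $v_\infty:M_\infty\times(-\infty,0)\to\mathbb R^k$ which is a $(k,\varepsilon)$-splitting map at scale 2. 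Because $t\mapsto \int_{M_\infty}(v_\infty^a)^2\, d\nu_{f,t}$ is nonincreasing under the heat equation, the $L^2$ bounds at scale 2 extend to at most polynomial growth as $t\to -\infty$, with exponent arbitrarily close to $1$. This lets me invoke Lemma \ref{lemma:growth_linear} (powered by the spectral gap of Lemma \ref{lemma:gap}), which forces each $v_\infty^a$ to be affine in the Euclidean coordinates of the soliton splitting $M_\infty = M'_\infty \times \mathbb R^k$, with $\int v_\infty^a\, d\nu_{f,t} = 0$. The normalization at scale $2$ then pins down $\langle\nabla v_\infty^a, \nabla v_\infty^b\rangle \equiv \delta^{ab}$, and the same Gram matrix at scale $1$ gives $A\to I_k$ in the limit, contradicting the assumed failure.

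The telescoping bounds $\|T_{s_1} T_{s_2}^{-1}\| \leq (s_2/s_1)^{C(n)\sqrt{\varepsilon}}$ and $\|T_{s_1}\| \leq C(n) s_1^{-C(n)\sqrt{\varepsilon}}$ then follow by multiplying the one-step estimate over the $\log_2(s_2/s_1)$ dyadic scales in between, using $(1+C\sqrt\varepsilon)^N \leq e^{CN\sqrt\varepsilon}$. Conclusion (1) is deduced from (3): writing $v_s = A\, v_{2s}$ with $\|A\| \leq 1+C\sqrt\varepsilon$, the Hessian estimate at scale $s$ follows from that at scale $2s$ multiplied by $\|A\|^2$, after accounting for the change of time interval (which costs only a bounded factor since $[-s^2,-\delta s^2] \subset [-2s)^2, -\delta s^2]$). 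The gradient estimate $\int|\langle\nabla v_s^a,\nabla v_s^b\rangle -\delta^{ab}|^2 < \varepsilon$ then comes from the space-time Poincar\'e argument of Lemma \ref{lemma:space_time_Poincare}, applied to $v_s$ using the normalization (2) together with the freshly established Hessian control.

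The main obstacle I anticipate is maintaining uniform $L^2_{\nu_{(p,0),t}}$ bounds on the renormalized maps $v_{2s_j,j}$ along the rescaled sequence so that Proposition \ref{prop:s_map_compactness} actually applies and extracts a limit defined on all of $(-\infty,0)$, and arranging that the growth exponent of the limit $v_\infty$ falls below the threshold $\varepsilon_0(n,C_I,\Lambda,H,\eta)$ of Lemma \ref{lemma:growth_linear} --- this is what ultimately dictates how small $\delta$ must be taken in terms of $\eta$ and $\varepsilon$. A secondary subtlety is that within the inductive step one must verify that $v_{2s}$ satisfies the hypotheses of Lemma \ref{lemma:normalize_splitting} at scale $s$ (i.e.\ the Gram matrix $B_s$ is positive definite), which uses the closeness of $B_s$ to $B_{2s}$ at the current step and is what justifies defining $T_s$ in the first place.
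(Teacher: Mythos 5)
Your plan has the right ingredients (compactness, the spectral gap via Lemma \ref{lemma:gap}, linearity via Lemma \ref{lemma:growth_linear}), but the way you have assembled them leaves a genuine gap, and in fact the contradiction is aimed at the wrong statement. The one-step bound $\|T_sT_{2s}^{-1}-I_k\|\leq C(n)\sqrt{\varepsilon}$ is the \emph{easy} part: once $v_{2s}$ is a normalized $(k,\varepsilon)$-splitting map at scale $2s$, rescaling and applying Lemma \ref{lemma:normalize_splitting} with $r=1/2$, together with uniqueness of the Cholesky factorization, gives it directly, with no limiting argument needed. The hard part is Assertion (1) with the \emph{same} $\varepsilon$ at every scale, and your deduction of (1) from (3) --- Hessian bound at scale $s$ equals Hessian bound at scale $2s$ times $\|A\|^2$ times a bounded factor from shrinking the time interval, then Lemma \ref{lemma:space_time_Poincare} for the gradient --- does not close the induction: each dyadic step multiplies the Hessian bound by a factor strictly larger than $1$, and Lemma \ref{lemma:space_time_Poincare} needs the Hessian input to be small compared to the desired gradient output (it turns a bound $\delta$ into roughly $\delta^{1/8}$), so after the $\sim\log_2(1/r)$ steps, with $r$ arbitrarily small, the constant degrades without bound. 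This is precisely why the paper does not induct directly but argues by contradiction at the \emph{first failing scale} $\rho_j$ and proves an improvement there: assuming the conclusion at all scales in $[\rho_j,1]$, the rescaled map $\hat v_j$ at scale $\rho_j$ is shown (Claim 1) to satisfy polynomial growth with exponent $C(n)\sqrt{\varepsilon}$ on the whole interval $[-\rho_j^{-2},0]$; the limit is then linear by Lemma \ref{lemma:growth_linear}, so by the concentration estimate (Claim 2) the Hessian of $\hat v_j$ is eventually smaller than any $\varepsilon''$, making $\hat v_j$ a $(k,\varepsilon')$-splitting map at scale $1$ for arbitrarily small $\varepsilon'$; then Lemmas \ref{lemma:near_splitting} and \ref{lemma:normalize_splitting} push it one scale lower while staying within the $(k,\varepsilon)$ class, contradicting the failure at $\rho_j/2$.

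There is also a concrete error in your mechanism for the growth bound: since $(\partial_t-\Delta)(v^a)^2=-2|\nabla v^a|^2\leq 0$, the function $t\mapsto\int_M (v^a)^2\,d\nu_{(p,0),t}$ is non-increasing in $t$, i.e.\ it is controlled \emph{forward} in time and may grow as $t\to-\infty$; so the $L^2$ bound of the limit at scale $2$ gives no control whatsoever on $\int (v_\infty^a)^2\,d\nu_{f,t}$ for $t\to-\infty$, and Lemma \ref{lemma:growth_linear} cannot be invoked from it. The growth estimate must instead be assembled scale by scale, exactly as in the paper's Claim 1: at each intermediate scale $s\in[\rho_j,1]$ the normalization \eqref{eqn:vr_normalization} of $v_{j,s}$ pins the gradient Gram matrix, and writing $v_{j,\rho_j}=T_{j,\rho_j}T_{j,s}^{-1}v_{j,s}$ with $\|T_{j,\rho_j}T_{j,s}^{-1}\|\leq (s/\rho_j)^{C(n)\sqrt{\varepsilon}}$ converts this into the gradient bound $\tau^{C(n)\sqrt\varepsilon}$ at backward time $\tau\approx (s/\rho_j)^2$, after which the Hein--Naber Poincar\'e inequality gives the $L^2$ growth $\tau^{1+C(n)\sqrt{\varepsilon}}$; this is where the hypothesis at all larger scales is genuinely used, and it is also why one must first argue that the failing scale tends to $0$ (otherwise the limit is not ancient and the rigidity lemma does not apply), a point your proposal does not address.
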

\begin{proof}
We will prove Assertions (1)-(3) arguing by contradiction. Suppose there is an $\eta>0$ and $\varepsilon>0$, which we may assume to be small enough, a sequence $\delta_j\rightarrow 0$, a sequence smooth compact Ricci flows $(M_j, g_j(t),p_j)_{t\in (-2\delta_j^{-2},0]}$ satisfying (RF1-3) and a sequence $v_j:M_j\times [-1,0]\rightarrow \mathbb R^k$ of $(k,\delta_j)$-splitting maps around $p_j$ at scale $1$, with the following properties:
\begin{itemize}
\item[A.] There is a sequence $r_j>0$ such that for every $j$ and  $s\in [r_j,1]$, there is $q_{j,s}\in B(p_j,-s^2,R s)$ such that  $(M_j, g_j(t),q_{j,s})_{t\in (-2\delta_j^{-2},0]}$ is $(k,\delta_j^2)$-selfsimilar but not $(k+1,\eta)$-selfsimilar  around $q_{j,s}$, at scale $s\in [r_j,1]$, and if $q_{j,s}\not = p_j$
\begin{equation}
\mathcal W_{p_j}( \delta_j s^2) - \mathcal W_{p_j} (\delta_j^{-1} s^2) <\delta_j.
\end{equation}
\item[B.]  There is a sequence $\rho_j>r_j$  such that for each $s\in [\rho_j,1]$ there is a lower triangular $k\times k$ matrix $T_{j,s}$ such that $v_{j,s}=T_{j,s} v_j$ are $(k,\varepsilon)$-splitting maps at scale $s$ satisfying for any $a,b=1,\ldots,k$
 \begin{equation}\label{eqn:adjusting_normalization}
\frac{4}{3s^2}\int_{-s^2}^{-s^2/4}\int_{M_j} \langle \nabla v_{j,s}^a,\nabla v_{j,s}^b\rangle d\nu_{(p_j,0),t} dt  =\delta^{ab}.
\end{equation}
\item[C.]
For each $j$, there is no lower triangular $k\times k$ matrix $T_{j,\rho_j/2}$ such that $v_{j,\rho_j/2}=T_{j,\rho_j/2}v_j$ is a $(k,\varepsilon)$-splitting map around $p_j$ at scale $\rho_j/2$ with 
 \begin{equation}\label{eqn:contr_norm}
\frac{4}{3\rho_j^2}\int_{-\rho_j^2/4}^{-\rho_j^2/16}\int_{M_j} \langle \nabla v_{j,\rho_j/2}^a,\nabla v_{j,\rho_j/2}^b\rangle  d\nu_{(p_j,0),t} dt =\delta^{ab},
\end{equation}
for any $a,b=1,\ldots,k$.
\end{itemize}

First note that, by Assumption B, applying  Lemma \ref{lemma:normalize_splitting} to $v_{j,2s}$, for any $s\in[\rho_j,1/2]$, we have that 
$$|| T_{j,s} T_{j,2s}^{-1} - I_k || <C(n) \sqrt\varepsilon,$$
since $v_{j,s}=T_{j,s} v_j = T_{j,s} T_{j,2s}^{-1} v_{j,2s}$. As in \cite{CJN}, it follows that for all $\rho_j\leq s_1\leq s_2 \leq 1$
\begin{equation}\label{eqn:dilation_ratio}
||T_{j,s_1} T_{j,s_2}^{-1} || \leq \left(\frac{s_2}{s_1}\right)^{C(n)\sqrt\varepsilon}.
\end{equation}
In particular, applying Lemma  \ref{lemma:normalize_splitting} to the $(k,\delta)$-splitting map $v$ we obtain $||T_{j,1} - I || \leq C(n)\sqrt\delta$,  thus
\begin{equation}
\begin{aligned}
||T_{j,s}||&= ||T_{j,s} T_{j,1}^{-1} T_{j,1}||\\
&=|| T_{j,s} T_{j,1}^{-1}(( T_{j,1}-I) +I ) ||\\
&\leq k|| T_{j,s} T_{j,1}^{-1}|| \cdot || T_{j,1}-I +I || \\
&\leq k|| T_{j,s} T_{j,1}^{-1}| \left( || T_{j,1}-I || + 1 \right)\\
&\leq k s^{-C(n) \sqrt\varepsilon} ( C(n) \sqrt \delta+1)\\
&\leq (1+\varepsilon) s^{-C(n) \sqrt\varepsilon},
\end{aligned}
\end{equation}
if $0<\delta\leq \delta(n,\varepsilon)$.

Moreover, again by Lemma \ref{lemma:normalize_splitting} and Assumptions B and C, it follows that $\rho_j\rightarrow 0$ and $r_j\rightarrow 0$.

Now, for each $a=1,\ldots,k$, let
$$c_j = \int_{M_j} v^a_{j,\rho_j} d\nu_{(p_j,0),-\rho_j^2}$$
and consider the rescaled sequences 
\begin{align*}
 \hat g_j(t)&=\rho_j^{-2} g_j( \rho_j^2 t), \\
\hat v_j^a(x,t) &= \rho_j^{-1} (v^a_{j,\rho_j}(x,\rho_j^2 t)-c_j).
\end{align*}
We will denote by $\hat \nu_{(p_j,0)}$ the backwards conjugate heat kernel measures based at $(p_j,0)$ in \linebreak $(M_j,\hat g_j(t))_{t\in[-\frac{1}{\rho_j^2},0]}$, namely 
$\hat \nu_{(p_j,0),t} =  \nu_{(p_j,0),\rho_j^2 t}$. Moreover, let $q_j = q_{j,\rho_j}\in B_{\hat g_j}(p_j,-1, R)$.

\noindent\textbf{Claim 1.} ($L^2$ estimates). For every $\tau\in [1,\rho_j^{-2}]$ and $a=1,\ldots,k$
\begin{align}
\frac{4}{3\tau}\int_{-\tau}^{-\tau/4} \int_{M_j} |\nabla \hat v_j^a|^2 d\hat\nu_{(p_j,0),t} dt& \leq \tau^{C(n)\sqrt\varepsilon}, \label{eqn:claim_nabla_tilde_vj}\\
\int_{-\tau}^{-\tau/4} \int_{M_j} |\hess \hat v_j^a|^2 d\hat\nu_{(p_j,0),t} dt &<\varepsilon \tau^{C(n)\sqrt\varepsilon},\label{eqn:claim_hessian_tilde_vj}\\
\frac{4}{3\tau}\int_{-\tau}^{-\tau/4}\int_{M_j} |\hat v_j^a|^2 d\hat\nu_{(p_j,0),t}  dt  &\leq 2\tau^{1+C(n)\sqrt\varepsilon},\label{eqn:claim_hatvj_spacetime}\\
\int_{M_j} |\hat v_j^a|^2  d\hat\nu_{(p_j,0),-\tau/4}  &\leq \frac{1}{2}\tau^{1+C(n)\sqrt\varepsilon}.\label{eqn:claim_hatvj_space}
\end{align}

\begin{proof}[Proof of Claim 1]
By Assumption B, for every $s\in [\rho_j,1]$, $v_{j,s}$ is a $(k,\varepsilon)$-splitting map at scale $s$ around $p$ and satisfies for any $a,b=1,\ldots,k$
\begin{align*}
\frac{4}{3s^2}\int_{-s^2}^{-s^2/4} \int_{M_j} \langle \nabla v_{j,s}^a,\nabla v_{j,s}^b\rangle d\nu_{(p_j,0),t} dt &= \delta^{ab},\\
\int_{-s^2}^{-s^2/4} \int_{M_j} |\hess v_{j,s}^a|^2 (\cdot,t) d\nu_{(p_j,0),t} dt &<\varepsilon,
\end{align*}
and  $v_{j,\rho_j} = T_{j,\rho_j} v_j=T_{j,\rho_j} T_{j,s}^{-1} v_{j,s}$.

Therefore,   using \eqref{eqn:dilation_ratio}, we can estimate for any $a=1,\ldots,k$
\begin{align*}
\int_{-(s/\rho_j)^2}^{-(s/2\rho_j)^2} \int_{M_j} |\nabla \hat v_j^a|^2 d\hat \nu_{(p_j,0),t}  dt&=\frac{1}{\rho_j^2}\int_{-s^2}^{-s^2/4} \int_{M_j} |\nabla v_{j,\rho_j}^a|^2 d\nu_{(p_j,0),t}  dt\\
&\leq\frac{1}{\rho_j^2} || T_{j,\rho_j} T_{j,s}^{-1}||^2 \max_a \left(\int_{-s^2}^{-s^2/4} \int_{M_j} |\nabla v_{j,s}^a|^2 d\nu_{(p_j,0),t}  dt \right)\\
&\leq \left( \frac{s}{\rho_j}\right)^{C(n)\sqrt\varepsilon} \frac{3s^2}{4\rho_j^2}.
\end{align*}

Similarly,
\begin{align*}
\int_{-(s/\rho_j)^2}^{-(s/2\rho_j)^2} \int_{M_j} |\hess \hat v_j^a|^2 d\hat\nu_{(p_j,0),t}  dt &=\int_{-s^2}^{-s^2/4} \int_{M_j} |\hess  v_{j,\rho_j}^a|^2 d\nu_{(p_j,0),t}  dt \\
&\leq || T_{j,\rho_j} T_{j,s}^{-1} ||^2  \max_a\left( \int_{-s^2}^{-s^2/4} \int_{M_j} |\hess  v_{j,s}^a|^2 d\nu_{(p_j,0),t}dt\right)\\
&\leq \varepsilon\left( \frac{s}{\rho_j}\right)^{C(n)\sqrt\varepsilon}. 
\end{align*}

It follows that for every $\tau\in [1,\frac{1}{\rho_j^2}]$
\begin{align}
\frac{4}{3\tau}\int_{-\tau}^{-\tau/4} \int_{M_j} |\nabla \hat v_j^a|^2 d\hat\nu_{(p_j,0),t}  dt \leq \tau^{C(n)\sqrt\varepsilon}\\
\int_{-\tau}^{-\tau/4} \int_{M_j} |\hess \hat v_j^a|^2 d\hat\nu_{(p_j,0),t}  dt<\varepsilon \tau^{C(n)\sqrt\varepsilon},\label{eqn:hessian_tilde_vj}
\end{align}
which prove \eqref{eqn:claim_nabla_tilde_vj} and \eqref{eqn:claim_hessian_tilde_vj}.

Now, since $\int_{M_j}\hat v_j (\cdot,t)d\hat\nu_{(p_j,0),t}  dt=0$, by the definition of $\hat v_j$, we can apply the Poincar\'e inequality, Theorem \ref{thm:poincare}, and \eqref{eqn:claim_nabla_tilde_vj}, to obtain
\begin{equation}
\begin{aligned}
\int_{-\tau}^{-\tau/4}\int_{M_j} |\hat v_j^a|^2 d\hat\nu_{(p_j,0),t}  dt &\leq \int_{-\tau}^{-\tau/4} 2|t| \int_{M_j} |\nabla \hat v_j^a|^2 d\hat\nu_{(p_j,0),t}  dt\\
&\leq  \frac{3}{2} \tau^{2+C(n)\sqrt\varepsilon}=\frac{3\tau}{4}  2\tau^{1+C(n)\sqrt\varepsilon} ,
\end{aligned}
\end{equation}
which proves \eqref{eqn:claim_hatvj_spacetime}.

Finally, observe that since $\frac{\partial}{\partial t} (\hat  v_j^a)^2=\Delta (\hat v_j^a)^2 - 2|\nabla \hat v_j^a|^2$, each function $$\tau\mapsto \int_{M_j} |\hat v_j^a|^2 d\hat \nu_{(p_j,0),-\tau}$$
is non-decreasing. Therefore, by \eqref{eqn:claim_hatvj_spacetime},
\begin{equation*}
\int_{M_j} |\hat v_j^a|^2d\hat \nu_{(p_j,0),-\tau/4} \leq \frac{4}{3\tau}\int_{-\tau}^{-\tau/4}\int_{M_j} |\hat v_j^a|^2 d\hat \nu_{(p_j,0),t} dt\leq 2\tau^{1+C(n)\sqrt\varepsilon},
\end{equation*}
which proves \eqref{eqn:claim_hatvj_space}.
\end{proof}

\noindent\textbf{Claim 2.}(Hessian concentration) If $\varepsilon<1$, then for every $t\in [-10,0)$ and $p_0>2$ there is $E=E(n,C_I)<+\infty$ and $C(n,C_I,p_0)<+\infty$ and $p(t)\geq p_0$ such that, if $j$ is large enough,
\begin{equation}\label{eqn:claim_hessian_Lp}
\left(\int_{M_j} ||\hess \hat v_j||^{p(t)} d\hat \nu_{(p_j,0),t}  \right)^{1/p(t)}\leq C(n,C_I,p_0) \varepsilon^{1/2} |t|^{-E/2}
\end{equation}
Thus, for every $\delta'>0$ there is $\bar R<+\infty$ large so that for all $t\in [-10,0)$ 
\begin{equation}\label{claim:concentration}
\int_{M_j\setminus B(p_j,t,\bar R \sqrt{|t|})} |t|^E||\hess \hat v_j||^2 d\hat \nu_{(p_j,0),t}  <\delta'.
\end{equation}
\begin{proof}[Proof of Claim 2]
By Claim 1, for every $\tau\in [1,\rho_j^{-2}]$
\begin{equation*}
\begin{aligned}
 &\int_{-\tau}^{-\tau/4}\int_{M_j} ||\hess \hat v_j||^2  d\hat \nu_{(p_j,0),t}   dt \leq \varepsilon \tau^{C(n)\sqrt\varepsilon} 
\end{aligned}
\end{equation*}
hence by the mean value theorem, there is $\bar t\in [-\tau,-\tau/4]$ so that
\begin{equation}
\int_{M_j} ||\hess \hat v_j||^2  d\hat \nu_{(p_j,0),\bar t}  \leq \frac{4\varepsilon}{3} \tau^{1+C(n)\sqrt\varepsilon}
\end{equation}
Since $\left(\frac{\partial}{\partial t}-\Delta\right)(|t|^{E/2}|\hess\hat v^a_j|)\leq 0$, by the hypercontractivity Theorem \ref{thm:hypercontractivity}, we obtain that for every $t\in [\bar t,0]$, 
\begin{equation}\label{eqn:hessian_Lp_tau}
\begin{aligned}
&\left(\int_{M_j} |t|^{Ep(t)/2} ||\hess \hat v_j||^{p(t)} d\hat \nu_{(p_j,0),t}  \right)^{1/p(t)} \leq \\
&\leq \left( \int_{M_j} |\bar t|^{E} ||\hess \hat v_j||^2 d\hat \nu_{(p_j,0),\bar t}  \right)^{1/2} \leq 2 \varepsilon^{1/2} \tau^{E/2} \tau^{\frac{1+C(n)\sqrt\varepsilon}{2}},
\end{aligned}
\end{equation}
where $p(t)=1+\frac{|\bar t|}{|t|}\geq 2$.

To obtain stimate \eqref{eqn:claim_hessian_Lp} for every $t\in [-10,0)$ it suffices to apply \eqref{eqn:hessian_Lp_tau} for $\tau_{p_0}=40p_0$.

Finally, \eqref{claim:concentration} follows from H\"older's inequality and for $\frac{2}{p(t)}+\frac{1}{q(t)}=1$ and Lemma \ref{lemma:int_ker_bounds}, since for any measurable function $v$
\begin{align*}
&\int_{M_j\setminus B(p_j,t,\bar R \sqrt{|t|})} v^2 d\hat \nu_{(p_j,0),t}  \leq\\
&\leq\left( \int_{M_j\setminus B(p_j,t,\bar R \sqrt{|t|})} v^{p(t)} d\hat \nu_{(p_j,0),t}   \right)^{1/p(t)} \left(  \int_{M_j\setminus B(p_j,t,\bar R \sqrt{|t|})} d\hat \nu_{(p_j,0),t}  \right)^{1/q(t)}.
\end{align*}
\end{proof}

Now, by Proposition \ref{prop:compactness_rf}, Assumptions (RF1-3), Assumption A and Lemma \ref{lemma:k_delta_convergence}, after passing to a subsequence, we can assume that $(M_j,\hat g_j(t),p_j)$ converges to a pointed Ricci flow $$(M_\infty=M_\infty'\times \mathbb R^k,\hat g_\infty(t)=g'_\infty(t)\oplus g_{\mathbb R^k},p_\infty)_{t\in(-\infty,0)}$$ induced by a gradient shrinking Ricci soliton satisfying (RF1), that $q_j \rightarrow q_\infty \in\mathcal S_{\textrm{point}}\cap B(p_\infty,-1,2R)$, that the conjugate heat flows $\hat \nu_{(p_j,0)}$ converge to a conjugate heat flow $\hat \nu_\infty \in\mathcal S$ that satisfies (CHF1) with respect to $p_\infty$, and that $-\Lambda \leq\mathcal W_{\hat g_j,p_j}(1) \rightarrow \bar\mu(\hat g_\infty(-1))$. In particular $-\Lambda \leq \bar\mu(\hat g_\infty(-1))\leq 0$. Moreover, $(M_\infty, \hat g_\infty(t),q_\infty)_{t\in (-\infty,0)}$ is $k$-selfsimilar but not $(k+1,\eta)$-selfsimilar at scale $1$.

By the estimates of Claim 1, arguing as in the proof of Proposition \ref{prop:s_map_compactness}, passing to a further subsequence, we can assume that  for each $a=1,\ldots,k$, $\hat v_j^a$ converges to an ancient solution $\hat v_\infty^a$ of the heat equation on $(M_\infty,\hat g_\infty(t))_{t\in (-\infty,0)}$. 

  Moreover, by \eqref{eqn:claim_hatvj_space}, \eqref{eqn:claim_nabla_tilde_vj} and Fatou's Lemma, we obtain that for every $a=1,\ldots,k$ and $t\leq -1$,
\begin{align}\label{eqn:v_infty_growth}
\int_{M_\infty}(\hat v^a_\infty)^2 d\hat \nu_{\infty,t} &\leq C |t|^{1+C(n)\sqrt\varepsilon},\\
\int_{M_\infty}|\nabla \hat v^a_\infty|^2 d\hat \nu_{\infty,t} &\leq C |t|^{C(n)\sqrt\varepsilon}.
\end{align}

Hence each $\hat v^a_\infty(\cdot,t) \in H^1_{\hat\nu_{\infty,t}}$, for every $t<0$. By Lemma \ref{lemma:growth_linear} there is $\varepsilon_0=\varepsilon_0(n,C_I,\Lambda,H,2R,\eta)>0$ such that if $C(n)\sqrt\varepsilon<\varepsilon_0$ then \eqref{eqn:v_infty_growth} implies that each $\hat v^a_\infty$ is an affine function of the $\mathbb R^k$-coordinates of $M_\infty=M_\infty'\times\mathbb R^k$, for all $t\in(-\infty,0)$.

Now, by \eqref{claim:concentration} of Claim 2, for any $\varepsilon''>0$,
\begin{align*}
\lim_{j\rightarrow+\infty} \int_{-10}^{-\varepsilon''} \int_{M_j} ||\hess \hat v_j||^2 d\hat \nu_{(p_j,0),t}  dt &=\int_{-10}^{-\varepsilon''} \int_{M_\infty} ||\hess \hat v_\infty||^2 d\hat \nu_{\infty,t} dt =0,
\end{align*}
thus for $j$ large
\begin{equation}\label{eqn:hessian_much_smaller}
\int_{-10}^{-\varepsilon''} \int_{M_j} ||\hess \hat v_j||^2 d \hat \nu_{(p_j,0),t}  dt  < \varepsilon''
\end{equation}
Therefore, by \eqref{eqn:hessian_much_smaller} and Assumption B, we can employ Lemma \ref{lemma:space_time_Poincare} to obtain for any $\varepsilon'>0$ and $0<\varepsilon''\leq \varepsilon''(n,C_I|\varepsilon')$
\begin{equation}
\int_{-1}^{-\varepsilon'} \int_M \left| \langle \nabla \hat v_j^a,\nabla \hat v_j^b\rangle -\delta^{ab}\right|^2 d\hat \nu_{(p_j,0),t}  dt <\varepsilon'.
\end{equation}

Therefore, for any $\varepsilon'>0$ and large $j$, $\hat v_j$ are $(k,\varepsilon')$-splitting maps around $p_j$ at scale $1$, in \linebreak $(M_j,\hat g_j(t))_{t\in (-\frac{1}{\rho_j^2},0)}$.

Hence, by Assertion 1 of Lemma \ref{lemma:near_splitting}, if $0<\varepsilon'\leq\varepsilon'(\varepsilon)$, $\hat v_j$ are $(k,\varepsilon/2)$-splitting maps at scale $1/2$ around $p_j$ in $(M_j,\hat g_j(t))_{t\in (-\frac{1}{\rho_j^2},0)}$. Moreover, by Lemma \ref{lemma:normalize_splitting}, there is a lower triangular $k\times k$ matrix $A_j$ such that 
\begin{equation} 
\frac{4}{3(\frac{1}{2})^{2}}\int_{-1/4}^{-1/16} \int_{M_j} \langle\nabla (A_j\hat v_j)^a,\nabla (A_j\hat v_j)^b\rangle d\hat \nu_{(p_j,0),t} dt=\delta^{ab},
\end{equation}
and
\begin{equation}\label{eqn:aj}
||A_j-I_k||<C(n)\sqrt{\varepsilon'}.
\end{equation} 
It follows that if $0<\varepsilon'\leq \varepsilon'(n|\varepsilon)$, then $A_j\hat  v_j$ are $(k,\varepsilon)$-splitting maps around $p_j$ at scale $1/2$ in $(M_j,\hat g_j(t))_{t\in (-\frac{1}{\rho_j^2},0)}$.

We conclude that, if $j$ is large enough, $v_{j,\rho_j/2}:=A_j T_{j,\rho_j} v_j$ are $(k,\varepsilon)$-splitting maps at scale $\rho_j/2$ around $p_j$ in $(M_j,g_j(t))$ satisfying \eqref{eqn:contr_norm}, which contradicts Assumption C. 
\end{proof}

\subsection{Hessian decay}

Let $(M,g(t))_{t\in [-1,0]}$ be a smooth compact Ricci flow, $v$ a $(k,\delta)$-splitting map around $p$ at scale $1$ and $h:M\times [-1,0]\rightarrow \mathbb R$ a solution of the heat equation.

Define the non-linear part $\tilde h$ of $h$ by $\tilde h= h - \xi_i v^i$, where $\xi=(\xi_1,\ldots,\xi_k)\in\mathbb R^k$ are such that
\begin{equation}\label{eqn:definition_tilde_h}
\int_{-1}^{-1/4}\int_{M}|\nabla \tilde h|^2 d\nu_{(p,0),t}  dt =\min_{\zeta=(\zeta_1,\ldots,\zeta_k) \in \mathbb R^k}\int_{-1}^{-1/4}\int_{M} |\nabla h - \zeta_i \nabla v^i|^2  d\nu_{(p,0),t}  dt,
\end{equation}
which in particular implies that
\begin{equation}\label{eqn:orthogonal_functions}
\int_{-1}^{-1/4}\int_M \langle \nabla v^a,\nabla \tilde h \rangle  d\nu_{(p,0),t}  dt= 0,
\end{equation}
for any $a=1,\ldots,k$.

Since, for any $a,b=1,\ldots,k$
\begin{align*}
\left| \int_{-1}^{-1/4} \int_M ( \langle \nabla v^a,\nabla v^b\rangle - \delta^{ab})d\nu_{(p,0),t}  dt\right| &\leq \int_{-1}^{-1/4} \int_M | \langle \nabla v^a,\nabla v^b\rangle - \delta^{ab}| d\nu_{(p,0),t}  dt\\
&\leq  \left(\int_{-1}^{-\delta}\int_M | \langle \nabla v^a,\nabla v^b\rangle - \delta^{ab}| ^2 d\nu_{(p,0),t}  dt\right)^{1/2}\\
&\leq \sqrt\delta,
\end{align*}
we know that
\begin{equation}\label{eqn:close_to_I}
\left| \frac{4}{3} \int_{-1}^{-1/4}\int_M  \langle \nabla v^a,\nabla v^b\rangle d\nu_{(p,0),t}  dt - \delta^{ab}\right| \leq \sqrt\delta.
\end{equation}
Therefore, if
\begin{equation}\label{hessian_dec_normalization}
\frac{4}{3}\int_{-1}^{-1/4} \int_{M} |\nabla h|^2 d\nu_{(p,0),t}  dt =1.
\end{equation}
then, by \eqref{eqn:orthogonal_functions}  and \eqref{eqn:close_to_I}  we obtain that 
\begin{equation}\label{eqn:xi_bound}
|\xi| \leq C(n).
\end{equation}

Thus, in the presence of $(k,\delta)$-splitting map, a solution to the heat equation decomposes as  $h=\tilde h+h_{\textrm{linear}}$, where $\tilde h$ is defined as above and $h_{\textrm{linear}}=\xi_i v^i$. The following lemma establishes the hessian decay of the non-linear part of $h$, if the Ricci flow is $(k,\delta)$-selfsimilar but not $(k+1,\eta)$-selfsimilar.

\begin{lemma}\label{lemma:hessian_decay}
Let $(M,g(t),p)_{t \in (-2\delta^{-1},0]}$ be a smooth compact Ricci flow satisfying (RF1-3) and $h: M\times [-1,0]\rightarrow \mathbb R$ a solution to the heat equation. Suppose that there is a $q\in B(p,-1,R)$ such that $(M,g(t),q)_{t \in (-2\delta^{-1},0]}$ is $(k,\delta)$-selfsimilar but not $(k+1,\eta)$-selfsimilar, and if $q\not = p$
 \begin{equation}
\mathcal W_p(\delta) -\mathcal W_p(\delta^{-1}) <\delta.
 \end{equation}
 Let $v:M\times [-1,0]\rightarrow \mathbb R^k$ be a $(k,\delta)$-splitting map around $p$ at scale $1$.

There is $\tau=\tau(n,C_I,\Lambda,C_1,\eta)=4^{-m}\in(0,1/4)$ such that
if $0<\delta\leq \delta(n,C_I,\Lambda, C_1,\eta)$ then $\tilde h$ satisfies 
\begin{align*}
\int_{-\tau}^{-\frac{\tau}{4}} \int_{M} |\hess \tilde h|^2 d\nu_{(p,0),t}  dt \leq \frac{1}{4} \int_{-1}^{-\frac{1}{4}} \int_{M} |\hess \tilde h|^2 d\nu_{(p,0),t}  dt.
\end{align*}
\end{lemma}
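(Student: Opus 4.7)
The plan is to argue by contradiction, pass to a limiting Ricci flow induced by a gradient shrinking Ricci soliton, and invoke the Hessian decay of Lemma \ref{lemma:hessian_decay_solitons}. Let $\beta = \beta(n, C_I, \Lambda, H, \eta)$ be the decay exponent provided by that lemma, and choose $m$ so that $\tau := 4^{-m}$ satisfies $\tau^\beta < 1/8$. Suppose the conclusion fails for this $\tau$; then one extracts a sequence $\delta_j \to 0$ together with Ricci flows $(M_j, g_j(t), p_j)$, splitting maps $v_j$, and heat solutions with associated $\tilde h_j$ violating the inequality. The case in which the right-hand side vanishes is trivial (then $\hess \tilde h_j = 0$ on the slab $[-1,-1/4]$, and by the Lichnerowicz evolution \eqref{eqn:evol_hess} of its Hessian, on all of $[-1,0]$), so we may rescale $h_j$ so that $\int_{-1}^{-1/4}\int_{M_j}|\hess \tilde h_j|^2 d\nu_{(p_j,0),t} dt = 1$, giving $\int_{-\tau}^{-\tau/4}\int_{M_j}|\hess \tilde h_j|^2 d\nu_{(p_j,0),t} dt > 1/4$. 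Observe that $\tilde h_j$ is itself a heat solution $L^2$-orthogonal to $\nabla v_j^a$ on $[-1,-1/4]$; shifting by a constant (which preserves both properties and conserves the spatial mean under the heat equation), we arrange $\int_{M_j}\tilde h_j\, d\nu_{(p_j,0),t}=0$.

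By Lemma \ref{lemma:k_delta_convergence}, a subsequence of $(M_j,g_j(t),p_j)$ converges smoothly to a Ricci flow $(M_\infty,g_\infty(t),p_\infty)$ induced by a normalized gradient shrinking Ricci soliton; smooth closeness and the uniform failure of $(k+1,\eta)$-selfsimilarity force the limit to split exactly $k$ Euclidean factors, so Lemma \ref{lemma:gap} furnishes a spectral gap on the limit. Proposition \ref{prop:s_map_compactness} yields a convergent subsequence $v_j \to v_\infty$ whose components are, up to a linear change, the projections onto the $\mathbb R^k$ factor of $M_\infty = M'_\infty \times \mathbb R^k$. The central compactness step is to obtain a uniform bound $\int_{-1}^{-1/4}\int|\nabla \tilde h_j|^2 d\nu \leq C$. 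I would establish this by a secondary contradiction: were this integral unbounded along a subsequence, normalizing each $\tilde h_j$ to unit $L^2$-gradient would send the $L^2$-Hessian integral to zero, and any smooth limit $w_\infty$ of such rescalings would satisfy $\hess w_\infty = 0$ identically, hence would lie in the span of constants and the $v_\infty^a$ by Theorem \ref{thm:spectrum}(4), contradicting both the vanishing spatial mean and the orthogonality to $\nabla v_\infty^a$ passed to the limit. With the gradient bound in hand, the monotonicity $\frac{d}{dt}\int|\nabla \tilde h_j|^2 d\nu = -2\int|\hess \tilde h_j|^2 d\nu$ extends it to all of $[-1,0)$, Theorem \ref{thm:poincare} yields $L^2_\nu$ bounds on $\tilde h_j$, and hypercontractivity-based concentration (in the spirit of Claim 2 of Proposition \ref{prop:transformations}) combined with parabolic regularity promotes these to locally uniform $C^\infty$ bounds, producing a smooth limit $\tilde h_\infty$ that solves the heat equation on $(M_\infty,g_\infty(t))$ with $\|\tilde h_\infty(\cdot,t)\|_{H^1_{\nu_{f,t}}}$ locally bounded.

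The same concentration estimates force both Hessian space-time integrals to converge under the limit, yielding $\int_{-1}^{-1/4}\int|\hess \tilde h_\infty|^2 d\nu_{\infty,t} dt = 1$ and $\int_{-\tau}^{-\tau/4}\int|\hess \tilde h_\infty|^2 d\nu_{\infty,t} dt \geq 1/4$. On the other hand, Lemma \ref{lemma:hessian_decay_solitons} applied to $\tilde h_\infty$ with $t_1 = -1$ and $t_2 = -\tau$ gives
\begin{equation*}
\int_{-\tau}^{-\tau/4}\int_{M_\infty}|\hess \tilde h_\infty|^2 d\nu_{\infty,t} dt \leq \tau^{\beta} \int_{-1}^{-1/4}\int_{M_\infty}|\hess \tilde h_\infty|^2 d\nu_{\infty,t} dt = \tau^\beta < \frac{1}{8},
\end{equation*}
contradicting the lower bound of $1/4$. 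I expect the main obstacle to be the compactness step: the uniform gradient bound via the inner contradiction genuinely relies on the spectral gap of the limit (hence on $\eta$), and ensuring uniform concentration of the Hessian mass in $j$, so that the integrals pass to the limit with equality rather than merely via Fatou, will require hypercontractivity estimates analogous to those developed earlier in Section \ref{sec:GTT}.
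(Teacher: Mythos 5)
Your overall strategy---contradiction, compactness to a $k$-selfsimilar but not $(k+1,\eta)$-selfsimilar limit, and the spectral-gap Hessian decay of Lemma \ref{lemma:hessian_decay_solitons}---is the same as the paper's, but you normalize by the Hessian and apply the soliton decay from $t_1=-1$, whereas the paper normalizes the space-time $L^2$-gradient of $\tilde h$ to one, uses the negated decay inequality together with the monotonicity of $|t|^E\int_M|\hess\tilde h|^2d\nu_{(p,0),t}$ to transfer all Hessian comparisons to the sub-slab $[-1/4,-1/16]$ versus $[-\tau,-\tau/4]$ (where hypercontractivity from a time slice in $[-1,-1/2]$ is available), and uses the gradient normalization plus the failure of $(k+1,\eta)$-selfsimilarity to show that the limiting Hessian mass on $[-\tau,-\tau/4]$ is strictly positive. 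The genuine gap in your route is the secondary contradiction by which you claim the uniform bound on $\int_{-1}^{-1/4}\int_{M_j}|\nabla\tilde h_j|^2 d\nu_{(p_j,0),t}dt$: after rescaling to unit $L^2$-gradient, everything you derive about the limit $w_\infty$---vanishing Hessian, zero mean, orthogonality to the $\nabla v_\infty^a$---is perfectly consistent with $w_\infty\equiv 0$; indeed those three facts together simply force $w_\infty\equiv 0$, and this is only absurd if you also know that the unit-gradient normalization survives the limit. That persistence of gradient mass is itself a concentration statement and must be proved, e.g.\ by observing that $t\mapsto\int_{M_j}|\nabla w_j|^2d\nu_{(p_j,0),t}$ is non-increasing with average $4/3$ on $[-1,-1/4]$ and total drop $2\int\int|\hess w_j|^2\to 0$, hence lies in $[4/3-o(1),\,2]$ on all of $[-1,-\tau/4]$, and then running hypercontractivity (Theorem \ref{thm:hypercontractivity} with Corollary \ref{cor:subsolutions}) from $t=-1$ together with Lemma \ref{lemma:int_ker_bounds} to get uniform $L^3(\nu_t)$ bounds on $|\nabla w_j|$ for $t\ge -1/2$, so that no gradient mass escapes and $\int_{M_\infty}|\nabla w_\infty|^2d\nu_{\infty,t}\ge 4/3>0$ there. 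Without this step your inner argument is vacuous, and it is load-bearing: the local smooth convergence of $\tilde h_j$, and hence everything downstream, rests on the gradient bound.

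A second, fixable, overstatement: you cannot show that $\int_{-1}^{-1/4}\int|\hess\tilde h_j|^2$ converges \emph{with equality} to the limit integral, because hypercontractivity only improves integrability at times $t$ with $|t|\le |t_1|/2$ for some earlier $t_1$ at which $L^2$-control is known, and $h_j$ only exists from $t=-1$; on $[-1,-1/2]$ no higher integrability is available and $\tilde h_j$ does not converge up to the initial time. Fortunately your final inequality only needs the Fatou direction there, namely $\int_{-1}^{-1/4}\int_{M_\infty}|\hess\tilde h_\infty|^2d\nu_{\infty,t}dt\le 1$, applied via Lemma \ref{lemma:hessian_decay_solitons} with $t_1=-1+\epsilon$ and $\epsilon\to 0$ (the limit exists only for $t>-1$), while genuinely two-sided convergence is needed only on $[-\tau,-\tau/4]$, where the $L^3$ Hessian bounds do apply once a slice bound at some $t_1\in[-1,-1/2]$ is extracted from the normalization. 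This asymmetry is exactly why the paper runs its comparison between $[-1/4,-1/16]$ and $[-\tau,-\tau/4]$; if you repair the gradient-bound step and phrase the $[-1,-1/4]$ estimate through Fatou, your Hessian-normalized variant goes through and is essentially parallel to the paper's argument.
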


\begin{proof}
Since $\tilde h$ solves the heat equation, we can normalize $\tilde h$, by adding a constant, so that for every $t\in [-1,0]$
\begin{equation}\label{eqn:tilde_norm_1}
\int_M \tilde h(\cdot,t) d\nu_{(p,0),t} =0.
\end{equation}
Moreover, by scaling $\tilde h$, we can also assume that
\begin{equation}\label{eqn:tilde_norm_2}
\frac{4}{3}\int_{-1}^{-1/4}\int_M |\nabla \tilde h|^2 d\nu_{(p,0),t}  dt=1,
\end{equation}
since if $\nabla \tilde h=0$ on $M\times [-1,-1/4]$ then $\tilde h=0$ and there is nothing to prove.

\noindent\textbf{Claim:} Suppose that for some $\tau\in (0,1/4)$, $\tilde h$ satisfies
\begin{equation}\label{eqn:hessian_decay_not}
\int_{-1}^{-1/4} \int_M |\hess \tilde h|^2 d\nu_{(p,0),t}  dt<4 \int_{- \tau}^{- \tau/4} \int_M |\hess \tilde h|^2 d\nu_{(p,0),t}  dt. 
\end{equation}
then
\begin{align}
\int_{-1}^{-\tau/4} \int_M |\hess \tilde h|^2 d\nu_{(p,0),t}  dt &\leq C(n,C_I,\tau) \int_{-\tau}^{- \tau/4}\int_M |\hess \tilde h|^2 d\nu_{(p,0),t}  dt,\label{eqn:earlier_from_later_hessian}\\
\int_{-1/4}^{-1/16} \int_M |\hess \tilde h|^2 d\nu_{(p,0),t}  dt &\leq L(n,C_I) \int_{-\tau}^{- \tau/4}\int_M |\hess \tilde h|^2 d\nu_{(p,0),t}  dt.\label{eqn:decay_not}\\
1 - C(n,C_I,\tau) \int_{-\tau}^{-\tau/4} \int_M |\hess \tilde h|^2 d\nu_{(p,0),t} dt &\leq \frac{16}{3} \int_{-1/4}^{-1/16} \int_M |\nabla \tilde h|^2 d\nu_{(p,0),t} dt \leq 1.\label{eqn:grad_lower}
\end{align}
Moreover,
\begin{align}
\int_{-1}^{- \tau/4} \int_M \left(|\tilde h|^2 +|\nabla \tilde h|^2  +|\hess \tilde h|^2\right) d\nu_{(p,0),t}  dt &\leq C(n,C_I,\tau),\label{eqn:L2}\\
\int_{-1/4}^{-\tau/4} \int_M  \left( |\nabla \tilde h|^3  +|\hess \tilde h|^3 \right)d\nu_{(p,0),t}  dt &\leq C(n,C_I,\tau),\label{eqn:Lp}
\end{align}

\begin{proof}[Proof of Claim]
We first prove \eqref{eqn:decay_not} and \eqref{eqn:Lp}. By Lemma \ref{lemma:Lich_type_I} and \eqref{eqn:evol_hess} there is $E=E(n,C_I)$ so that
\begin{equation}\label{eqn:hessian_tilde_h_monotonicity}
\frac{d}{dt} \int_M |t|^E |\hess \tilde h|^2 d\nu_{(p,0),t}\leq 0.
\end{equation}
It follows that, for any $s\in [-1/4,-\tau/4]$,
\begin{equation}\label{eqn:earlier_hessian}
\begin{aligned}
|s|^E\int_M  |\hess \tilde h|^2 d\nu_{(p,0),s}&\leq\frac{1}{4^E}\int_M  |\hess \tilde h|^2 d\nu_{(p,0),-1/4} \\
&\leq \frac{4}{3}\int_{-1}^{-1/4} \int_M |t|^E |\hess \tilde h|^2 d\nu_{(p,0),t} dt\\
&\leq \frac{4}{3}\int_{-1}^{-1/4} \int_M |\hess \tilde h|^2 d\nu_{(p,0),t} dt\\
&\leq \frac{16}{3} \int_{-\tau}^{-\tau/4} \int_M |\hess \tilde h|^2 d\nu_{(p,0),t} dt,
\end{aligned}
\end{equation}
where we also used  \eqref{eqn:hessian_decay_not} for the last inequality.

Integrating \eqref{eqn:earlier_hessian} gives
\begin{equation}
\int_{-1/4}^{-1/16} \int_M |\hess \tilde h|^2 d\nu_{(p,0),t} dt \leq  L(n,C_I) \int_{-\tau}^{- \tau/4}\int_M |\hess \tilde h|^2 d\nu_{(p,0),t} dt,
\end{equation}
which proves \eqref{eqn:decay_not}, and
\begin{equation}\label{eqn:full_hessian}
\int_{-1}^{-\tau/4} \int_M |\hess \tilde h|^2 d\nu_{(p,0),t} dt \leq C(n,C_I,\tau)  \int_{-\tau}^{- \tau/4}\int_M |\hess \tilde h|^2 d\nu_{(p,0),t} dt,
\end{equation}
which proves \eqref{eqn:earlier_from_later_hessian}. Note that to arrive to \eqref{eqn:full_hessian} we need to apply \eqref{eqn:hessian_decay_not} once more.

Recall that any solution of the heat equation satisfies 
\begin{equation}\label{eqn:evol_grad}
\left(\frac{\partial}{\partial t} -\Delta \right)|\nabla \tilde h|^2 =-2|\hess \tilde h|^2.
\end{equation}
hence, 
\begin{align}\label{eqn:L2grad_evol}
\frac{d}{dt} \int_M |\nabla\tilde h|^2  d\nu_{(p,0),t}  =-2\int_M |\hess \tilde h|^2 d\nu_{(p,0),t} \leq 0.
\end{align}
Let $\bar t\in [-1,-1/4]$ be such that
$$\int_M |\nabla \tilde h|^2 d\nu_{(p,0),\bar t} = \frac{4}{3} \int_{-1}^{-1/4} \int_M |\nabla \tilde h|^2 d\nu_{(p,0),t} dt =1,$$
by the mean value theorem.

Integrating \eqref{eqn:L2grad_evol} we thus obtain for any $t_0\in [\bar t,-\tau/4]$
\begin{equation}\label{eqn:grad_hess_1}
\begin{aligned}
0\leq\int_M |\nabla \tilde h|^2 d\nu_{(p,0),t_0}&=\int_M |\nabla \tilde h|^2 d\nu_{(p,0),\bar t}-
2\int_{\bar t}^{t_0} \int_M |\hess \tilde h|^2 d\nu_{(p,0),t} dt \\
&=1-2\int_{\bar t}^{t_0} \int_M |\hess \tilde h|^2 d\nu_{(p,0),t} dt\leq 1,
\end{aligned}
\end{equation}
and for every $s_0\in [-1, \bar t]$
\begin{equation}\label{eqn:grad_hess_2}
\begin{aligned}
\int_M |\nabla \tilde h|^2 d\nu_{(p,0),s_0} &= \int_M |\nabla \tilde h|^2 d\nu_{(p,0),\bar t} +2\int_{s_0}^{\bar t} \int_M |\hess \tilde h|^2 d\nu_{(p,0),t} dt \\
&= 1+2\int_{s_0}^{\bar t} \int_M |\hess \tilde h|^2 d\nu_{(p,0),t} dt.
\end{aligned}
\end{equation}

From \eqref{eqn:full_hessian} and \eqref{eqn:grad_hess_1} it follows that
\begin{equation}\label{eqn:full_hessian_bound}
\int_{-1}^{-\tau/4} \int_M |\hess \tilde h|^2 d\nu_{(p,0),t} dt\leq C(n,C_I,\tau) \int_{-\tau}^{-\tau/4} \int_M |\hess \tilde h|^2 d\nu_{(p,0),t} dt\leq C(n,C_I,\tau), 
\end{equation}
hence \eqref{eqn:grad_hess_2} gives
\begin{equation}\label{eqn:t_h_grad_initial}
\int_M |\nabla \tilde h|^2 d\nu_{(p,0),-1} \leq C(n,C_I,\tau).
\end{equation}

Moreover, by \eqref{eqn:tilde_norm_2} and \eqref{eqn:grad_hess_1} we also obtain
\begin{equation}\label{eqn:tilde_h_gradient}
\int_{-1}^{-\tau/4} \int_M |\nabla \tilde h|^2 d\nu_{(p,0),t}dt \leq 2.
\end{equation}
These suffice to prove the  $L^2$ bounds for $\nabla \tilde h$ and $\hess \tilde h$ in \eqref{eqn:L2}. The $L^2$ bound of $\tilde h$ in \eqref{eqn:L2} follows from \eqref{eqn:tilde_norm_1}, \eqref{eqn:tilde_h_gradient} and the Poincar\'e inequality of Theorem \ref{thm:poincare}. Namely,
\begin{equation}
\int_{-1}^{-\tau/4} \int_M |\tilde h|^2 d\nu_{(p,0),t} dt \leq  \int_{-1}^{-\tau/4} 2|t|\int_M |\nabla \tilde h|^2 d\nu_{(p,0),t} dt \leq 4.
\end{equation}

On the other hand, \eqref{eqn:grad_hess_1} and \eqref{eqn:full_hessian} imply that 
$$1\geq \frac{16}{3}\int_{-1/4}^{-1/16}\int_M |\nabla \tilde h|^2 d\nu_{(p,0),t} \geq 1- C(n,C_I,\tau) \int_{-\tau}^{-\tau/4} \int_M |\hess \tilde h|^2 d\nu_{(p,0),t} dt,$$
which proves \eqref{eqn:grad_lower}.

Since $3\leq 1+\frac{1}{|t|}$ if $t\geq-1/4$, Theorem \ref{thm:hypercontractivity} and the fact that $\left(\frac{\partial}{\partial t}-\Delta\right)|\nabla \tilde h|\leq0$, by Corollarly \ref{cor:subsolutions}, imply that
\begin{equation}
\left(\int_M |\nabla\tilde h|^3  d\nu_{(p,0),t}\right)^{1/3}\leq  \left(\int_M |\nabla\tilde h|^2  d\nu_{(p,0),-1}\right)^{1/2} \leq C(n,C_I,\tau),
\end{equation}
for every $t\in [-1/4,-\tau/4]$, by \eqref{eqn:t_h_grad_initial}. Thus, integrating we obtain the gradient estimate in \eqref{eqn:Lp}.

To prove \eqref{eqn:Lp}, first note that by \eqref{eqn:full_hessian_bound} and the mean value theorem there is $\tilde t\in [-1,-1/2]$ such that
\begin{equation}\label{eqn:tilde_t_hessian}
\int_M |\hess \tilde h|^2 d\nu_{(p,0),\tilde t} \leq C(n,C_I,\tau).
\end{equation}
Now, since
\begin{align*}
\left(\frac{\partial}{\partial t}-\Delta\right)\left( |t|^{E/2} |\hess \tilde h| \right)&\leq0,
\end{align*}
and $1+\frac{|\tilde t|}{| t|} \geq 3$, for any $t\in [-1/4,-\tau/4]$, by Theorem \ref{thm:hypercontractivity} and \eqref{eqn:tilde_t_hessian} we obtain that
\begin{equation*}
|t|^{3E/2}\left(\int_M |\hess \tilde h|^3 d\nu_{(p,0),t} \right)^{1/3} \leq  \left(\int_M |\hess \tilde h|^2 d\nu_{(p,0),\bar t} \right)^{1/2}\leq C(n,C_I,\tau),
\end{equation*}
Integrating in the interval $[-1/4,-\tau/4]$ we obtain the hessian estimate in \eqref{eqn:Lp}.
\end{proof}

We will now prove the lemma arguing by contradiction. For this, suppose that there is a sequence $\delta_j\rightarrow 0$ and a pointed sequence  $(M_j,g_j(t),p_j)_{t\in [-2\delta_j^{-2},0]}$ of smooth compact Ricci flows satisfying (RF1-3) and $q_j \in B(p_j,-1,R)$ such that $(M_j,g_j(t),q_j)_{t\in [-2\delta_j^{-2},0]}$ is $(k,\delta_j)$-selfsimilar but not $(k+1,\eta)$-selfsimilar at scale $1$ around $p_j$, and if $q_j\not =p_j$
\begin{equation}
\mathcal W_{p_j}(\delta_j) - \mathcal W_{p_j}(\delta_j^{-1})<\delta_j.
\end{equation}
 Moreover, assume that there is a sequence of $(k,\delta_j)$-splitting maps $v_j:M\times [-1,0]\rightarrow \mathbb R$ around $p_j$ at scale $1$, which we can assume to be normalized so that for any $a=1,\ldots,k$
$$\int_{-1}^0\int_M v_j^a d\nu_{(p_j,0),t}  dt =0.$$

Finally, let $\tau\in (0,1/4)$ be a small constant that will be specified towards the end of the proof, and assume that $h_j:M_j\times [-1,0]\rightarrow \mathbb R$ are solutions to the heat equation, such that the corresponding $\tilde h_j$  is normalized as in \eqref{eqn:tilde_norm_1} and \eqref{eqn:tilde_norm_2} and satisfies
\begin{equation*}
\int_{-1}^{-1/4}\int_{M_j} |\hess \tilde h_j|^2 d\nu_{(p_j,0),t}  dt < 4\int_{-\tau}^{- \tau/4} \int_{M_j} |\hess \tilde h_j|^2 d\nu_{(p_j,0),t}  dt,
\end{equation*}
By the Claim it follows that each $\tilde h_j$ satisfies \eqref{eqn:earlier_from_later_hessian}, \eqref{eqn:decay_not}, \eqref{eqn:L2} and \eqref{eqn:Lp}.

Thus, by (RF1-3), Proposition \ref{prop:compactness_rf}, Lemma \ref{lemma:k_delta_convergence}, Proposition \ref{prop:W_drop_small} and Proposition \ref{prop:s_map_compactness}, passing to a subsequence if necessary, we can assume that $(M_j,g_j(t),p_j)_{t\in [-2\delta_j^{-1},0]}$ converges to a smooth complete pointed Ricci flow $(M_\infty,g_\infty(t),p_\infty)_{t\in (-\infty,0)}$ and that
\begin{enumerate}
\item $q_j\rightarrow q_\infty \in B(p_\infty,-1,2R)$ and $(M_\infty,g_\infty(t),q_\infty)_{t\in (-\infty,0)}$ is $k$-selfsimilar but not $(k+1,\eta)$-selfsimilar at scale $1$.
\item the conjugate heat kernel measures $\nu_{(p_j,0)}$ smoothly converge to  a conjugate heat flow $\nu_\infty \in\mathcal S$, that satisfies (CHF1) with respect to $p_\infty$.
\item the functions $v_j^a$ smoothly converge, uniformly in compact subsets of $M_\infty\times (-1,0)$, to  linear functions $v_\infty^a$, satisfying
\begin{equation*}
\begin{aligned}
 \int_{M_\infty}v_\infty^a d\nu_{\infty,t}  &=0, \quad\textrm{for every $t\in (-1,0)$},\\
 \frac{4}{3}\int_{-1}^{-1/4} \int_{M_\infty} \langle \nabla v_\infty^a,\nabla v_\infty^b \rangle d\nu_{\infty,t}  dt &=\delta^{ab}.
 \end{aligned}
\end{equation*} 
\end{enumerate}
Moreover, using the estimates on $\tilde h_j$ from the Claim and arguing as in the proof of Proposition \ref{prop:s_map_compactness}, we can assume that the functions $\tilde h_j$ smoothly converge, uniformly in compact subsets of $M_\infty\times [-1/4,-\tau/4]$, to a solution $\tilde h_\infty:M_\infty\times [-1/4,-\tau/4] \rightarrow \mathbb R$ of the heat equation that satisfies
\begin{align}
\int_{M_\infty} \tilde h_\infty d\nu_{\infty,t}&=0, \quad \textrm{for every $t\in [-1/4, -\tau/4]$},\\
\int_{-1/4}^{-1/16} \int_{M_\infty} |\nabla \tilde h_\infty|^2 d\nu_{\infty,t} dt &= \lim_{j\rightarrow+\infty} \int_{-1/4}^{-1/16} \int_{M_j} |\nabla \tilde h_j|^2 d\nu_{(p_j,0),t} dt, \label{eqn:limit_non_trivial}\\
\int_{-1/4}^{-1/16} \int_{M_\infty} \langle \nabla v_\infty^a,\nabla \tilde h_\infty\rangle d\nu_{\infty,t} dt  &=\lim_{j\rightarrow+\infty} \int_{-1/4}^{-1/16} \int_{M_j} \langle \nabla v_j^a,\nabla \tilde h_j\rangle d\nu_{(p_j,0),t} dt =0, \label{eqn:tildehinfty_perp}
\end{align}
for any $a=1,\ldots,k$.

In particular, to prove \eqref{eqn:limit_non_trivial} it suffices to use the $L^3$ bound on $\nabla \tilde h_j$ from the Claim, while for \eqref{eqn:tildehinfty_perp} it suffices to use the $L^2$ bound on $\nabla \tilde h_j$ from the Claim, together with the $L^2$ bound on $\nabla v_j^a$ due to $v_j$ being $(k,\delta_j$)-splitting maps.

Also, \eqref{eqn:Lp} of the Claim, H\"older's inequality and Lemma \ref{lemma:int_ker_bounds},  suffice to establish the convergence
\begin{equation}\label{eqn:hessian_convergence}
\lim_{i\rightarrow+\infty} \int_{s}^{s/4} \int_{M_i} |\hess \tilde h_j|^2 d\nu_{(p_j,0),t} dt =  \int_{s}^{s/4} \int_{M_\infty} |\hess \tilde h_\infty|^2 d\nu_{\infty,t} dt,
\end{equation}
for any $s\in [-1/4,-\tau]$.

Hence, we may pass \eqref{eqn:earlier_from_later_hessian}  and \eqref{eqn:decay_not} to the limit to obtain that 
\begin{align}
\int_{-1/4}^{-1/16} \int_{M_\infty} |\hess \tilde h_\infty|^2 d\nu_{\infty,t} dt &\leq L(n,C_I) \int_{-\tau}^{-\tau/4}\int_{M_\infty} |\hess \tilde h_\infty|^2 d\nu_{\infty,t}  dt, \label{eqn:soliton_decay_not}\\
\int_{-1/4}^{-\tau/4} \int_{M_\infty} |\hess \tilde h_\infty|^2 d\nu_{\infty,t}  dt &\leq C(n,C_I,\tau) \int_{-\tau}^{-\tau/4}\int_{M_\infty} |\hess \tilde h_\infty|^2 d\nu_{\infty,t}  dt. \label{eqn:earlier_from_later_limit}
\end{align}

Now, we claim that
\begin{equation}\label{eqn:hessian_nontrivial}
\int_{-\tau}^{-\tau/4} \int_{M_\infty} |\hess \tilde h_\infty|^2 d\nu_{\infty,t} dt \geq c(n,C_I,\Lambda,C_1,\eta,\tau)>0.
\end{equation}
Suppose this is not true.  Then by \eqref{eqn:hessian_convergence} we know that 
$$\lim_{j\rightarrow +\infty}\int_{-\tau}^{-\tau/4} \int_{M_j} |\hess \tilde h_j|^2 d\nu_{(p_j,0),t} dt =0,$$
hence \eqref{eqn:grad_lower} implies that
$$\lim_{j\rightarrow +\infty}\int_{-1/4}^{-1/16} \int_{M_j} |\nabla \tilde h_j|^2 d\nu_{(p_j,0),t} dt =1.$$
Then, by \eqref{eqn:limit_non_trivial} and \eqref{eqn:earlier_from_later_limit}, $\tilde h_\infty$ is a non-constant linear function in $M_\infty \times [-1/4,-\tau/4]$.  Moreover,  by \eqref{eqn:tildehinfty_perp}, for any $a=1,\ldots,k$, $\nabla \tilde h_\infty$ is perpendicular to $\nabla v_\infty^a$ in $M_\infty \times [-\tau,-\tau/4]$. Since $\tilde h_\infty$ and $v^a_\infty$ solve the heat equation and their hessian vanishes, it follows that $\langle \nabla \tilde h_\infty,\nabla v_\infty^a\rangle$ is constant in $M_\infty\times [-1/4,-\tilde\tau/4]$, hence $\nabla \tilde h_\infty$ is perpendicular to  each $\nabla v_\infty^a$ in $M_\infty\times [-1/4,-\tau/4]$.

 Therefore, the $k$-selfsimilar Ricci flow $(M_\infty,g_\infty(t),q_\infty)_{t\in(-\infty,0)}$ splits in fact $k+1$-Euclidean factors. This contradicts that $(M_j,g_j (t),q_j)_{t\in (-\delta_j^{-1},0)}$ is not $(k+1,\eta)$-selfsimilar for every $j$.
 
Now, by the Hessian decay estimate  Lemma \ref{lemma:hessian_decay_solitons}, there is a $\tau=\tau(n,C_I,\Lambda,H,\eta)=4^{-m}$ so that
\begin{equation}\label{eqn:soliton_hessian_decay}
\int_{-1/4}^{-1/16} \int_{M_\infty} |\hess \tilde h_\infty|^2 d\nu_{\infty,t}  dt \geq 2 L(n,C_I) \int_{-\tau}^{-\tau/4} \int_{M_\infty} |\hess \tilde h_\infty|^2 d\nu_{\infty,t} dt.
\end{equation}
Therefore,  \eqref{eqn:soliton_decay_not} and  \eqref{eqn:soliton_hessian_decay} give
\begin{align*}
L(n,C_I) \int_{-\tau}^{-\tau/4} \int_{M_\infty} |\hess \tilde h_\infty|^2 d\nu_{\infty,t}  dt &\geq  \int_{-1/4}^{-1/16} \int_{M_\infty} |\hess \tilde h_\infty|^2 d\nu_{\infty,t}  dt\\
&\geq 2L(n,C_I) \int_{-\tau}^{-\tau/4} \int_{M_\infty} |\hess \tilde h_\infty|^2 d\nu_{\infty,t} dt.
\end{align*}
Hence, by \eqref{eqn:hessian_nontrivial}, we obtain a contradiction.

\end{proof}

Now that we have a control on the hessian of the non-linear part of a solution to the heat equation, the control of the linear part will be achieved by exploiting the hessian control available for the sharp splitting maps established by Theorem \ref{thm:sharp_splitting}.

\begin{lemma}\label{lemma:decay_pinching}
There is $D'=D'(n,H)<+\infty$ such that, given a scale $r>0$, $0<\mu\leq \frac{1}{6}$ and $R\geq \frac{D'}{\mu}$, the following holds. Let $(M,g(t),p)_{t\in (-\delta^{-2}r^2,0]}$ be a pointed smooth compact Ricci flow satisfying (RF1-5). Suppose that there is $q\in B(p,-1,R)$ such that $(M,g(t),q)_{t\in (-\delta^{-2}r^2,0]}$  is $(k,\delta^2)$-selfsimilar but not $(k+1,\eta)$-selfsimilar at scale $r$, and that if $q\not = p$
\begin{equation}\label{eqn:drop_decay_pinching}
\mathcal W_p(\delta)-\mathcal W_p(\delta^{-1})<\delta.
\end{equation}

Let $h: M \times [-r^2,0]\rightarrow \mathbb R$ be a solution to the heat equation with $\frac{4}{3r^2}\int_{-r^2}^{-r^2/4}\int_M |\nabla h|^2 d\nu_{(p,0),t} dt=1$. There is $\tau=\tau(n,C_I,\Lambda,C_1)=4^{-m}\in (0,1/4)$ such that if $0<\delta\leq \delta(n,C_I,\Lambda,H,K,\eta|R,\mu)$,  then 
\begin{equation*}
\begin{aligned}
\int_{-\tau r^2}^{-\frac{\tau r^2}{4}} \int_M &|\hess h|^2  d\nu_{(p,0),t} dt \leq\\
&\leq  \frac{1}{3}  \int_{-r^2}^{-\frac{r^2}{4}} \int_M |\hess h|^2 d\nu_{(p,0),t} dt + C(n,C_I,\Lambda,H,\eta|R,\mu)\mathcal E^{(k,\mu,\delta,R)}_r (p).
\end{aligned}
\end{equation*}
\end{lemma}

\begin{proof}
By scaling we can assume $r=1$. Let $D'=D'(n,H)<+\infty$ be provided by Theorem \ref{thm:sharp_splitting}. For any $\delta'>0$, if $0<\delta\leq\delta(n,C_I,\Lambda,C_1,C_2,K|R,\mu,\delta')$, $(M,g(t),q)_{t\in (-\delta^{-1},0]}$ is $(k,\delta^2)$-selfsimilar for $q\in B(p,-1,R)$, and \eqref{eqn:drop_decay_pinching} holds if $q\not=p$, then by Theorem \ref{thm:sharp_splitting} there exists a $(k,\delta')$-splitting map  $v: M\times [-1,0]\rightarrow \mathbb R^k$ around $p$, at scale $1$. Moreover,  $v$ satisfies for each $a=1,\ldots,k$
\begin{equation}
\int_{-1}^{-\delta'}\int_M  |\hess v^a|^2 d\nu_{(p,0),t} dt\leq C(n,C_I,\Lambda,C_1,C_2|R,\mu,\delta')\mathcal E^{(k,\mu,\delta,R)}_1 (p).
\end{equation}

Define $\tilde h=h-\xi_a v^a$ and $h_{\textrm{linear}} = \xi_a v^a$ such that  $h=\tilde h+h_{\textrm{linear}}$ and \eqref{eqn:definition_tilde_h} holds. Recall that, under the normalization of $h$, we have $|\xi|\leq C(n)$ by \eqref{eqn:xi_bound}.

By Lemma \ref{lemma:hessian_decay}, if $\delta'=\delta'(n,C_I,\Lambda,C_1,\eta)>0$ is small enough, we know that $\tilde h$ satisfies, for some $\tau=\tau(n,C_I,\Lambda,C_1)\in(0,1/4)$,
\begin{equation}\label{eqn:h_d}
 \int_{-\tau}^{-\frac{\tau}{4}} \int_M  |\hess \tilde h|^2 d\nu_{(p,0),t} dt \leq \frac{1}{4}  \int_{-1}^{-\frac{1}{4}} \int_M |\hess \tilde h|^2 d\nu_{(p,0),t} dt 
\end{equation}
Therefore, using  the Peter-Paul inequality, we obtain
\begin{align*}
& \int_{-\tau}^{-\frac{\tau}{4}} \int_M |\hess h|^2 d\nu_{(p,0),t} dt =  \int_{-\tau}^{-\frac{\tau}{4}} \int_M |\hess \tilde h+\hess h_{\textrm{linear}}|^2 d\nu_{(p,0),t} dt, \\
&\leq 1.1  \int_{-\tau}^{-\frac{\tau}{4}} \int_M  |\hess \tilde h|^2 d\nu_{(p,0),t} dt + C\int_{-\tau}^{-\frac{\tau}{4}} \int_M  |\hess h_{\textrm{linear}}|^2 d\nu_{(p,0),t} dt,\\
&\leq 1.1  \int_{-\tau}^{-\frac{\tau}{4}} \int_M  |\hess \tilde h|^2 d\nu_{(p,0),t} dt + C(n)  \int_{-\tau}^{-\frac{\tau}{4}} \int_M ||\hess v||^2 d\nu_{(p,0),t} dt,
\end{align*}
and similarly
\begin{align*}
&\int_{-1}^{-\frac{1}{4}} \int_M |\hess \tilde h|^2 d\nu_{(p,0),t} dt = \int_{-1}^{-\frac{1}{4}} \int_M |\hess h - \hess h_{\textrm{linear}}|^2 d\nu_{(p,0),t} d t,\\
&\leq 1.1 \int_{-1}^{-\frac{1}{4}} \int_M |\hess  h|^2 d\nu_{(p,0),t} d t +C(n)\int_{-1}^{-\frac{1}{4}} \int_M ||\hess v||^2 d\nu_{(p,0),t} dt.
\end{align*}

Hence, applying \eqref{eqn:h_d},
\begin{align*}
&\int_{-\tau}^{-\frac{\tau}{4}} \int_M |\hess h|^2 d\nu_{(p,0),t} dt \leq \\
&\leq 1.1 \int_{-\tau}^{-\frac{\tau}{4}} \int_M  |\hess \tilde h|^2 d\nu_{(p,0),t} dt + C(n)  \int_{-\tau}^{-\frac{\tau}{4}} \int_M ||\hess v||^2 d\nu_{(p,0),t} dt,\\
&\leq \frac{1.1}{4}   \int_{-1}^{-\frac{1}{4}} \int_M |\hess \tilde h|^2 d\nu_{(p,0),t} d\tau+C(n) \int_{-\tau}^{-\frac{\tau}{4}} \int_M  ||\hess v||^2 d\nu_{(p,0),t} dt,\\ 
&\leq \frac{1}{3}\int_{-1}^{-\frac{1}{4}} \int_M |\hess  h|^2 d\nu_{(p,0),t} dt +C(n)\int_{-1}^{-\frac{1}{4}} \int_M ||\hess v||^2 d\nu_{(p,0),t} dt, \\
&+C(n)  \int_{-\tau}^{-\frac{\tau}{4}} \int_{M}  |\hess v|^2 d\nu_{(p,0),t} dt,\\
&\leq \frac{1}{3}\int_{-1}^{-\frac{1}{4}} \int_M |\hess  h|^2 d\nu_{(p,0),t} dt +C(n)  \int_{-1}^{-\delta'} \int_M  ||\hess v||^2 d\nu_{(p,0),t} dt,\\
&\leq \frac{1}{3}\int_{-1}^{-\frac{1}{4}} \int_M |\hess  h|^2 d\nu_{(p,0),t} dt + C(n,C_I,\Lambda,C_1,C_2,\eta|R,\mu)\mathcal E^{(k,\mu,\delta,R)}_1 (p),
\end{align*}
which proves the required estimate.
\end{proof}

Finally, applying Lemma \ref{lemma:decay_pinching} to the splitting maps constructed by Proposition \ref{prop:transformations} leads to the following decay estimate for splitting maps.

\begin{proposition}\label{prop:decay_transformations}
Fix $\eta>0$, $0<\mu\leq \frac{1}{6}$,  $R\geq \frac{D'}{\mu}$, where $D'=D'(n,H)$ is given by Theorem \ref{thm:sharp_splitting}, and let $(M,g(t),p)_{t \in (-\delta^{-1},0]}$ be a pointed smooth compact Ricci flow satisfying (RF1-5). Suppose that there is $r\in (0,1]$ such that, for every $s\in [r,1]$ there is $q_s\in B(p,-s^2,Rs)$ such that $(M,g(t),q_s)_{(-2\delta^{-2} s^2,0]}$ is $(k,\delta^2)$-selfsimilar but 
not $(k+1,\eta)$-selfsimilar at scale $s$, and if $q_s\not = p$ then 
\begin{equation*}
\mathcal W_p( \delta s^2)-\mathcal W_p (\delta^{-1} s^2)<\delta.
\end{equation*}
 Let $v: M\times [-1,0]\rightarrow \mathbb R^k$ be a $(k,\delta)$-splitting map around $p$ at scale $1$. 

There is $\tau=\tau(n,C_I,\Lambda,C_1)\in (0,1/4)$ such that if $0<\delta\leq \delta(n,C_I,\Lambda,H,K,\eta|R,\mu,\varepsilon)$ then for every $s \in [ r, 1]$ there is a lower triangular $k\times k$ matrix $T_s$  such that
\begin{enumerate}
\item $v_s=T_s v$ is a $(k,\varepsilon)$-splitting map around $p$ at scale $s$, normalized as in \eqref{eqn:vr_normalization}.
\item If $s\in [r,1]$ and $s\sqrt{\tau}\in [r,1]$ then
\begin{align*}
\int_{-\tau s^2}^{-\frac{\tau s^2}{4}} \int_M & ||\hess v_{s\sqrt\tau}||^2 d\nu_{(p,0),t}  dt \leq\\
&\leq \frac{1}{2} \int_{-s^2}^{-\frac{s^2}{4}} \int_M ||\hess v_s||^2 d\nu_{(p,0),t}  dt + C(n,C_I,\Lambda,H,\eta|R,\mu)\mathcal E^{(k,\mu,\delta,R)}_{s}(p).
\end{align*}
\end{enumerate}
\end{proposition}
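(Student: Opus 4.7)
The plan is to combine Proposition \ref{prop:transformations}, which produces the matrices $T_s$ and the rescaled splitting maps $v_s$ with controlled near-identity transitions, with Lemma \ref{lemma:decay_pinching}, which provides componentwise hessian decay for heat solutions satisfying the appropriate gradient normalization. First I would apply Proposition \ref{prop:transformations} with an auxiliary parameter $\varepsilon'\leq\varepsilon$ (chosen below) to obtain for each $s\in[r,1]$ a lower triangular $T_s$ such that $v_s:=T_s v$ is a $(k,\varepsilon')$-splitting map at scale $s$ normalized as in Assertion~(1). The dilation estimate $\|T_{s\sqrt\tau}T_s^{-1}\|\leq\tau^{-C(n)\sqrt{\varepsilon'}/2}$ of Proposition~\ref{prop:transformations}(3) then yields $\|T_{s\sqrt\tau}T_s^{-1}-I_k\|$ as small as desired, once $\varepsilon'$ is chosen small compared to the (eventually fixed) $\tau$.

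For Assertion~(2), I would apply Lemma \ref{lemma:decay_pinching} componentwise to each $v_s^a$ at scale $s$: the normalization in Assertion~(1) gives precisely $\tfrac{4}{3s^2}\int_{-s^2}^{-s^2/4}\int_M|\nabla v_s^a|^2\,d\nu=1$, and the assumption that $(M,g(t),p)$ is $(k,\delta^2)$-selfsimilar but not $(k+1,\eta)$-selfsimilar at scale $s$ is precisely what the lemma requires. It then produces $\tau\in(0,1/4)$ with dependence $\tau=\tau(n,C_I,\Lambda,C_1)$, along with a per-component decay coefficient $c_0$ such that
\begin{equation*}
\int_{-\tau s^2}^{-\tau s^2/4}\!\!\int_M|\hess v_s^a|^2\,d\nu\;\leq\;c_0\int_{-s^2}^{-s^2/4}\!\!\int_M|\hess v_s^a|^2\,d\nu+C\,\mathcal E_s^{(k,\mu,\delta,R)}(p).
\end{equation*}
I would choose $\tau$ small enough that $c_0\leq 1/(3k)$; this is permissible because the constant in Lemma \ref{lemma:decay_pinching} can be made arbitrarily small by shrinking the soliton decay ratio $\tau^{\beta}$ appearing in the proof of Lemma \ref{lemma:hessian_decay}, and since $k\leq n$ this only tightens the $n$-dependence of $\tau$. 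Summing the per-component estimate over $a$ and using $\|\hess v_s\|^2\leq\sum_a|\hess v_s^a|^2\leq k\|\hess v_s\|^2$ produces a decay for $\int\|\hess v_s\|^2\,d\nu$ with coefficient $kc_0\leq 1/3$.

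Finally, I would convert from $v_s$ to $v_{s\sqrt\tau}$ by writing $v_{s\sqrt\tau}=(I_k+E)v_s$ with $E:=T_{s\sqrt\tau}T_s^{-1}-I_k$, so $\hess v_{s\sqrt\tau}^a=\hess v_s^a+E^a_b\hess v_s^b$. Peter--Paul together with Cauchy--Schwarz then gives pointwise
\begin{equation*}
\|\hess v_{s\sqrt\tau}\|^2\;\leq\;(1+\theta)\|\hess v_s\|^2+C(\theta,n)\|E\|^2\|\hess v_s\|^2.
\end{equation*}
Since $\|E\|$ can now be made arbitrarily small by shrinking $\varepsilon'$ (the scale ratio $\sqrt{\tau}$ being already fixed), the total decay coefficient is at most $(1+\theta+C(\theta,n)\|E\|^2)kc_0\leq 1/2$ for $\theta,\|E\|$ small, and the additive error term absorbs cleanly into $C\,\mathcal E_s^{(k,\mu,\delta,R)}(p)$. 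The main obstacle---and the reason the decay constant in Lemma \ref{lemma:decay_pinching} must be shrunk in a dimension-dependent way---is the factor of $k$ lost when converting between $\sum_a|\hess v_s^a|^2$ and $\|\hess v_s\|^2=\max_a|\hess v_s^a|^2$; everything else is a routine combination of the componentwise decay with the near-identity transition estimate supplied by Proposition \ref{prop:transformations}.
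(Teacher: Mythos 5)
Your proposal is correct and follows the same architecture as the paper: apply Proposition \ref{prop:transformations} to produce the $T_s$, apply Lemma \ref{lemma:decay_pinching} componentwise to each $v_s^a$, then convert from $v_s$ to $v_{s\sqrt\tau}$ via the near-identity transition matrix. You are somewhat more careful than the paper in tracking the factor of $k$ that arises when passing from the componentwise estimate to the $\ell^\infty$ norm $\|\hess v\|=\max_a|\hess v^a|$; the paper absorbs this silently into its constants, while your explicit device of tightening $\tau$ so that the per-component decay coefficient is at most $1/(3k)$ (permissible since the H\"older decay rate in Lemma \ref{lemma:hessian_decay_solitons} allows one to take $\tau$ as small as needed, at the cost of worsening the $n$-dependence) makes the step rigorous.
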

\begin{proof}

By Proposition \ref{prop:transformations}, if $\delta>0$ is small enough and $v$ is a $(k,\delta)$-splitting map around $p$ at scale $1$, then for each $s\in [r,1]$ there is a lower triangular $k\times k$ matrix $T_r$ such that $v_s=T_sv$ are $(k,\varepsilon)$-splitting maps, normalized as in \eqref{eqn:vr_normalization}.

Let $\tau=\tau(n,C_I,\Lambda,C_1)$ be the constant provided by Lemma \ref{lemma:decay_pinching}. Then, by Lemma \ref{lemma:normalize_splitting}, if  $0<\varepsilon\leq \varepsilon(n,C_I,\tau)=\varepsilon(n,C_I,\Lambda,C_1)$ we know that
\begin{equation}\label{eqn:dilation_control}
||T_{s\sqrt\tau}T_s^{-1} - I_k||\leq 0.1,
\end{equation}
since $v_s$ is a $(k,\varepsilon)$-splitting map and $v_{s\sqrt\tau}$ satisfies the normalization \eqref{eqn:vr_normalization}.

Applying Lemma \ref{lemma:decay_pinching} to each $v_s^a$, $a=1,\ldots,k$, we obtain, if $\delta$ is small enough, that
\begin{equation*}
\begin{aligned}
\int_{-\tau s^2}^{-\frac{\tau s^2}{4}} \int_M &||\hess v_s||^2 d\nu_{(p,0),t} dt \leq\\
&\leq \frac{1}{3} \int_{-s^2}^{-\frac{s^2}{4}} \int_M ||\hess v_s||^2 d\nu_{(p,0),t} dt + C(n,C_I,\Lambda,C_1,C_2,\eta|R,\mu)\mathcal E^{(k,\mu,\delta,R)}_s(p).
\end{aligned}
\end{equation*}
Since $v_{s\sqrt\tau}= T_{s\sqrt\tau} T_s^{-1} v_s$, by \eqref{eqn:dilation_control} this suffices to obtain the required estimate.
 \end{proof}

\subsection{Proof of the Geometric Transformation Theorem \ref{thm:transformations}}

To simplify our notation we will denote
$$\mathcal E_s(p):=\mathcal E^{(k,\mu,\delta,R)}_s(p),$$
since $k,\mu,\delta,R$ are fixed and $D'=D'(n,H)<+\infty$ is the constant given by Theorem \ref{thm:sharp_splitting}.  Moreover, observe that it suffices to prove the theorem for small enough $\varepsilon>0$.

Apply Proposition \ref{prop:decay_transformations} to obtain $\tau=\tau(n,C_I,\Lambda,C_1)=4^{-m}$ and lower triangular $k\times k$ matrices $T_s$, such that, for every $s\in [r,1]$, $v_s=T_s v$ are $(k,\varepsilon)$-splitting maps satisfying
\begin{equation*}
\begin{aligned}
\int_{-\tau s^2}^{-\frac{\tau s^2}{4}} \int_M &||\hess v_{s\sqrt{\tau}} ||^2 d\nu_{(p,0),t} dt \leq \\
&\leq \frac{1}{2}  \int_{-s^2}^{-\frac{s^2}{4}} \int_M ||\hess v_s ||^2 d\nu_{(p,0),t} dt + C(n,C_I,\Lambda,C_1,C_2,\eta|R,\mu) \mathcal E_s(p).
\end{aligned}
\end{equation*}

Choose $\theta=\theta(n,C_I,\Lambda,C_1)$ such that $\tau^{\theta/2}=\frac{1}{2}$ to obtain
\begin{align*}
 \int_{-\tau s^2}^{-\frac{\tau s^2}{4}} \int_M &||\hess v_{s\sqrt{\tau}} ||^2 d\nu_{(p,0)} dt \leq \\
 &\leq\tau^{\theta/2}  \int_{-s^2}^{-\frac{s^2}{4}} \int_M ||\hess v_s ||^2 d\nu_{(p,0)} dt + C(n,C_I,\Lambda,C_1,C_2,\eta|R,\mu)\mathcal E_s(p).
\end{align*}

We will first prove the estimate for any $s_i = 2^{-i}$. Note that since $v$ is a $(k,\delta)$-splitting map around $p$ at scale $1$,  by Lemma \ref{lemma:normalize_splitting}, if $\delta>0$ is small enough and $k=0,\ldots,m-1$
\begin{equation*}
\int_{-s_i^2}^{-s_i^2/4} \int_M ||\hess v_{s_k}||^2 d\nu_{(p,0),t} dt \leq 2\int_{-s_i^2}^{-\frac{s_i^2}{4}}\int_M ||\hess v||^2 d\nu_{(p,0),t} dt<2\delta,
\end{equation*}
so there is nothing to prove in this case. 

Suppose that $i\geq m$. Then, writing $i=l m +k$, for $k=0,\ldots,m-1$,
\begin{align*}
& \int_{-s_i^2}^{-\frac{s_i^2}{4}} \int_M ||\hess v_{s_i} ||^2 d\nu_{(p,0),t} dt  \leq \\
&\leq \left(\frac{s_i}{s_{i-m}}\right)^\theta  \int_{-s_{i-m}^2}^{-\frac{s_{i-m}^2}{4}} \int_M ||\hess v_{s_{i-m}} ||^2 d\nu_{(p,0),t} dt + C(n,C_I,\Lambda,C_1,C_2,\eta|R,\mu)\mathcal E_{s_{i-m}}(p),\\
&\leq \left( \frac{s_i}{s_k}\right)^\theta  \int_{-s_k^2}^{-\frac{s_k^2}{4}} \int_M  ||\hess v_{s_k} ||^2 d\nu_{(p,0),t} dt+  C(n,C_I,\Lambda,C_1,C_2,\eta|R,\mu)\sum_{j=0}^{l-1} \left(\frac{s_i}{s_{k+jm}} \right)^\theta \mathcal E_{s_{k+jm}}(p),\\
&\leq 2\left( 2^m s_i\right)^\theta  \delta+  C(n,C_I,\Lambda,C_1,C_2,\eta|R,\mu)\sum_{s_{i-1}\leq s_j\leq 1} \left(\frac{s_i}{s_j} \right)^\theta \mathcal E_{s_j}(p).
\end{align*}

Now, suppose that $s\in (s_{i+1},s_i)$, for some $i$. If $\varepsilon>0$ is small enough, we can use Lemma \ref{lemma:normalize_splitting} to estimate
\begin{equation}\label{eqn:s_decay_hessian_1}
\begin{aligned}
&\int_{-s^2}^{-\frac{s^2}{4}} \int_M ||\hess v_s||^2 d\nu_{(p,0),t} d t \leq \\
&\leq \int_{-s_{i+1}^2}^{-s_{i+1}^2/4} \int_M ||\hess v_s||^2 d\nu_{(p,0),t}dt +\int_{-s_i^2}^{-s_i^2/4} \int_M ||\hess v_s||^2 d\nu_{(p,0),t}dt,\\
&\leq 2\left( \int_{-s_{i+1}^2}^{-s_{i+1}^2/4} \int_M ||\hess v_{s_{i+1}}||^2 d\nu_{(p,0),t}dt +\int_{-s_i^2}^{-s_i^2/4} \int_M ||\hess v_{s_i}||^2 d\nu_{(p,0),t}dt\right).
\end{aligned} 
\end{equation}

Noting that $s_{i+1}<s$ and $s_i< s/\sqrt\tau$, \eqref{eqn:s_decay_hessian_1} gives
\begin{equation*}\label{eqn:s_decay_hessian_2}
\begin{aligned}
&\int_{-s^2}^{-\frac{s^2}{4}} \int_M ||\hess v_s||^2 d\nu_{(p,0)} dt \leq\\
&\leq C\left(s_{i+1}^\theta \delta +\sum_{s_{i}\leq s_j\leq 1} \left(\frac{s_{i+1}}{s_j} \right)^\theta \mathcal E_{s_j}(p)\right) + C\left(s_i^\theta \delta +\sum_{s_{i-1}\leq s_j\leq 1} \left(\frac{s_i}{s_j} \right)^\theta \mathcal E_{s_j}(p)\right), \\
&\leq C\left( s^\theta \delta +\sum_{s \leq s_j\leq 1} \left(\frac{s}{s_j}\right)^\gamma \mathcal E_{s_j}(p) \right).
\end{aligned}
\end{equation*}
This suffices to prove Theorem \ref{thm:transformations}.

\begin{proof}[Proof of Theorem \ref{intro_thm:GTT}]
By Proposition \ref{prop:RF35}, $(M,g(t),p)_{t\in (-2\delta^{-2},0]}$ satisfies (RF1-5) for some constants $C_I,\Lambda,C_1,C_2,K$ depending only on $n,C_I,\Lambda$. Thus Theorem \ref{thm:transformations} proves the result.

\end{proof}

\section{Non-degeneration of splitting maps}\label{sec:non_degeneration}

\begin{theorem}\label{thm:non_degen}
Fix $\varepsilon>0, \eta>0$, $0<\mu\leq \frac{1}{6}$, $R\geq \frac{D'}{\mu}$, where $D'=D'(n,H)<+\infty$,  $1\leq k \leq n$ and let $(M,g(t),p)_{t\in (-2\delta^{-2},0]}$ be a pointed smooth compact Ricci flow satisfying (RF1-5). Let $r\in (0,1)$ and suppose that for every $s\in [r,1]$ there is a $q_s\in B(p,-s^2,Rs)$ such that $(M,g(t),q_s)_{t\in (-2\delta^{-2},0)}$ is $(k,\delta^2)$-selfsimilar but not $(k+1,\eta)$-selfsimilar  at scale $s$, for $1\leq k\leq n$, and if $q_s\not = p$ then
\begin{equation*}
\mathcal W_p(\delta s^2)-\mathcal W_p(\delta^{-1} s^2) <\delta
\end{equation*}
holds.

Moreover, let  $v$ be a $(k,\delta)$-splitting map $v$ around $p$ at scale $1$, and suppose that for $s_j=2^{-j}$ we have
\begin{equation}
\sum_{r \leq s_j\leq 1} \mathcal E^{(k,\mu,\delta,R)}_{s_j}(p) <\delta.
\end{equation}
If $0<\delta\leq\delta(n,C_I,\Lambda,H,K,\eta|R,\mu,\varepsilon)$, then for every $r\leq s\leq 1$, $v: M\times [-s^2,0]\rightarrow \mathbb R^k$ is a $(k,\varepsilon)$-splitting map around $p$ at scale $s$.
\end{theorem}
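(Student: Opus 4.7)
The strategy is to apply Theorem \ref{thm:transformations} at each dyadic scale $s_j=2^{-j}\in[r,1]$ to produce lower triangular matrices $T_{s_j}$ such that $v_{s_j}=T_{s_j}v$ is a $(k,\varepsilon)$-splitting map at scale $s_j$, and then to show that under the summability hypothesis the accumulated dilation $T_s$ stays within $C\sqrt\delta$ of the identity at all scales $s\in[r,1]$. Once that is established, pulling back the splitting-map bounds for $v_s$ through $T_s^{-1}=I_k+O(\sqrt\delta)$, and extending them from the normalization interval $[-s^2,-s^2/4]$ out to the full splitting-map interval $[-s^2,-\varepsilon s^2]$ via the subsolution property $(\partial_t-\Delta)(|t|^E\|\hess v_s\|^2)\le 0$ from Lemma \ref{lemma:Lich_type_I}, will show that $v$ itself is a $(k,\varepsilon)$-splitting map at every scale $s\in[r,1]$.

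\textbf{Summing the Hessian bounds.} Feeding the hypothesis $\sum_{r\leq s_j\leq 1}\mathcal E^{(k,\mu,\delta,R)}_{s_j}(p)<\delta$ into Part (2) of Theorem \ref{thm:transformations}, the quantity
\[
H_{s_j}:=\int_{-s_j^2}^{-s_j^2/4}\int_M \|\hess v_{s_j}\|^2\,d\nu_{(p,0),t}\,dt
\]
is bounded by $C\bigl(\delta s_j^\theta+\sum_{s_j\leq s_l\leq 1}(s_j/s_l)^\theta\mathcal E^{(k,\mu,\delta,R)}_{s_l}(p)\bigr)$. Summing over $j$ and swapping the order of summation (using $\sum_{j\ge l}2^{-(j-l)\theta}\le(1-2^{-\theta})^{-1}$) yields $\sum_j H_{s_j}\le C\delta$, the key geometric-series estimate driving the rest of the argument.

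\textbf{Linear Hessian control of $T_{s_{j+1}}T_{s_j}^{-1}$.} Set $A_j:=T_{s_{j+1}}T_{s_j}^{-1}$, so that $v_{s_{j+1}}=A_j v_{s_j}$, and introduce the Gram matrix
\[
B^{ab}_{s_{j+1}}(v_{s_j}):=\frac{4}{3s_{j+1}^2}\int_{-s_{j+1}^2}^{-s_{j+1}^2/4}\int_M\langle\nabla v_{s_j}^a,\nabla v_{s_j}^b\rangle\,d\nu_{(p,0),t}\,dt.
\]
By uniqueness of Cholesky decomposition, $A_j B_{s_{j+1}}(v_{s_j})A_j^{*}=I_k$, so once the right-hand side is small, $\|A_j-I_k\|\le C\|B_{s_{j+1}}(v_{s_j})-I_k\|$. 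The normalization \eqref{eqn:vr_normalization} at scale $s_j$ asserts that the analogous average of $\langle\nabla v_{s_j}^a,\nabla v_{s_j}^b\rangle$ over $[-s_j^2,-s_j^2/4]$ equals $\delta^{ab}$, while the evolution identity $\partial_t\int_M\langle\nabla v_{s_j}^a,\nabla v_{s_j}^b\rangle d\nu_t=-2\int_M\langle\hess v_{s_j}^a,\hess v_{s_j}^b\rangle d\nu_t$ combined with the mean value theorem produces
\[
\|B_{s_{j+1}}(v_{s_j})-I_k\|\le C\int_{-s_j^2}^{-s_j^2/16}\int_M\|\hess v_{s_j}\|^2\,d\nu_{(p,0),t}\,dt.
\]
The Hessian bound on the later half $[-s_j^2/4,-s_j^2/16]$ (not directly furnished by the GTT) is transferred from $H_{s_j}$ using the subsolution monotonicity of $|t|^E\|\hess v_{s_j}\|^2$ together with the mean value theorem applied on $[-s_j^2,-s_j^2/4]$, giving an additional contribution of order $CH_{s_j}$. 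Consequently $\|A_j-I_k\|\le CH_{s_j}$, a \emph{linear} (rather than square-root) dependence on the Hessian, which is the crux of the entire argument.

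\textbf{Telescoping and conclusion.} Iterating $T_{s_J}=A_{J-1}\cdots A_0 T_1$ and noting that $\|T_1-I_k\|\le C\sqrt\delta$ by Lemma \ref{lemma:normalize_splitting} applied to $v$ at scale $1$, a standard product estimate yields
\[
\|T_{s_J}-I_k\|\le C\sqrt\delta+C\sum_{j=0}^{J-1}\|A_j-I_k\|\le C\sqrt\delta+C\sum_j H_{s_j}\le C\sqrt\delta.
\]
The same bound persists for non-dyadic $s\in[r,1]$ by comparing $T_s$ to the nearest dyadic $T_{s_j}$ via an identical Gram-matrix argument. Finally, writing $v=T_s^{-1}v_s$ and expanding $\langle\nabla v^a,\nabla v^b\rangle-\delta^{ab}$ and $\hess v^a$ in terms of $T_s^{-1}=I_k+O(\sqrt\delta)$ and the splitting-map bounds for $v_s$ (after extending the relevant integrals to $[-s^2,-\varepsilon s^2]$ as in the first paragraph) shows that $v$ is a $(k,\varepsilon)$-splitting map at scale $s$, provided $\delta$ is chosen small enough. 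The main obstacle is securing the linear-in-Hessian estimate $\|A_j-I_k\|\le CH_{s_j}$; once this is in hand, the rest is combinatorial bookkeeping driven by the summability hypothesis.
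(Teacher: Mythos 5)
Your proposal is correct and follows essentially the same route as the paper: apply the Geometric Transformation Theorem at every scale, bound $\|T_{s_{j+1}}T_{s_j}^{-1}-I_k\|$ linearly by the Hessian integral via the Gram-matrix evolution identity, mean value theorem and uniqueness of the Cholesky factorization, sum the dyadic contributions using the entropy-pinching summability hypothesis after swapping the order of summation, telescope the product, and handle non-dyadic scales by comparison with the nearest dyadic one. The only (harmless) deviation is that you transfer the Hessian bound to the later sub-interval $[-s_j^2/4,-s_j^2/16]$ by the forward monotonicity of $|t|^E\|\hess v_{s_j}\|^2$, whereas the paper invokes the GTT Hessian estimate at the next scale $s_{j+1}$ together with $\|T_{s_j}T_{s_{j+1}}^{-1}-I_k\|$ small; both close the same step.
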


\begin{proof}
Let $\varepsilon'>0$ which we will choose small enough towards the end of the proof. 

By the Geometric Transformation Theorem \ref{thm:transformations}, if $0<\delta\leq \delta(n,C_I,\Lambda,H,K,\eta|R,\mu,\varepsilon')$, there is $\theta=\theta(n,C_I,\Lambda,C_1)$ such that for each $r\leq s\leq 1$ there exist lower triangular $k\times k$ matrices  $T_s$ such that $v_s=T_s v$ are $(k,\varepsilon')$-splitting maps around $p$ at scale $s$ that satisfy
\begin{equation}\label{eqn:vs_norm}
\frac{4}{3s^2} \int_{-s^2}^{-s^2/4} \int_M \langle \nabla v_s^a,\nabla v_s^b\rangle d\nu_{(p,0),t} dt = \delta^{ab}, \quad \textrm{for every $a,b=1,\ldots,k$},
\end{equation}
and
\begin{equation}
\begin{aligned}
\int_{-s^2}^{-s^2/4} \int_M & ||\hess v_s||^2 d\nu_{(p,0),t} dt \leq \\
&\leq C(n,C_I,\Lambda,H,\eta|R,\mu)  \left( \sum_{s\leq s_j\leq 1} \left( \frac{s}{r_j}\right)^\theta \mathcal E_{s_j}^{(k,\mu,\delta,R)}(p) +\delta s^\theta\right).
\end{aligned}
\end{equation}

By the mean value theorem, for any $a,b=1,\ldots,k$ and $r\leq s_i\leq 1$, there is $ t^{ab}_i\in [s_{i+1}^2,s_i^2]$ and $\tilde t^{ab}_i\in [s_{i+2}^2,s_{i+1}^2]$ so that
\begin{align*}
\int_M \langle \nabla v_{s_i}^a,\nabla v_{s_i}^b\rangle d\nu_{(p,0),t^{ab}_i} &= \delta^{ab},\\
\int_M \langle \nabla v_{s_i}^a,\nabla v_{s_i}^b\rangle  d\nu_{(p,0),\tilde t^{ab}_i}&= \frac{4}{3 s_{i+1}^2} \int_{-s_{i+1}^2}^{s_{i+2}^2} \int_M \langle \nabla v_{s_i}^a,\nabla v_{s_i}^b\rangle d\nu_{(p,0),\tilde t^{ab}_i}.
\end{align*}

On the other hand, integrating 
$$\left(\frac{\partial}{\partial t} - \Delta\right) \langle \nabla v_{s_i}^a,\nabla v_{s_i}^b\rangle = - 2\langle\hess v_{s_i}^a,\hess v_{s_i}^b\rangle,$$
we can estimate
\begin{equation} 
\begin{aligned}
&\left|\frac{4}{3s_{i+1}^2}\int_{-s_{i+1}^2}^{-s_{i+2}^2}\int_M \langle \nabla v_{s_i}^a,\nabla v_{s_i}^b\rangle  d\nu_{(p,0),t} dt-\delta^{ab} \right| \leq \\
&=\left|\int_M \langle \nabla v_{s_i}^a,\nabla v_{s_i}^b\rangle d\nu_{(p,0),\tilde t^{ab}_i}-\int_M \langle \nabla v_{s_i}^a,\nabla v_{s_i}^b\rangle d\nu_{(p,0), t^{ab}_i} \right|,  \\
&\leq 2\int_{-s_i^2}^{-s_{i+2}^2} \int_M ||\hess v_{s_i}||^2 d\nu_{(p,0),t} dt,\\
&\leq C \left( \sum_{s_i\leq s_j\leq 1} \left( \frac{s_i}{s_j}\right)^\theta \mathcal E_{s_j}^{(k,\mu,\delta,R)}(p) +\delta s_i^\theta\right)+C \left( \sum_{s_{i+1}\leq s_j\leq 1} \left( \frac{s_{i+1}}{s_j}\right)^\theta \mathcal E_{s_j}^{(k,\mu,\delta,R)}(p) +\delta s_{i+1}^\theta\right),\\
&\leq C\left( \sum_{s_{i+1}\leq s_j\leq 1} \left( \frac{s_{i+1}}{s_j}\right)^\theta \mathcal E_{s_j}^{(k,\mu,\delta,R)}(p) +\delta s_{i+1}^\theta \right)=\chi_{i+1}.
\end{aligned}
\end{equation}

Therefore, by Gram-Schmidt, there is a lower triangular $k\times k$ matrix $A$ such that
$$\frac{4}{3s_{i+1}^2}\int_{-s_{i+1}^2}^{-s_{i+2}^2}\int_M \langle \nabla A v_{s_i}^a,\nabla A v_{s_i}^b\rangle  d\nu_{(p,0),t} dt=\delta^{ab},$$
and $||A_i-I_k||\leq C(n) \chi_{i+1}$. 

Since $v_{s_{i+1}}=T_{s_{i+1}}T_{s_i}^{-1} v_{s_i}$ also satisfies the normalization \eqref{eqn:vs_norm}, by the uniqueness of the Cholesky decomposition we conclude that $A_i=T_{s_{i+1}}T_{s_i}^{-1}$, hence 
$$||T_{s_{i+1}} T_{s_i}^{-1} - I_k||\leq C(n) \chi_{i+1}.$$

Therefore, since for every $l=1,\ldots$, $T_{s_{l}} T_{s_0}^{-1}=( T_{s_{l}}T_{s_{l-1}}^{-1}) \cdots (T_{s_1} T_{s_0}^{-1}) $, we obtain as in \cite{CJN}
\begin{equation}\label{eqn:tri}
\begin{aligned}
|| T_{s_{l}} T_{s_0}^{-1}- I_k|| &\leq  \prod_{j=0}^{l-1} \left(1+k ||T_{s_{j+1}}T_{s_j}^{-1} - I_k|| \right) -1,\\
&\leq  \prod_{j=0}^{l-1} \left(1+kC(n)  \chi_{j+1}\right) -1= e^{\sum_{j=0}^{l-1} \log(1+kC(n)\chi_{j+1})} -1,\\
&\leq e^{\sum_{j=0}^{l-1} C(n) \chi_{j+1}}-1 \leq C(n)\sum_{j=0}^{l-1}\chi_{j+1}.
\end{aligned}
\end{equation}

On the other hand
\begin{align*}
\sum_{j=0}^{l-1} \chi_{j+1} &= C \sum_{j=0}^{l-1} \left(  s_{j+1}^\theta \delta + \sum_{s_{j+1} \leq s_m \leq 1} \left( \frac{s_{j+1}}{s_m}\right)^\theta \mathcal E^{(k,\mu,\delta,R)}_{s_m}(p) \right),\\
&\leq C\delta \sum_{j=0}^{l-1}  s_{j+1}^\theta + C\sum_{j=0}^{l-1} \sum_{s_{j+1} \leq s_m \leq 1} \left( \frac{s_{j+1}}{s_m}\right)^\theta \mathcal E^{(k,\mu,\delta,R)}_{s_m}(p), \\
&\leq C\delta +  C \sum_{j=0}^{l-1} \sum_{s_{j+1} \leq s_m \leq 1} \frac{1}{2^{\theta(j+1-m)}} \mathcal E^{(k,\mu,\delta,R)}_{s_m}(p),\\
&\leq C\delta + C\sum_{s_l \leq s_m \leq 1} \mathcal E^{(k,\mu,\delta,R)}_{s_m}(p)
 \sum_{j=m}^l 2^{-\theta(j+1-m)}, \\
 &\leq C\left(\delta +\sum_{s_l \leq s_m \leq 1} \mathcal E^{(k,\mu,\delta,R)}_{s_m}(p)\right).
 \end{align*}
 Therefore, by \eqref{eqn:tri}, it follows that $||T_{s_l} T_{s_0}^{-1}-I_k ||<\varepsilon''$ for every $s_l\in[ r,1]$, if $\delta>0$ is small enough.
 
In general, namely if $\max(s_{l+1},r)\leq s\leq s_l$ for some $l$, using  Lemma \ref{lemma:normalize_splitting} we obtain that if $0<\varepsilon' \leq \varepsilon'(n|\varepsilon'')$ then $||T_s\circ T_{s_l}^{-1} - I_k ||\leq \varepsilon''$. Since $T_s T_{s_0}^{-1}=T_s T_{s_l}^{-1}T_{s_l}T_{s_0}^{-1}$ we can estimate
\begin{equation}
\begin{aligned}
||T_s T_{s_0}^{-1}-I_k|| &\leq ||T_s\circ T_{s_l}^{-1}-I_k|| + ||T_{s_l} T_{s_0}^{-1}-I_k|| + k||T_s\circ T_{s_l}^{-1}-I_k|| \; ||T_{s_l} T_{s_0}^{-1}-I_k|| \\
&\leq 2\varepsilon'' + k (\varepsilon'')^2.
\end{aligned}
\end{equation}

Therefore, since $T_s=T_s T_{s_0}^{-1} T_{s_0}$
\begin{equation*}
||T_s - I_k|| \leq ||T_s T_{s_0}^{-1} -I_k|| +||T_{s_0}^{-1}-I_k|| + k||T_s T_{s_0}^{-1} -I_k || ||T_{s_0}^{-1}-I_k ||,
\end{equation*}
 and by Lemma \ref{lemma:normalize_splitting}, $||T_{s_0}^{-1}-I_k||$ can be made arbitrarily small, if $\delta>0$ is small enough.
 
Thus, for $\varepsilon''$ small enough we conclude that $v$ is a $(k,\varepsilon)$-splitting map around $p$ at every scale $s\in [ r,1]$.
\end{proof}

\begin{proof}[Proof of Theorem \ref{intro_thm:non_degen}]
By Proposition \ref{prop:RF35}, $(M,g(t),p)_{t\in (-2\delta^{-2},0]}$ satisfies (RF1-5) for some constants $C_I,\Lambda,H,K$ depending only on $n,C_I,\Lambda$. Thus Theorem \ref{thm:non_degen} proves the result.

\end{proof}

\large{\bf Acknowledgments:}  The author would like to thank Andrea Mondino for bringing \cite{CJN} to his attention and Felix Schulze for many interesting discussions. The author would also like to thank  Konstantinos Leskas and George Zacharopoulos for their helpful comments on earlier versions of the paper.

{\bf Funding:} The research was supported by the Hellenic Foundation for Research
and Innovation (H.F.R.I.) under the “2nd Call for H.F.R.I. Research Projects to support
Faculty Members \& Researchers” (Project Number: HFRI-FM20-2958).

{\bf Data availability:} Data sharing is not applicable to this article as no datasets were generated or analyzed during the current study.

{\bf Declarations}\\
{\bf{Conflict of interest:}} The author declares that he has no conflict of interest.


\begin{thebibliography}{10}

\bibitem{BamlerScalar}
Richard Bamler, \emph{Convergence of {R}icci flows with bounded scalar
  curvature}, Ann. of Math. (2) \textbf{188} (2018), no.~3, 753--831.
  \MR{3866886}

\bibitem{Bamler_singular}
Richard~H. Bamler, \emph{Structure theory of singular spaces}, J. Funct. Anal.
  \textbf{272} (2017), no.~6, 2504--2627. \MR{3603307}

\bibitem{bamler2021entropy}
Richard~H Bamler, \emph{Entropy and heat kernel bounds on a ricci flow
  background}, 2021.

\bibitem{Bamler3}
\bysame, \emph{Structure theory of non-collapsed limits of ricci flows}, 2021.

\bibitem{BZ1}
Richard~H. Bamler and Qi~S. Zhang, \emph{Heat kernel and curvature bounds in
  {R}icci flows with bounded scalar curvature}, Adv. Math. \textbf{319} (2017),
  396--450. \MR{3695879}

\bibitem{BZ2}
\bysame, \emph{Heat kernel and curvature bounds in {R}icci flows with bounded
  scalar curvature---{P}art {II}}, Calc. Var. Partial Differential Equations
  \textbf{58} (2019), no.~2, Paper No. 49, 14. \MR{3911147}

\bibitem{BrendlePIC}
Simon Brendle, \emph{Ricci flow with surgery on manifolds with positive
  isotropic curvature}, Ann. of Math. (2) \textbf{190} (2019), no.~2, 465--559.
  \MR{3997128}

\bibitem{CarrilloNi}
Jos\'{e}~A. Carrillo and Lei Ni, \emph{Sharp logarithmic {S}obolev inequalities
  on gradient solitons and applications}, Comm. Anal. Geom. \textbf{17} (2009),
  no.~4, 721--753. \MR{3010626}

\bibitem{CC1}
Jeff Cheeger and Tobias~H. Colding, \emph{On the structure of spaces with
  {R}icci curvature bounded below. {I}}, J. Differential Geom. \textbf{46}
  (1997), no.~3, 406--480. \MR{1484888}

\bibitem{CC2}
\bysame, \emph{On the structure of spaces with {R}icci curvature bounded below.
  {II}}, J. Differential Geom. \textbf{54} (2000), no.~1, 13--35. \MR{1815410}

\bibitem{CC3}
\bysame, \emph{On the structure of spaces with {R}icci curvature bounded below.
  {III}}, J. Differential Geom. \textbf{54} (2000), no.~1, 37--74. \MR{1815411}

\bibitem{CJN}
Jeff Cheeger, Wenshuai Jiang, and Aaron Naber, \emph{Rectifiability of singular
  sets of noncollapsed limit spaces with {R}icci curvature bounded below}, Ann.
  of Math. (2) \textbf{193} (2021), no.~2, 407--538. \MR{4226910}

\bibitem{CheegerNaber}
Jeff Cheeger and Aaron Naber, \emph{Lower bounds on {R}icci curvature and
  quantitative behavior of singular sets}, Invent. Math. \textbf{191} (2013),
  no.~2, 321--339. \MR{3010378}

\bibitem{CW1}
Xiuxiong Chen and Bing Wang, \emph{Space of {R}icci flows {I}}, Comm. Pure
  Appl. Math. \textbf{65} (2012), no.~10, 1399--1457. \MR{2957704}

\bibitem{CW3}
\bysame, \emph{On the conditions to extend {R}icci flow({III})}, Int. Math.
  Res. Not. IMRN (2013), no.~10, 2349--2367. \MR{3061942}

\bibitem{CW2a}
\bysame, \emph{Space of {R}icci flows ({II})---{P}art {A}: {M}oduli of singular
  {C}alabi-{Y}au spaces}, Forum Math. Sigma \textbf{5} (2017), Paper No. e32,
  103. \MR{3739253}

\bibitem{CW2b}
\bysame, \emph{Space of {R}icci flows ({II})---{P}art {B}: {W}eak compactness
  of the flows}, J. Differential Geom. \textbf{116} (2020), no.~1, 1--123.
  \MR{4146357}

\bibitem{ChengZhou}
Xu~Cheng and Detang Zhou, \emph{Eigenvalues of the drifted {L}aplacian on
  complete metric measure spaces}, Commun. Contemp. Math. \textbf{19} (2017),
  no.~1, 1650001, 17. \MR{3575913}

\bibitem{RFpartI}
Bennett Chow, Sun-Chin Chu, David Glickenstein, Christine Guenther, James
  Isenberg, Tom Ivey, Dan Knopf, Peng Lu, Feng Luo, and Lei Ni, \emph{The
  {R}icci flow: techniques and applications. {P}art {I}}, Mathematical Surveys
  and Monographs, vol. 135, American Mathematical Society, Providence, RI,
  2007, Geometric aspects. \MR{2302600}

\bibitem{RFpartIII}
\bysame, \emph{The {R}icci flow: techniques and applications. {P}art {III}.
  {G}eometric-analytic aspects}, Mathematical Surveys and Monographs, vol. 163,
  American Mathematical Society, Providence, RI, 2010. \MR{2604955}

\bibitem{RF_Part3}
\bysame, \emph{The {R}icci flow: techniques and applications. {P}art {III}.
  {G}eometric-analytic aspects}, Mathematical Surveys and Monographs, vol. 163,
  American Mathematical Society, Providence, RI, 2010. \MR{2604955}

\bibitem{CM_singularities}
Tobias~Holck Colding and William P.~Minicozzi II, \emph{Singularities of ricci
  flow and diffeomorphisms}, 2022.

\bibitem{CM_uniqueness_MCF}
Tobias~Holck Colding and William~P. Minicozzi, II, \emph{Uniqueness of blowups
  and \l ojasiewicz inequalities}, Ann. of Math. (2) \textbf{182} (2015),
  no.~1, 221--285. \MR{3374960}

\bibitem{EMT}
Joerg Enders, Reto M\"{u}ller, and Peter~M. Topping, \emph{On type-{I}
  singularities in {R}icci flow}, Comm. Anal. Geom. \textbf{19} (2011), no.~5,
  905--922. \MR{2886712}

\bibitem{Hallgren_scalar}
Max Hallgren, \emph{The entropy of {R}icci flows with type-{I} scalar curvature
  bounds}, Adv. Math. \textbf{418} (2023), Paper No. 108940, 36. \MR{4556831}

\bibitem{Hallgren_Kahler}
Max Hallgren and Wangjian Jian, \emph{Tangent flows of {K}\"{a}hler metric
  flows}, J. Reine Angew. Math. \textbf{805} (2023), 143--184. \MR{4669036}

\bibitem{HasMull}
Robert Haslhofer and Reto M\"{u}ller, \emph{A compactness theorem for complete
  {R}icci shrinkers}, Geom. Funct. Anal. \textbf{21} (2011), no.~5, 1091--1116.
  \MR{2846384}

\bibitem{HeinNaber}
Hans-Joachim Hein and Aaron Naber, \emph{New logarithmic {S}obolev inequalities
  and an {$\epsilon$}-regularity theorem for the {R}icci flow}, Comm. Pure
  Appl. Math. \textbf{67} (2014), no.~9, 1543--1561. \MR{3245102}

\bibitem{ManteMull}
Carlo Mantegazza and Reto M\"{u}ller, \emph{Perelman's entropy functional at
  {T}ype {I} singularities of the {R}icci flow}, J. Reine Angew. Math.
  \textbf{703} (2015), 173--199. \MR{3353546}

\bibitem{Naber4d}
Aaron Naber, \emph{Noncompact shrinking four solitons with nonnegative
  curvature}, J. Reine Angew. Math. \textbf{645} (2010), 125--153. \MR{2673425}

\bibitem{Perelman1}
Grisha Perelman, \emph{The entropy formula for the ricci flow and its geometric
  applications}, 2002.

\bibitem{Perelman3}
\bysame, \emph{Finite extinction time for the solutions to the ricci flow on
  certain three-manifolds}, 2003.

\bibitem{Perelman2}
\bysame, \emph{Ricci flow with surgery on three-manifolds}, 2003.

\bibitem{Sesum}
Natasa Sesum, \emph{Convergence of the {R}icci flow toward a soliton}, Comm.
  Anal. Geom. \textbf{14} (2006), no.~2, 283--343. \MR{2255013}

\bibitem{SimonScalar2}
Miles Simon, \emph{Extending four dimensional {R}icci flows with bounded scalar
  curvature}, Comm. Anal. Geom. \textbf{28} (2020), no.~7, 1683--1754.
  \MR{4184830}

\bibitem{SimonScalar1}
\bysame, \emph{Some integral curvature estimates for the {R}icci flow in four
  dimensions}, Comm. Anal. Geom. \textbf{28} (2020), no.~3, 707--727.
  \MR{4124141}

\bibitem{Wang}
Bing Wang, \emph{On the conditions to extend {R}icci flow({II})}, Int. Math.
  Res. Not. IMRN (2012), no.~14, 3192--3223. \MR{2946223}

\bibitem{Yokota}
Takumi Yokota, \emph{Perelman's reduced volume and a gap theorem for the
  {R}icci flow}, Comm. Anal. Geom. \textbf{17} (2009), no.~2, 227--263.
  \MR{2520908}

\end{thebibliography}
\end{document}